\numberwithin{equation}{section}
\tikzset{>=latex}
\newcolumntype{L}{>{\arraybackslash}X}
\theoremstyle{plain}
\newtheorem{theorem}{Theorem}[section]
\theoremstyle{remark}
\newtheorem{remark}[theorem]{Remark}
\theoremstyle{plain}
\newtheorem{corollary}[theorem]{Corollary}
\newtheorem{lemma}[theorem]{Lemma}
\newtheorem{proposition}[theorem]{Proposition}
\newtheorem{definition}[theorem]{Definition}
\newtheorem{assumption}[theorem]{Assumption}
\numberwithin{equation}{section}
\def\N{{\mathbb N}}
\def\Z{{\mathbb Z}}
\def\R{{\mathbb R}}
\newcommand{\E}{{\mathbb E}}
\renewcommand{\P}{{\mathbb P}}
\newcommand{\F}{{\mathscr F}}
\renewcommand{\H}{{\mathscr H}}
\newcommand{\g}{\gamma}
\newcommand{\om}{\omega}
\renewcommand{\O}{\Omega}
\renewcommand{\a}{\kappa}
\newcommand{\vp}{\varphi}
\newcommand{\Dom}{\mathcal{O}}
\newcommand{\I}{\mathcal{I}}
\newcommand{\J}{\mathcal{J}}
\newcommand{\Tor}{\mathbb{T}}
\newcommand{\T}{\mathbb{T}}
\newcommand{\A}{{\mathcal A}}
\newcommand{\loc}{{\rm loc}}
\newcommand{\calL}{{\mathscr L}}
\newcommand{\D}{\mathscr{D}}
\newcommand{\EE}{\mathscr{E}}
\newcommand{\LLL}{\mathscr{L}}
\DeclareMathOperator*{\esssup}{\textup{ess\,sup}}
\DeclareMathOperator*{\essinf}{\textup{ess\,inf}}
\newcommand{\tT}{T^{*}}
\newcommand{\wt}{\widetilde}
\newcommand{\rr}{\rrbracket}
\newcommand{\rParen}{\rrparenthesis}
\newcommand{\rro}{\rParen}
\newcommand{\MRtash}{\mathcal{SMR}_{p,\hat{\a}}^{\bullet}(t_0,T)}
\newcommand{\MRtzeroa}{\mathcal{SMR}_{p,\a}^{\bullet}(0,T)}
\newcommand{\MRtaszh}{\mathcal{SMR}_{p,\hat{\a}}(t_0,T)}
\newcommand{\MRttwoszero}{\mathcal{SMR}_{2,0}^{\bullet}(t_0,T)}
\newcommand{\MRttwosz}{\mathcal{SMR}_{2,0}(t_0,T)}
\newcommand{\Xap}{X^{\mathrm{Tr}}_{\a,p}}
\newcommand{\Xp}{X^{\mathrm{Tr}}_{p}}
\newcommand{\Xzp}{X^{\mathrm{Tr}}_{0,p}}
\newcommand{\one}{{{\bf 1}}}
\newcommand{\embed}{\hookrightarrow}
\renewcommand{\div}{{\rm div}}
\newcommand{\stoc}{\mathrm{sto}}
\newcommand{\Fou}{\mathcal{F}}
\newcommand{\norm}[1]{{\left\vert\kern-0.25ex\left\vert\kern-0.25ex\left\vert #1
	\right\vert\kern-0.25ex\right\vert\kern-0.25ex\right\vert}}
\renewcommand{\emptyset}{\varnothing}
\newcommand{\Progress}{\mathscr{P}}
\renewcommand{\S}{\mathscr{S}}
\def\XXint#1#2#3{{\setbox0=\hbox{$#1{#2#3}{\int}$ }
	\vcenter{\hbox{$#2#3$ }}\kern-.6\wd0}}
\newcommand{\vt}{\vartheta}
\newcommand{\avg}{\int_{\T} u_0\, \dd x}
\newcommand{\dd}{\mathrm{d}}
\renewcommand{\epsilon}{\varepsilon}
\renewcommand{\hat}{\widehat}
\renewcommand{\tilde}{\widetilde}
\newcommand{\pstar}{q_*}
\newcommand{\ppstar}{p_*}
\newcommand{\Qstar}{Q_*}
\newcommand{\Bstar}{B_*}
\newcommand{\Cstar}{C_*}
\newcommand{\Istar}{I_*}
\newcommand{\Iso}{\mathcal{J}}
\begin{document}

\author{Antonio Agresti}
\address{Delft Institute of Applied Mathematics\\
	Delft University of Technology \\ P.O. Box 5031\\ 2600 GA Delft\\The
	Netherlands.}
\curraddr{Department of Mathematics Guido Castelnuovo\\
	Sapienza University of Rome \\ P.le Aldo Moro 5\\ 00185 Rome\\ Italy.}
\email{antonio.agresti92@gmail.com}

\author{Max Sauerbrey}
\address{Delft Institute of Applied Mathematics\\
	Delft University of Technology \\ P.O. Box 5031\\ 2600 GA Delft\\The
	Netherlands.}
\curraddr{Max Planck Institute for Mathematics in the Sciences\\
	Inselstr. 22 \\ 04103 Leipzig \\ Germany.}
\email{maxsauerbrey97@gmail.com}

\subjclass[2020]{35R60, 76A20, 35K59} 

\thanks{The first author has received funding from the VICI subsidy VI.C.212.027 of the Netherlands Organisation for Scientific Research (NWO)}

\date\today

\title[Stochastic thin-film equation with an interface potential]{Well-posedness of the stochastic thin-film\\ equation with an interface potential}

\keywords{Thin-Film Equation, Conservative Noise, Regularity, Degenerate Equations, Stochastic Evolution Equations,  A-Priori Estimates, Stochastic Maximal Regularity.}

\begin{abstract}
		We consider strictly positive solutions to a class of fourth-order conservative quasilinear SPDEs on the $d$-dimensional torus modeled after the stochastic thin-film equation. We prove local Lipschitz estimates in Bessel potential spaces under minimal assumptions on the parameters and corresponding stochastic maximal $L^p$-regularity estimates for thin-film type operators with measurable in time coefficients. As a result, we deduce local well-posedness of the stochastic thin-film equation as well as blow-up criteria and instantaneous regularization for the solution. In dimension one, we additionally close $\alpha$-entropy estimates and subsequently an energy estimate for the stochastic thin-film equation with an interface potential so that global well-posedness follows. We allow for a wide range of mobility functions, including the power laws $u^n$ for $n\in [0,6)$ as long as the interface potential is sufficiently repulsive. 
\end{abstract}

\maketitle
\tableofcontents

\section{Introduction and statement of the main results}
We consider fourth-order quasilinear stochastic PDEs of the form 
\begin{align}\label{Eq101}
	\dd u\,+\,\div( m(u) \nabla \Delta u)\, \dd t \,=\, \div(\Phi(u)\nabla u )\, \dd t \,+\, \sum_{k\in \N}\div( g(u) \psi_k )\, \dd \beta^{(k)}, \qquad u(0)=u_0,
\end{align}
on the $d$-dimensional torus $\Tor^d$.
Throughout this article, we assume that 
\begin{equation}\label{eq:assumption_coeff}
m\colon (0,\infty)\to (0,\infty) \quad \text{ and }\quad g,\Phi\colon (0,\infty)\to \R \ \ \text{ are \emph{smooth functions}},
\end{equation}
and we only specify them for positive values of $u$, as we are interested in the situation where \eqref{Eq101} preserves positivity, and the initial value $u_0$ is a strictly positive function. Moreover, 
\begin{equation}\label{eq:assumption_psi}
	(\psi_k)_{k\in \N}\quad \text{ are \emph{ sufficiently regular vector fields} }\quad   \psi_k\colon \Tor^d\to\R^d \quad\text{ with \emph{sufficient  decay as $k\to\infty$},}\end{equation}
and $(\beta^{(k)})_{k\in \N}$ a corresponding family of independent standard Brownian motions. The precise formulation of the assumptions on the noise coefficients $(\psi_k)_{k\in \N}$ is given below.
We remark that the stochastic PDE \eqref{Eq101} is understood in the It\^o sense. 

The class of equations of the form \eqref{Eq101} contains, in particular, the \emph{stochastic thin-film equation} 
\begin{align}\label{Eq100}
	 \dd u\,=\, -\div( m(u) \nabla (\Delta u- \phi'(u)))\,\dd t\,+\, \div( m^{1/2}(u) \,\dd B ), 
\end{align} 
which results from setting $g(u) =  m^{1/2}(u)$, $\Phi(u)=m(u)\phi''(u)$ in \eqref{Eq101} and 
 $B= \sum_{k\in \N} \psi_k \beta^{(k)}$  
is a function-valued Brownian motion, whenever 
\begin{equation}\label{eq:assumption_STFE_coeff}
m\colon (0,\infty)\to (0,\infty) \quad \text{ and }\quad \phi\colon (0,\infty)\to \R \ \ \text{ are smooth and $ B_t $  is regular enough for each $t>0$.}
\end{equation}
Then \eqref{Eq101} becomes indeed the \emph{It\^o} equation \eqref{Eq100}, but we stress that under reasonable symmetry conditions on the driving process $B$ also the \emph{Stratonovich} version of \eqref{Eq100} can be cast into the form \eqref{Eq101} by adjusting the coefficient $\Phi$, see \cite[Remark 2.1]{metzger2022existence}, \cite[Appendix A]{KleinGruen22} and the comments below \eqref{Eq100_stra}.

The solution $u$ to \eqref{Eq100} models the height of a thin liquid film driven by surface tension, interaction forces between the liquid and the substrate, and thermal fluctuations of the fluid molecules. The coefficient $m(u)$ is then called the mobility function and reflects the boundary condition of the fluid velocity at the substrate. The choice $m(u) = u^3$ corresponds to the no-slip condition and $m(u) = u^{n}$ for $n\in [1,3)$ to various slip conditions, with $n=2$ corresponding to the Navier-slip condition. The resulting thin-film operator $-\div( m(u) \nabla \Delta u)$ captures the effects of surface tension on the film height. As derived independently in \cite{DMES2005,GruenMeckeRauscher2006}, the effects of thermal noise on the fluid velocity are modeled by the conservative noise term in \eqref{Eq100} for an $L^2(\T^d)$-cylindrical Brownian motion $B$. We remark that the smoothness conditions, which we impose later on in the manuscript on the sequence $(\psi_k)_{k\in \N}$, restrict our results to the case of spatially colored $B$. The second-order operator $\div(m(u)\nabla[ \phi'(u)])$ in \eqref{Eq100} accounts for the effect of interaction forces between molecules of the fluid and the substrate, where $\phi$ is the effective interface potential. A typical example is
\begin{equation}
\label{eq:prototype_example_potential}
\phi(u) = u^{-8}- u^{-2} +1, 
\end{equation}
corresponding to conjoining and disjoining van der Waals forces modeled by the \emph{$6$-$12$ Lennard-Jones potential}, see \cite[Eq. (2.48c)]{ODB97}.  

Our main results can be summarized as follows:
\begin{itemize}
\item Local well-posedness and blow-up criteria for \eqref{Eq101} in any space dimensions -- Theorem \ref{Thm_local}  and Proposition \ref{prop:blow_up_criteria}.
\item Global well-posedness for \eqref{Eq100} with a repulsive potential in one dimension -- Theorems \ref{Thm_global} and \ref{cor_global_stratonovich} (It\^o $\&$ Stratonovich noise).
\end{itemize}

Let us stress that, since the derivations of the stochastic thin-film equation in \cite{DMES2005} and \cite{GruenMeckeRauscher2006}, our results are the first ones on the \emph{well-posedness} of \eqref{Eq100}. 
Indeed, only existence and \emph{no} uniqueness results for solutions to \eqref{Eq100} are available at the moment. 
In particular, at least if the interface potential is sufficiently singular near $0$ as in \cite{fischer_gruen_2018}, we can prove the uniqueness of global solutions, as conjectured in \cite[Section 6]{fischer_gruen_2018} for \eqref{Eq100} with $m(u)=u^2$.
Interestingly, our results cover the potential \eqref{eq:prototype_example_potential} even in the presence of a non-quadratic mobility, c.f., Assumption \ref{Assumptions_m}.
Of course, the main obstacle in obtaining well-posedness for the stochastic thin-film equation is the degeneracy of the leading order operator. 
However, in the presence of a repulsive potential, the solutions are strictly positive at all times, and therefore the thin-film operator remains parabolic a posteriori. As a consequence, pathwise uniqueness is amenable to be proven.

Further novelties of our approach are:
\begin{itemize}
\item Global well-posedness for various mobility functions -- see Assumption \ref{Assumptions_m}.
\item Reduced regularity of the initial data -- Theorem \ref{Thm_local} and Remark \ref{Rem2}.
\item Instantaneous high-order regularization -- Proposition \ref{Prop_regularization}.
\end{itemize}

The global well-posedness results of Theorems \ref{Thm_global} and \ref{cor_global_stratonovich} hold for a wide range of mobility functions, including power laws of the form $m(u)=u^n$ with $n\in [0,6)$, but also $m(u)=u^3+\lambda^{3-n} u^n$ for some $\lambda\geq 0$. While the former is often assumed in the mathematical literature on \eqref{Eq100}, it usually serves as a simplification of the latter mobility function obtained from the lubrication approximation, see \cite[Section II B]{ODB97}. Moreover, the literature on martingale solutions to \eqref{Eq100}, which we review in Subsection \ref{ss:review_literature},  imposes more restrictive assumptions on the exponent $n$. Since our global well-posedness result relies on the positivity-preserving mechanism of the effective interface potential $\phi$, it comes at the expense of excluding the interesting case in which a contact line, i.e., a triple junction of liquid-solid and gas, is present. Nevertheless, if one is interested in the situation of a non-fully supported fluid film, Theorems \ref{Thm_global} and \ref{cor_global_stratonovich} can be useful to construct solutions by taking $\phi\searrow 0$, as performed successfully in \cite{KleinGruen22} for $m(u)=u^2$. This limit is of particular interest for future research since positive approximations of the thin-film equation are often compatible with formal a-priori estimates of the equation. Additionally, the uniqueness part of Theorems \ref{Thm_global} and \ref{cor_global_stratonovich} has also numerical implications -- for example in the case $m(u)=u^2$ in which a subsequence of a finite difference discretization of \eqref{Eq100} was shown to converge in law to a solution in \cite[Theorem 3.2]{fischer_gruen_2018}. Indeed, the pathwise uniqueness implied by the Gy\"ongy-Krylov lemma \cite[Theorem 2.10.3]{Breit_Feireisl_Hofmanova} ensures that the finite difference scheme converges in probability to the unique solution to \eqref{Eq100} on the original probability space, at least for a subsequence. Since this solution is unique, the convergence also holds for the full sequence of approximations.

Concerning the regularity of the initial data $u_0$, in all dimensions, we can allow $u_0\in H^{1/2+\varepsilon,q}(\T^d)$ with $\varepsilon>0$ arbitrary and $q\ge 2$ large, and thus below the energy level $H^1(\T^d)$.
Moreover, in the case $d=1$, we can choose $u_0\in H^{1/2+\varepsilon}(\T)$ for $\varepsilon>0$.
Proposition \ref{Prop_regularization} shows that the regularity of the initial data only affects the regularity of $u$ at times $t\sim 0$, while for $t>0$ the solutions become smooth. More precisely, if $(\psi)_{k\in\N}$ are regular enough, then $u$ becomes smooth in space regardless the regularity of $u_0$: 
\begin{equation}
\label{eq:smoothness_classical_intro}
u\in C^{\theta,\infty}_{{\rm loc}}((0,\sigma)\times\T^d) \,\text{ a.s.\ 
for all $\theta \in [0,\tfrac{1}{2})$, where $\sigma$ is the explosion time of $u$.}
\end{equation}
Let us remark that $\sigma=\infty$ a.s.\ if $d=1$ and $\phi$ is sufficiently singular, cf., Theorems \ref{Thm_global} and \ref{cor_global_stratonovich}.

\smallskip
The proof of these results relies on the following three advances. Firstly, we show \emph{stochastic maximal regularity} estimates for thin-film type operators with strictly positive coefficients which depend only measurably on time. The latter is central in the derivation of suitable blow-up criteria for the quasilinear stochastic PDE \eqref{Eq101}. Secondly, we adapt the theory \cite{AV19_QSEE_1,AV19_QSEE_2} on quasilinear stochastic evolution equations to stochastic PDEs, which are \emph{a-priori} only degenerate parabolic. Thirdly, we estimate the \emph{energy production} of \eqref{Eq100} by the \emph{$\alpha$-entropy dissipation} for different $\alpha$ leading to new a-priori estimates for the stochastic thin-film equation with an interface potential. In particular, this allows us to deduce that the equation remains a.s.\ parabolic, \emph{a-posteriori}.
To keep this work as self-contained as possible, we rely only on local well-posedness results from \cite{AV19_QSEE_1, AV19_QSEE_2}, along with a relatively simple consequence in terms of blow-up criteria. 
For completeness, relevant background material is included in Appendix \ref{app:Sobolev_spaces}, whereas Appendix \ref{app:MR_Lp_time_measurable} provides a new and self-contained proof of maximal estimates for parabolic fourth-order operators, which play a key role in establishing stochastic maximal regularity of thin-film type operators.

\smallskip
The rest of this section is organized as follows. In Subsection \ref{ss:local_all_dimensions_intro} we discuss the local well-posedness of \eqref{Eq101} in all dimensions $d\geq 1$, while in Subsection \ref{ss:global_1d_intro} we state the global well-posedness result for \eqref{Eq100} in $d=1$. In Subsection \ref{ss:review_literature}, we provide further comments on the literature, and in Subsection \ref{ss:notation} we collect notation used throughout the manuscript.

\subsection{Local well-posedness, regularity and blow-up criteria in any dimension}
\label{ss:local_all_dimensions_intro}
For the local well-posedness of \eqref{Eq101}, we use the well-posedness theory for quasilinear stochastic evolution equations developed by Veraar and the first author in \cite{AV19_QSEE_1,AV19_QSEE_2}. 
 To consider \eqref{Eq101} as a quasilinear stochastic evolution equation, we introduce the operators
\begin{equation}\begin{aligned}\label{Eq8}
	A[u](f)\,=\, \div( m(u) \nabla \Delta f), \qquad
	F(u)\,=\,  \div(\Phi(u)\nabla u ),\qquad
	G_k(u)\,=\, \div( g(u) \psi_k ),
	\end{aligned}
\end{equation}
where the latter gives rise to the operator on $\ell^2(\N)$ defined by $G[u](e_k)  = G_k(u)$ for the $k$-th unit vector $e_k\in \ell^2(\N)$. If we introduce the cylindrical Brownian motion
\begin{align}
	W(t)\,=\, \sum_{k\in \N} e_k \beta_t^{(k)}
\end{align}
on $\ell^2(\N)$, \eqref{Eq101} takes the form of a quasilinear stochastic evolution equation  \cite[Eq. (1.1)]{AV19_QSEE_1}:
\begin{align}\label{Eq125}
	\dd u\,+\,  A[u](u)\,\dd t\,=\, F(u)\, \dd t \,+\, G[u]\,\dd W, \qquad u(0)=u_0.
\end{align}
We let the solution $u$ lie for almost all times in the \emph{Bessel potential space} $H^{s+2,q}(\Tor^d)$ while the deterministic and stochastic nonlinearities take values in the spaces $H^{s-2,q}(\Tor^d)$ and $H^{s,q}(\T^d;\ell^2(\N))$, respectively. The trajectory of the solution $u$ is continuous in the \emph{trace space} depending on the temporal integrability of $u$, which is described by the parameters $p$ and $\kappa$ and turns out to be the \emph{Besov space}  $B^{s+2-4\frac{1+\kappa}{p}}_{q,p}(\Tor^d)$. 
More precisely, we require $u$ itself and the nonlinearities in \eqref{Eq125} to be $p$-integrable in time with respect to the power weight $w_\kappa(t)=|t|^\kappa$. 
By the trace theory of anisotropic spaces, which we recall in  Appendix \ref{app:Sobolev_spaces}, this determines the above trace space as the optimal space for the initial value $u_0$.
We remark that the use of temporal weights plays a central role in the proof of high-order regularity and of blow-up criteria, see Propositions \ref{Prop_regularization} and \ref{prop:blow_up_criteria} below.

In what follows, we employ the following condition on the parameters $(p,\a,s,q)$. 

\begin{assumption}[Admissible parameters]\label{Assumptions_coefficients} The parameters $s\in (\frac{-1}{2},\infty)$, $p,q\in [2,\infty)$ and $\kappa\in [0,\infty)$ satisfy the following conditions:
	\begin{align}
		&
		\label{Eq1}
		p\in (2,\infty), \, \kappa \in \bigl[0,\tfrac{p}{2}-1\bigr)
		\quad \text{ or } \quad q=p=2, \, \kappa= 0,
		\\& 
		\label{Eq104}
		s\,+\,2\,-\,4\,\tfrac{1+\kappa}{p}\,-\,\tfrac{d}{q}\,>\, 0,
		\\&
		\label{Eq103}
		s\,+\,2\,-\,4\,\tfrac{1+\kappa}{p}\,>\, 1-s.
	\end{align}
\end{assumption}

\noindent
We also say that $(p,\a,s,q)$ are \emph{admissible parameters} if they satisfy Assumption \ref{Assumptions_coefficients}.
\smallskip

The restriction of the smoothness parameter $s>\frac{-1}{2}$ is made explicit in Assumption \ref{Assumptions_coefficients} because it is, anyway, implied by \eqref{Eq103}. We impose the condition $q\in [2,\infty)$ to make sure that the spaces $H^{s\pm2,q}(\T^d)$ are UMD-Banach spaces of type $2$, see \cite[Ex. 3.6.13, Prop. 4.2.15, Prop. 4.2.17 (1)]{Analysis1}. Together with $p\in [2,\infty)$ and \eqref{Eq1} this ensures that \cite[Assumption 3.1]{AV19_QSEE_1} is satisfied.
Condition \eqref{Eq104}, on the other hand, implies that the trace space $B^{s+2-4\frac{1+\kappa}{p}}_{q,p}(\Tor^d)$ embeds into a space of H\"older continuous functions by Sobolev embeddings. Therefore, we can control the oscillations of the coefficient $m(u)$, and the operator $A[u]$ behaves locally like the Bi-Laplacian, which is a key ingredient in proving stochastic maximal regularity of $A[u]$ required to apply \cite{AV19_QSEE_1,AV19_QSEE_2}. On the other hand, condition \eqref{Eq103} is imposed to give sense to the product 
\begin{equation}\label{Eq105}
	m(u)	\nabla \Delta f
\end{equation}
in the definition of $A[u]$ using Sobolev pointwise multipliers. Indeed, for $u$ in the trace space $B^{s+2-4\frac{1+\a}{p}}_{q,p}(\T^d)$ we expect smoothness $	s\,+\,2\,-\,4\,\frac{1+\kappa}{p}$ from $m(u)$ and $s-1$ from $	\nabla \Delta f$ where $f\in H^{s+2,q}(\Tor^d)$. If $s-1\ge 0$, there is no problem in giving sense to the pointwise multiplication \eqref{Eq105} and the condition \eqref{Eq103} is implied by \eqref{Eq104}. However if $s-1<0$, the function $\nabla \Delta f$ becomes a distribution and $m(u)$ must admit smoothness $1-s$ to define the product  $\eqref{Eq105}$, which is expressed in \eqref{Eq103}. 
\begin{remark}\label{Rem2}We investigate which choices of parameters are compatible with Assumption \ref{Assumptions_coefficients}. Firstly, we observe that for any $s\in (\frac{-1}{2},\infty)$ choosing $p,q\in [2,\infty)$ large and $\kappa=0$ guarantees that \eqref{Eq1}, \eqref{Eq104} and \eqref{Eq103} are satisfied. Therefore, for each $s\in (\frac{-1}{2},\infty)$ there is a feasible choice of parameters $(p,\kappa,q)$ subject to Assumption \ref{Assumptions_coefficients}. Secondly, we analyze how low we can choose the smoothness $s\,+\,2\,-\,4\,\frac{1+\kappa}{p}$ of the trace space, determining the roughness of initial values we can allow for \eqref{Eq125}. Condition \eqref{Eq1} implies that
\begin{equation}
	s\,+\,2\,-\,4\,\tfrac{1+\kappa}{p}\,\ge\, s,
\end{equation}
which together with \eqref{Eq103} yields
	\begin{equation}\label{Eq118}
	s\,+\,2\,-\,4\,\tfrac{1+\kappa}{p} \,>\, \tfrac{1}{2}.
\end{equation}
We convince ourselves that it is possible to choose admissible parameters $(p,\a,s,q)$ such that $s\,+\,2\,-\,4\,\tfrac{1+\kappa}{p}$ becomes arbitrarily close to $\frac{1}{2}$. If $d=1$,  we can choose simply $p=q=2$, $\kappa=0$ for any $s>\frac{1}{2}$, resulting in the trace space 
	$
	B_{2,2}^{s}(\Tor)\,=\, H^{s}(\Tor).
	$
	If however $d\ge 2$, we choose  $q>\frac{d}{s}$ for $s>\frac{1}{2}$, $p\in (2,\infty)$ and $\kappa$ close to $\frac{p}{2}-1$. Then the smoothness of the trace space is close to $s$, which can be chosen arbitrarily close to $\frac{1}{2}$.
\end{remark}

We can now define local solutions to \eqref{Eq101}. Recall that we are interested in the situation where the solution remains positive for all times due to the possible loss of parabolicity of $A[u]$ for non-positive $u$.

\begin{definition}[Local solution]\label{Defi_Lpk}
Let $(p,\a,s,q)$ be admissible parameters as in Assumption \ref{Assumptions_coefficients}. 
	Let $\sigma\colon \Omega \to [0,\infty]$ be a stopping time and $u\colon \llbracket 0,\sigma\rrparenthesis \to H^{s+2,q}(\Tor^d)$ be a progressively measurable process. 
	Then the tuple $(u,\sigma)$ is called a \emph{positive local $(p,\a,s,q)$-solution} to \eqref{Eq101}, if there exists a sequence of stopping times $(\sigma_{l})_{l \in \N}$ such that $0\le \sigma_l \nearrow \sigma$ and a.s.\ for all $l\in \N$ we have
	\begin{align}
	\label{Eq4}
		&u\,\in \, 
		L^p(0,\sigma_l, w_\kappa; H^{s+2,q}(\Tor^d)) \,\cap\, C([0,\sigma_l]; B^{s+2-4\frac{1+\kappa}{p}}_{q,p}(\Tor^d)),\\
		\label{Eq7} \noeqref{Eq7}
			&F(u)\in L^p(0,\sigma_l, w_\kappa; H^{s-2,q}(\Tor^d)),\quad G[u]\in L^p(0,\sigma_l, w_\kappa;H^{s,q}(\T^d;\ell^2(\N))),\\
			\label{Eq34}
		&\inf_{ [0,\sigma_l ]\times \Tor^d} u\,>\, 0  \ \ \  (\normalfont{\text{local positivity}}),
	\end{align}
	and a.s.\ for all $t\in [0,\sigma_{l}]$:
	\begin{align}
	\label{eq:stoch_integrated_eq}
		u(t)\,-\, u(0)\,+\,\int_0^t A[u(r)](u(r))\, \dd r \,=\,  \int_0^t F(u(r))\, \dd r
		\,+\, \int_0^t 
		G[u(r)]
		\, \dd W_r\,.
	\end{align}
	\end{definition}
Due to the  conditions \eqref{Eq4}--\eqref{Eq34} and $L^p(w_{\a})\embed L^2$ for $\a<\frac{p}{2}-1$, the deterministic and stochastic integrals in \eqref{eq:stoch_integrated_eq} are well-defined as $H^{s-2,q}$- and $H^{s,q}$-valued Bochner and It\^o integral, respectively (see, e.g., \cite[Theorem 4.7 and Proposition 5.3]{NVW13} for It\^o integration in type 2 spaces).
We remark that the term \emph{local} comes from the fact that the latter requirements \eqref{Eq4}--\eqref{Eq34} are demanded only away from the stopping time $\sigma$ 
and we sometimes refer to a sequence $(\sigma_l)_{l\in\N}$  as in Definition \ref{Defi_Lpk} as \emph{localizing sequence} for $(u,\sigma)$.
Finally, we define positive maximal unique $(p,\a,s,q)$-solutions.

\begin{definition}[Maximal unique positive solution]\label{Defi_max}A positive local $(p,\a,s,q)$-solution $(u,\sigma)$ to \eqref{Eq101} is called \emph{positive maximal unique $(p,\a,s,q)$-solution}, if for every positive local $(p,\a,s,q)$-solution $(v,\tau)$ to \eqref{Eq101}, one has $\tau\leq \sigma$ a.s.\ and $u=v$ a.s.\ on $[0, \tau)$. 
\end{definition} 

It remains to specify the regularity of the noise coefficients needed for the local well-posedness of \eqref{Eq101}.
In what follows, we use the parameters $s_{\psi}>-1/2$ and $q_{\psi}\in [2,\infty)$ to capture the smoothness of the noise. 

\renewcommand{\thetheorem}{\arabic{section}.\arabic{theorem}$(s_{\psi},q_{\psi})$}

\begin{assumption}[Noise regularity -- Local well-posedness]\label{Assumptions_noise_local}
There are parameters $s_{\psi}\in \R$ and $q_{\psi}\in [2,\infty)$, such that
	\begin{equation}\label{Eq140}
\biggl\|
\biggl(\sum_{k\in \N} \bigl| (1-\Delta)^{(1+s_{\psi})/2}\psi_k  \bigr|^2 \biggr)^{1/2}
\biggr\|_{L^{q_{\psi}}(\Tor^d)}\,<\, \infty.
	\end{equation}
\end{assumption}
\renewcommand{\thetheorem}{\arabic{section}.\arabic{theorem}}

The left-hand side of the above is the norm of $(\psi_k)_{k\in \N}$  in $ H^{1+s_\psi,q_\psi}(\T^d;\ell^2(\N;\R^d))$ and \eqref{Eq140} holds in particular if $(\psi_k)_{k\in \N}\in \ell^2(\N;H^{1+s_{\psi},q_{\psi}}(\T^d;\R^d))$, see \cite[Theorem 9.2.10]{Analysis2} and \eqref{eq:identification_gamma_norms} below.
For the local well-posedness of \eqref{Eq101} formulated below, we only need Assumption \ref{Assumptions_noise_local} for some $s_{\psi}>{-1}/{2}$ and $q_{\psi}$ large depending on $d$, see the discussion in Remark \ref{Rem2}. In particular, this includes less regular noise than assumed in the literature on (global in time) martingale solutions to \eqref{Eq100}. Indeed, the most general condition treated so far, see \cite[Assumption 1.2]{dareiotis2023solutions}, implies that Assumption \ref{Assumptions_noise_local} holds for $s_\psi=0$ and any $q_\psi<\infty$. A more restrictive condition on $(\psi_k)_{k\in \N}$  for the global well-posedness of \eqref{Eq100} with $d=1$ is given below in Assumption \ref{Assumptions_noise_global}. 

We are ready to state our first result on local well-posedness of \eqref{Eq101} in all dimensions $d\geq 1$.

\begin{theorem}[{Local well-posedness}]\label{Thm_local}
Let Assumptions \ref{Assumptions_coefficients} and \ref{Assumptions_noise_local} with $(s_{\psi},q_{\psi})=(s,q)$ be satisfied, and 
\begin{equation*}
 u_0 \colon\Omega  \to B^{s+2-4\frac{1+\kappa}{p}}_{q,p}(\Tor^d) \quad \text{ is strongly $\mathscr{F}_0$-measurable and satisfies }\quad 
 \inf_{\Tor^d} u_0>0 \text{ a.s. }
\end{equation*}
Then there exists a positive maximal unique $(p,\a,s,q)$-solution $(u,\sigma)$ to \eqref{Eq101} such that a.s.\ $\sigma>0$ and
\begin{align*}
u\in H^{\theta,p}_{\loc}([0,\sigma), w_\kappa;H^{s+2-4\theta,q}(\T^d)) \cap C((0,\sigma); B^{s+2-\frac{4}{p}}_{q,p}(\Tor^d))
\end{align*}
for all $\theta\in [0,\frac{1}{2})$, if  $p>2$. 
\end{theorem}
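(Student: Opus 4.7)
The strategy is to recast \eqref{Eq101} as a quasilinear stochastic evolution equation in the framework of \cite{AV19_QSEE_1,AV19_QSEE_2} on the UMD type $2$ pair $X_0=H^{s-2,q}(\T^d)$, $X_1=H^{s+2,q}(\T^d)$, whose weighted trace space coincides with $B^{s+2-4(1+\kappa)/p}_{q,p}(\T^d)$. Since $m,g,\Phi$ are only defined on $(0,\infty)$, the coefficient operators $A[u],F(u),G[u]$ of \eqref{Eq8} do not fit the abstract setup verbatim. I would therefore first reduce, by the $\mathscr{F}_0$-measurable localization available in \cite{AV19_QSEE_1}, to the case of an initial datum satisfying $\delta\le u_0\le M$ for deterministic $\delta,M\in(0,\infty)$, which is possible because the trace space embeds into $C(\T^d)$ by \eqref{Eq104}. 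I then fix smooth bounded extensions $\widetilde m,\widetilde g,\widetilde\Phi\colon\R\to\R$ of $m,g,\Phi|_{[\delta/2,2M]}$ with $\widetilde m$ bounded from below, producing globally defined coefficient maps $\widetilde A[v],\widetilde F[v],\widetilde G[v]$ on the trace space. The plan is to prove local well-posedness for the truncated equation first and then patch to \eqref{Eq101} via a positivity stopping time.

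Two conditions then have to be verified to invoke the abstract local existence theorem of \cite{AV19_QSEE_1}. The first is stochastic maximal $L^p$-regularity of $\widetilde A[v]$ on $(X_0,X_1)$ with temporal weight $w_\kappa$, uniformly for $v$ in bounded subsets of the trace space; this is supplied by the SMR estimate for thin-film-type operators with measurable-in-time coefficients announced in the introduction, combined with \eqref{Eq104}, which via Sobolev embedding forces the trace space into $C^\gamma(\T^d)$ and so makes the spatial coefficient $\widetilde m(v(\cdot,\omega))$ uniformly H\"older continuous and progressively measurable in time. The second is the local Lipschitz continuity of $v\mapsto\widetilde A[v],\widetilde F[v],\widetilde G[v]$ from the trace space into $\mathcal{L}(X_1,X_0)$, $X_0$ and $\gamma(\ell^2(\N),H^{s,q}(\T^d))$ respectively. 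This reduces to pointwise multiplier estimates on Bessel potential spaces: the decisive point is \eqref{Eq103}, which guarantees that an element of the trace space has strictly more smoothness than $1-s$, so that the product $\widetilde m(v)\nabla\Delta f\in H^{s-2,q}(\T^d)$ is well-defined for $f\in H^{s+2,q}(\T^d)$ and depends locally Lipschitz continuously on $v$ through the composition with the smooth $\widetilde m$. An analogous multiplier argument handles $\widetilde F$, while the stochastic term is treated via Assumption \ref{Assumptions_noise_local} at $(s_\psi,q_\psi)=(s,q)$ together with the $\gamma$-$\ell^2$ identification on the type $2$ space $H^{s,q}(\T^d)$.

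\textbf{Main obstacle.} By far the hardest input is the SMR estimate itself. For the fourth-order operator $f\mapsto-\div(\widetilde m(v)\nabla\Delta f)$ with bounded measurable-in-time and H\"older-in-space coefficient on the $L^q$-scale, SMR is obtained through a freezing-of-coefficients/perturbation argument reducing locally to the constant-coefficient bi-Laplacian (whose SMR on $H^{s,q}(\T^d)$ follows from its bounded $H^\infty$-calculus), but adapted to fourth-order operators and to the full weighted range $\kappa\in[0,p/2-1)$; the remaining Lipschitz and growth conditions are more routine once one carefully tracks uniformity on trace-space balls.

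With both ingredients in place, the abstract theorem delivers a maximal unique local solution $(\widetilde u,\widetilde\sigma)$ of the truncated equation with $\widetilde\sigma>0$ a.s.\ and $\widetilde u\in C([0,\widetilde\sigma);B^{s+2-4(1+\kappa)/p}_{q,p}(\T^d))\hookrightarrow C([0,\widetilde\sigma);C(\T^d))$. I then define
\[
\sigma:=\inf\bigl\{t\in[0,\widetilde\sigma):\,\min\nolimits_{\T^d}\widetilde u(t)\le\delta/2\ \text{or}\ \max\nolimits_{\T^d}\widetilde u(t)\ge 2M\bigr\}\wedge\widetilde\sigma,
\]
which is a.s.\ strictly positive because $\delta<u_0<M$. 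On $[0,\sigma)$ one has $\widetilde m(\widetilde u)=m(\widetilde u)$, $\widetilde g(\widetilde u)=g(\widetilde u)$, $\widetilde\Phi(\widetilde u)=\Phi(\widetilde u)$, so $u:=\widetilde u|_{[0,\sigma)}$ is a positive local $(p,\kappa,s,q)$-solution of \eqref{Eq101}; a localizing sequence $\sigma_l\nearrow\sigma$ is obtained by tightening the thresholds in the definition of $\sigma$. Maximality and uniqueness on the positive cone are inherited from the truncated equation by patching over an exhausting sequence $\delta\searrow 0$, $M\nearrow\infty$, since every positive local solution starting from the same datum takes values in some band $[\delta/2,2M]$ on a small initial random interval and hence coincides there with $\widetilde u$ by uniqueness for the truncated equation. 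Finally, the instantaneous regularization $u\in C((0,\sigma);B^{s+2-4/p}_{q,p}(\T^d))$ and the weighted mixed-derivative regularity for $p>2$ follow by restarting the equation at arbitrarily small positive times with weight $\kappa=0$, together with the standard bootstrap embedding of weighted into unweighted maximal regularity spaces, as in \cite{AV19_QSEE_2}.
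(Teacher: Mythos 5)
Your proposal follows essentially the same route as the paper: regularize the coefficients so that the quasilinear machinery of \cite{AV19_QSEE_1,AV19_QSEE_2} applies, supply the two key inputs (stochastic maximal regularity for fourth-order thin-film operators with time-measurable coefficients via freezing, and local Lipschitz bounds for the regularized nonlinearities via Bessel-potential multiplier and composition estimates), then patch back to \eqref{Eq101} through a positivity stopping time. A small technical difference: the paper's cutoff $\eta_j$ modifies $m,g,\Phi$ only near zero and uses no upper truncation, because the Lipschitz hypotheses of \cite{AV19_QSEE_1} are stated on bounded trace-space balls anyway (Lemmas \ref{Lemma_aj}--\ref{Lemma_gj}); your two-sided truncation on $[\delta/2,2M]$ is harmless but adds an extra parameter to patch over.

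Where your sketch is genuinely incomplete is the maximality argument. The clause ``maximality and uniqueness \dots\ are inherited by patching \dots\ since every positive local solution \dots\ coincides there with $\widetilde u$ by uniqueness'' establishes only that two positive local solutions agree on the overlap of their lifetimes; it does not show that the lifetime $\sigma$ you construct dominates the lifetime $\tau$ of an arbitrary competing positive local solution $(v,\tau)$, which is what Definition \ref{Defi_max} demands. Proving $\tau\le\sigma$ a.s.\ requires a blow-up criterion: if $\tau>\sigma$ with positive probability, then on that event $v=u$ is continuous up to $\sigma$ in the trace space and bounded away from $0$ and $\infty$, and one must argue that this contradicts the terminal behaviour of the truncated solution at $\sigma$. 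The paper handles this in three stages: it builds a maximal-among-unique solution via an essential supremum of lifetimes of local unique solutions (using \cite[Theorem A.3]{KS98}); it proves the intermediate blow-up criterion \eqref{eq:blow_up_intermediate_local_proof_original_problem} for this candidate, which in turn relies on the strengthened truncated criterion \eqref{Eq30} established in Proposition \ref{Wellposedness_truncated_problem} by exploiting SMR with time-measurable coefficients to upgrade the off-the-shelf ``limit exists'' criterion of \cite{AV19_QSEE_2} to a ``$\sup_{t<\sigma}\|u(t)\|_{\Xap}<\infty$'' criterion; and only then concludes maximality among all positive local solutions. Your patching over $\delta\searrow0,\,M\nearrow\infty$ also silently uses compatibility of the truncated solutions at different truncation levels, a separate uniqueness check that the paper carries out explicitly via the stopping times $\tau^{(j)}$.
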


Next, we discuss how the regularity of the noise coefficients affects the regularity of solutions. Let us stress that the following result is independent of the regularity of the initial data $u_0$.

\begin{proposition}[Instantaneous regularization]\label{Prop_regularization}
	Let the assumptions of Theorem \ref{Thm_local} be satisfied, and $(u,\sigma)$ be the corresponding positive maximal unique $(p,\a,s,q)$-solution to \eqref{Eq101}. Assume that Assumption \ref{Assumptions_noise_local} holds for some $s_{\psi}\geq s$ and all $q_{\psi}\in [2,\infty)$. Then
		\begin{equation}\label{Eq35}
		u\,\in\, H_{\loc}^{\theta,r}(0,\sigma; H^{2+s_{\psi}-4\theta,\zeta}(\Tor^d))  \text{ a.s.\ for all }\theta\in[0,\tfrac{1}{2}),\ r,\zeta\in (2,\infty).
	\end{equation}
	In particular $u\,\in\,C_{\loc}^{\theta_1,\theta_2}((0,\sigma)\times \T^d)$ a.s.\ for all $\theta_1\in [0,\frac{1}{2})$ and $\theta_2\in (0,2+s_{\psi})$.
\end{proposition}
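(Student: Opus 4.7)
The plan is to bootstrap the regularity of $u$ by iteratively applying Theorem \ref{Thm_local} with progressively improved admissible parameters, exploiting the instantaneous smoothing $u\in C((0,\sigma); B^{s+2-4/p}_{q,p}(\Tor^d))$ together with the hypothesis that $(\psi_k)_{k\in\N}$ is as regular as needed.

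First I would fix $t_0\in(0,\sigma)$; the continuity statement of Theorem \ref{Thm_local} gives $u(t_0)\in B^{s+2-4/p}_{q,p}(\Tor^d)$ a.s., and by Sobolev embedding (via \eqref{Eq104}) this space embeds continuously into $C(\Tor^d)$, so $\inf_{\Tor^d} u(t_0)>0$ a.s. I would then select a new tuple $(p_1,\kappa_1,s_1,q_1)$ satisfying Assumption \ref{Assumptions_coefficients}, with $s<s_1\le s_\psi$, $q_1\ge q$, $p_1\ge p$, and $\kappa_1$ close to $p_1/2-1$, chosen so that
\begin{equation*}
B^{s+2-4/p}_{q,p}(\Tor^d) \hookrightarrow B^{s_1+2-4(1+\kappa_1)/p_1}_{q_1,p_1}(\Tor^d)
\end{equation*}
by standard Sobolev embedding. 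Since Assumption \ref{Assumptions_noise_local} with $(s_\psi,q_\psi)=(s_1,q_1)$ is granted by hypothesis, Theorem \ref{Thm_local} yields a positive maximal unique $(p_1,\kappa_1,s_1,q_1)$-solution $(\tilde u,\tilde\sigma)$ starting at $t_0$ from $u(t_0)$. A weak--strong uniqueness argument --- viewing $\tilde u$ as an $(p,\kappa,s,q)$-solution via the above embedding and invoking Definition \ref{Defi_max} --- gives $\tilde u = u$ on $[t_0,\sigma\wedge\tilde\sigma)$ and $\tilde\sigma = \sigma$ a.s.

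The quantitative gain in smoothness per iteration is bounded below by a fixed positive amount controlled by \eqref{Eq1}--\eqref{Eq103}, so finitely many such steps reach any target $(p_N,\kappa_N,s_\psi,q_N)$ with $p_N, q_N$ arbitrarily large. Applying the regularity conclusion of Theorem \ref{Thm_local} at the final step, letting $t_0\searrow 0$, and observing that the weight $w_{\kappa_N}$ is harmless on intervals bounded away from $0$, produces \eqref{Eq35} with $r=p_N$ and $\zeta=q_N$. The mixed Hölder statement follows from the embeddings $H^{\theta,r}(I;X)\hookrightarrow C^{\theta_1}(I;X)$ for $\theta-1/r>\theta_1$ together with $H^{2+s_\psi-4\theta,\zeta}(\Tor^d)\hookrightarrow C^{\theta_2}(\Tor^d)$ for $2+s_\psi-4\theta-d/\zeta>\theta_2$, upon choosing $\theta\in((2+s_\psi-\theta_2)/4,1/2)$ and $r,\zeta$ sufficiently large.

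The main technical obstacle is the identification $\tilde u = u$ across different admissible parameter sets, which relies on a compatibility argument within the framework of \cite{AV19_QSEE_1,AV19_QSEE_2}: one has to verify that both solutions, restricted to a common coarser admissible class, satisfy the hypotheses of the uniqueness statement in Definition \ref{Defi_max}, using the embedding of the corresponding trace spaces and of the solution spaces $L^{p_1}(w_{\kappa_1};H^{s_1+2,q_1})\hookrightarrow L^p(w_\kappa;H^{s+2,q})$ on bounded intervals. A secondary point is that the gain in $s$ per step is uniform, which is read off directly from \eqref{Eq1}--\eqref{Eq103}.
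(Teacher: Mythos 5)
Your overall strategy — restart at $t_0>0$ from $u(t_0)$ in the (improved) trace space with better admissible parameters, identify the new solution with $u$ by a uniqueness argument, and iterate — is in the right spirit, and it is close to what the paper does in the transfer step (Corollary \ref{cor_blow_up_regularized} and the proof of Proposition \ref{prop:blow_up_criteria}). However, the central quantitative claim, that ``the quantitative gain in smoothness per iteration is bounded below by a fixed positive amount controlled by \eqref{Eq1}--\eqref{Eq103},'' is not true for a single restart, and this is exactly where the technical content lies. Concretely, with $\kappa_1$ close to $p_1/2-1$ the trace exponent is $\gamma_1 = s_1+2-4(1+\kappa_1)/p_1 \approx s_1$, so the embedding $B^{s+2-4/p}_{q,p}\hookrightarrow B^{\gamma_1}_{q_1,p_1}$ via Sobolev forces $s_1 \lesssim s + 2-4/p - d/q + d/q_1$. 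When the initial parameters are $p=q=2$ (allowed by Assumption \ref{Assumptions_coefficients} in $d=1$, and used in the global result Theorem \ref{Thm_global} with $s\in(1/2,1]$), this gives $s_1 < s-d/2$, and for $d=1$ the additional constraint $\gamma_1 > 1-s_1$ forces $s_1>1/2$, so the restart is already impossible whenever $s\le 1$; in no case is the first restart guaranteed to increase $s$. The paper avoids this trap by first bootstrapping temporal integrability ($p\to\hat r$, via \cite[Corollary 6.5]{AV19_QSEE_2}), then spatial integrability ($q\to\hat q$, via \cite[Theorem 6.3]{AV19_QSEE_2}), and only then spatial smoothness ($s\to s_\psi$) in Proposition \ref{Prop_regularization_trunc}; the ability to push $s$ up by roughly one unit per step relies on having already made $p$ and $q$ large and $\kappa>0$. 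Your sketch collapses these three bootstraps into one and loses the intermediate gains that make the last one work.

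Two secondary gaps. First, the identification $\tilde\sigma=\sigma$ requires more than the ``$\le$'' direction you get from embedding $\tilde u$ into the coarser class; the ``$\ge$'' direction, as in the paper, is recovered from the blow-up criteria (one shows that on $\{\sigma<\tilde\sigma\}$ the norm of $u$ stays bounded, contradicting Proposition \ref{Wellposedness_truncated_problem}). Second, the paper deliberately does this bookkeeping for the \emph{regularized} equation \eqref{Eq102} (which is uniformly parabolic) and then transfers back to \eqref{Eq101} by the stopping-time construction from the proof of Theorem \ref{Thm_local}; working directly with the degenerate problem, as you do, requires carrying the local-positivity condition \eqref{Eq34} through every restart and every uniqueness comparison, which is an additional layer you have not addressed. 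Finally, a small slip: in the closing Hölder embedding step you should choose $\theta<(2+s_\psi-\theta_2)/4$ (not $>$) for the spatial part, and a separate, larger $\theta$ for the temporal part, since the two components of $C^{\theta_1,\theta_2}$ use different trade-offs.
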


The above shows that, if Assumption \ref{Assumptions_noise_local} is satisfied for all $s_{\psi}>0$ and $q_{\psi}\in [2,\infty)$, then the solution $(u,\sigma)$ provided by Theorem \ref{Thm_local} is smooth in space as claimed in \eqref{eq:smoothness_classical_intro}. Moreover, the above regularization result for $s_{\psi}=1$ will play an important role in the study of global well-posedness as it allows us to justify the integrations by parts when working on intervals $(t_0,T)$ with $t_0>0$ even if $u_0\not\in H^1(\T^d)$. 
The above result also implies that solutions to \eqref{Eq101} are \emph{compatible} in the sense that if Theorem \ref{Thm_local} can be applied to different sets of parameters $(p_i,\a_i,s_i,q_i)$ for $i\in\{1,2\}$, the corresponding maximal unique $(p_i,\a_i,s_i,q_i)$-solution $(u_i,\sigma_i)$ coincides, i.e., $\sigma_1=\sigma_2$ a.s.\ and $u_1=u_2$ a.e.\ on $[0,\sigma_1)\times \O$.

Now, we turn to the question of how to determine whether $\sigma=\infty$ a.s.\ or not.
Usually, one needs to obtain a-priori estimates for solutions to the corresponding stochastic PDE in a sufficiently regular norm. In practice, the smoothness needed in the blow-up criteria reflects the one used for the initial data in local well-posedness results. In particular, the lower the regularity allowed from the initial data, the better the corresponding blow-up criteria. Since we allow for a leading order operator which degenerates near $0$, we also need to assume a positivity condition in the following blow-up criterion.

\begin{proposition}[Blow-up criteria]
\label{prop:blow_up_criteria}
Let the assumptions of Theorem \ref{Thm_local} be satisfied, and let $(u,\sigma)$ be the corresponding positive maximal unique $(p,\a,s,q)$-solution to \eqref{Eq101}. Assume that Assumption \ref{Assumptions_noise_local} holds for some $s_{\psi}\geq s$ and all $q_{\psi}\in [2,\infty)$. Moreover, let $(p_0,\a_0,s_0,q_0)$ be admissible exponents (cf., the comments below Assumption \ref{Assumptions_coefficients}) satisfying $s_0\leq s_{\psi}$ and set $\g_0 := s_0+2-4\frac{1+\kappa_0}{p_0}$. Then, for all $0<\varepsilon<T<\infty$,
\begin{equation}
\P \Bigl(\varepsilon<
	\sigma<T\,,\, \sup_{t\in [\varepsilon, \sigma)} \|u(t)\|_{ B^{\g_0}_{q_0,p_0}(\T^d)}<\infty\,,\, \inf_{ [\varepsilon,\sigma)\times \T^d}  u\,>\,0
	\Bigr)\,=\, 0 .
\end{equation}
\end{proposition}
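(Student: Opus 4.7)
The strategy is to combine the instantaneous regularization of Proposition \ref{Prop_regularization} with the abstract blow-up criteria of \cite{AV19_QSEE_1,AV19_QSEE_2} for the quasilinear stochastic evolution equation \eqref{Eq125}, applied with the admissible parameters $(p_0,\kappa_0,s_0,q_0)$ in place of $(p,\kappa,s,q)$. Since $u_0$ need not lie in the trace space $B^{\gamma_0}_{q_0,p_0}(\T^d)$ associated to the new parameters, the equation has to be \emph{restarted} at a positive time $\varepsilon$, where the improved regularity provided by Proposition \ref{Prop_regularization} brings the process into this trace space.

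Fix $\varepsilon\in (0,T)$ and consider the event $\{\sigma>\varepsilon\}$. By Proposition \ref{Prop_regularization} applied with the given $s_\psi$, one has a.s.\ $u\in C^{\theta_1,\theta_2}_{\loc}((0,\sigma)\times\T^d)$ for all $\theta_2\in (0,2+s_\psi)$. Since $\gamma_0=s_0+2-4\tfrac{1+\kappa_0}{p_0}<s_0+2\leq s_\psi+2$, one can pick $\theta_2\in (\gamma_0,2+s_\psi)$, and then the embedding $C^{\theta_2}(\T^d)\hookrightarrow B^{\gamma_0}_{q_0,p_0}(\T^d)$ yields $u(\varepsilon)\in B^{\gamma_0}_{q_0,p_0}(\T^d)$ a.s.\ on $\{\sigma>\varepsilon\}$. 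Together with the local positivity \eqref{Eq34} and the continuity of paths, one also obtains $\inf_{\T^d}u(\varepsilon)>0$ a.s.\ on the same event.

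Next, I would apply Theorem \ref{Thm_local} to \eqref{Eq101} started at time $\varepsilon$, with initial datum $u(\varepsilon)$ and admissible parameters $(p_0,\kappa_0,s_0,q_0)$, obtaining a positive maximal unique $(p_0,\kappa_0,s_0,q_0)$-solution $(v,\tau)$ of the restarted equation. Using Proposition \ref{Prop_regularization} again on any interval $[\varepsilon,\varepsilon']\subset(0,\sigma)$, one checks that the time-shift $u(\cdot+\varepsilon)$, restricted to $\llbracket 0,\sigma-\varepsilon\rrparenthesis$, itself qualifies as a positive local $(p_0,\kappa_0,s_0,q_0)$-solution in the sense of Definition \ref{Defi_Lpk}. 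By pathwise uniqueness, cf.\ Definition \ref{Defi_max}, this process must coincide with $v$ on the common interval of definition; the maximality of $\tau$ then forces $\tau=\sigma-\varepsilon$ a.s.\ on $\{\sigma>\varepsilon\}$.

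Finally, I would apply the abstract blow-up criterion of \cite{AV19_QSEE_2} to $(v,\tau)$: on the event $\{\tau<T-\varepsilon\}$, a.s.\ either $\sup_{t<\tau}\|v(t)\|_{B^{\gamma_0}_{q_0,p_0}(\T^d)}=\infty$, or $v$ leaves every compact subset of the open set $\{w\in B^{\gamma_0}_{q_0,p_0}(\T^d)\,:\,w>0\}$ on which the leading operator $A[\cdot]$ is uniformly parabolic, the latter being equivalent to $\inf v\to 0$ along a sequence of times approaching $\tau$. Translating back via $\tau=\sigma-\varepsilon$ gives the claim. The hard part is the consistency step: namely, showing that the $(p,\kappa,s,q)$-solution $u$ can be legitimately realized as a $(p_0,\kappa_0,s_0,q_0)$-solution after restarting. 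This relies both on the positivity-preserving structure of \eqref{Eq101} and, via Proposition \ref{Prop_regularization}, on the stochastic maximal $L^p$-regularity of thin-film type operators with time-measurable coefficients that underpins Theorem \ref{Thm_local}.
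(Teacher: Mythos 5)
Your overall architecture (use the instantaneous regularization to place $u(\varepsilon)$ in the new trace space, restart with the admissible parameters $(p_0,\kappa_0,s_0,q_0)$, identify the restarted lifetime, then invoke a blow-up criterion) is in the right spirit, and the first two paragraphs are fine. However, your final step contains a genuine gap: you invoke ``the abstract blow-up criterion of \cite{AV19_QSEE_2}'' for the restarted solution $(v,\tau)$ of the \emph{original} degenerate equation, in a form that combines a norm condition with an ``exits the positivity cone'' (or $\inf v\to 0$) condition. No such combined criterion is available off the shelf. The abstract framework of \cite{AV19_QSEE_1,AV19_QSEE_2} assumes, among other things, that the quasilinear operator $A[v_0]$ satisfies stochastic maximal regularity for \emph{all} $v_0$ in a bounded subset of the trace space; for $A[v_0]=\div(m(v_0)\nabla\Delta\cdot)$ this fails for non-positive $v_0$, so the framework does not apply directly to \eqref{Eq101}. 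Consequently there is no ready-made blow-up dichotomy ``either $\sup\|v\|=\infty$ or $v$ leaves every compact subset of $\{w>0\}$'' (the latter formulation is in any case not equivalent to $\inf v\to 0$ in the infinite-dimensional Besov space, where bounded sets with uniformly positive infimum are not compact).

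The paper instead \emph{derives} the combined criterion by comparison with the regularized equations \eqref{Eq102}. The proof argues by contradiction: if the event in the statement has positive probability, one introduces the first time $\tau_\star$ at which $\inf_{\T^d}u(t,\cdot)$ dips below a threshold $2/j_\star$; up to that time $u$ agrees with the maximal solution $(u_\star,\sigma_\star)$ of the $j_\star{+}1$-regularized (uniformly parabolic) equation, for which the abstract theory \emph{does} apply. Corollary \ref{cor_blow_up_regularized}, which already incorporates the restart at $\varepsilon$ and the change to the new admissible parameters via Proposition \ref{Prop_regularization_trunc}, then forces $\sigma_\star>\tau_\star$ a.s.\ on the bad event, and $u_\star$ yields a positive local unique extension of $(u,\sigma)$, contradicting maximality. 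In short: the blow-up criterion with the positivity hypothesis must be proved through the cutoff construction, not quoted; your sketch treats as an available theorem precisely the statement the proof has to establish.
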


The norm in the above blow-up criterion is well-defined even if $s_0\gg s$ by Proposition \ref{Prop_regularization}.
Moreover, due to \eqref{Eq104} in the admissibility condition, $B^{\g_0}_{q_0,p_0}\embed C^{\alpha_0}$ for some $\alpha_0>0$.  Finally, if $d=1$ and Assumption \ref{Assumptions_noise_local} holds with $s_{\psi}=1$ and for all $q_{\psi}<\infty$, then one can choose $p_0=q_0=2$, $\a_0=0$ and $s_0=1$ corresponding to the usual energy space for the thin-film equation.

The key point in the above result is the independence of the blow-up criteria of the original set $(p,\a,s,q)$ of admissible parameters. 
Such independence is essentially a consequence of the instantaneous regularization of solutions as given in Proposition \ref{Prop_regularization}. Indeed, at any time $t>\varepsilon$, the regularity of $u|_{[\varepsilon,\sigma)}$ does not depend on the original admissible parameters, and thus one can restart \eqref{Eq101} with any new set of admissible parameters $(p_0,\a_0,s_0,q_0)$ as long they are compatible with the noise, i.e., $s_0\leq s_{\psi}$ (see the proof of \cite[Theorem 2.10]{AV22_localRD}).

\subsection{Global well-posedness in one dimension}
\label{ss:global_1d_intro}
We turn our attention to the global in time well-posedness of the stochastic thin-film equation \eqref{Eq100}, which is of the form \eqref{Eq101} with $g(u)=m^{1/2}(u)$. This stochastic PDE admits several dissipated quantities which, in one dimension, allow us to extend the local well-posedness and regularization results globally in time using the blow-up criteria of Proposition \ref{prop:blow_up_criteria}. In particular, as in many works on the stochastic thin-film equation, our analysis is centred around the \emph{energy functional} 
\begin{equation}\label{Eq2}
	\EE(u)\,=\, \int_{\Tor}\Big[ \tfrac{1}{2}|u_x|^2 \,+\, \phi(u)\Big]\, \dd x
\end{equation}
of the solution $u$ to \eqref{Eq100}. The key point is that the functional $\EE$ estimates at the same time the norm $\|u\|_{H^{1}{(\T)}}^2$ by mass conservation and the smallness of $u$ due to the singularity of $\phi$ which we demand in Assumption \ref{Assumptions_phi}  to obtain global well-posedness, see \eqref{Eq45}. 

In general, it is challenging to close an a-priori estimate on \eqref{Eq2}, especially in the \emph{nonlinear noise} case, where $m(u) $ is not proportional to $ u^2$ and therefore $g(u)=m^{1/2}(u)$ is nonlinear. In \cite{dareiotis2021nonnegative,sauerbrey2023solutions} this was achieved for the Stratonovich interpretation of \eqref{Eq100} with $\phi=0$ by controlling the smallness of the solution leading to a film height which is positive a.e.\  for all times. In \cite{fischer_gruen_2018}, however, a repulsive interface potential $\phi$ was used instead of the Stratonovich correction to close an a-priori estimate on \eqref{Eq2} together with the \emph{entropy functional}
\begin{equation}
	\H_0 (u)\,=\, \int_{\T} h_{0} (u)\, \dd x ,\qquad  h_{0}(r)\,=\, \int_1^r \int_1^{r'}\frac{1}{m(r'')} \, \dd r''\, \dd r'
\end{equation} in the linear noise case $m(u) =u^2$. We extrapolate this approach using so-called \emph{$\alpha$-entropy functionals}
\begin{equation} \label{Eq46}
\H_\beta (u)\,=\, \int_{\T} h_{\beta} (u)\, \dd x ,\qquad h_{\beta}(r)\,=\, \int_1^r \int_1^{r'}\frac{(r'')^\beta}{m(r'')} \, \dd r''\, \dd r'
\end{equation}
for $\beta \in (-{1}/{2},1)$ originally defined in \cite{Beretta_Bertsch_DalPasso_95} for $\beta=\alpha +n-1$. Notably, while a key ingredient in \cite{fischer_gruen_2018} is the spatial discretization of \eqref{Eq100} developed in \cite{grun2000nonnegativity, Bertozzi_numerics}, which is compatible with the entropy estimate, an $\alpha$-entropy consistent discretization of \eqref{Eq100} is to the authors' knowledge not available and may depend on the specific choice of $\alpha$. Thus, working directly with the maximal local solutions provided by Theorem \ref{Thm_local} enables us to use a wider class of a-priori estimates and to cover many different cases of $m$. Specifically, we impose the following assumptions on the \emph{smooth function} $m\colon (0,\infty) \to (0,\infty)$.  
\begin{assumption}[Mobility coefficient]\label{Assumptions_m}
	There exist $n\in \R$ and $\nu\in [0,6)$ such that, 
	for all $r\in (0,\infty)$,
	\begin{align}& \label{Eq40}
		\limsup_{r\searrow 0} {m(r)}/{r^{n}} \,<\,\infty , \qquad
	\liminf_{r\searrow 0} {m(r)}/{r^{n+2}} \,>\,0,
		\\&
		\limsup_{r\to \infty} {m(r)}/r^{\nu} \,<\, \infty ,\qquad 
		\liminf_{r \to \infty} m(r) >0 , \label{Eq41}
		\\&\label{Eq42}
		|m'(r)|\,\lesssim \, {m(r)}/{r},\qquad |m''(r)|\,\lesssim\,{m(r)}/{r^2}.
	\end{align} 
\end{assumption}
\noindent
In this case, we call $n$ and $\nu$ \emph{exponent of degeneracy} and \emph{growth exponent} of $m$, respectively.
\smallskip

Due to the flexibility of our conditions \eqref{Eq40}--\eqref{Eq41}, the exponent of degeneracy $n$   and growth exponent $\nu$ are not uniquely determined by $m$. To keep the resulting Assumption \ref{Assumptions_phi} on $\phi$ below as weak as possible, one should, however, take the largest value of $n$  that the function $m$ allows for.

While \eqref{Eq40} expresses that $m(r)$ degenerates at least as much as the power $r^n$ near $0$, the condition \eqref{Eq41} bounds the growth of $m$  near $\infty$. The technical condition \eqref{Eq42} assumes that $m$ behaves under differentiation like a power-law. 
One can readily check that the following examples of mobilities satisfy Assumption \ref{Assumptions_m}:
 \begin{enumerate}[{\rm(i)}]
	\item \label{Ex1} {\rm (Power laws)} $m(r) =r^n$ for $n\in [0,6)$.
	\item \label{Ex2} {\rm (Mixed powers)} $m(r) =  c_1 r^{n_1}\,+\, \dots \,+\, c_J r^{n_J}$, $c_j \in (0,\infty)$, $n_j\in (-\infty, 6)$ provided $\displaystyle{\max_{j\in \{1,\dots,J\}}n_{j}\geq 0}$, which guarantees the second condition of \eqref{Eq41}.
	\item \label{Ex3} {\rm (Nonlinear Interpolation)} Let $m$ and $\tilde{m}$ be two mobility functions satisfying Assumption \ref{Assumptions_m} with respective exponents of degeneracy $n,\tilde{n}$ and growth exponents $\nu,\tilde{\nu}$.
	Then, for all $\delta>0$
	\[
	m_\delta(r)\,=\, \frac{m(r)\tilde{m}(r)}{\delta m (r) +\tilde{m}(r)}
	\]
	suffices Assumption \ref{Assumptions_m} with exponent of degeneracy $\max\{ n,\tilde{n}\}$ and growth exponent $\min\{\nu,\tilde{\nu}\}$.
\end{enumerate}
As mentioned before, when deriving the stochastic thin-film equation using a lubrication approximation, one obtains the mobility function $m(u) = u^3 + \lambda^{3-n} u^n$ with $n\in [1,3]$ and $\lambda\geq 0$ depending on the boundary condition of the fluid velocity near the substrate. Since the physically relevant regime is $u\ll 1$, this is usually approximated by $u^n$ in the mathematical literature, which is covered by \eqref{Ex1}, but we can also cover the former mobility function with \eqref{Ex2}.
The class \eqref{Ex3} is of mathematical interest since these nonlinear interpolations of two mobility functions can be used to construct solutions to the (stochastic) thin-film equation as a limit of strictly positive solutions to regularized equations, see \cite[Section 6]{BERNIS1990}.
We impose a corresponding assumption on the effective interface potential given by a \emph{smooth function} $\phi \colon (0,\infty) \to (0,\infty)$.
\begin{assumption}[Interface potential]\label{Assumptions_phi}
Let $n$ be the exponent of degeneracy of the mobility function $m$ as in Assumption \ref{Assumptions_m}. 
	There exists $\vartheta>\max\{2, 6-2n\}$ and $c_0 \in(0,\infty)$ such that, for all $r\in (0,\infty)$,
	\begin{align}
			 \label{Eq44}
		r^{-\vt} \,\lesssim \, \phi(r)\qquad \text{ and }\qquad
		 r^{-\vt-2} \,-\, c_0  \,\lesssim \, \phi''(r)\,\lesssim\, r^{-\vt-2}.
	\end{align}
\end{assumption}
The above assumption includes effective interface potentials of the form $\phi(u) = u^{-\vt} -u^{-2} +c_\vt$ with $ \vt >\max\{2, 6-2n\}$ as long as $c_\vt$ is large enough to ensure that $\phi(r)>0$ for all $r>0$. For $\vt=8$, this becomes \eqref{eq:prototype_example_potential} corresponding to the $6$-$12$ \emph{Lennard-Jones pair potential} for the van der Waals forces between the fluid and solid molecules. For $n\ge 2$, Assumption \ref{Assumptions_phi} reduces precisely to the assumption on the interface potential imposed in \cite[Hypothesis (H2)]{fischer_gruen_2018}. In particular, the continuous version
\begin{align}\label{Eq45}
	\sup_{x\in \T} u^{(2-\vt)/2}(x)\,\lesssim \,\EE(u) \,+\, \biggl(\int_{\T} u\, \dd x\biggr)^{(2-\vt)/2}
\end{align}
of \cite[Lemma 4.1]{fischer_gruen_2018} implies that a profile $u$ is strictly positive, if its energy \eqref{Eq2} is finite.
Moreover, for $n<2$, the coefficient $m^{1/2}(u)$ is not differentiable at $0$ anymore, leading to increased production of energy by the noise for small film heights. Thus, an improved control of the smallness of $u$ is required, which is reflected by the additional condition $\vt>6-2n$.
Finally, we state the main assumption on the noise, which allows us to obtain global well-posedness of the stochastic thin-film equation in dimension one.

\begin{assumption}[Noise regularity -- Global well-posedness]\label{Assumptions_noise_global}
	We assume that $(\psi_k)_{k\in\N}$ satisfies
	\begin{equation}
		\sum_{k\in \N} \| \psi_k \|_{W^{2,\infty}(\T^d;\R^d)}^2\,<\, \infty.
	\end{equation}
\end{assumption}

Note that the above implies that Assumption \ref{Assumptions_noise_local} holds with $s_{\psi}=1$ and for all $q_{\psi}<\infty$. 
We are ready to state our result on global well-posedness in one spatial dimension.
Below, uniqueness is understood as in Definition \ref{Defi_max}.

\begin{theorem}[Global well-posedness in one dimension -- It\^o]\label{Thm_global} Fix $s\in (1/2,1]$ and $d=1$. Suppose that  Assumptions \ref{Assumptions_m}, \ref{Assumptions_phi} and \ref{Assumptions_noise_global} are satisfied and \begin{equation}
\label{eq:initial_data_global_1d}
u_0\in  L_{\mathscr{F}_0}^{0}(\Omega,H^{s}(\Tor)) \quad \text{ satisfies }\quad \inf_{\Tor} u_0>0\text{ a.s. }
\end{equation}
Then there exists a unique progressively measurable process $u\colon \llbracket 0,\infty\rrparenthesis\to H^{s+2}(\T)$, such that a.s.\
\begin{align}
\label{eq:regularity_1d_global_0}
&\inf_{[0,t)\times \T} u>0 \ \ \text{ for all }\ t<\infty,\\
\label{eq:regularity_1d_global_1}
&u\in L^2_{\loc}([0,\infty); H^{s+2}(\Tor)) \,\cap\, C([0,\infty); H^{s}(\Tor)),
\end{align} 
and, a.s.\ for all $t>0$ and $\vp\in C^\infty(\T)$,
\begin{align}
\label{eq:thin_film_ito_formulation}
	\int_{\T} (u(t)-u_0)\vp \,\dd  x \,&=\,   \int_0^t 
	 \bigl\langle \vp_x  m(u), (u_{xx} - \phi'(u))_{x} \bigr\rangle_{H^{1-s}(\T)\times H^{s-1}(\T)}
	\, \dd r
	\\&
	-\, \sum_{k\in \N} \int_0^t  \int_{\T} \vp_x  m^{1/2}(u) \psi_k   \,\dd  x\, \dd \beta^{(k)}.
\end{align}
Finally, the solution $u$ instantaneously regularizes in time and space:
\begin{align}
\label{eq:regularity_1d_global_2}
&u\in H^{\theta,r}_{{\rm loc}}(0,\infty;H^{3-4\theta,\zeta}(\T)) \    \text{ for all }\theta\in [0,\tfrac{1}{2}),\ r,\zeta\in (2,\infty),\\
\label{eq:regularity_1d_global_3}
&u\in C_{\loc}^{\theta_1,\theta_2}((0,\infty)\times \T)  \   \text{ for all }\theta_1\in [0,\tfrac{1}{2}), \ \theta_2\in (0,3).
\end{align}
\end{theorem}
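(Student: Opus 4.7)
The plan is to combine the local well-posedness (Theorem \ref{Thm_local}), the instantaneous regularization (Proposition \ref{Prop_regularization}) and the blow-up criterion (Proposition \ref{prop:blow_up_criteria}) with an a-priori estimate obtained by coupling the energy $\EE$ with a well-chosen $\alpha$-entropy $\H_\beta$. First I would fix $s\in(1/2,1]$ and apply Theorem \ref{Thm_local} with parameters $p=q=2$, $\kappa=0$; one readily checks that $(p,\kappa,s,q)=(2,0,s,2)$ is admissible and that the trace space equals $B^{s}_{2,2}(\T)=H^{s}(\T)$. Since Assumption \ref{Assumptions_noise_global} implies Assumption \ref{Assumptions_noise_local} with $s_\psi=1$ and every $q_\psi\in[2,\infty)$, this produces a positive maximal unique $(2,0,s,2)$-solution $(u,\sigma)$ satisfying \eqref{eq:regularity_1d_global_1} on $[0,\sigma)$, and Proposition \ref{Prop_regularization} provides the regularization \eqref{eq:regularity_1d_global_2}--\eqref{eq:regularity_1d_global_3} on $(0,\sigma)$. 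This extra smoothness justifies the integrations by parts in the a-priori estimates below and also allows one to pass from the abstract formulation \eqref{eq:stoch_integrated_eq} to the tested version \eqref{eq:thin_film_ito_formulation} by testing against $\varphi\in C^\infty(\T)$ and integrating by parts in space.

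The heart of the argument is to establish, for every $T<\infty$ and some $t_0\in(0,T)$, a quantitative bound of the form
\begin{equation*}
\E\Bigl[\sup_{t\in[t_0,T\wedge\sigma)}\bigl(\EE(u(t))+\H_\beta(u(t))\bigr)\Bigr]\,\leq\,C(T,t_0,u_0)\,<\,\infty
\end{equation*}
for a suitable $\beta\in(-1/2,1)$ depending on the exponent of degeneracy $n$ of $m$. Starting from $t_0>0$ rather than $0$ exploits the regularization of Step~1, so that $u(t_0)\in H^{2}(\T)$ and $\H_\beta(u(t_0))<\infty$ a.s. Applying It\^o's formula to $\H_\beta(u)$ on $[t_0,T\wedge\sigma)$, the choice $h_\beta''(r)=r^\beta/m(r)$ is designed so that the deterministic drift and the It\^o correction from the conservative nonlinear noise $\sum_k\div(m^{1/2}(u)\psi_k)$ combine into a favorable dissipation of a weighted $H^2$-norm of $u$, up to lower-order terms amenable to Gronwall. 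A separate It\^o application to $\EE(u)$ produces the standard energy dissipation $\int_\T m(u)\bigl|(u_{xx}-\phi'(u))_x\bigr|^2\,\dd x$ together with an It\^o correction containing $\sum_k|\partial_x[m^{1/2}(u)\psi_k]|^2$. The crucial point, enabled by Assumptions \ref{Assumptions_m}--\ref{Assumptions_phi}, is that this correction splits into two pieces that can be absorbed by the energy dissipation and the $\alpha$-entropy dissipation respectively, provided $\beta$ is correctly tuned to $n$. Taking expectations and applying Burkholder-Davis-Gundy and Gronwall then yields the displayed estimate.

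Coupling this bound with the mass conservation of \eqref{Eq100} provides a uniform bound on $\|u(t)\|_{H^{1}(\T)}$ for $t\in[t_0,T\wedge\sigma)$, whereas \eqref{Eq45} combined with the energy bound yields $\inf_{[t_0,T\wedge\sigma)\times\T}u\geq c(T,t_0,u_0)>0$ on sets of arbitrarily high probability. The blow-up criterion of Proposition \ref{prop:blow_up_criteria}, applied with admissible parameters $(p_0,\kappa_0,s_0,q_0)=(2,0,1,2)$ (so that $\gamma_0=1$ and $B^{1}_{2,2}(\T)=H^{1}(\T)$, and $s_0\leq s_\psi$) and $\varepsilon=t_0$, then rules out both failure modes, forcing $\sigma\geq T$ a.s.\ for every $T<\infty$, hence $\sigma=\infty$ a.s. Uniqueness is obtained by observing that any progressively measurable $u$ satisfying \eqref{eq:regularity_1d_global_0}--\eqref{eq:thin_film_ito_formulation} is realized as a positive local $(2,0,s,2)$-solution via the localizing sequence $\sigma_l=l\wedge\inf\{t\geq 0:\inf_\T u(t)\leq 1/l\}$, and hence coincides with the maximal one on $[0,\sigma)$ by Theorem \ref{Thm_local}.

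The main obstacle is Step~2: closing the coupled energy/$\alpha$-entropy estimate in the genuinely nonlinear noise regime ($n\neq 2$, so that $m^{1/2}$ fails to be $C^2$ at $0$) across the full range of mobilities and potentials allowed by Assumptions \ref{Assumptions_m}--\ref{Assumptions_phi}. One has to select $\beta\in(-1/2,1)$ so that (i) the It\^o correction from the energy balance is controlled by the sum of the two dissipations, (ii) the $\alpha$-entropy balance closes on its own, and (iii) the growth at infinity ($\nu<6$) and the strength of the potential singularity ($\vt>\max\{2,6-2n\}$) compensate the $H^2$-deficit produced by the non-smoothness of $m^{1/2}$ near $0$. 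This delicate calibration is precisely what dictates the ranges for $n$, $\nu$ and $\vt$ stated in Assumptions \ref{Assumptions_m}--\ref{Assumptions_phi} and explains why a repulsive interface potential is indispensable in the present nonlinear noise framework.
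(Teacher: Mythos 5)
Your overall skeleton matches the paper: local well-posedness via Theorem \ref{Thm_local} with $(p,\kappa,s,q)=(2,0,s,2)$, instantaneous regularization from Proposition \ref{Prop_regularization} to reach positive times $t_0>0$ where $u(t_0)\in H^2$, a coupled energy/$\alpha$-entropy estimate on $[t_0,T\wedge\sigma)$, the lower bound on $u$ from \eqref{Eq45}, and then Proposition \ref{prop:blow_up_criteria} with $(p_0,\kappa_0,s_0,q_0)=(2,0,1,2)$ together with mass conservation to force $\sigma=\infty$; uniqueness by identifying any such process as a positive local $(2,0,s,2)$-solution. This is indeed the route the paper takes, with Lemma \ref{Lemma_Energy_Est} carrying the main weight.

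The gap is in your sketch of the energy estimate. You claim that a \emph{single} $\H_\beta$ with $\beta\in(-1/2,1)$ ``tuned to the degeneracy exponent $n$'' suffices to absorb the It\^o correction. This fails in general. The It\^o correction $\sum_k[\phi''(u)(g(u)\psi_k)_x^2 + (g(u)\psi_k)_{xx}^2]$ must be split over $\{u>1\}$ and $\{u\leq 1\}$, because the relevant powers of $u$ there are governed by the \emph{growth exponent} $\nu$ and the \emph{degeneracy exponent} $n$, respectively. The paper therefore invokes Lemma \ref{Lemma_alpha_Entr} twice: once with $\max\{0,\nu-5\}<\beta<1$ (hence $\beta$ tuned to $\nu$, not $n$) to dominate the $\{u>1\}$ contributions after pulling out $\sup_\T u^{\tilde\nu-\beta-2}$ via Lemmas \ref{Lemma_max_min_est}--\ref{Lemma_max_min_Energy}, and once with $\beta'=0$ (the usual entropy $\H_0$) for the $\{u\leq 1\}$ contributions, where $\tilde n=\min\{2,n\}$ enters. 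If $\nu>5$ you are forced to take $\beta>\nu-5>0$, and such a $\beta$ cannot simultaneously play the role of $\H_0$ near $u=0$; a single $\alpha$-entropy therefore cannot close the estimate across the full range $\nu\in[0,6)$ allowed by Assumption \ref{Assumptions_m}. Two further imprecisions: the closure is by Young's inequality (absorbing into the left-hand side) and Fatou's lemma, not a Gronwall argument — the feedback is superlinear and there is no linear-in-$\EE$ term to iterate; and the energy dissipation absorbs the BDG/martingale term, while the It\^o corrections are absorbed by the two $\alpha$-entropy dissipations. Finally, the a-priori estimate must be taken on an $\mathscr F_{t_0}$-measurable set $\Gamma$ on which $\EE(u(t_0))+\H_\beta(u(t_0))+\H_0(u(t_0))$ and the mass are bounded, because these quantities are only finite a.s.\ after regularization and are not a priori in $L^1(\Omega)$; the estimate is then passed to the limit in $l$, $j$, $\varepsilon$, $T$.
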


Proposition \ref{Prop_regularization} improves the assertions \eqref{eq:regularity_1d_global_2}--\eqref{eq:regularity_1d_global_3} in the case that the noise is more regular than required in Assumption \ref{Assumptions_noise_global}. In any case,
it follows from the instantaneous regularization result of \eqref{eq:regularity_1d_global_2}--\eqref{eq:regularity_1d_global_3} that we can replace the term $\langle \vp_x  m(u), (u_{xx} - \phi'(u))_{x} \rangle_{H^{1-s}(\T)\times H^{s-1}(\T)}$ in \eqref{eq:thin_film_ito_formulation} by 
\begin{equation}
\label{eq:replace_term_Hs}
\int_{\T} \vp_x  m(u) (u_{xx} - \phi'(u))_{x} \,\dd  x 
\end{equation}
for the solution $u$.

\smallskip 

Next, we turn our attention to the stochastic thin-film equation with noise in the Stratonovich form in one dimension:
\begin{equation}\label{Eq100_stra}
	\dd  u\,=\, -( m(u)  ( u_{xx}- \phi'(u))_x)_x\, \dd  t+\, \sum_{k\in \N}( m^{1/2}(u) \psi_k \circ \dd  \beta^{(k)} )_x,
	\quad u(0)=u_0.
\end{equation}
Note that, as $d=1$, the Stratonovich correction takes the form
\begin{align}
	\tfrac{1}{4}\sum_{k\in \N} (m'(u)m^{-1/2}(u) \psi_k ( m^{1/2}(u)\psi_k)_x )_x\,=\, \tfrac{1}{8}
	\sum_{k\in \N} ((m'(u))^2m^{-1}(u)u_x \psi_k^2 )_x\,+\, 
		\tfrac{1}{4}\sum_{k\in \N} (m'(u) \psi_k  \psi_k')_x.
\end{align}
Whenever $\sum_{k\in \N} \psi_k^2$ sums up to a constant $C$ independent of $x$ the above simplifies to 
\[\tfrac{C}{8}  ((m'(u))^2m^{-1}(u)u_x )_x,\]
which can be included in the second order term of \eqref{Eq101} if one modifies $\Phi$ appropriately.
We remark that, as observed by one of the anonymous referees, the latter condition expresses that 
	the noise intensity 
	\[
	\E \biggl|   
	\sum_{k\in \N} \psi_k(x) \beta^{(k)}_t
	\biggr|^2 \,=\, t \sum_{k\in \N}\psi_k^2(x)\,\equiv \, Ct
	\]
	is independent of the spatial location $x$ and 
	is satisfied for instance by any mollification of an $L^2(\T^d)$-cylindrical Brownian motion. In any case, the local well-posedness result Theorem \ref{Thm_local} holds then also for the Stratonovich interpretation of \eqref{Eq100}. Since the aforementioned a-priori estimates on the $\alpha$-entropy \eqref{Eq46} and the energy \eqref{Eq2} can be carried out analogously as for the It\^o interpretation \eqref{Eq100}, we also obtain the following result.

\begin{theorem}[Global well-posedness in one dimension -- Stratonovich]\label{cor_global_stratonovich}
Fix $s\in (1/2,1]$ and $d=1$. Let $u_0$ be as in \eqref{eq:initial_data_global_1d}. Suppose that Assumptions \ref{Assumptions_m}--\ref{Assumptions_noise_global} are satisfied, and that there exists $C>0$ such that  
\begin{equation}
\label{eq:assumption_noise_stratonovich}
\sum_{k\in \N}\psi_k^2(x)\equiv C\ \ \text{ for all }\ x\in \T.
\end{equation}
Then there exists a unique progressively measurable process $u\colon \llbracket 0,\infty\rrparenthesis\to H^{s+2}(\T)$ satisfying \eqref{eq:regularity_1d_global_0}, \eqref{eq:regularity_1d_global_1} and, for all $t>0$ and $\vp\in C^\infty(\T)$,
\begin{align}
\label{eq:thin_film_stra_formulation}
	\int_{\T} (u(t)-u_0)\vp \,\dd  x \,&=\,  \int_0^t 
	\bigl\langle \vp_x  m(u), (u_{xx} - \phi'(u))_{x} \bigr\rangle_{H^{1-s}(\T)\times H^{s-1}(\T)}
	\, \dd r
	\\& -\, \tfrac{C}{8}\int_0^t  \int_{\T} \vp_x  (m'(u))^2 m^{-1}(u) u_x  \, \dd x \, \dd r\, 
	-\, \sum_{k\in \N} \int_0^t  \int_{\T} \vp_x  m^{1/2}(u) \psi_k  \, \dd x \, \dd \beta^{(k)}.
\end{align}
Moreover, the solution $u$ enjoys the additional regularity \eqref{eq:regularity_1d_global_2} and \eqref{eq:regularity_1d_global_3}.
\end{theorem}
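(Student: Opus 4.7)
The plan is to reduce \eqref{Eq100_stra} to an It\^o equation of the form \eqref{Eq101} so that Theorem \ref{Thm_global} and its proof apply after a modification of the drift. First I would exploit the Stratonovich-to-It\^o correction already decomposed in the excerpt just below \eqref{Eq100_stra}: differentiating the identity $\sum_{k\in\N}\psi_k^2\equiv C$ provided by \eqref{eq:assumption_noise_stratonovich} yields $\sum_{k\in\N}\psi_k\psi_k'\equiv 0$, so the term $\tfrac{1}{4}\sum_k (m'(u)\psi_k\psi_k')_x$ vanishes identically, while $\tfrac{1}{8}\sum_k ((m'(u))^2m^{-1}(u) u_x \psi_k^2)_x$ collapses to $\tfrac{C}{8}\bigl((m'(u))^2 m^{-1}(u) u_x\bigr)_x$. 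Consequently \eqref{Eq100_stra} is equivalent, on the class of strictly positive solutions, to an It\^o equation of the form \eqref{Eq101} with $g(u)=m^{1/2}(u)$ and the modified drift coefficient
\[
\widetilde{\Phi}(u):= m(u)\phi''(u) + \tfrac{C}{8}\tfrac{(m'(u))^2}{m(u)},
\]
which is smooth on $(0,\infty)$.

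Next I would invoke Theorem \ref{Thm_local} and Proposition \ref{Prop_regularization} applied to this It\^o equation with the initial datum \eqref{eq:initial_data_global_1d} to obtain a positive maximal unique $(p,\a,s,q)$-solution $(u,\sigma)$ with the regularity \eqref{eq:regularity_1d_global_2}--\eqref{eq:regularity_1d_global_3}. The integrated identity \eqref{eq:thin_film_stra_formulation} is then recovered by rewriting the $\widetilde{\Phi}$-drift back into a Stratonovich stochastic integral inside \eqref{eq:stoch_integrated_eq}, which is legitimate precisely because of the computation in the first paragraph.

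It then remains to show $\sigma=\infty$ a.s., which I would achieve by replaying the $\alpha$-entropy and energy a-priori arguments used for Theorem \ref{Thm_global}. These are cleanest in the Stratonovich formulation, where applying the Stratonovich chain rule to $\EE(u)$ and to $\H_\beta(u)$ produces no It\^o correction terms, and the resulting dissipation identities coincide structurally with the ones employed for Theorem \ref{Thm_global} (the extra drift coming from $\widetilde{\Phi}$ is precisely what gets reabsorbed into the Stratonovich integral). Together with \eqref{Eq45} they give, on $[0,\sigma\wedge T)$ for every $T<\infty$, an a.s.\ strictly positive lower bound on $u$ and an $H^1$-bound; the blow-up criterion Proposition \ref{prop:blow_up_criteria} with $(p_0,\a_0,s_0,q_0)=(2,0,1,2)$ then rules out $\sigma<T$. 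Pathwise uniqueness transfers from Theorem \ref{Thm_global} without change, since $\widetilde{\Phi}$ is locally Lipschitz on $\{u\geq\eta\}$ for every $\eta>0$.

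The main obstacle I expect is the careful bookkeeping of the additional drift $\tfrac{C}{8}(m'(u))^2/m(u)$ if one insists on carrying out the $\alpha$-entropy estimate directly on the It\^o side rather than through Stratonovich: by Assumption \ref{Assumptions_m} this term is bounded by $C\,m(u)/u^2$ and, when paired with $h_\beta''(u)=u^\beta/m(u)$, it produces $\int u^{\beta-2}u_x^2\,\dd x$, which has to be absorbed into the leading fourth-order dissipation or controlled by energy/entropy. Verifying this cleanly for the full range of $\beta\in(-1/2,1)$ and for the mobility exponents prescribed by Assumption \ref{Assumptions_m} is the technical heart of the argument; everything else is essentially a translation of the It\^o proof of Theorem \ref{Thm_global}.
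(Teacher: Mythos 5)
Your proposal follows the paper's route: absorb the simplified Stratonovich correction into a modified drift coefficient $\widetilde{\Phi}$, invoke Theorem \ref{Thm_local} and Proposition \ref{Prop_regularization}, close the $\alpha$-entropy and energy estimates, and apply the blow-up criterion.

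One clarification concerning the ``main obstacle'' you flag in the closing paragraph: it is actually a non-issue, and in fact points in the opposite direction. Tested against $h_\beta''(u)=u^\beta/m(u)$, the extra drift $\tfrac{C}{8}(m'(u))^2/m(u)$ enters the $\alpha$-entropy balance with a \emph{minus} sign, and under \eqref{eq:assumption_noise_stratonovich} it exactly cancels the $(g'(u))^2u_x^2\psi_k^2$ portion of the It\^o correction $\tfrac{1}{2}\sum_k\int_\T h_\beta''(u)(g(u)\psi_k)_x^2\,\dd x$ appearing in Lemma \ref{Lemma_alpha_Entr}, leaving only the benign $\tfrac{1}{2}\int_\T u^\beta \sum_k(\psi_k')^2\,\dd x$. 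So the Stratonovich case is strictly easier, not harder --- this is precisely the ``cancellation'' the paper alludes to when stating that the It\^o case is more delicate. Your claim that the dissipation identities obtained via the Stratonovich chain rule ``coincide structurally'' with those of Theorem \ref{Thm_global} therefore slightly undersells the situation: they coincide \emph{minus} one term. Even if one does not exploit the exact cancellation, the extra drift contribution is a multiple of $\int_\T u^{\beta-2}u_x^2\,\dd x$, which is of the same type and is absorbed by the same Young-inequality steps already carried out in Lemma \ref{Lemma_alpha_Entr}; this is exactly what the paper means by ``estimating the terms due to the Stratonovich correction in the same way as the It\^o-correction terms.'' Finally, to make your invocation of the Stratonovich chain rule rigorous one must in any case convert the resulting Stratonovich stochastic integral to It\^o form before applying the BDG inequality --- at which point one lands on the It\^o computation just described --- so the It\^o-side bookkeeping cannot really be avoided, only simplified.
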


As for the It\^o formulation of the stochastic thin-film equation, we can replace the integrand of the deterministic integral on the right-hand side of \eqref{eq:thin_film_stra_formulation} by \eqref{eq:replace_term_Hs} for the solution $u$.
\smallskip

Finally, we remark that in the case of Stratonovich noise, additional cancellations occur, which allow for closing the ($\alpha$-) entropy estimate and subsequently the energy estimate for \eqref{Eq100} even if no interface potential is present, as carried out in \cite{dareiotis2021nonnegative, sauerbrey2023solutions}. Moreover, the $\alpha$-entropy function $h_\beta(r)$ behaves like $r^{\beta-n+2}$ near $0$, and the latter can become as singular as the interface potential $\phi(r)$ for mobility functions with an exponent of degeneracy $n>7/2$. Therefore, we expect preservation of positivity in the spirit of \cite[Theorem 4.1 (iii)]{Beretta_Bertsch_DalPasso_95} and consequently also global well-posedness of \eqref{Eq100_stra} without the interface potential for sufficiently degenerating mobility functions. However, this lies beyond the scope of the current article.

Global well-posedness in the physical dimension $d=2$ seems out of reach using our approach. The main obstacle is that the blow-up criteria in  Proposition \ref{prop:blow_up_criteria} require an a-priori estimate at least in $C^{\alpha}$ for some $\alpha>0$. However, the energy functional only provides an estimate in $H^{1}(\T^d)$, and the latter embeds in a space of H\"older continuous functions precisely if $d=1$.

We lastly point out that there is a large body of research concerned with maximal regularity estimates and corresponding well-posedness results for the deterministic thin-film equation for the case in which a contact line is present, see \cite{gnann_wisse} and the references therein. It is an intriguing problem to derive stochastic counterparts of these results, which usually rely on suitable transformations of the equation, to deduce well-posedness of  \eqref{Eq100} with $\phi=0$ also in situations in which the film height becomes $0$.

\subsection{Further comments on the literature}
\label{ss:review_literature}
The stochastic thin-film equation was independently derived in \cite{DMES2005} and \cite{GruenMeckeRauscher2006}.
We review here some of the mathematical works conducted on it in recent years. A main part of it is concerned with the construction of martingale, i.e., probabilistically weak solutions to \eqref{Eq100} based on the stochastic compactness method. As in the current article, the driving noise $B$ is usually assumed to be spatially correlated to close suitable a-priori estimates for \eqref{Eq100}. This line of research was initiated in \cite{fischer_gruen_2018}, where martingale solutions to \eqref{Eq100} in the It\^o sense on the one-dimensional torus $\T$ with quadratic mobility $m(u)=u^2$ are constructed using a finite difference scheme. In particular, it is assumed that the interface potential $\phi$ is sufficiently repulsive near zero, leading to a strictly positive film height for all times. In \cite{GessGann2020, KleinGruen22}, martingale solutions to the Stratonovich interpretation of \eqref{Eq100} with $m(u)=u^2$ and $\phi=0$ are constructed using a time splitting scheme and an approximation based on a vanishing interface potential, respectively, again in $d=1$. In \cite{dareiotis2021nonnegative, dareiotis2023solutions,sauerbrey2023solutions} martingale solutions to the Stratonovich interpretation of \eqref{Eq100} for $m(u)=u^n$, $\phi=0$, $d=1$ for various $n\in (2,4)$ are constructed using respective less degenerate approximations of \eqref{Eq100}. In particular, these results allow for a nonlinear noise term in \eqref{Eq100}. The physically relevant, two-dimensional setting was addressed in \cite{metzger2022existence,Sauerbrey_2021}, where higher-dimensional versions of \cite{fischer_gruen_2018, GessGann2020} are obtained. Martingale solutions to perturbations of the Stratonovich interpretation of \eqref{Eq100} with $m(u)=u^2$, $\phi=0$, $d=1$ are constructed in \cite{kapustyan2023film}. For \eqref{Eq100} with spatially white $B$, additional computations and numerical simulations are provided in \cite{Gvalani_19, GGKO21}. Additionally, an effort to treat this singular stochastic PDE using tree-free regularity structures is initiated in \cite{gvalani2023stochastic}.

\subsection{Notation} 
\label{ss:notation}
We write $\T^d$ for the $d$-dimensional torus $\R^d/ \Z^d$. The usual Sobolev space on an open subset $\mathcal{O}$ of $\R^d$ or $\T^d$ is denoted by $W^{s,q}(\mathcal{O})$ for an integer smoothness index $s\in \N_0$. For the definition of the Besov spaces $B^{s}_{q,p}(\R^d)$ and Bessel-potential spaces $H^{s,q}(\R^d)$ the reader is referred to \cite[Section 2.1.2]{Runst_Sickel} and for the periodic spaces $B^{s}_{q,p}(\T^d)$ and $H^{s,q}(\T^d)$ to \cite[Section 3.5.4]{schmeisser1987topics}.  The Besov and Bessel-potential space on an open subset $\mathcal{O}$ of $\R^d$ or $\T^d$ is defined as the set of restrictions of functions from the Besov and Bessel-potential space on the whole space and equipped with the induced quotient norm. The corresponding spaces of vector fields $W^{s,q}(\mathcal{O};\R^d)$, $B^{s}_{q,p}(\mathcal{O};\R^d)$ and $H^{s,q}(\mathcal{O}, \R^d)$ are defined as the $d$-fold direct sum of these spaces. In any of these situations the symbol $H^{s}$ stands for $H^{s,2}$.

We write $L^p(S,\mu;\mathscr{X})$ for the Bochner space of strongly measurable, $p$-integrable $\mathscr{X}$-valued functions for a measure space $(S,\mu)$ and a Banach space $\mathscr{X}$ as defined in \cite[Section 1.2b]{Analysis1}. If $\mathscr{X}=\R$, we write $L^p(S,\mu)$, and if it is clear which measure we refer to, we also leave out $\mu$. Moreover, if $S$ is countable and equipped with the counting measure, we write $\ell^p(S)$ instead of $L^p(S)$. If on the other hand $I$ is an open interval and $w$ a density, we write $L^p(I,w;\mathscr{X})$ for $L^p(I,w\, \dd t;\mathscr{X})$. In particular, we are interested in power weights of the form $w_\a^s(t)=|t - s|^\kappa$. The corresponding $\mathscr{X}$-valued fractional Sobolev space with weight $w_\a^s$ is denoted  by $H^{\theta,p}(I,w_\kappa^s;\mathscr{X})$ (see Appendix \ref{app:Sobolev_spaces} for the definition) and $H_{\loc}^{\theta,p}(I,w_\kappa^s;\mathscr{X})$ is the intersection of $H^{\theta,p}(J,w_\kappa^s;\mathscr{X})$ for all intervals $J$, which are compactly contained in $I$. Whenever we write '$a,b$' instead of an interval in the above spaces we mean the open interval $I=(a,b)$, e.g., $H^{\theta,p}(a,b,w_\kappa^s;\mathscr{X})$ stands for $H^{\theta,p}((a,b),w_\kappa^s;\mathscr{X})$.

If $(S,d)$ is a metric space we write $C(S;\mathscr{X})$ for the continuous function and $C^{\theta}(S;\mathscr{X})$ for the subset of $\theta$-H\"older continuous functions for $\theta \in (0,\infty)\setminus \N$, where we leave $\mathscr{X}$ out again if $\mathscr{X}=\R$. If $I$ is an open interval, we define the anisotropic H\"older space 
\begin{equation}
	C^{\theta_1,\theta_2}(I\times \T^d)\,=\, C^{\theta_1}(I;C(\T^d))\cap C(I;C^{\theta_2}(\T^d))
\end{equation}
for $\theta_i \in (0,\infty)\setminus \N$, $i\in \{1,2\}$ and accordingly $C_{\loc}^{\rho_1-,\rho_2-}(I\times \T^d)$ as the intersection of the spaces $C^{\theta_1,\theta_2}(J\times \T^d)$ for all $\theta_i\in (0,\rho_i)\setminus \N$ and intervals $J$ which are compactly contained in $I$. 

To ease the notation of these spaces, we introduce for a given quadruple $(p,\a,s,q)$ the notation $X_0=H^{s-2,q}(\T^d)$ and $X_1 = H^{s+2,q}(\T^d)$. Following \cite{AV19_QSEE_1,AV19_QSEE_2}, we write
\begin{equation}
\label{eq:interpolation_spaces_definition_notation_section}
	X_{\theta}\,:=\,[X_0,X_1]_{\theta},\qquad \Xap\,:=\,(X_0,X_1)_{1-\frac{1+\a}{p},p},\qquad  \Xp\,:=\,\Xzp
\end{equation}
for $\theta\in (0,1)$ and $p,\kappa$ as in \eqref{Eq1} for the complex and real interpolation spaces of $X_0$ and $X_1$, see  \cite[Chapter 1 and Chapter 2]{Luninterp} for a definition and Proposition \ref{prop:tracespace} for their appearance in the study of traces of solutions to SPDE with path regularity as in Theorem \ref{Thm_local}. In particular,  it holds
\begin{align}\label{Eq72}
	X_{\theta} \,=\, H^{s+2-4\theta,q}(\T^d),\qquad \Xap \,=\, B^{s+2-4\frac{1+\kappa}{p}}_{q,p}(\Tor^d)
\end{align}
by \cite[Section 3.6.1]{schmeisser1987topics} meaning that the Banach spaces coincide as sets and carry equivalent norms. 

We also fix throughout the manuscript a filtered probability space $(\Omega, \mathfrak{A},\mathscr{F}, \P)$ carrying a family of independent Brownian motions $(\beta^{(k)})_{k\in \N}$. We write $\E$ for the expectation on $(\Omega, \mathfrak{A}, \P)$ and $\mathscr{P}$ for the progressive $\sigma$-field. For a Banach space  $\mathscr{X}$ we denote the subspaces of $\mathscr{F}_t$-measurable and progressively measurable random variables in $L^p(\Omega; \mathscr{X})$ by $L_{\mathscr{F}_t}^p(\Omega;\mathscr{X})$ and $L_{\mathscr{P}}^p(\Omega;\mathscr{X})$, respectively. If additionally $\mathscr{Y}$ is another Banach space and $H$ is a Hilbert space, we write $\LLL(\mathscr{X},\mathscr{Y})$ for the space of bounded linear operators and $\gamma(H,\mathscr{X})$ for the space of $\gamma$-radonifying operators, see \cite[Chapter 9]{Analysis2} or Appendix \ref{app:gamma_radonifying}. 
For a stopping time $\tau\colon \Omega\to [s,T]$, we define 
\begin{align}
\llbracket s,\tau\rrparenthesis \,:=\, \bigl\{(\omega,t)\in \Omega\times [s,T]\,\big|\, t\,<\,  \tau(\omega)\bigr\}.
\end{align}
Let $(\mathscr{X}_t)_{t\in [s,T]}$ be a family of Banach spaces such that $\mathscr{X}_t$ consists of functions from $[s,T]$ to another Banach space $\mathscr{X}$ such that $f|_{[s,t]}\in \mathscr{X}_t$ for each $f\in \mathscr{X}_T$ and $\|f|_{[s,t]}\|_{\mathscr{X}_t}$ is increasing in $t$. Then we define $L_{\Progress}^p(\Omega; \mathscr{X}_\tau)$ as the restrictions of processes from $L_{\Progress}^p(\Omega;\mathscr{X}_T)$ to $\llbracket s,\tau\rrparenthesis$ and equip it with the norm
\[
\|u|_{\llbracket s,\tau\rrparenthesis} \|_{L^p(\Omega;\mathscr{X}_\tau)}\,:=\, \E\bigl[
\|u|_{[s,\tau]}\|_{\mathscr{X}_\tau}^p
\bigr]^\frac{1}{p},
\]
which is well-defined by \cite[Lemma 2.15]{AV19_QSEE_1}. In the situation that $\mathscr{X}_t = L^p((s,t),w_\a^s;\mathscr{X})$, we write $L_{\Progress}^p( (s,\tau)\times \Omega ,w_\a^s;\mathscr{X})$ for $L_{\Progress}^p(\Omega; \mathscr{X}_\tau)$.  

For two quantities $x$ and $y$, we write $x\lesssim y$ if there exists a universal constant $C$ such that $x\le Cy$. If a constant depends on parameters $(p_1,\dots)$ we either mention it explicitly or indicate this by writing $C_{(p_1,\dots)}$ and correspondingly $x\lesssim_{(p_1,\dots)}y$ whenever $x\le C_{(p_1,\dots)}y$. Finally, we write $x\eqsim_{(p_1,\dots)} y$, whenever $x\lesssim_{(p_1,\dots)} y$ and $y\lesssim_{(p_1,\dots)}x$.
Other recurring symbols introduced throughout the manuscript are collected in the following list:
\begin{itemize}
	\item The coefficients $m, \Phi, g$ and $(\psi_k)_{k\in\N}$ of the quasilinear SPDE: \eqref{Eq101}--\eqref{eq:assumption_psi}\noeqref{eq:assumption_coeff}
	\item The coefficients $m,\phi$ and $B$ of the stochastic thin-film equation: \eqref{Eq100}--\eqref{eq:assumption_STFE_coeff}
	\item The coefficients $A,F$ and $G$ of the stochastic evolution equation: \eqref{Eq8}--\eqref{Eq125}
	\item The parameters $(p,\kappa,s,q)$ are always taken according to Assumption \ref{Assumptions_coefficients}.
	\item The functionals $\EE$ and  $\H_\beta$: \eqref{Eq2}--\eqref{Eq46}
	\item The parameters $n,\nu$ and $\vt$ are introduced in Assumptions \ref{Assumptions_m}--\ref{Assumptions_phi}.
	\item The coefficients  $m_j, \Phi_j$ and $ g_j$ of the regularized quasilinear SPDE: \eqref{eq:regularization_eta_j}--\eqref{Eq102}
	\item The coefficients $A_j,F_j$ and $G_j$ of the regularized stochastic evolution equation \eqref{eq:reg_coeff_QSEE}--\eqref{Eq119}
	\item The operator $\Psi $ is defined before \eqref{eq:identification_gamma_norms}.
	\item The operator $\mathcal{A}$ is defined in and below \eqref{Eq12}.
	\item The classes $\MRtash$ and $\MRtaszh$ are defined in Definition \ref{def:SMRgeneralized}.
	\item The constants $C_{\mathcal{A}}^{l,\theta,p,\hat{\kappa}}$ and $K_{\mathcal{A}}^{l,\theta,p,\hat{\kappa}}$ for $l\in \{\det,\stoc\}$ are defined in and above \eqref{eq:constKellthetapa}.
	\item The  operator $\mathcal{A}_0$ is defined in \eqref{eq:bilaplacian_H_infty_calculus}.
	\item The operators $E, E_{y,r}^{\R^d}, E_{y,r}^{\T^d}$ and $\tilde{E}_{y,r}^{\T^d}$ are introduced at the start of the proof of Lemma \ref{l:estimates_small_interval}.
	\item The quantity $
	\mathcal{N}^\a (u;\sigma)
	$  is defined below \eqref{Eq30_stronger_proof}.
	\item The range of the exponent $\gamma$ is specified in \eqref{Eq10}.
	\item 
	The modified exponents $\tilde{\nu}$ and $\tilde{n}$ are defined above the equations \eqref{Eq78} and \eqref{Eq61}, respectively.
\end{itemize}

\section{Local well-posedness of thin-film type equations in any dimension}
The purpose of this section is to show local well-posedness, blow-up criteria, and instantaneous regularization for \eqref{Eq101} as stated in Subsection \ref{ss:local_all_dimensions_intro}. To this end, we fix in this subsection the smooth coefficients $m\colon(0,\infty)\to (0,\infty)$ and $g,\Phi\colon(0,\infty)\to \R$ and do not indicate if an implicit constant depends on them. Our strategy is to use the general theory for quasilinear parabolic stochastic evolution equations developed in \cite{AV19_QSEE_1,AV19_QSEE_2} and apply the results \cite[Theorem 4.7]{AV19_QSEE_1} and \cite[Theorem 4.9, Theorem 6.3]{AV19_QSEE_2} to \eqref{Eq101}. However, since we allow for a degeneracy of the operator $A[u]$ when $u$ approaches $0$, we need to consider regularized versions of \eqref{Eq101} instead. Specifically, we fix a smooth and increasing function $\eta\colon \R\to \R$ with $\eta(r)=1$ for $r\ge 2$ and  $\eta(r)=0$ for $r\le 1$ and define $\eta_j(r) = \eta(jr)$ for $j\in \N$. Setting 
\begin{align}&
	m_j(r)\,=\, \eta_j(r)m(r)\,+\, (1- \eta_j(r)), \qquad
	\label{eq:regularization_eta_j}
	\Phi_j(r)\,=\, \eta_j(r)\Phi(r),\qquad
	g_j(r)\,=\, \eta_j(r)g(r),
\end{align}
gives rise to smooth functions $m_j, \Phi_j, g_j\colon \R\to \R$. The new coefficient $m_j$ is bounded away from $0$, and consequently, the leading order operator of
\begin{align}\label{Eq102}
	\dd u\,+\,\div( m_j(u) \nabla \Delta u)\, \dd t\,=\, \div(\Phi_j(u)\nabla u )\, \dd t \,+\, \sum_{k\in \N}\div( g_j(u) \psi_k )\, \dd \beta^{(k)}, \quad u(0)=u_0,
\end{align}
is non-degenerate, so the theory for parabolic stochastic evolution equations developed in \cite{AV19_QSEE_1,AV19_QSEE_2} becomes applicable. Analogously to \eqref{Eq8}, we define
\begin{align}\label{eq:reg_coeff_QSEE}&
	A^{(j)}[u](f)\,=\, \div( m_j(u) \nabla \Delta f), \qquad
	F^{(j)}(u)\,=\,  \div(\Phi_j(u)\nabla u ),\qquad
	G_k^{(j)}(u)\,=\, \div( g_j(u) \psi_k )
\end{align}
and $G^{(j)}[u](e_k)  = G_k^{(j)}(u)$,
so that \eqref{Eq102} takes the form
\begin{align}\label{Eq119}
	\dd u\,+\,A^{(j)}[u](u)\,\dd t \,=\,  F^{(j)}(u)\, \dd t \,+\, G^{(j)}[u]\,\dd W, \qquad u(0)=u_0,
\end{align}
of \cite[Eq. (1.1)]{AV19_QSEE_1}.

\smallskip

The rest of this section is organized as follows. In Subsection \ref{Sec_lipschitz} we check the local Lipschitz conditions on the coefficients $A^{(j)}$, $F^{(j)}$ and $G^{(j)}$ from \cite[Hypothesis (H')]{AV19_QSEE_1}. Subsection \ref{Sec_SMR} is devoted to proving the stochastic maximal regularity of the linear problem
\begin{equation}
	\dd u\, +\,\div(a \nabla \Delta u)\,\dd t\,=\,f\, \dd t+ g\, \dd W, \quad u(0)=u_0,
\end{equation}
with a positive and bounded coefficient $a:[0,\infty)\times \O\to B^{s+2-4\frac{1+\kappa}{p}}_{q,p}(\Tor^d)$,
which allows us to prove local well-posedness and the blow-up criteria for \eqref{Eq102} in Subsection \ref{Sec_truncated_problem}. In Subsection \ref{Sec_reg}, we show instantaneous regularization of solutions to \eqref{Eq102} for sufficiently smooth noise. Finally, in Subsection  \ref{Sec_orig_problem} we transfer these statements to the original equation \eqref{Eq101} and prove Theorem \ref{Thm_local} and Propositions \ref{Prop_regularization}-\ref{prop:blow_up_criteria}.

\smallskip

Before proceeding, we point out that Assumption \ref{Assumptions_noise_local} is equivalent to demanding that the operator $\Psi:\ell^2(\N)\to H^{1+s_{\psi},q_{\psi}}(\T^d;\R^d)$ defined by $\Psi(e_k)=\psi_k$ is \emph{$\gamma$-radonifying}, see \cite[Chapter 9]{Analysis2} or Appendix \ref{app:gamma_radonifying}. Indeed, Proposition \ref{prop:identification_gamma_spaces} ensures 
\begin{equation}
\label{eq:identification_gamma_norms}
\g(\ell^2(\N),H^{s,q}(\T^d))=H^{s,q}(\T^d;\ell^2(\N))  \text{
with equivalent norms},
\end{equation} 
where $\g(\ell^2(\N),H^{s,q}(\T^d))$ denotes the set of $\g$-radonifying operators from $\ell^2(\N)$ to $H^{s,q}(\T^d)$, cf., \cite[Definition 9.1.4]{Analysis2} or Appendix \ref{app:gamma_radonifying}. The identification \eqref{eq:identification_gamma_norms} in particular implies 
\begin{equation}
	\|\Psi \|_{\gamma(\ell^2(\N), H^{1+s_\psi, q_\psi}(\T^d;\R^d))}\, \eqsim \, 
	\|
\psi 
\|_{H^{1+s_\psi,q_{\psi}}(\Tor^d;\ell^2(\N;\R^d))}\,\eqsim\, \biggl\|
\biggl(\sum_{k\in \N} \bigl| (1-\Delta)^{(1+s_{\psi})/2}\psi_k  \bigr|^2 \biggr)^{1/2}
\biggr\|_{{L^{q_{\psi}}(\Tor^d)}}.
\end{equation}
Below, we will frequently use the equivalence \eqref{eq:identification_gamma_norms} without further mentioning it.
For more information on the role of $\gamma$-radonifying operators in the context of the $L^p$-theory of stochastic evolution equations, the reader is referred to \cite{AV19,AV19_QSEE_1,AV19_QSEE_2,Brz2,LoVer,MaximalLpregularity,NVW13,VP18} and the references therein. 

\smallskip

Additionally, we recall the shorthand notation introduced in \eqref{Eq72} for the functions for function spaces: 
\[
X_{\theta} \,=\, H^{s+2-4\theta,q}(\T^d),\quad\text{and }\quad \Xap \,=\, B^{s+2-4\frac{1+\kappa}{p}}_{q,p}(\Tor^d).\]

\subsection{Local Lipschitzianity of the regularized coefficients}\label{Sec_lipschitz}
As laid out earlier, this subsection is devoted to showing local  Lipschitz and growth estimates of the coefficients of \eqref{Eq119}. The main ingredients are composition and product estimates in Bessel-potential estimates, which result from a frequency decomposition of the involved functions, see \cite[Chapter 2]{ToolsPDEsTaylor} or \cite[Chapters 4 and 5]{Runst_Sickel} for an introduction. More precisely, we repeatedly use  \cite[Proposition 4.1 (1), (3)]{AV21_max_reg_torus} and  \cite[Theorem 1, p.373]{Runst_Sickel}, which we recall for this reason in Appendix \ref{app:para_est}. To proceed, we fix a quadruple $(p,\a,s,q)$ and a sequence $(\psi_k)_{k\in \N}$ subject to Assumptions \ref{Assumptions_coefficients} and \ref{Assumptions_noise_local} with $(s_{\psi},q_{\psi})=(s,q)$ for the remainder of this subsection and allow for all constants to depend on this particular choice.
\begin{lemma}\label{Lemma_aj}
	For all $j,n\in \N$ there exist constants $C_{j,n}, L_{j,n}\in (0,\infty)$ such that
	\begin{align}&\label{Eq110}
		\|	A^{(j)}[u]\|_{\LLL(X_1,X_0)}\,\le \, C_{j,n} \bigl(1+\|u \|_{\Xap}\bigr),\\&\label{Eq106}
		\|	A^{(j)}[u]-	A^{(j)}[v]\|_{\LLL(X_1,X_0)}\,\le \, L_{j,n} \|u - v \|_{\Xap}
	\end{align}
	for all $u,v\in \Xap$ with $\|u\|_{\Xap}, \|v\|_{\Xap}\le n$.
\end{lemma}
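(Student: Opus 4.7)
The key observation is the identity
\begin{align}
A^{(j)}[u](f) - A^{(j)}[v](f) = \div\bigl( (m_j(u)-m_j(v))\,\nabla \Delta f\bigr),
\end{align}
so that \eqref{Eq106} reduces to a pointwise multiplier estimate for $(m_j(u)-m_j(v))\nabla\Delta f$ in $H^{s-1,q}(\T^d)$, after which the divergence gains one order of negative smoothness to land in $X_0=H^{s-2,q}(\T^d)$. Similarly, $A^{(j)}[u](f) = \div(m_j(u)\nabla \Delta f)$ and $A^{(j)}[0](f) = \div(\nabla \Delta f)$ (since $m_j(0)=1$), so \eqref{Eq110} will follow from the Lipschitz bound \eqref{Eq106} together with the boundedness of $A^{(j)}[0]\colon X_1\to X_0$.

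The plan is as follows. First, by \eqref{Eq104} and the Sobolev embedding $B^{s+2-4\frac{1+\kappa}{p}}_{q,p}(\T^d) \hookrightarrow C^{\alpha}(\T^d)$ for some $\alpha>0$, the ball $\{\|u\|_{\Xap}\le n\}$ is bounded in $L^{\infty}(\T^d)$ by some constant $M=M(n)$. Since $m_j\in C^{\infty}(\R)$, all its derivatives are bounded on $[-M,M]$ by constants depending on $j$ and $n$. Writing
\begin{align}
m_j(u)-m_j(v) = (u-v)\int_0^1 m_j'(\tau u +(1-\tau)v)\,\dd \tau,
\end{align}
a standard composition (Moser-type) estimate in Besov/Bessel-potential spaces, see e.g.\ \cite[Section 5.3.7]{Runst_Sickel}, yields
\begin{align}
\|m_j(u)-m_j(v)\|_{\Xap}\lesssim_{j,n} \|u-v\|_{\Xap},
\end{align}
and analogously $\|m_j(u)\|_{\Xap}\lesssim_{j,n} 1+\|u\|_{\Xap}$.

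Second, for $f\in X_1$ we have $\nabla\Delta f \in H^{s-1,q}(\T^d;\R^d)$ with norm controlled by $\|f\|_{X_1}$. Thanks to \eqref{Eq103} the smoothness $s+2-4\frac{1+\kappa}{p}$ of $\Xap=B^{s+2-4\frac{1+\kappa}{p}}_{q,p}(\T^d)$ strictly exceeds $|s-1|$, and thanks to \eqref{Eq104} it exceeds $d/q$, so $\Xap$ is a pointwise multiplier on $H^{s-1,q}(\T^d)$ by the pointwise multiplication results of \cite[Section 4.4]{Runst_Sickel}. Combining this with the composition bound gives
\begin{align}
\bigl\|(m_j(u)-m_j(v))\nabla\Delta f\bigr\|_{H^{s-1,q}(\T^d;\R^d)} \lesssim_{j,n} \|u-v\|_{\Xap}\|f\|_{X_1}.
\end{align}
Applying $\div\colon H^{s-1,q}(\T^d;\R^d)\to H^{s-2,q}(\T^d)$ finishes the proof of \eqref{Eq106}; \eqref{Eq110} follows in the same way, using $\|m_j(u)\|_{\Xap}\lesssim_{j,n}1+\|u\|_{\Xap}$ in place of the Lipschitz estimate.

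The main technical point is the pointwise multiplier estimate on $H^{s-1,q}$ by elements of $\Xap$, which requires both conditions \eqref{Eq103} and \eqref{Eq104} in the admissibility assumption; everything else is routine.
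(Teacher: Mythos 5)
Your proof is correct and follows essentially the same route as the paper's: reduce to the product estimate $\|(m_j(u)-m_j(v))\nabla\Delta f\|_{H^{s-1,q}}$, obtain a Lipschitz bound for $u\mapsto m_j(u)$ via composition estimates, and conclude via a pointwise-multiplier estimate. The paper carries out the multiplier step as a three-way case analysis on the sign of $s-1$ using the paraproduct estimates of \cite[Proposition 4.1]{AV21_max_reg_torus}, whereas you compress this into a single appeal to \cite[Section 4.4]{Runst_Sickel} for $B^{\gamma}_{q,p}$ acting as multipliers on $H^{s-1,q}$ with $\gamma>\max\{|s-1|,d/q\}$ — this is a fine shortcut, with the minor caveat that for $s>1$ the inequality $\gamma>|s-1|=s-1$ follows from \eqref{Eq1} (since $4\tfrac{1+\kappa}{p}<2$), not from \eqref{Eq103}.
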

\begin{proof}
	We first verify the local Lipschitz condition \eqref{Eq106} and observe that
	\begin{align}\label{Eq107}
		\|	(A^{(j)}[u]-	A^{(j)}[v])f\|_{X_0} \,\lesssim \, \| (m_j(u) - m_j(v))\nabla \Delta f\|_{H^{s-1,q}(\Tor^d; \R^d)}.
	\end{align}
	
	We distinguish several cases and start with $s-1 = 0$. Due to H\"older's inequality, we can bound
	\eqref{Eq107} further  by
	\begin{align}&
		\| m_j(u) - m_j(v) \|_{L^\infty(\Tor^d)}\|\nabla \Delta f\|_{L^q(\Tor^d; \R^d)}
		\,\lesssim \, 
		\| m_j(u) - m_j(v) \|_{L^\infty(\Tor^d)}\|f\|_{H^{s+2,q}(\Tor^d)}
	\end{align}
	and it remains to use the Sobolev embedding $\Xap\hookrightarrow L^\infty(\Tor^d)$ guaranteed by \eqref{Eq104} together with local Lipschitz continuity of $m_j$ to deduce \eqref{Eq106}.
		
	Next, we assume that $s-1>0$, in which case we employ the paraproduct estimate Proposition \ref{prop:para}~\eqref{item:prop_para_1} to bound \eqref{Eq107} by
	\begin{align}\begin{split}\label{Eq108}&
			\| m_j(u) - m_j(v) \|_{L^\infty(\Tor^d)}\|\nabla \Delta f\|_{H^{s-1,q}(\Tor^d; \R^d)}
			\,+\, 
			\| m_j(u) - m_j(v) \|_{H^{s-1,l}(\Tor^d)}\|\nabla \Delta f\|_{L^{r}(\Tor^d; \R^d)}
		\end{split}
	\end{align}
	for $l\in (1,\infty)$, $r\in (1,\infty]$ subject to 
	\begin{equation}\label{Eq109}
		\tfrac{1}{q} \,=\, \tfrac{1}{l}\,+\, \tfrac{1}{r}.
	\end{equation}
	We claim that we can choose  $l,r$ such that additionally $\Xap\hookrightarrow H^{s-1,l}(\Tor^d)$ and $H^{s-1,q}(\Tor^d)\hookrightarrow L^r(\Tor^d)$. 
	Indeed, if $s-1-\frac{d}{q}>0$, the choice $l=q$, $r=\infty$ is feasible. Otherwise, if $s-1-\frac{d}{q}\le 0$, we can choose $r$ such that
	\[s-1-\tfrac{d}{q}\,>\,
	\tfrac{-d}{r}\,>\, s-1-\tfrac{d}{q}-\epsilon
	\] 
	for $\epsilon>0$. Then, by \eqref{Eq109}
	\[
	\tfrac{d}{l}\,>\, s-1-\epsilon \quad \iff\quad s-1-\tfrac{d}{l} < \epsilon
	\]
	and by \eqref{Eq104} we can choose $\epsilon$ smaller than
	\begin{equation}\label{Eq121}
		s\,+\,2\,-\,4\,\tfrac{1+\kappa}{p}\,-\,\tfrac{d}{q}
	\end{equation}
	resulting in the embedding $\Xap\hookrightarrow H^{s-1,l}(\Tor^d)$.
	Thus, we can estimate \eqref{Eq108} by
	\begin{align}&
		\bigl(
		\| m_j(u) - m_j(v) \|_{L^\infty(\Tor^d)} \,+\, \| m_j(u) - m_j(v) \|_{H^{s-1,l}(\Tor^d)}  \bigr)\|\nabla \Delta f\|_{H^{s-1,q}(\Tor^d; \R^d)}
		\\&\quad \lesssim\,
		\| m_j(u) - m_j(v) \|_{L^\infty(\Tor^d)\cap H^{s-1,l}(\Tor^d)} \|f\|_{H^{s+2,q}(\Tor^d)}.
	\end{align}The desired estimate \eqref{Eq106} follows by local Lipschitz continuity of $u\mapsto m_j(u)$ in $L^\infty(\Tor^d)\cap  H^{s-1,l}(\Tor^d)$, see Proposition \ref{prop:comp}~\eqref{item:prop_comp_H}, together with the embedding 
	$\Xap\hookrightarrow L^\infty(\Tor^d)\cap  H^{s-1,l}(\Tor^d)$.
	
	Lastly, we consider the case $s-1<0$, in which the condition \eqref{Eq103} becomes relevant. Then, an application of Proposition \ref{prop:para}~\eqref{item:prop_para_3} yields that the right-hand side of \eqref{Eq107} is bounded by 
	\begin{align}\label{Eq23}&
		\| m_j(u) - m_j(v) \|_{L^\infty(\Tor^d)}\|\nabla \Delta f\|_{H^{s-1,q}(\Tor^d; \R^d)}
		\,+\, 
		\| m_j(u) - m_j(v) \|_{H^{\tau,\zeta}(\Tor^d)}\|\nabla \Delta f\|_{H^{s-1,q}(\Tor^d; \R^d)}
	\end{align}
	 for any $\tau>\max\{\frac{d}{\zeta}, 1-s\}$ and $\zeta \in [q',\infty)$. If we can choose $\tau,\zeta$ such that $\Xap\hookrightarrow H^{\tau,\zeta}(\Tor^d)$ the claimed estimate \eqref{Eq106} follows as in the previous case. But we can choose simply $\zeta = q$ and $\tau$ slightly smaller than 
	\[
	s\,+\,2\,-\,4\,\frac{1+\kappa}{p}
	\]
	by \eqref{Eq104} and \eqref{Eq103}.
	The growth condition \eqref{Eq110} follows in all cases by choosing $v=0$ in \eqref{Eq106} and noticing that $A^{(j)}[0]= \Delta^2$, completing the proof.
\end{proof}
\begin{lemma}\label{Lemma_fj}
	For all $j,n\in \N$ there exist constants $C_{j,n}, L_{j,n}\in (0,\infty)$ such that
	\begin{align}&\label{Eq111}
		\|	F^{(j)}(u)\|_{X_0}\,\le \, C_{j,n} \|u \|_{\Xap},\\&\label{Eq112}
		\|	F^{(j)}(u)-	F^{(j)}(v)\|_{X_0}\,\le \, L_{j,n} \|u - v \|_{\Xap}
	\end{align}
	for all $u,v\in \Xap$ with $\|u\|_{\Xap}, \|v\|_{\Xap}\le n$.
\end{lemma}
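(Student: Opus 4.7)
The plan is to mirror the proof of Lemma \ref{Lemma_aj} very closely, exploiting that $F^{(j)}$ has two fewer spatial derivatives than $A^{(j)}[u](\,\cdot\,)$. First, since $\eta_j$ vanishes on $(-\infty,1/j]$ we have $\Phi_j(0)=0$, and hence $F^{(j)}(0)=\div(0)=0$. The growth estimate \eqref{Eq111} therefore follows by setting $v=0$ in the local Lipschitz estimate \eqref{Eq112}, so it is enough to establish \eqref{Eq112}.

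To prove \eqref{Eq112}, I would use the telescopic decomposition
\begin{equation*}
F^{(j)}(u)-F^{(j)}(v)\,=\,\div\bigl((\Phi_j(u)-\Phi_j(v))\nabla u\bigr)\,+\,\div\bigl(\Phi_j(v)\nabla(u-v)\bigr),
\end{equation*}
combined with the continuity $\div\colon H^{s-1,q}(\Tor^d;\R^d)\to H^{s-2,q}(\Tor^d)=X_0$, so the task reduces to estimating the two products in $H^{s-1,q}(\Tor^d;\R^d)$. Setting $\gamma:=s+2-4\frac{1+\kappa}{p}$, condition \eqref{Eq1} gives $\gamma>s$, hence $\nabla u,\nabla(u-v)\in B^{\gamma-1}_{q,p}(\Tor^d;\R^d)$ with smoothness strictly larger than $s-1$; the Lipschitz-in-the-top-factor control $\|\nabla(u-v)\|_{B^{\gamma-1}_{q,p}}\lesssim\|u-v\|_{\Xap}$ is immediate, and the Nemytskij difference $\Phi_j(u)-\Phi_j(v)$ will be controlled using the smoothness and boundedness of $\Phi_j$ on the range of $u,v$ together with the Nemytskij calculus of \cite[Theorem 1, p.373]{Runst_Sickel}.

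The remaining work is the pointwise multiplication, and I would split into the same three cases as in Lemma \ref{Lemma_aj}. If $s-1=0$, H\"older's inequality and $\Xap\hookrightarrow L^\infty(\Tor^d)$ (guaranteed by \eqref{Eq104}) suffice. If $s-1>0$, the paraproduct estimate \cite[Proposition 4.1 (1)]{AV21_max_reg_torus} applies with auxiliary exponents $(l,r)$ satisfying \eqref{Eq109} chosen exactly as in the proof of \eqref{Eq106}, so that $\Xap\hookrightarrow H^{s-1,l}(\Tor^d)$. If $s-1<0$, I would invoke \cite[Proposition 4.1 (3)]{AV21_max_reg_torus}, which demands one factor in a space $H^{\tau,\zeta}(\Tor^d)$ with $\tau>\max\{d/\zeta,1-s\}$: taking $\zeta=q$ and $\tau$ slightly smaller than $\gamma$, this requirement is met simultaneously by \eqref{Eq104} and \eqref{Eq103}, and $\Phi_j(u),\Phi_j(v)$ lie in $H^{\tau,q}(\Tor^d)$ with locally Lipschitz dependence on $u,v\in\Xap$.

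The main obstacle is the sub-case $s-1<0$, where $\nabla u$ and $\nabla(u-v)$ are genuine distributions of negative Bessel-potential smoothness. Giving a rigorous meaning to the product $\Phi_j(v)\nabla(u-v)$ in $H^{s-1,q}$ is precisely the situation for which the condition \eqref{Eq103} was introduced, and up to this structural point the argument is completely parallel to the corresponding step in the proof of Lemma \ref{Lemma_aj}.
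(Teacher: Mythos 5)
Your proposal mirrors the paper's proof essentially step for step: the same reduction to products in $H^{s-1,q}$ via the divergence, the same three-case split on the sign of $s-1$, the same paraproduct estimates from \cite[Proposition 4.1]{AV21_max_reg_torus}, and the same Nemytskij calculus from \cite[Theorem 1, p.373]{Runst_Sickel}. The only cosmetic differences — which factor carries the gradient difference in the telescoping, and phrasing the control of $\nabla(u-v)$ through $B^{\gamma-1}_{q,p}$ instead of the embedding $\Xap\hookrightarrow X_{1/2}=H^{s,q}$ — do not change the argument (note $\gamma>s$ is only strict for $p>2$; for $p=q=2$, $\kappa=0$ one has $\gamma=s$, but $B^{s-1}_{2,2}=H^{s-1}$ so the embedding still holds).
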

\begin{proof}We start again with \eqref{Eq112} and  calculate 
	\begin{align}&
		\|F^{(j)}(u)-	F^{(j)}(v)\|_{X_0}\,\lesssim\, \|\Phi_j(u)\nabla u - \Phi_j(v)\nabla v \|_{H^{s-1,q}(\Tor^d;\R^d)}
		\\&\quad \le\, \|\Phi_j(u)(\nabla u - \nabla v )\|_{H^{s-1,q}(\Tor^d;\R^d)} \,+\, 
		\|(\Phi_j(u)- \Phi_j(v))\nabla v \|_{H^{s-1,q}(\Tor^d;\R^d)}.
	\end{align}
	Since these terms compare structurally to \eqref{Eq107}, we continue as in the proof of Lemma \ref{Lemma_aj}. 
	If $s-1=0$, we proceed by estimating 
	\begin{align}&
		\|\Phi_j(u)(\nabla u - \nabla v )\|_{H^{s-1,q}(\Tor^d;\R^d)} \,\lesssim \,
		\|\Phi_j(u)\|_{L^\infty(\Tor^d)}\|\nabla u - \nabla v \|_{L^{q}(\Tor^d;\R^d)}
		\\&\quad \lesssim \,  
		\|\Phi_j(u)\|_{L^\infty(\Tor^d)}\|u-v \|_{H^{s,q}(\Tor^d)}
		\,\lesssim \,  
		\|\Phi_j(u)\|_{L^\infty(\Tor^d)}\|u-v \|_{\Xap},
	\end{align}
	where we used in the last step that $\Xap\hookrightarrow X_{1/2}$. Moreover, we have
	\begin{align}&
		\|(\Phi_j(u)- \Phi_j(v))\nabla v \|_{H^{s-1,q}(\Tor^d;\R^d)}
		\,\lesssim \, \|\Phi_j(u)- \Phi_j(v)\|_{L^\infty(\Tor^d)}\|\nabla v \|_{L^{q}(\Tor^d;\R^d)}
		\\&\quad \lesssim \, \|\Phi_j(u)- \Phi_j(v)\|_{L^\infty(\Tor^d)}\| v \|_{\Xap}
	\end{align}
	by the same argument. The desired estimate \eqref{Eq112} follows by local Lipschitz continuity of $\Phi_j$ and the embedding $\Xap\hookrightarrow L^\infty(\Tor^d)$ by \eqref{Eq104}.
	
	If instead $s-1>0$, we choose $l,r$ again such that \eqref{Eq109} holds and additionally $\Xap\hookrightarrow H^{s-1,l}(\Tor^d)$ and $H^{s-1,q}(\Tor^d)\hookrightarrow L^r(\Tor^d)$. Then Proposition \ref{prop:para}~\eqref{item:prop_para_1} yields that
	\begin{align}
		&
		\|\Phi_j(u)(\nabla u - \nabla v )\|_{H^{s-1,q}(\Tor^d;\R^d)} \\&\quad \lesssim \,
		\|\Phi_j(u )\|_{L^\infty(\Tor^d)}\|\nabla u-\nabla v\|_{H^{s-1,q}(\Tor^d; \R^d)}
		\,+\, 
		\| \Phi_j(u ) \|_{H^{s-1,l}(\Tor^d)}\|\nabla u-\nabla v\|_{L^{r}(\Tor^d; \R^d)}
		\\&\quad\lesssim \,
		\bigl(
		\|\Phi_j(u )\|_{L^\infty(\Tor^d)}\,+\, 	\| \Phi_j(u ) \|_{H^{s-1,l}(\Tor^d)}
		\bigr)
		\|\nabla u-\nabla v\|_{H^{s-1,q}(\Tor^d; \R^d)}
		\\&\quad\le \, L_{j,n} \|u-v\|_{H^{s,q}(\Tor^d)},
	\end{align}
	where in the last step, we used 
	$\Xap\hookrightarrow L^\infty(\Tor^d)\cap  H^{s-1,l}(\Tor^d)$ together with local Lipschitz continuity of $u\mapsto \Phi_j(u)$ in the latter space, see again Proposition \ref{prop:comp}~\eqref{item:prop_comp_H}. Because of $\Xap\hookrightarrow X_{1/2}$, we obtain
	\begin{align}\label{Eq113}
		\|\Phi_j(u)(\nabla u - \nabla v )\|_{H^{s-1,q}(\Tor^d;\R^d)} \,\lesssim\, L_{j,n} \|u-v\|_{\Xap}.
	\end{align}
	Analogously, we derive that
	\begin{align}&
		\|(\Phi_j(u)- \Phi_j(v))\nabla v \|_{H^{s-1,q}(\Tor^d;\R^d)}
		\\&\quad\lesssim \,
		\|\Phi_j(u ) - \Phi_j(v)\|_{L^\infty(\Tor^d)}\|\nabla v\|_{H^{s-1,q}(\Tor^d;\R^d)}
		\,+\, 
		\| \Phi_j(u ) - \Phi_j(v) \|_{H^{s-1,l}(\Tor^d)}\|\nabla v\|_{L^{r}(\Tor^d; \R^d)}
		\\&\quad
		\lesssim\,
		\bigl(
		\|\Phi_j(u ) - \Phi_j(v)\|_{L^\infty(\Tor^d)}\,+\, 	\| \Phi_j(u ) - \Phi_j(v) \|_{H^{s-1,l}(\Tor^d)}
		\bigr)\|v \|_{H^{s,q}(\Tor^d)}
		\\&\quad
		\lesssim\,
		L_{j,n} \|u-v \|_{\Xap}.
	\end{align}
	Together with \eqref{Eq113}, we conclude that \eqref{Eq112} holds.
	
	Lastly, if $s-1<0$, we choose $\tau$ and $\zeta$ such that $\tau>\max\{\frac{d}{\zeta}, 1-s\}$, $\zeta \in [q',\infty)$ and $\Xap\hookrightarrow H^{\tau,\zeta}(\Tor^d)$ and apply Proposition \ref{prop:para}~\eqref{item:prop_para_3} to estimate 
	\begin{align}
		&
		\|\Phi_j(u)(\nabla u - \nabla v )\|_{H^{s-1,q}(\Tor^d;\R^d)} \\&\quad\lesssim \,
		\|\Phi_j(u )\|_{L^\infty(\Tor^d)}\|\nabla u-\nabla v\|_{H^{s-1,q}(\Tor^d; \R^d)}
		\,+\, 
		\| \Phi_j(u ) \|_{H^{\tau,\zeta}(\Tor^d)}\|\nabla u-\nabla v\|_{H^{s-1,q}(\Tor^d; \R^d)}
		\\&\quad\lesssim \,
		\bigl(
		\|\Phi_j(u )\|_{L^\infty(\Tor^d)}\,+\, 	\| \Phi_j(u ) \|_{H^{\tau,\zeta}(\Tor^d)}
		\bigr)
		\|\nabla u-\nabla v\|_{H^{s-1,q}(\Tor^d; \R^d)}
		\\&\quad
		\le \,L_{j,n} \|u-v\|_{\Xap}
	\end{align}
	and 
	\begin{align}&
		\|(\Phi_j(u)- \Phi_j(v))\nabla v \|_{H^{s-1,q}(\Tor^d;\R^d)}
		\\&\quad\lesssim \,
		\|\Phi_j(u ) - \Phi_j(v)\|_{L^\infty(\Tor^d)}\|\nabla v\|_{H^{s-1,q}(\Tor^d; \R^d)}
		\,+\, 
		\| \Phi_j(u ) - \Phi_j(v) \|_{H^{\tau,\zeta}(\Tor^d)}\|\nabla v\|_{H^{s-1,q}(\Tor^d; \R^d)}
		\\&\quad
		\lesssim\,
		\bigl(
		\|\Phi_j(u ) - \Phi_j(v)\|_{L^\infty(\Tor^d)}\,+\, 	\| \Phi_j(u ) - \Phi_j(v) \|_{H^{\tau,\zeta}(\Tor^d)}
		\bigr)\|v \|_{H^{s,q}(\Tor^d)}
		\\&\quad
		\lesssim\,
		L_{j,n} \|u-v \|_{\Xap}
	\end{align}
	using once more Proposition \ref{prop:comp}~\eqref{item:prop_comp_H}. Also in this case we deduce \eqref{Eq112} and the growth estimate \eqref{Eq111} can be obtained by inserting $v=0$ in \eqref{Eq112} and using that $F^{(j)}(0)=0$.
\end{proof}

\begin{lemma}\label{Lemma_gj}
	For all $j,n\in \N$ there exist constants $C_{j,n}, L_{j,n}\in (0,\infty)$ such that
	\begin{enumerate}[(i)]
		\item \label{Item11} if $s+1-\frac{d}{q}>0$, then
		\begin{align}&\label{Eq123}
			\|G^{(j)}[u]\| _{\gamma(\ell^2(\N),X_{1/2})} \,\le \, C_{j,n} \|u\|_{X_{{3}/{4}}} 
			,
			\\ \begin{split}&
				\label{Eq114}
				\| G^{(j)}[u] -G^{(j)}[v]\|_{\gamma(\ell^2(\N), X_{1/2})} \,\le \, L_{j,n} \bigl(\|u-v\|_{X_{{3}/{4}}}\,+\, \|u-v\|_{\Xap} \bigl(
				\|u\|_{X_{3/4}}\,+\, 	\|v\|_{X_{3/4}}
				\bigr) \bigr)
				,
			\end{split}
		\end{align}
		\item \label{Item12} and if $s+1-\frac{d}{q}\le 0$, then
		\begin{align}&\label{Eq124}
			\|G^{(j)}[u]\| _{\gamma(\ell^2(\N), X_{1/2})} \,\le \, C_{j,n} \|u\|_{\Xap}
			,
			\\& \label{Eq115}
			\| G^{(j)}[u] -G^{(j)}[v]\|_{\gamma(\ell^2(\N), X_{1/2})} \,\le \, L_{j,n} \|u-v\|_{\Xap}
		\end{align}
	\end{enumerate}
	for all $u,v\in \Xap$ with $\|u\|_{\Xap}, \|v\|_{\Xap}\le n$.
\end{lemma}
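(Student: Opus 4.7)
The plan is to invoke the $\gamma$-Fubini identification \eqref{eq:identification_gamma_norms} to replace the norm $\|G^{(j)}[u]\|_{\gamma(\ell^2(\N),X_{1/2})}$ by the equivalent $H^{s,q}(\T^d;\ell^2(\N))$-norm of the sequence $(G^{(j)}_k(u))_{k\in \N}$. Since $G^{(j)}_k(u)=\div(g_j(u)\psi_k)$ and the divergence maps $H^{s+1,q}(\T^d;\R^d)\to H^{s,q}(\T^d)$ continuously, it suffices to control
\begin{equation*}
\|g_j(u)\psi\|_{H^{s+1,q}(\T^d;\ell^2(\N;\R^d))},\qquad \|(g_j(u)-g_j(v))\psi\|_{H^{s+1,q}(\T^d;\ell^2(\N;\R^d))}.
\end{equation*}
For the Lipschitz-type estimate I would write $g_j(u)-g_j(v)=h(u,v)(u-v)$ with $h(u,v):=\int_0^1 g_j'(v+t(u-v))\,\dd t$, reducing matters to a triple product. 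The basic tool, as in the proofs of Lemmas \ref{Lemma_aj} and \ref{Lemma_fj}, is the paraproduct estimate from \cite[Proposition 4.1]{AV21_max_reg_torus}, combined with the chain-rule estimate in Bessel-potential spaces from \cite[Section 5.3.7]{Runst_Sickel} and the fact that $g_j(0)=0$ (because $\eta_j(0)=0$ by the definition \eqref{eq:regularization_eta_j}).

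In case \eqref{Item11}, where $s+1-d/q>0$, the space $X_{3/4}=H^{s+1,q}(\T^d)$ is a Banach algebra continuously embedded into $L^\infty(\T^d)$. The paraproduct estimate \cite[Proposition 4.1(1)]{AV21_max_reg_torus} yields
\begin{equation*}
\|g_j(u)\psi\|_{H^{s+1,q}(\T^d;\ell^2)}\,\lesssim\, \|g_j(u)\|_{L^\infty}\|\psi\|_{H^{s+1,q}(\T^d;\ell^2)}\,+\,\|g_j(u)\|_{H^{s+1,q}}\|\psi\|_{L^\infty(\T^d;\ell^2)}.
\end{equation*}
Assumption \ref{Assumptions_noise_local} with $(s_\psi,q_\psi)=(s,q)$ bounds both $\psi$-factors, and the chain rule gives $\|g_j(u)\|_{H^{s+1,q}}\le C(\|u\|_{L^\infty})\|u\|_{H^{s+1,q}}$, which in view of the embedding $\Xap\embed L^\infty$ from \eqref{Eq104} yields \eqref{Eq123}. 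The Lipschitz bound \eqref{Eq114} follows by applying the paraproduct estimate twice to $h(u,v)(u-v)\psi$: the term $\|h(u,v)(u-v)\|_{L^\infty}\|\psi\|_{H^{s+1,q}(\T^d;\ell^2)}$ contributes $\|u-v\|_{\Xap}$, the term with the $H^{s+1,q}$-norm on $u-v$ alone produces $\|u-v\|_{X_{3/4}}$, and the term where the chain rule falls on $h(u,v)$ produces the factor $\|u\|_{X_{3/4}}+\|v\|_{X_{3/4}}$ accompanied by $\|u-v\|_{L^\infty}\lesssim \|u-v\|_{\Xap}$.

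In case \eqref{Item12}, where $s+1-d/q\le 0$, the space $X_{3/4}$ is not embedded into $L^\infty$, so the previous splitting is unavailable and one must express everything in terms of the trace-space norm. Here I would use the variant \cite[Proposition 4.1(3)]{AV21_max_reg_torus} of the paraproduct estimate together with the embedding $\Xap\embed H^{\tau,\zeta}(\T^d)\cap L^\infty(\T^d)$ for parameters $\tau>\max\{d/\zeta,-s\}$, $\zeta\in [q',\infty)$ admissible by \eqref{Eq104}, exactly as in Lemma \ref{Lemma_aj}. The chain-rule estimate then gives $\|g_j(u)\|_{H^{\tau,\zeta}}\lesssim_{n}1$ uniformly in $\|u\|_{\Xap}\le n$, and analogously $\|g_j(u)-g_j(v)\|_{H^{\tau,\zeta}}\lesssim_n \|u-v\|_{\Xap}$ via \cite[Theorem 1, p.~373]{Runst_Sickel}, leading to \eqref{Eq115} and, by taking $v=0$, to \eqref{Eq124}.

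The main technical obstacle is the triple-product estimate in case \eqref{Item11}: because the right-hand side of \eqref{Eq114} is linear in $\|u-v\|_{X_{3/4}}$ but also contains a cross term $\|u-v\|_{\Xap}(\|u\|_{X_{3/4}}+\|v\|_{X_{3/4}})$, one has to carefully track, in each of the three paraproduct pieces of $h(u,v)(u-v)\psi$, which factor absorbs the highest regularity, and invoke the chain rule for $h(u,v)=\int_0^1 g_j'(v+t(u-v))\,\dd t$ only on those terms where the $H^{s+1,q}$-norm on $u,v$ can be paired with the weaker $\Xap$-norm on $u-v$. The low-regularity case \eqref{Item12} is technically softer, the choice of $(\tau,\zeta)$ mirroring the one in Lemma \ref{Lemma_aj}, and the growth estimates follow in both cases by specializing the Lipschitz estimate to $v=0$ and using $g_j(0)=0$.
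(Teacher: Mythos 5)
Your strategy is broadly similar to the paper's (paraproduct plus composition estimates in Bessel--potential spaces, growth estimates by setting $v=0$ and using $g_j(0)=0$), but case \eqref{Item12} contains a genuine error, and the $\gamma$-Fubini reorganization creates a gap that the paper's factoring was designed to avoid.

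The error: in case \eqref{Item12} you apply \cite[Proposition 4.1(3)]{AV21_max_reg_torus} with $\tau>\max\{d/\zeta,-s\}$, ``exactly as in Lemma \ref{Lemma_aj}''. That variant is only for targets of \emph{negative} smoothness; in Lemma \ref{Lemma_aj} it is used in the branch $s-1<0$ with the condition $\tau>1-s=-(s-1)$. Here the target is $H^{s+1,q}(\T^d)$, and by Assumption \ref{Assumptions_coefficients} one always has $s+1>0$, so Proposition 4.1(3) does not apply (and the threshold $-s$ you wrote matches neither $1-s$ nor the formal $-(s+1)$). What is actually needed — and what the paper does — is Proposition 4.1(1) with a H\"older-conjugate pair $(l,r)$, $1/q=1/l+1/r$, chosen so that $\Xap\embed H^{s+1,l}(\T^d)$ and $H^{s+1,q}(\T^d)\embed L^r(\T^d)$; that is, the analogue of the $s-1>0$ branch of Lemma \ref{Lemma_aj}, not the $s-1<0$ branch you copied.

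The gap: the paper factors $G^{(j)}[u]=H^{(j)}[u]\circ\Psi$ with $H^{(j)}[u]\colon H^{s+1,q}(\T^d;\R^d)\to H^{s,q}(\T^d)$ and invokes the ideal property of $\gamma$-radonifying operators, so product estimates are only ever needed in finite-dimensional ($\R^d$-valued) Bessel-potential spaces. Your $\gamma$-Fubini route instead asks for the paraproduct estimate directly in $H^{s+1,q}(\T^d;\ell^2(\N;\R^d))$, with an infinite-dimensional Hilbert-space-valued factor; \cite[Proposition 4.1]{AV21_max_reg_torus} is stated for scalar factors, and the $\ell^2$-valued extension, while believable via UMD-valued Mikhlin theory, is not the cited result and would need separate justification. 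Finally, the mean-value decomposition $g_j(u)-g_j(v)=h(u,v)(u-v)$ is a legitimate alternative to the paper's direct use of the composition estimate \cite[Theorem 1, p.~373]{Runst_Sickel}, but it requires a Moser-type bound on $h(u,v)$ of the same nature, so it does not actually shorten the argument.
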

\begin{proof}
	To prove the assertions concerning the local Lipschitz estimates, we introduce the operator
	\begin{equation}
		H^{(j)}[u] \colon H^{s+1,q}(\Tor^d;\R^d)\to H^{s,q}(\Tor^d), \, f\mapsto \div(g_j(u)f )
	\end{equation}
	so that $G^{(j)}[u]  = H^{(j)}[u]\circ\Psi $ for the operator $\Psi$ introduced below \eqref{Eq140}. Therefore, by the ideal property \cite[Theorem 9.1.10]{Analysis2} of $\gamma$-radonifying operators we deduce that
	\begin{align}&
		\| G^{(j)}[u] -G^{(j)}[v]\|_{\gamma(\ell^2(\N), H^{s,q}(\Tor^d))} \,\le \, 
		\| H^{(j)}[u] -H^{(j)}[v]\|_{\LLL (H^{s+1,q}(\Tor^d;\R^d), H^{s,q}(\Tor^d))} \|\Psi\|_{\gamma(\ell^2(\N), H^{s+1,q}(\Tor^d;\R^d))}
	\end{align}
	and recall that the latter term is finite by Assumption \ref{Assumptions_noise_local}.
	First, we treat the case from \eqref{Item11}, namely that $s+1-\frac{d}{q}>0$. Then we can apply Proposition \ref{prop:para}~\eqref{item:prop_para_1} to obtain
	\begin{align}&
		\| (H^{(j)}[u] -H^{(j)}[v]) f\|_{ H^{s,q}(\Tor^d)}
		\\&
		\quad\lesssim\, \|(g_j(u)  - g_j(v)) f\|_{ H^{s+1,q}(\Tor^d;\R^d)}
		\\&\quad
		\lesssim\, \|g_j(u)  - g_j(v)\|_{ H^{s+1,q}(\Tor^d)} \|f\|_{L^\infty(\Tor^d;\R^d)}
		\,+\, \|g_j(u)  - g_j(v)\|_{L^\infty(\Tor^d)}   \|f\|_{ H^{s+1,q}(\Tor^d;\R^d)}
		\\&\quad\lesssim \,  \|g_j(u)  - g_j(v)\|_{H^{s+1,q}(\Tor^d)}   \|f\|_{ H^{s+1,q}(\Tor^d;\R^d)}.
	\end{align}
	By Proposition \ref{prop:comp}~\eqref{item:prop_comp_H}, we can estimate 
	\begin{align}&
		\|g_j(u)  - g_j(v)\|_{H^{s+1,q}(\Tor^d)}\,\le \, L_{j,n}\bigl( \|u-v\|_{H^{s+1,q}(\Tor^d)}\,+\, \|u-v\|_{\Xap} \bigl(\|u\|_{H^{s+1,q}(\Tor^d)}\,+\, \|v\|_{H^{s+1,q}(\Tor^d)}\bigr)\bigr)
	\end{align}
	using the embedding $\Xap \hookrightarrow L^\infty(\Tor^d)$ implied by \eqref{Eq104}. Hence, \eqref{Eq114} follows. 
	
	Next, we assume that $s+1-\frac{d}{q}\le 0$ as in  \eqref{Item12}  and recall that 
	$s+1>0$ by Assumption \ref{Assumptions_coefficients}. Thus, we can use again Proposition \ref{prop:para}~\eqref{item:prop_para_1} to estimate
	\begin{align}\begin{split}\label{Eq122}
			&
			\| (H^{(j)}[u] -H^{(j)}[v]) f\|_{ H^{s,q}(\Tor^d)}
			\\&\quad 
			\lesssim\, \|g_j(u)-g_j(v)\|_{L^\infty(\Tor^d)} \|f\|_{H^{s+1,q}(\Tor^d;\R^d)}\,+\, 
			\|g_j(u)-g_j(v)\|_{H^{s+1,l}(\Tor^d)} \|f\|_{L^r(\Tor^d;\R^d)}
		\end{split}
	\end{align}
	for $l\in (1,\infty)$, $r\in (1,\infty]$ subject to \eqref{Eq109}. As in the proof of Lemma \ref{Lemma_aj} we find that $l,r$ can be chosen such that $\Xap\hookrightarrow H^{s+1,l}(\Tor^d)$ and $H^{s+1,q}(\Tor^d)\hookrightarrow L^r(\Tor^d)$.
	Indeed for each $\epsilon>0$, we can choose $r$ such that
	\[
	s+1-\tfrac{d}{q}\,>\,
	\tfrac{-d}{r}\,>\, s+1-\tfrac{d}{q}-\epsilon.
	\]
	Then we have 
	\[
	\tfrac{d}{l}\,>\, s+1-\epsilon\quad \iff\quad s+1 -  \tfrac{d}{l}\,<\,\epsilon 
	\]
	due to \eqref{Eq109}. Invoking additionally \eqref{Eq104} and choosing $\epsilon$ smaller than \eqref{Eq121} yields that also $\Xap\hookrightarrow H^{s+1,l}(\Tor^d)$. We estimate \eqref{Eq122} further by
	\[
	\|g_j(u)-g_j(v)\|_{L^\infty(\Tor^d)\cap H^{s+1,l}(\Tor^d)} \|f\|_{H^{s+1,q}(\Tor^d;\R^d)}
	\]
	and since $u\mapsto g_j(u)$ is locally Lipschitz continuous  in $L^\infty(\Tor^d)\cap  H^{s+1,l}(\Tor^d)$ by Proposition \ref{prop:comp}~\eqref{item:prop_comp_H} the embedding 
	$\Xap\hookrightarrow L^\infty(\Tor^d)\cap  H^{s+1,l}(\Tor^d)$ yields \eqref{Eq115}.
	The corresponding growth estimates \eqref{Eq123} and \eqref{Eq124} follow by inserting $v=0$ in \eqref{Eq114} and \eqref{Eq115} and using that $G^{(j)}[0]=0$.
\end{proof}
\begin{remark}\label{Rem1}
	We convince ourselves that the assertions from Lemmas \ref{Lemma_aj}--\ref{Lemma_gj} imply \cite[Hyptohesis (H')]{AV19_QSEE_1} for fixed $j\in \N$. Indeed, Lemma \ref{Lemma_aj} yields \cite[Hypothesis (HA)]{AV19_QSEE_1} and Lemma \ref{Lemma_fj} yields 
	the estimate in terms of the trace space from \cite[Hypothesis (HF')]{AV19_QSEE_1}. Similarly, if $s+1-\frac{d}{q}\le 0$, Lemma \ref{Lemma_gj} \eqref{Item12} implies the estimate on the trace part from \cite[Hypothesis (HG')]{AV19_QSEE_1}. If instead $s+1-\frac{d}{q}> 0$, Lemma \ref{Lemma_gj} \eqref{Item11} yields 
	\begin{align}&
		\|G^{(j)}[u]\| _{\gamma(\ell^2(\N),X_{{ 1}/{2}})} \,\le \, C_{j,n} \sum_{l=1}^2 	(1+\|u\|_{X_{\vp_l}}^{\rho_l})\|u\|_{X_{\beta_l}},
		\\&
		\|G^{(j)}[u] -G^{(j)}[v] \| _{\gamma(\ell^2(\N),X_{{ 1}/{2}})} \,\le \, L_{j,n} \sum_{l=1}^2 	(1+\|u\|_{X_{\vp_l}}^{\rho_l}+\|v\|_{X_{\vp_l}}^{\rho_l})\|u-v\|_{X_{\beta_l}}
	\end{align}
	for  $u,v\in \Xap$ with $\|u\|_{\Xap}, \|v\|_{\Xap}\le n$,
	if we choose 
	\begin{equation}
		\vp_l \,\in \, \bigl(\max \bigl\{ 1-\tfrac{1+\kappa}{p},\tfrac{3}{4}\bigr\},1\bigr),\quad l\in \{1,2\},
	\end{equation}
	$\beta_1 =  \vp_1$, $\rho_1=0$, $\beta_2\in (1-\frac{1+\kappa}{p},\vp_2]$ close to $1-\frac{1+\kappa}{p}$ and $\rho_2=1$. This implies the estimate on the (possibly) critical part from \cite[Hypothesis (HG')]{AV19_QSEE_1}. In particular, we have
	\begin{equation}\label{Eq69}
		\rho_l\bigl(\vp_l -  1+\tfrac{1+\kappa}{p}\bigr) \,+\,\beta_l \, <\, 1,
		\quad l\in \{1,2\}
	\end{equation}
	for sufficiently small $\beta_2$ and are therefore in the subcritical regime of the theory developed in  \cite{AV19_QSEE_1, AV19_QSEE_2}.
\end{remark}

\subsection{Stochastic maximal regularity of thin-film type operators}
\label{Sec_SMR} Next to the local Lipschitz estimates established in the previous subsection, the local well-posedness theory from \cite{AV19_QSEE_1,AV19_QSEE_2} also requires optimal regularity estimates for linear problems of the form
\begin{equation}
	\label{eq:diffAB_s}
	\begin{cases}
		\dd u\, +\,\A(u)\,\dd t\,=\,f\, \dd t+ g\, \dd W,\quad t\in [t_0,T],\\
		u(t_0)\,=\,u_{t_0},
	\end{cases}
\end{equation}
called stochastic maximal regularity. 
Here $\A:[t_0,T]\times \O\to \calL(X_1,X_0)$ is strongly progressively measurable, $X_0,X_1$ are UMD spaces of type 2 (see, e.g., \cite[Chapter 7]{Analysis2}) and $T<\infty$, $t_0\in [0,T]$ are fixed. 

The purpose of this subsection is to verify that if $(p,\a,s,q)$ are admissible,
\begin{align}\label{Eq12}
	\A(u)\,=\, \div(a\nabla \Delta u)
\end{align}
for a positive and $\Progress$-measurable coefficient $a:[0,\infty)\times \O\to \Xap$ and $(X_0,X_1)$ is as in Subsection \ref{ss:notation}, then the problem \eqref{eq:diffAB_s} indeed admits stochastic maximal regularity estimates, which can be seen as a generalization of \cite[Theorem 5.2]{AV21_max_reg_torus} to fourth-order operators. 
Let us note that the time dependence of the coefficient $a$ in \eqref{Eq12} appears naturally when considering the nonlinear problem \eqref{Eq102} by choosing $a=m(u)$. One of the key points here is that we will allow coefficients that are only measurable in time. This will be crucial in the proof of the blow-up criteria of Proposition \ref{prop:blow_up_criteria} (cf., \eqref{Eq30} below) used in the proof of the global well-posedness result in Subsection \ref{ss:global_1d_intro}.
\smallskip

Before going into the details, we recall some definitions. 
For a stopping time $\tau\colon \Omega \to [t_0,T]$, an initial value
$u_{t_0}\in L^0_{\F_{t_0}}(\O;X_0)$ and inhomogeneities $f\in L^0_{\Progress}(\O;L^1({t_0},\tau;X_0))$ and $g\in L^0_{\Progress}(\O;L^2({t_0},\tau;\g(l^{2}(\N),X_{1/2})))$   of \eqref{eq:diffAB_s}, we call a strongly progressive measurable map $u:\llbracket {t_0},\tau\rrbracket\to X_1$ a \textit{strong solution} to \eqref{eq:diffAB_s} on $\llbracket {t_0},\tau\rrbracket$, if $u\in L^0(\O;L^2({t_0},\tau;X_1))$ and
\begin{equation*}
	u(t)\,-\,u_{{t_0}}\,+\,\int_{t_0}^t \A(u)\, \dd r\,=\,\int_{{t_0}}^t
	f\, \dd r\,+\, \int_{t_0}^t g\,\dd W_r
\end{equation*}
a.s.\ for all $t\in [{t_0},\tau]$.
\begin{definition}[Stochastic maximal regularity]
	\label{def:SMRgeneralized}Let $\hat{\kappa}\in [0,\frac{p}{2}-1)$ for $p>2$ and $\hat{\kappa} = 0$ for $p=2$ be possibly different from $\kappa$ and  $\|\A\|_{\calL(X_1,X_0)}\le
	C_{\A}$ for some constant $C_{\A}<\infty$. We write $\A\in \MRtaszh$ if for every
	\begin{equation}\label{Eq77}
		f\in  L^p_{\Progress}( (t_0,T)\times\Omega,w_{\hat{\a}}^{t_0};X_0),
		\quad\text{ and }\quad
		g\in L^p_{\Progress}((t_0,T)\times\Omega,w_{\hat{\a}}^{t_0};\g(H,X_{1/2}))
	\end{equation}
	there exists a strong solution $u$ to \eqref{eq:diffAB_s} with initial value $u_{t_0}=0$ such that $u\in L^p_{\Progress}((t_0,T)\times\Omega, w_{\hat{\a}}^{t_0};X_{1})$, and moreover for all stopping time $\tau\colon \Omega \to [t_0,T]$ and strong solutions $u\in L^p_{\Progress}((t_0,\tau)\times \O,w_{\hat{\a}}^{t_0};X_1)$ to \eqref{eq:diffAB_s} with $u_{t_0}=0$ the estimate
	\begin{equation}\label{Eq201}
		\begin{aligned}
			\|u\|_{L^p((t_0,\tau)\times\Omega,w_{\hat{\a}}^{{t_0}};X_1)}
			\,\lesssim\,\|f\|_{L^p((t_0,\tau)\times\Omega,w_{\hat{\a}}^{{t_0}};X_0)}
			\,+\,\|g\|_{L^p((t_0,\tau)\times\Omega,w_{\hat{\a}}^{{t_0}};\g(H,X_{1/2}))}
		\end{aligned}
	\end{equation}
	holds,
	where the implicit constant is independent of $f$, $g$ and $\tau$. In addition:
	\begin{enumerate}[{\rm(1)}]
		\item\label{it:SMR_regularity_0} If $p\in (2,\infty)$, we write $\A\in \MRtash$ if $\A\in \MRtaszh$ and
		\begin{align}\begin{split}\label{Eq141}
				&
				\|u\|_{L^p(\O;H^{\theta,p}(t_0,T,w_{\hat{\a}}^{{t_0}};X_{1-\theta}))}
				\,\lesssim\,\|f\|_{L^p((t_0,T)\times\Omega,w_{\hat{\a}}^{{t_0}};X_0)}\,+\,\|g\|_{L^p((t_0,T)\times\Omega,w_{\hat{\a}}^{{t_0}};\g(H,X_{1/2}))}
			\end{split}
		\end{align}
	 	for each $\theta\in [0,\frac{1}{2})$,
		where the implicit constant is independent of $f$ and $g$.
		\item If $p=2$, we write $\A\in \MRttwoszero$ if $\A\in \MRttwosz$ and it holds
		\begin{align}\begin{split}
				&\label{Eq142}
				\|u\|_{ L^2(\O;C([{t_0},T];X_{1/2}))}
				\,\lesssim\, \|f\|_{L^2(({t_0},T)\times\Omega ;X_0)}\,+\,\|g\|_{L^2(({t_0},T)\times\Omega;\g(H,X_{1/2}))},
			\end{split}
		\end{align}
		where the implicit constant is independent of $f$ and $g$.
	\end{enumerate}
\end{definition}
For a definition of the Sobolev space $H^{\theta,p}(t_0,T,w_{\hat{\a}}^{{t_0}};X_{1-\theta})$ we refer to Appendix \ref{app:Sobolev_spaces} and we remark that the choice $u_{t_0}=0$ in the above definition is not essential. Indeed, stochastic maximal $L^p$-regularity estimates with zero initial data directly imply corresponding ones with non-trivial data \cite[Proposition 3.10]{AV19_QSEE_1}.

If $\A\in \MRtash$, we define $C_{\mathcal{A}}^{\det,\theta,p,\hat{\a}}({t_0},T)$ as the smallest implicit constant such that \eqref{Eq201} for $\theta=0$ and \eqref{Eq141}--\eqref{Eq142} for $\theta>0$ holds for all $	f\in  L^p_{\Progress}( ({t_0},T)\times\Omega,w_{\hat{\a}}^{{t_0}};X_0) $ and $g=0$, i.e., the constant accounting for the inhomogeneity $f\,\dd t$ on the right-hand side of \eqref{eq:diffAB_s}. Analogously, we define $C_{\mathcal{A}}^{\stoc,\theta,p,\hat{\a}}({t_0},T)$ as the smallest implicit constant such that \eqref{Eq201}--\eqref{Eq142}, respectively, holds for all $g\in L^p_{\Progress}(({t_0},T)\times\Omega,w_{\hat{\a}}^{t_0};\g(H,X_{1/2}))$ and $f=0$, accounting for the fluctuating part of the right-hand side. For $\ell\in \{\det,\stoc\}$, we set finally
\begin{align}\label{eq:constKellthetapa}
	K^{\ell,\theta,p,\hat{\a}}_{\A}({t_0},T)\,:=\,C_{\A}^{\ell,\theta,p,\hat{\a}}({t_0},T)\,+\,C_{\A}^{\ell,0,p,\hat{\a}}({t_0},T).
\end{align}

\smallskip
We are ready to state the main result of this subsection.

\begin{theorem}[Stochastic maximal regularity -- Thin-film type operators]
\label{Theorem_SMR}
Let the parameters  $(p,\a,s,q)$ be admissible (i.e., Assumption \ref{Assumptions_coefficients} holds). Assume that the mapping $a:[t_0,T]\times \O\to \Xap$ is $\Progress$-measurable and there exist positive constants $\lambda,\mu>0$ such that, for a.e.\ $(t,\om)\in [t_0,T]\times \O$,  
\begin{equation}
\label{eq:assumption_smr_smoothness_coefficients}
\lambda\leq \inf_{\T^d} a(t,\om,\cdot)\quad \text{ and }\quad
 \|a(t,\om,\cdot)\|_{\Xap}\leq \mu .
\end{equation}
Then $\A$ defined in \eqref{Eq12} satisfies $\A\in \MRtash$ and 
	\begin{equation}
		\max\bigl\{K^{\det,\theta,p,\hat{\a}}_{\A}({t_0},T), K^{\stoc,\theta,p,\hat{\a}}_{\A}({t_0},T)\}\,\lesssim_{\lambda,\mu,\theta,T}\,1
	\end{equation}
	for each $\theta\in [0,\frac{1}{2})$ and $\hat{\a}\in \{0,\a\}$.
\end{theorem}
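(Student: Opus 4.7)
My plan is to prove this in three stages following the template of the second-order analog \cite[Theorem 4.2]{AV21_max_reg_torus}: stochastic maximal regularity for the model operator, a small-perturbation principle, and a localization/freezing-of-coefficients argument. Let me sketch them in order.

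\textbf{Stage 1 (Model operator).} I would start from the bi-Laplacian $\A_0:=-\Delta^2$ on $\T^d$. Since $-\Delta^2=(-\Delta)^2$ and $-\Delta$ admits a bounded $H^\infty$-calculus on $H^{s-2,q}(\T^d)$, so does $-\Delta^2$, and this yields $-\Delta^2 \in \MRtaszh$ with universal constants via the $\gamma$-functional calculus machinery (see \cite[Chapter 10]{Analysis2}). The higher-order estimate \eqref{Eq141} then follows from the mixed-derivative theorem for analytic generators together with the trace embedding of weighted anisotropic spaces; this is where $\theta\in[0,\tfrac12)$ enters. A trivial rescaling upgrades this to $-a_0\Delta^2$ for any constant $a_0\in[\lambda,\mu]$ with constants depending only on $\lambda$.

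\textbf{Stage 2 (Small-perturbation principle).} If $a$ is $\varepsilon$-close in $L^\infty_{t,\omega,x}$ to such a constant $a_0$, then $\A(u)-a_0\Delta^2 u = \div((a-a_0)\nabla\Delta u)$ satisfies an operator-norm bound $X_1\to X_0$ of order $C_{\lambda,\mu}\,\varepsilon$. Indeed, inspection of the proof of Lemma \ref{Lemma_aj} shows that only the $L^\infty$-piece of the norm of the coefficient controls the operator norm of the perturbation when the coefficient is close to a constant; the $\Xap$-pieces are used merely to guarantee that $a-a_0 \in \Xap$ which is here automatic by \eqref{eq:assumption_smr_smoothness_coefficients}. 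A Neumann-series argument in the $\MRtash$-seminorm (cf.\ \cite[Proposition 3.9]{AV19_QSEE_1}) then transfers SMR from $-a_0\Delta^2$ to $\A$ for $\varepsilon$ sufficiently small.

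\textbf{Stage 3 (Freezing and localization).} For general $a$, the admissibility condition \eqref{Eq104} yields $\Xap\hookrightarrow C^\gamma(\T^d)$ for some $\gamma>0$, so \eqref{eq:assumption_smr_smoothness_coefficients} gives a uniform-in-$(t,\omega)$ Hölder estimate for $x\mapsto a(t,\omega,x)$. With $\varepsilon>0$ fixed by Stage 2, cover $\T^d$ by finitely many balls $B_r(x_j)$, $r$ small enough that $\sup_{y\in B_r(x_j)}|a(t,\omega,y)-a(t,\omega,x_j)|\le \varepsilon$ uniformly in $(t,\omega,j)$. Multiplying the equation by a smooth partition of unity $\{\varphi_j\}$ subordinate to the cover, each $\varphi_j u$ solves an equation whose leading coefficient $a_j:=\widetilde\varphi_j a+(1-\widetilde\varphi_j)a(\cdot,\cdot,x_j)$ is $\varepsilon$-close to the time-measurable, spatially constant $a(\cdot,\cdot,x_j)$. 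Applying Stages 1--2 pathwise to the latter and absorbing the commutators $[\varphi_j,\A]$ (of order $\le 3$, hence $X_1\to X_0$ by interpolation and the estimates of Subsection \ref{Sec_lipschitz}) as a lower-order perturbation, then summing over $j$, yields $\A\in\MRtash$ with bounds depending only on $\lambda$, $\mu$ and $T$.

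\textbf{Main obstacle.} The delicate point is that $a$ is \emph{only} measurable in $(t,\omega)$, so the freezing value $a(t,\omega,x_j)$ is itself $(t,\omega)$-dependent. SMR for the time-measurable scalar multiple $-a(\cdot,\cdot,x_j)\Delta^2$ must therefore be extracted pathwise from Stage 1, for which the pathwise $H^\infty$-calculus (not merely the generation of an analytic semigroup) of $-\Delta^2$ is essential. A secondary obstacle is that at low smoothness $s\in(-\tfrac12,1)$ the commutators $[\varphi_j,\A]$ involve products of distributions with functions, so their treatment relies exactly on the paraproduct and pointwise-multiplier estimates already established in Subsection \ref{Sec_lipschitz}; this is where condition \eqref{Eq103} in the admissibility assumption plays its role.
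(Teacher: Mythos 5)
Your three-stage template (model operator, small perturbation, freezing of coefficients) coincides with the paper's strategy, and you have correctly identified the central difficulty: the frozen coefficient $a(\cdot,\cdot,x_j)$ is itself only measurable in $(t,\omega)$. However, your proposed resolution of that difficulty does not work as stated. "Pathwise $H^\infty$-calculus of $-\Delta^2$" is not a mechanism that produces stochastic maximal regularity for the \emph{family} $t\mapsto -\bar a(t)\Delta^2$: knowing that each frozen operator $-\bar a(t_0)\Delta^2$ has bounded $H^\infty$-calculus (which is trivial, since $\bar a(t_0)$ is a positive scalar) does not by itself give maximal $L^p$-regularity, deterministic or stochastic, for the time-measurable non-autonomous problem. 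The paper handles this by combining two non-trivial results that your sketch omits entirely: first, the periodic version of \cite[Theorem 5.1]{GV17}, which gives \emph{deterministic} maximal $L^p_\kappa$-regularity for $\bar a(t)\Delta^2$ with only measurable dependence on $t$ and constants depending only on $\lambda$; and second, the transference theorem \cite[Theorem 3.9]{VP18}, which upgrades a time-independent reference operator in $\mathcal{SMR}_{p,\hat\kappa}^{\bullet}$ (here $\Delta^2$) to the time-measurable family once the latter has deterministic maximal regularity. Without these two ingredients, Stage 1 only covers constant-in-time $a_0$, and Stages 2--3 cannot start from a time-measurable frozen coefficient.

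Two further points are under-specified but less critical. After the localization and commutator estimate in Stage 3, the lower-order term $[\varphi_j,\mathcal{A}]u$ can only be absorbed after interpolating down to $X_0$, applying Young's inequality at the level of the Bochner norm, and then absorbing the resulting $\|u\|_{L^p(\cdot;X_0)}$-term using that the time interval is short (cf.\ \cite[Lemma 3.13]{AV19_QSEE_1}); the resulting a priori estimate therefore holds only on intervals of length $T^*(\lambda,\mu,T)$ and must be concatenated over a partition $t_0<t_1<\dots<t_N=T$. Finally, the localization argument produces an a priori estimate, not existence of a strong solution, so you still need the method of continuity applied to the homotopy $r\mapsto\div((ra+(1-r)\lambda^{-1})\nabla\Delta u)$ to pass to the actual membership $\mathcal{A}\in\MRtash$.
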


Note that no regularity of the coefficient $a$ is assumed w.r.t.\ the time variable.
From Definition \ref{def:SMRgeneralized}, we have the inclusion $  \MRtash\subseteq\MRtaszh$, where the equality $ \MRtaszh= \MRtash$ holds however provided that $ \MRtash\neq \emptyset$, see \cite[Proposition 3.8]{AV19_QSEE_1}. According to \cite[Theorem 1.2]{MaximalLpregularity}, the periodic version of \cite[Theorem 10.2.25]{Analysis2} applied to the operator
\begin{equation}
\label{eq:bilaplacian_H_infty_calculus}
\mathcal{A}_0:=\Delta^2:H^{s+2,q}(\T^d)\subseteq H^{s-2,q}(\T^d)\to H^{s-2,q}(\T^d),
\end{equation} 
implies that $\MRtash\neq \emptyset$ in our setting, since $X_1=H^{s+2,q}(\T^d)$ and $X_0=H^{s-2,q}(\T^d)$. 
Therefore, to prove Theorem \ref{Theorem_SMR} it suffices to show that $\mathcal{A}\in \MRtaszh$. 

The proof of the above result is given at the end of this subsection, and some preparation is required. We begin by considering the situation of a spatially constant coefficient $a$.

\begin{lemma}\label{Lemma_SMR_bilplace}
	Assume that $a(t,x) = \bar{a}(t)$ for a progressive measurable random variable $\bar{a}\colon [t_0,T]\times \Omega \to \R$ satisfying $\lambda\le \bar{a} \le \lambda^{-1}$ for some $\lambda\in (0,1)$. Then $\mathcal{A} \in \MRtaszh$ and 
	\begin{align}\label{Eq19}
		\max\bigl\{ C^{\det,0,p,\hat{\a}}_{\A}({t_0},T), C^{\stoc,0,p,\hat{\a}}_{\A}({t_0},T)\bigr\}\, \lesssim_{\lambda, T} \,1
	\end{align}
	for $\hat{\kappa}\in \{0,\a\}$. 
\end{lemma}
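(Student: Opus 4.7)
The plan is to reduce the assertion to stochastic maximal regularity of the autonomous bi-Laplacian $\Delta^2$ on the torus, and then to absorb the scalar time-dependent coefficient $\bar{a}$ via a random time change. Since $\bar{a}$ is spatially constant, one has $\A(t)=\bar{a}(t)\Delta^2$ as a bounded map $X_1\to X_0$. The bi-Laplacian $\Delta^2$ on $\T^d$ admits a bounded $H^\infty$-calculus of angle $0$ on $X_0=H^{s-2,q}(\T^d)$ with domain $X_1=H^{s+2,q}(\T^d)$; this follows from Fourier multiplier arguments on the torus. Therefore $-\Delta^2\in \MRtaszh$, including the H\"older refinement \eqref{Eq141}, by the stochastic maximal regularity theory for operators with bounded $H^\infty$-calculus of angle $<\pi/2$ on UMD spaces of type $2$, cf.\ \cite{NVW13} and its weighted variant used in \cite{AV21_max_reg_torus,AV19_QSEE_1}. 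The quantitative constants obtained only depend on $T$.

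To pass from the case $\bar{a}\equiv 1$ to the general scalar case, I would introduce the random time change $\tau(t)=\int_{t_0}^{t}\bar{a}(r)\,\dd r$. The hypothesis $\lambda^{-1}\le \bar{a}\le \lambda$ yields
\begin{equation*}
\lambda^{-1}(t-t_0)\,\le\, \tau(t)-\tau(t_0)\,\le\, \lambda(t-t_0),
\end{equation*}
so $\tau$ is a.s.\ strictly increasing and bi-Lipschitz with constants depending only on $\lambda$; denote its inverse by $t(\sigma)$. Setting $v(\sigma)=u(t(\sigma))$, $\tilde{f}(\sigma)=f(t(\sigma))/\bar{a}(t(\sigma))$ and $\tilde{g}(\sigma)=g(t(\sigma))/\bar{a}(t(\sigma))^{1/2}$, a cylindrical Dambis--Dubins--Schwarz argument produces a cylindrical Brownian motion $\tilde{W}$ with respect to the time-changed filtration $(\mathscr{F}_{t(\sigma)})_{\sigma}$ such that $v$ satisfies the autonomous equation $\dd v+\Delta^2 v\,\dd \sigma=\tilde{f}\,\dd \sigma+\tilde{g}\,\dd \tilde{W}_\sigma$. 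Applying the SMR estimates for $\Delta^2$ to $v$ on $[\tau(t_0),\tau(T)]\subset[t_0,t_0+\lambda(T-t_0)]$ and undoing the change of variables transfers the bound back to $u$, with constants depending only on $\lambda$ and $T$.

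The main technical obstacle in this plan is to make the time change compatible with the weighted scale $w^{t_0}_{\hat{\a}}$, with progressive measurability, and with the H\"older-in-time norm in \eqref{Eq141}. The weight compatibility is immediate from the bi-Lipschitz comparison $|t-t_0|\eqsim_{\lambda}|\tau(t)-\tau(t_0)|$, which transfers $L^p$-norms against $w^{t_0}_{\hat{\a}}$ up to multiplicative constants depending on $\lambda$ and $\hat{\a}$. Progressive measurability with respect to $(\mathscr{F}_{t(\sigma)})_\sigma$ follows from adaptedness and strict monotonicity of $\tau$, as in the classical construction of time-changed cylindrical Brownian motions. The H\"older norm $H^{\theta,p}$ with $\theta\in[0,\tfrac{1}{2})$ is preserved under bi-Lipschitz time changes, since this norm coincides with a Sobolev--Slobodeckij seminorm that is bi-Lipschitz invariant on these intervals. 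As an alternative avoiding the time change, one may construct the solution directly through the evolution system $U(t,s)=e^{-(\tau(t)-\tau(s))\Delta^2}$ and conclude via the $R$-sectoriality of $\Delta^2$, which is essentially the fourth-order analogue of the strategy of \cite[Theorem 4.2]{AV21_max_reg_torus}.
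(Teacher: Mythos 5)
Your first step — establishing a bounded $H^\infty$-calculus of angle $0$ for $\Delta^2$ on $H^{s-2,q}(\T^d)$ and deducing $\Delta^2\in\MRtash$ — is exactly the paper's first step. From there, however, you and the paper diverge: you reduce the general scalar-in-space, rough-in-time coefficient $\bar a(t)$ back to $\Delta^2$ via a random time change and a Dambis--Dubins--Schwarz argument, whereas the paper stays in the original time variable and invokes two black-box results: the non-autonomous \emph{deterministic} maximal $L^p_\kappa$-regularity of $\bar a(t)\Delta^2$ from \cite[Theorem 5.1]{GV17} (which requires only measurability in time and uniform bounds on $\bar a$), combined with the transference theorem \cite[Theorem 3.9]{VP18}, which upgrades SMR of $\Delta^2$ to SMR of $\bar a(t)\Delta^2$ given the deterministic maximal regularity. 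The paper's route is entirely pathwise on the time-dependence and never touches the filtration or the Brownian motion, which is precisely what makes it clean in this setting.

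Your time-change route is plausible in spirit, and it does recover the correct transformed data $\tilde f=f(t(\cdot))/\bar a(t(\cdot))$, $\tilde g=g(t(\cdot))/\bar a(t(\cdot))^{1/2}$, but it carries substantially more hidden technical weight than your sketch acknowledges, and some of it is genuinely delicate. First, the time-changed noise $\tilde W$: DDS for a single real Brownian motion under a common adapted time change is standard, but for the cylindrical $W=\sum_k e_k\beta^{(k)}$ you need a Knight-type argument for countably many jointly orthogonal martingales under one common random clock, plus a change-of-variables identity for $\gamma$-radonifying stochastic integrals $\int g\,\dd W\mapsto\int\tilde g\,\dd\tilde W$. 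None of this is impossible, but it is not off-the-shelf in the UMD/type-$2$ setting used here and deserves a careful argument or a precise reference. Second, the time horizon: the transformed equation lives on the \emph{random} interval $[0,\tau(T)]$; you can handle this via the stopping-time robustness built into Definition \ref{def:SMRgeneralized}, but you would need to verify that $\tau(T)$ is indeed a stopping time for the time-changed filtration $(\F_{t(\sigma)})_\sigma$ and then carry that dependence through. Third, and most importantly, the estimate \eqref{Eq141} is phrased in the weighted scale $H^{\theta,p}((t_0,T),w_{\hat\kappa}^{t_0};X_{1-\theta})$ of \cite[Definition 2.2]{AV19_QSEE_1}, which is a Bessel-potential (complex-interpolation) scale, not a Slobodeckij scale as your remark suggests. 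Bi-Lipschitz invariance is still plausible (by interpolating the bi-Lipschitz invariance of $L^p(w_{\hat\kappa})$ and $W^{1,p}(w_{\hat\kappa})$, using the weight comparability $|t-t_0|\eqsim_\lambda|\tau(t)|$), but it is an extra lemma you would have to state and prove. So: a genuine alternative route, conceptually sound, but the "main technical obstacle" paragraph in your proposal is actually where most of the real work lives, and the paper's GV17$+$VP18 combination is designed precisely to avoid it. Your closing one-line alternative (evolution system $U(t,s)=e^{-(\tau(t)-\tau(s))\Delta^2}$ plus $R$-sectoriality) is in fact much closer to the proof the paper gives.
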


\begin{proof}
The claim can be proved by applying existing results in the literature, but to keep this manuscript as self-contained as possible, we base our line of argument on the results from Appendix \ref{app:MR_Lp_time_measurable}.

First we note that it is enough to show the claim for $s=2$, which follows from the fact that $a$ is $x$-independent and $(1-\Delta)^{r/2}: H^{r+t,\zeta}(\T^d)\to H^{t,\zeta}(\T^d)$ is an isomorphism for all $r,t\in \R$ and $\zeta\in (1,\infty)$. Next, we mimic the transference argument in \cite[Theorem 3.9]{VP18}, where we can avoid measurability concerns 
due to dealing with an explicit SPDE \eqref{eq:diffAB_s}, cf. \cite[Theorem 3.9, Step 1]{VP18}. Indeed, by the \emph{method of continuity} \cite[Proposition 3.13]{AV19_QSEE_2} applied to the family of operators
\begin{equation}
	\mathcal{A}_r(u)\,=\, r \mathcal{A}(u)\,+\, (1-r)\mathcal{A}_0u\,=\, 
	(r\bar{a}+(1-r)) \Delta^2 u  , \qquad r\in [0,1],
\end{equation}
we obtain the existence part of Definition \ref{def:SMRgeneralized} as soon as we establish the estimate \eqref{Eq201} since the coefficients $a_r=ra+(1-r)$ satisfy our assumption with the same, uniform $\lambda$.

In the case $p=q=2$ and $\a=0$, \eqref{Eq201} follows from an application of the It\^o's formula, see \cite[Lemma 4.1]{AV24_variational} and \cite[Chapter 4]{LiuRock}, so  we only consider $p>2$ and $\kappa\in [0,\frac{p}{2}-1)$ in the following (recall that $p=2$ is allowed only if $q=2$, see \eqref{Eq1} in Assumption \ref{Assumptions_coefficients}).
To proceed in the latter case, we notice moreover that given any $g\in  L^p_{\Progress}((t_0,\tau )\times\Omega,w_{\hat{\a}}^{t_0};\g(H,X_{1/2}))$ there exists a strong solution $v$ to 
\begin{equation}
	\label{eq:diffAB_s_0_new}
	\begin{cases}
		\dd v\, +\,\A_0(v)\,\dd t\,=\, g\, \dd W,\quad t\in [t_0,\tau ],\\
		v(t_0)\,=\,0,
	\end{cases}
\end{equation}
satisfying the estimate 
\begin{equation}\label{Eq73}
	\|v\|_{L^p((t_0,\tau)\times\Omega,w_{\hat{\a}}^{{t_0}};X_1)}\,\lesssim_T \,\|g\|_{L^p((t_0,\tau)\times\Omega,w_{\hat{\a}}^{{t_0}};\g(H,X_{1/2}))},
\end{equation}
where $\A_0=\Delta^2\in \MRtash$  by  the discussion preceding \eqref{eq:bilaplacian_H_infty_calculus}. Consequently, if $u\in L^p_{\Progress}((t_0,\tau)\times \O,w_{\hat{\a}}^{t_0};X_1)$ is any strong solution to the original equation \eqref{eq:diffAB_s} on $\llbracket {t_0},\tau\rrbracket$ with $u_{t_0} = 0$ and $f \in L^p_{\Progress}((t_0,\tau)\times \O,w_{\hat{\a}}^{t_0};X_0)$, then we find that the difference $w = u - v$ solves a.s.\ the random PDE
\begin{equation}
	\label{eq:diffAB_s_01_new}
	\begin{cases}
		\partial_t w\, +\,\A(w)\,=\, (f+ (\A_0-\A)v), \quad t\in [t_0,T],\\
		w(t_0)\,=\,0.
	\end{cases}
\end{equation}
 Now Proposition \ref{prop:deterministic_SMR} yields that $w$ satisfies 
\[
\|w\|_{L^p((t_0,\tau)\times \O,w_{\a}^{t_0};X_1)} \,\lesssim_{\lambda, T} \, \|f\|_{ L^p((t_0,\tau)\times \O,w_{\a}^{t_0};X_0)} \,+\, 
\|(\mathcal{A}_0 - \mathcal{A})v \|_{ L^p((t_0,\tau)\times \O,w_{\a}^{t_0};X_0)},
\]
which together with \eqref{Eq73} and the continuity of $\A,\A_0\colon X_1 \to X_0$ yields the desired 
\begin{equation}
	\label{eq:estimate_to_be_proven_easy_case}
	\|u\|_{L^p((t_0,\tau)\times\Omega,w_{\hat{\a}}^{{t_0}};X_1)}\,\lesssim_{\lambda, T} 
	\,\|f\|_{L^p((t_0,\tau)\times\Omega,w_{\hat{\a}}^{{t_0}};X_0)}
	\,+\,\|g\|_{L^p((t_0,\tau)\times\Omega,w_{\hat{\a}}^{{t_0}};\g(H,X_{1/2}))}
\end{equation}
for the original   $u = v+w$.
\end{proof}

Using estimates similar to the ones in the proof of Lemma \ref{Lemma_aj}, we can also include small perturbations of a spatially constant coefficient. 
\begin{lemma}\label{Lemma_SMR_small_pert}
	Let $\bar{a}$ and $\lambda$ be as in Lemma \ref{Lemma_SMR_bilplace} and let $a$ be as in the statement of Theorem \ref{Theorem_SMR}. Then there exists some $\epsilon>0$ depending on $\lambda,T$ such that if 
	\begin{equation}\label{Eq21}
		\esssup_{[t_0,T]\times \O}\|a \,-\, \bar{a} \|_{L^\infty(\Tor^d)}\,<\,\epsilon,
	\end{equation}
	then we have $\mathcal{A} \in \MRtaszh$ with 
	\begin{equation}
	\max\bigl\{	C^{\det,0,p,\hat{\kappa}}_{\mathcal{A}}({t_0},T),C^{\stoc,0,p,\hat{\kappa}}_{\mathcal{A}}({t_0},T) \bigr\}\,\lesssim_{\lambda,\mu,T}\, 1
	\end{equation}
	for $\hat{\a}\in \{0,\a\}$.
\end{lemma}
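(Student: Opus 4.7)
The idea is to view $\A$ as a perturbation of the operator with spatially constant coefficient covered by Lemma \ref{Lemma_SMR_bilplace}. Writing $b := a - \bar{a}$, we decompose
\[
\A(u) = \bar{a}\Delta^2 u \,+\, B(u), \qquad B(u) := \div(b\,\nabla \Delta u).
\]
By Lemma \ref{Lemma_SMR_bilplace} the operator $\bar{a}\Delta^2$ lies in $\MRtash$ with constants depending only on $\lambda, T$. A standard perturbation result for stochastic maximal regularity (e.g.\ the argument of \cite[Proposition 3.12]{AV19_QSEE_1}, cf.\ also \cite[Theorem 3.2]{VP18}) shows that $\bar{a}\Delta^2 + B \in \MRtash$ with comparable constants, provided that the essential supremum of $\|B(t,\omega)\|_{\LLL(X_1,X_0)}$ is smaller than a threshold depending only on $\lambda, T$.

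To control $\|B(t,\om)\|_{\LLL(X_1,X_0)}$, I would repeat verbatim the case distinction from the proof of Lemma \ref{Lemma_aj}, with $m_j(u)-m_j(v)$ replaced by $b$ throughout. Specifically, in the case $s-1=0$ one obtains directly
\[
\|B(f)\|_{X_0} \,\lesssim\, \|b\|_{L^\infty(\T^d)} \|f\|_{X_1};
\]
in the case $s-1>0$ the paraproduct estimate \cite[Proposition 4.1~(1)]{AV21_max_reg_torus} yields a bound by $\bigl(\|b\|_{L^\infty(\T^d)} + \|b\|_{H^{s-1,l}(\T^d)}\bigr)\|f\|_{X_1}$ for the same $l,r$ as chosen in the proof of Lemma \ref{Lemma_aj}; and in the case $s-1<0$ one gets similarly $\bigl(\|b\|_{L^\infty(\T^d)} + \|b\|_{H^{\tau,\zeta}(\T^d)}\bigr)\|f\|_{X_1}$ with $\tau,\zeta$ as chosen there. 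The crucial point is that all the Sobolev norms of $b$ appearing on the right-hand side can be controlled by an intermediate space between $L^\infty(\T^d)$ and $\Xap$.

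To exploit this, I would invoke a standard Gagliardo--Nirenberg type interpolation: for every intermediate smoothness $\sigma \in (0, s+2-4\frac{1+\kappa}{p})$ and appropriate integrability there exists $\theta = \theta(\sigma) \in (0,1)$ with
\[
\|b\|_{H^{\sigma,\cdot}(\T^d)} \,\lesssim\, \|b\|_{L^\infty(\T^d)}^{1-\theta} \|b\|_{\Xap}^{\theta}.
\]
Because $\bar{a}$ is a bounded function of time and constants have bounded $\Xap$-norm on $\T^d$, together with the a priori bound $\|a(t,\om)\|_{\Xap}\leq \mu$ we get $\|b(t,\om)\|_{\Xap} \lesssim_{\lambda} \mu + 1$ uniformly in $(t,\om)$. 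Consequently
\[
\esssup_{[t_0,T]\times\O} \|B(t,\om)\|_{\LLL(X_1,X_0)} \,\lesssim_{\lambda,\mu}\, \epsilon^{1-\theta},
\]
which can be made smaller than the perturbation threshold by choosing $\epsilon$ sufficiently small (depending only on $\lambda,\mu,T$).

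The main obstacle is precisely the interpolation step in the cases $s-1\neq 0$: an $L^\infty$-smallness assumption on $b$ is \emph{not} by itself enough to make $\|B\|_{\LLL(X_1,X_0)}$ small, because the product estimates in $H^{s-1,q}$ inevitably bring in a fractional Sobolev norm of $b$. The uniform bound on $\|a\|_{\Xap}$ provided by the hypothesis on $a$ is exactly what allows one to trade a small amount of smoothness for smallness in $L^\infty$ and close the argument. Once this is done, the conclusion about both the deterministic and stochastic maximal regularity constants follows from the perturbation result, and is stable in $\hat\kappa\in\{0,\kappa\}$ since the underlying estimate for $\bar{a}\Delta^2$ in Lemma \ref{Lemma_SMR_bilplace} holds for both weights.
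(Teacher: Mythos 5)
Your decomposition $\A = \bar{a}\Delta^2 + B$ with $B(u)=\div(b\,\nabla\Delta u)$, $b:=a-\bar a$, and the appeal to Lemma \ref{Lemma_SMR_bilplace} match the start of the paper's argument, but from there you diverge in a way that introduces a genuine gap. You aim to make the full operator norm $\|B\|_{\LLL(X_1,X_0)}$ small. When $s-1\neq 0$ the paraproduct estimate produces a term $\|b\|_{H^{s-1,l}}$ (resp.\ $\|b\|_{H^{\tau,\zeta}}$), and to shrink it you interpolate the \emph{coefficient} $b$ between $L^\infty$ and $\Xap$, obtaining $\|B\|_{\LLL(X_1,X_0)}\lesssim \epsilon^{1-\theta}(\mu+1)^{\theta}$. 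As you yourself note, this forces a smallness threshold $\epsilon$ that depends on $\mu$. That contradicts the lemma's claim that $\epsilon$ depends only on $\lambda,T$, and it is precisely this order of quantifiers that is used in Lemma \ref{l:estimates_small_interval}: there $\epsilon=\epsilon(\lambda,T)$ is fixed first, and only afterward the covering radius $r$ is shrunk (that step is allowed to depend on $\mu$) to drive the oscillation of $a$ below $\epsilon$. Your route thus proves a weaker statement than asserted. In addition, the Gagliardo--Nirenberg inequality you invoke, with a Besov space $\Xap=B^{s+2-4(1+\kappa)/p}_{q,p}$ on one side and $L^\infty$ on the other and a Bessel space in between, is not a completely off-the-shelf estimate and would require separate justification.

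The paper avoids both issues by not making $\|B\|_{\LLL(X_1,X_0)}$ small at all. It shows instead the \emph{relative} bound \eqref{Eq22}, i.e.\ $\|Bu\|_{X_0}\le (2C^*_{\lambda,T})^{-1}\|u\|_{X_1}+L\|u\|_{X_0}$ with $L$ allowed to be large, which is exactly the hypothesis of the perturbation result \cite[Theorem 3.2]{AV21_max_reg_torus}. The mechanism is to interpolate $u$, not $b$: in the paraproduct estimate the embeddings $H^{s-1,q}\hookrightarrow L^r$ (resp.\ their analogues for $s-1<0$) hold with room to spare, so one may drop from $\|\nabla\Delta u\|_{H^{s-1,q}}$ to $\|\nabla\Delta u\|_{H^{s-1-\delta,q}}\lesssim\|u\|_{X_{1-\delta/4}}$; then the full-strength factor $\|b\|_{\Xap}\lesssim_\mu 1$ multiplies $\|u\|_{X_{1-\delta/4}}$, and by \eqref{Eq143} together with Young's inequality this becomes $\eta\|u\|_{X_1}+C_{\eta,\delta,\mu}\|u\|_{X_0}$ with $\eta$ arbitrarily small. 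The $\mu$-dependence is thus swept into the harmless lower-order constant $L$, while the coefficient of $\|u\|_{X_1}$ is governed purely by the $L^\infty$ smallness $\epsilon$, making $\epsilon$ depend only on $\lambda,T$. If you want to keep your idea of bounding an operator norm, you would need to target the norm $\LLL(X_1,X_{\delta/4})\to X_0$ (or a similar weaker target) rather than $\LLL(X_1,X_0)$, and combine it with a perturbation theorem that tolerates such lower-order contributions -- which is effectively a rediscovery of the paper's argument.
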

\begin{proof}
	By repeating the proof of the estimate \eqref{Eq110} for the operator $\A$, one obtains that, a.e.\ on $[t_0,T]\times \O$,
	\begin{equation}\label{Eq25}
		\|\A\|_{\LLL(X_1,X_0)}\,\lesssim\,\|a\|_{\Xap}\,\le \, \mu.
	\end{equation}
	By the previous Lemma \ref{Lemma_SMR_bilplace}, we have  $\bar{a}\Delta^2\in \MRtaszh$ and $\bar{a}\Delta^2\in \MRtaszh$. We define $C_{\lambda,T}^*$ as the maximum of the corresponding upper bounds on $C^{\det,0,p,\hat{\a}}_{\A}({t_0},T)$ and $C^{\det,0,p,0}_{\A}({t_0},T)$ from \eqref{Eq19}. Then, since we can write 
	\begin{equation}
		\mathcal{A}(u)\,=\, \div( \bar{a}\nabla \Delta u) \,+\,  \div( (a-\bar{a})\nabla \Delta u)
	\end{equation}
	the claim follows by the perturbation result \cite[Theorem 3.2]{AV21_max_reg_torus} as soon as we can verify that
	\begin{equation}\label{Eq22}
		\| \div( (a-\bar{a})\nabla \Delta u)\|\,\le \,({2C_{\lambda,T}^*})^{-1}\|u\|_{X_1} \,+\, L\|u\|_{X_0}
	\end{equation}
 for a suitable constant $L\in (0,\infty)$. To this end, we assume \eqref{Eq21} and obtain
	\begin{align}
		\|\div( (a-\bar{a})\nabla \Delta u) \|_{X_0}\,\le \, \| a-\bar{a} \|_{L^\infty(\Tor^d)} \|u\|_{X_1}\,<\, \epsilon \|u \|_{X_1}
	\end{align}
	for $s-1=0$ implying \eqref{Eq22} for sufficiently small $\epsilon>0$. For $s-1>0$, we derive
	\begin{align}
		\|\div( (a-\bar{a})\nabla \Delta u) \|_{X_0}\,\lesssim\, 
		 \|a-\bar{a}\|_{L^\infty(\Tor^d)} \|\nabla\Delta u\|_{H^{s-1,q}(\Tor^d;\R^d)}\,+\, \|a-\bar{a}\|_{H^{s-1,l}(\Tor^d)}\|\nabla \Delta u\|_{L^r(\Tor^d;\R^d)}
	\end{align}
	for  $l,r$ as in the proof of Lemma \ref{Lemma_aj}, analogously to \eqref{Eq108}. We observe moreover that the embedding $H^{s-1,q}(\Tor^d)\hookrightarrow L^r(\Tor^d)$ is not sharp and hence it also holds $H^{s-1-\delta,q}(\Tor^d)\hookrightarrow L^r(\Tor^d)$ for a sufficiently small $\delta>0$. Then, using \eqref{Eq21}, that $\Xap\hookrightarrow H^{s-1,l}(\Tor^d)$, the interpolation inequality 
	\begin{equation}\label{Eq143}
		\|u\|_{X_{1-{\delta}/{4}}}\,\le\, \|u\|_{X_{0}}^{{\delta}/{4}} \|u\|_{X_{1}}^{1-{\delta}/{4}}
	\end{equation} and Young's inequality, we can estimate
	\begin{align}&
		\|\div( (a-\bar{a})\nabla \Delta u) \|_{X_0}\,\lesssim \, \epsilon \|u\|_{X_1}\,+\, \|a-\bar{a}\|_{\Xap}\|u\|_{X_{1-{\delta}/{4}}}
		\\&\quad \le \, \epsilon \|u\|_{X_1} \,+\, C_{\lambda,\mu}  \|u\|_{X_{0}}^{{\delta}/{4}} \|u\|_{X_{1}}^{1-{\delta}/{4}}
		\,\le \, 2\epsilon \|u\|_{X_1} \,+\, C_{\delta,\epsilon,\lambda,\nu}\|u\|_{X_0}.
	\end{align}
	Therefore, \eqref{Eq22} follows also in this case as long as $\epsilon>0$ is small enough.
	For $s-1<0$, we proceed again as in Lemma \ref{Lemma_aj} and apply Proposition \ref{prop:para}~\eqref{item:prop_para_3} to obtain
	\begin{align}&
		\|\div( (a-\bar{a})\nabla \Delta u) \|_{X_0}\,\lesssim \, \|a-\bar{a}\|_{L^\infty(\Tor^d)} \|\nabla \Delta u\|_{H^{s-1,q}(\T^d;\R^d)}\,+\, \|a-\bar{a}\|_{H^{\tau,\zeta}(\Tor^d)}\|\nabla \Delta u\|_{H^{s-1-\delta,q}(\Tor^d;\R^d)}
	\end{align}
	for $\tau,\zeta$ as in Lemma \ref{Lemma_aj} and some $\delta>0$. We remark that we used the slightly worse estimate \eqref{Eq23} previously because it was sufficient to verify Lemma \ref{Lemma_aj}. Employing again \eqref{Eq21}, the inequality \eqref{Eq143}, Young's inequality and the embedding $\Xap\hookrightarrow H^{\tau,\zeta}(\T^d)$ we conclude
	\begin{align}
		\|\div( (a-\bar{a})\nabla \Delta u) \|_{X_0}\,\lesssim \, 2\epsilon \|u\|_{X_1} \,+\, C_{\delta,\epsilon,\lambda,\mu}\|u\|_{X_0}.
	\end{align}
	The desired estimate \eqref{Eq22} follows again for sufficiently small $\epsilon>0$.
\end{proof}
Before giving a proof of stochastic maximal regularity in the general situation, we first provide an a a-priori estimate on sufficiently small time intervals. The proof relies on a spatial localization procedure known as freezing the coefficients, which allows us to apply Lemma \ref{Lemma_SMR_small_pert} locally in space.
\begin{lemma}
	\label{l:estimates_small_interval}
	Let $a$ be as in the statement of Theorem \ref{Theorem_SMR}.
	Then there exists a time $\tT>0$ depending on $\lambda,  \mu,T$ for which the following holds.
	For any $t\in [{t_0},T]$, $\hat{\a}\in \{0,\a\}$,  stopping time $\tau\colon \O\to [t,T\wedge(t+\tT)]$,  $f\in L^p_{\Progress}((t,\tau)\times\Omega,w_{\hat{\a}}^t;X_0)$, $g\in L^p_{\Progress}((t,\tau)\times\Omega,w_{\hat{\a}}^t;\g(H,X_{{1}/{2}}))$ and  strong solution $u\in L^p_{\Progress}((t,\tau)\times\Omega,w_{\hat{\a}}^t;X_1)$ to 
	\begin{equation}
		\label{Eq150}
		\begin{cases}
			\dd u\, +\,\mathcal{A}(u)\,\dd r\,=\,f\, \dd r+ g\, \dd W,\quad r\in [t,\tau],\\
			u(t)\,=\,0,
		\end{cases}
	\end{equation}
	it holds
	\begin{align}\label{eq:lemaprioriTorus}
		\|u\|_{L^p((t,\tau)\times\Omega,w_{\hat{\a}}^t;X_1)}\,\lesssim_{\lambda,  \mu,T}\, \|f\|_{L^p((t,\tau)\times\Omega,w_{\hat{\a}}^t;X_0)}\,+\,\|g\|_{L^p((t,\tau)\times\Omega,w_{\hat{\a}}^t;\g(H,X_{{1}/{2}}))}.
	\end{align}
\end{lemma}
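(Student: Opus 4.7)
The plan is to reduce to the small-perturbation setting of Lemma \ref{Lemma_SMR_small_pert} via a spatial freezing of the coefficient. The key input is that $\Xap \embed C^{\alpha_0}(\T^d)$ for some $\alpha_0 > 0$ by \eqref{Eq104}, so the assumption $\|a(t,\omega,\cdot)\|_{\Xap} \le \mu$ yields a uniform H\"older bound on $a$ in the spatial variable: on any ball $B \subset \T^d$ of radius $\delta$ one has $\osc_B a(t,\omega,\cdot) \lesssim \mu\,\delta^{\alpha_0}$ uniformly in $(t,\omega)\in[t_0,T]\times\Omega$, together with $\|a\|_{L^\infty([t_0,T]\times\Omega\times\T^d)} \lesssim \mu$.

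First I would fix a finite cover $(B_i)_{i=1}^N$ of $\T^d$ by balls of radius $\delta$ with centers $x_i$ together with a smooth partition of unity $(\chi_i)_{i=1}^N$ satisfying $\sum_i \chi_i^2 \equiv 1$ and $\supp \chi_i \subset B_i$, choosing $\delta$ so small that $C\mu\,\delta^{\alpha_0}$ is below the threshold $\epsilon$ of Lemma \ref{Lemma_SMR_small_pert} associated with the constants $\lambda$ and $C\mu$. Setting $\bar{a}_i(t,\omega) := a(t,\omega,x_i) \ge \lambda$, I let $\tilde{a}_i$ be a smooth modification of $a$ that agrees with $a$ on $B_i$ and equals $\bar{a}_i$ off a slightly enlarged ball, with $\|\tilde{a}_i - \bar{a}_i\|_{L^\infty(\T^d)} < \epsilon$ and $\tilde{a}_i \ge \lambda/2$. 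Then $v_i := \chi_i u$ solves, on all of $\T^d$,
\begin{equation*}
\dd v_i \,+\, \div(\tilde{a}_i \nabla \Delta v_i)\, \dd r \,=\, \chi_i f\, \dd r \,+\, \chi_i g\, \dd W \,+\, R_i[u]\, \dd r,\qquad v_i(t) = 0,
\end{equation*}
where $R_i[u]$ collects the commutator contributions $[\chi_i, \div(\tilde{a}_i \nabla \Delta \cdot)]u$, which are linear in $u$ and involve derivatives of $u$ of order at most three, with coefficients obtained from $\tilde{a}_i$ and smooth derivatives of $\chi_i$. Applying Lemma \ref{Lemma_SMR_small_pert} to each $v_i$, summing over $i$, and exploiting that multiplication by $\chi_i$ is bounded on every $X_\theta$ together with $\sum_i \chi_i^2 \equiv 1$, one obtains
\begin{equation*}
\|u\|_{L^p(w_{\hat{\a}}^t; X_1)} \,\lesssim_{\lambda,\mu,T}\, \|f\|_{L^p(w_{\hat{\a}}^t; X_0)} \,+\, \|g\|_{L^p(w_{\hat{\a}}^t; \gamma(H, X_{1/2}))} \,+\, \|u\|_{L^p(w_{\hat{\a}}^t; X_{1-\eta})}
\end{equation*}
for some $\eta = \eta(s,q,d) > 0$ arising from the fact that each derivative falling on $\chi_i$ costs one order of smoothness of $u$, which is then estimated via paraproducts in $X_0 = H^{s-2,q}$ exactly as in the proof of Lemma \ref{Lemma_aj}.

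To close the estimate, I would absorb the last term using the smallness of $\hat{T}$. Complex interpolation and Young's inequality give $\|u\|_{X_{1-\eta}} \le \delta_1 \|u\|_{X_1} + C_{\delta_1}\|u\|_{X_0}$ for any $\delta_1 > 0$. On the other hand, using $u(t) = 0$ and integrating the equation in $X_0$ (where $\|\mathcal{A}(u)\|_{X_0} \lesssim_{\mu} \|u\|_{X_1}$ by the paraproduct estimate behind \eqref{Eq110}), H\"older's inequality together with the Burkholder--Davis--Gundy inequality in the type $2$ space $X_0$ yields
\begin{equation*}
\|u\|_{L^p(w_{\hat{\a}}^t(t,\tau); X_0)} \,\lesssim_{\mu, p, \hat{\a}}\, \hat{T}^\beta \bigl(\|u\|_{L^p(w_{\hat{\a}}^t; X_1)} + \|f\|_{L^p(w_{\hat{\a}}^t; X_0)} + \|g\|_{L^p(w_{\hat{\a}}^t; \gamma(H, X_{1/2}))}\bigr)
\end{equation*}
for some $\beta > 0$ (depending on $p$ and $\hat{\a}$, and using $\hat{\a} < p-1$ guaranteed by \eqref{Eq1}). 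Choosing first $\delta_1$ small and then $\hat{T}$ sufficiently small depending on $\lambda,\mu,T$, the $X_{1-\eta}$ term is absorbed into the left-hand side, yielding \eqref{eq:lemaprioriTorus}. The main obstacle is the precise estimation of the commutator remainder $R_i[u]$ in $X_0 = H^{s-2,q}$ when $s < 1$: the distribution $\nabla\Delta u$ forces a careful paraproduct analysis as in Subsection \ref{Sec_lipschitz}, which must simultaneously produce a strictly positive regularity gain $\eta > 0$ and keep the implicit constants independent of $\hat{T}$; a secondary subtlety is ensuring that the short-time bound on $\|u\|_{L^p(w_{\hat{\a}}^t; X_0)}$ indeed carries a positive power of $\hat{T}$ in the presence of the power weight $w_{\hat{\a}}^t$, which is where the admissibility restriction $\hat{\a} < p/2 - 1$ enters.
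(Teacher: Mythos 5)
Your overall architecture coincides with the paper's: freeze the coefficient at the centers of a finite cover, apply Lemma \ref{Lemma_SMR_small_pert} to each localized piece, show the commutators gain a small amount of regularity via the same paraproduct estimates as in Lemma \ref{Lemma_aj}, and absorb the resulting lower-order term by shrinking the time interval. For that last step you re-derive the bound directly from the integral form of \eqref{Eq150} via H\"older and BDG (exploiting $\hat{\a}<p/2-1$), whereas the paper invokes the first statement of \cite[Lemma 3.13]{AV19_QSEE_1} together with \eqref{Eq25}; these are the same estimate.

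The one substantive gap is in the localization step. Lemma \ref{Lemma_SMR_small_pert} requires the modified coefficient $\tilde{a}_i$ to satisfy the hypotheses of Theorem \ref{Theorem_SMR}, in particular a uniform Besov bound $\|\tilde{a}_i(t,\omega,\cdot)\|_{\Xap}\le \mu'$ with $\mu'$ independent of $(t,\omega)$ and $i$; you only prescribe $\|\tilde{a}_i-\bar{a}_i\|_{L^\infty}<\epsilon$ and $\tilde{a}_i\ge \lambda/2$, which is not sufficient to invoke the lemma. This is exactly the issue the paper spends its first step on, constructing extension operators $\tilde{E}^{\T^d}_{y,r}$ that are bounded from $B^{s+2-4\frac{1+\a}{p}}_{q,p}(B_{\T^d}(y,r))$ into $\Xap$ uniformly in $y,r$ and that send constants to constants (the modification of Stein's operator is needed because, after rescaling, constants are not uniformly controlled in $B^{s}_{q,p}(\R^d)$). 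Your implicit choice $\tilde{a}_i=\chi(a-\bar{a}_i)+\bar{a}_i$ for a fixed smooth bump $\chi\equiv 1$ on $B_i$ does yield the required bound on the torus, since constants lie in $\Xap$ and multiplication by $\chi$ is $\Xap$-bounded with a constant depending only on the fixed radius; but this must be stated, since the a-priori constant produced by Lemma \ref{Lemma_SMR_small_pert} depends on the constants $\lambda,\mu$ from \eqref{eq:assumption_smr_smoothness_coefficients}, and the argument would lose uniformity across the cover if $\|\tilde{a}_i\|_{\Xap}$ were left uncontrolled.
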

\begin{proof}We follow the strategy employed to prove \cite[Lemma 5.4]{AV21_max_reg_torus} and split the proof into three steps to improve its readability.
We do not display the time dependence of $a$ as it does not play any role here.
	
	\textit{Construction of suitable extension operators.}
	We first derive the existence of extension operators with a uniform operator norm, which can be seen as a variant of \cite[Lemma 5.8]{AV21_max_reg_torus} for Besov spaces. The main difference compared to \cite[Lemma 5.8]{AV21_max_reg_torus} is that constant functions are contained in H\"older spaces on $\R^d$, but not necessarily in $B_{q,p}^s(\R^d)$. Let $E$ be Stein's total extension operator from the unit ball $B_{\R^d}(0,1)$  to $\R^d$ as introduced in \cite[Theorem 5, p.181]{Stein70}, then 
	\begin{align}&
		E\colon W^{l,r}(B_{\R^d}(0,1))\to W^{l,r}(\R^d)
	\end{align}
	continuously for each $l\in \N_0$ and $r\in [1,\infty]$. We let $\vp\in C_c^\infty(\R^d)$ such that $\vp=1$ on $B_{\R^d}(0,1)$ and $\vp=0$ outside of $B_{\R^d}(0,2)$ and define
	\begin{align}
		E_{y,r}^{\R^d}[f](x)\,=\, (\vp \cdot E[ f(y+r\cdot)])\bigl(
		\tfrac{x-y}{r}
		\bigr)
	\end{align}
	for $y\in \R^d$ and $r\in (0,\frac{1}{8})$
	and obtain extension operators from $B_{\R^d}(y,r)$ to $\R^d$ with
	\begin{equation}
		\| E_{y,r}^{\R^d}\|_{\LLL(L^\infty(B_{\R^d}(y,r)), L^\infty(\R^d))}
	\end{equation}
	independent of $y$ and $r$. Identifying $ \T^d \eqsim (-1/2,1/2]^d$ we set now 
	\begin{equation}\label{Eq79}
	E_{y,r}^{\Tor^d}f (x) \,=\, \bigl[ E_{0,r}^{\R^d} ( f(\cdot-y)) \bigr] \bigl(( x-y) \mod 1 \bigr)
	\end{equation}
	with the convention that $(z\mod 1 )_j  \in (-1/2,1/2]$ for $1\le j\le d$  and  $z\in \R^d$ is the componentwise modulo operation. Due to the assumption that $r<\frac{1}{8}$, the left-hand side of \eqref{Eq79}
	satisfies periodic boundary conditions and therefore 
	yields extension operators $E_{y,r}^{\Tor^d}$ from $B_{\Tor^d}(y,r)$ to $\Tor^d$ with the same properties as $E_{y,r}^{\R^d}$. To ensure that constant functions are mapped to constant functions, we define new extension operators by setting 
	\begin{align}
		\tilde{E}_{y,r}^{\Tor^d}f \,=\, E_{y,r}^{\Tor^d}\biggl[
		f-\fint_{B_{\Tor^d}(y,r)} f\, \dd x
		\biggr]\,+\,\fint_{B_{\Tor^d}(y,r)} f\, \dd x,
	\end{align}
	where $\fint_{B_{\T^d}(y,r)}:=|B_{\T^d}(y,r)|^{-1}\int$, 
	and observe that again
	\begin{equation}
		\|	\tilde{E}_{y,r}^{\Tor^d}\|_{\LLL(L^\infty(B_{\Tor^d}(y,r)),  L^\infty(\Tor^d))}
	\end{equation}
	is independent of $y$ and $r$. By real interpolation, we conclude that also
	\begin{equation}
		\tilde{E}_{y,r}^{\Tor^d} \colon B^{s+2-4\frac{1+\a}{p}}_{q,p}(B_{\Tor^d}(y,r))\to \Xap
	\end{equation} 
	continuously.
	
	\textit{Spatial localization.}
	Next, analogously to the proof of \cite[Lemma 5.4]{AV21_max_reg_torus}, we introduce the operators
	\begin{align}
		\mathcal{A}_y(u)\,=\, \div(a(y) \nabla \Delta u ),\qquad \mathcal{A}^E_{y,r}(u)\,=\,  \div(\tilde{E}_{y,r}^{\Tor^d}[ a|_{B_{\Tor^d}(y,r)} ] \nabla \Delta u).
	\end{align}
	Let $(y_\iota)_{\iota\in \mathcal{I}}\subset \Tor^d$ and $r\in (0,\frac{1}{8})$ such that $(B_{\Tor^d}(y_\iota,r))_{\iota\in \mathcal{I}}$ is a finite cover of $\Tor^d$. Moreover, let $(\vp_\iota)_{\iota\in \mathcal{I}}$ be a partition of unity subordinate to $(B_{\Tor^d}(y_\iota,r))_{\iota\in \mathcal{I}}$. We observe that for sufficiently small $\gamma>0$ we have $\Xap\hookrightarrow C^{\gamma}(\Tor^d)$ by \eqref{Eq104} and consequently
	\begin{align}
		\|a(y_\iota) - a|_{B_{\Tor^d}(y_\iota,r)} \|_{L^\infty ({B_{\Tor^d}(y_\iota,r)} )}\,\lesssim_{\mu}\,r^\gamma.
	\end{align}
	After applying the extension operators $\tilde{E}_{y_\iota,r}^{\Tor^d}$ we constructed, we conclude that
	\begin{align}&
		\bigl\|a(y_\iota) - \tilde{E}_{y_\iota,r}^{\Tor^d} [a|_{B_{\Tor^d}(y_\iota,r)}] \bigr\|_{L^\infty (\Tor^d)}\,=\, 	\bigl\|\tilde{E}_{y_\iota,r}^{\Tor^d} [a(y_\iota) - a|_{B_{\Tor^d}(y_\iota,r)}] \bigr\|_{L^\infty (\Tor^d)}\,\lesssim_{\mu}\,r^\gamma. 
	\end{align}
	Thus, if we choose $\epsilon$ as in Lemma \ref{Lemma_SMR_small_pert} according to the given constant $\lambda$, we can ensure that 
	\begin{equation}
		\bigl\|a(y) - \tilde{E}_{y_\iota,r}^{\Tor^d} [a|_{B_{\Tor^d}(y,r)}] \bigr\|_{L^\infty (\Tor^d)}\,<\, \epsilon
	\end{equation}
	if we choose $r<r_{\lambda,\mu,T}^*\in (0,\frac{1}{8})$ small enough.	Setting then 
	\begin{equation}
		C_{\lambda,\mu,T}^{*}\,=\, \max_{\iota\in \mathcal{I}}\,\,\bigl\| \tilde{E}_{y_\iota,r}^{\Tor^d}\bigr\|_{\LLL\bigl(B^{s+2-4\frac{1+\kappa}{p}}_{q,p}(B_{\Tor^d}(y_\iota,r)),\Xap\bigr)},
	\end{equation}
	we deduce that
	\begin{align}&
		\bigl\|
		\tilde{E}^{\Tor^d}_{y_\iota,r} [a|_{B_{\Tor^d}(y_\iota,r)} ]
		\bigr\|_{\Xap}\,
		\le\, C_{\lambda,\mu,T}^* \|a|_{B_{\Tor^d}(y_\iota,r)}\|_{B^{s+2-4\frac{1+\kappa}{p}}_{q,p}(B_{\Tor^d}(y_\iota,r))}\,
		\le \, C_{\lambda,\mu,T}^* \|a\|_{\Xap}\,\le \, C_{\lambda,\mu,T}^*\mu.
	\end{align}
	Therefore, the functions $\tilde{E}^{\Tor^d}_{y_\iota,r} [a|_{B_{\Tor^d}(y_\iota,r)} ]$ satisfy the assumptions of Lemma \ref{Lemma_SMR_small_pert} with uniform constants and thus $\mathcal{A}_{y_\iota,r}^E\in \MRtash$ with 
	\begin{align}\label{Eq151}
		C^{\ell,0,p,\hat{\kappa}}_{\mathcal{A}_{y_\iota,r}^E}(s,T)\,\lesssim_{\lambda,\mu,T}\, 1,\qquad \ell\in \{\det,\stoc\}.
	\end{align}
	With this at hand, we finally consider the strong solution $u$ to \eqref{Eq150} and using the partition of unity $(\vp_\iota)_{\iota\in \mathcal{I}}$, we can write $
	u\,=\, \sum_{\iota\in \mathcal{I}}u_\iota$, if we set $u_\iota = u\vp\iota$. Introducing analogously $f_\iota = f\vp_\iota$ and $g_\iota = g\vp_\iota$, we find that
	\begin{align}
		\dd u_\iota \,+\, \mathcal{A}_{y_\iota, r}^E(u_\iota)\,\dd t \,&=\, \dd  u_\iota \,+\, \mathcal{A}(u_\iota)\,\dd t\\
		&=\, \vp_\iota \bigl(\dd u \,+\, \mathcal{A}(u)\, \dd t \bigr)\,+\, [\mathcal{A},\vp_\iota] u\, \dd t\,
		=\, \bigl(f_\iota\,+\, [\mathcal{A},\vp_\iota] u\bigr)\, \dd t\,+\, g_\iota\, \dd W,
	\end{align}
	where we use the commutator notation $[\mathcal{A},\vp_\iota]u= \mathcal{A}(\vp_\iota u) - \vp_\iota\mathcal{A} u $. Hence, applying \eqref{Eq151}, we deduce that
	\begin{align}\begin{split}\label{Eq154}
			&
			\|u\|_{L^p((t,\tau)\times \O, w_{\hat{\a}}^t;X_1)}\,\le\, \sum_{\iota\in \mathcal{I}} \|u_\iota\|_{L^p((t,\tau)\times \O, w_{\hat{\a}}^t;X_1)}\\&\quad \lesssim_{\lambda,\mu,T}
			\,
			\sum_{\iota\in \mathcal{I}}\Bigl(\|[\mathcal{A},\vp_\iota] u \|_{L^p((t,\tau)\times \O, w_{\hat{\a}}^t;X_0)}\,+\,\|f_\iota\|_{L^p((t,\tau)\times\Omega,w_{\hat{\a}}^t;X_0)}\,+\,\|g_{\iota}\|_{L^p((t,\tau)\times\Omega,w_{\hat{\a}}^t;\g(H,X_{1/2}))} \Bigr)
			\\&\quad\lesssim_{\lambda,\mu,T}	\,
			\sum_{\iota\in \mathcal{I}}\Bigl(\|[\mathcal{A},\vp_\iota] u \|_{L^p((t,\tau)\times \O, w_{\hat{\a}}^t;X_0)}\Bigr)\,+\,\|f\|_{L^p((t,\tau)\times\Omega,w_{\hat{\a}}^t;X_0)}\,+\,\|g\|_{L^p((t,\tau)\times\Omega,w_{\hat{\a}}^t;\g(H,X_{1/2}))},
		\end{split}
	\end{align}
	where in the last step we used that multiplication $\vp_\iota$ is a bounded linear mapping on $H^{s+2,q}(\Tor^d)$ and $ H^{s,q}(\Tor^d)$ for each $\iota\in \mathcal{I}$. 
	
	\textit{Absorbing the commutator terms.} To conclude the proof, we show that the commutator terms $[\mathcal{A},\vp_\iota]u$ are of lower order  and expand
	\begin{align}&
		\mathcal{A}(\vp_\iota u)\,=\, \div(a\nabla \Delta (\vp_\iota u))
		\,=\, \div(a \nabla ( \vp_\iota \Delta u \,+\, 2\nabla \vp_\iota \cdot \nabla u\,+\, u \Delta \vp_\iota ))
		\\&\quad=\, 
		\div(a(\vp_\iota \nabla \Delta u\,+\, \nabla \vp_\iota \Delta u )) \,+\,
		\div(a \nabla (  2\nabla \vp_\iota \cdot \nabla u\,+\, u \Delta \vp_\iota ))
		\\&\quad=\, \vp_\iota \mathcal{A} u\,+\, a\nabla \vp_\iota \cdot \nabla \Delta u \,+\, \div(a\nabla \vp_\iota \Delta u) \,+\,\div(a \nabla (  2\nabla \vp_\iota \cdot \nabla u\,+\, u \Delta \vp_\iota ))
	\end{align}
	to conclude 
	\begin{align}\label{Eq152}
		[\mathcal{A}, \vp_\iota ] u\,=\, a\nabla \vp_\iota \cdot \nabla \Delta u \,+\, \div(a\nabla \vp_\iota \Delta u) \,+\,\div(a \nabla (  2\nabla \vp_\iota \cdot \nabla u\,+\, u \Delta \vp_\iota )).
	\end{align}
	We claim that
	\begin{equation}\label{Eq155}
		\| 	[\mathcal{A}, \vp_\iota ] u\|_{X_0}\,\lesssim_{\lambda,\mu,T}\, \|u\|_{X_{1-{\delta}/{4}}}
	\end{equation}
	for sufficiently small $\delta>0$ and estimate only the first term on the right-hand side of \eqref{Eq152}, since the others are easier to treat. Firstly, we observe that
	\begin{equation}\label{Eq156}
		\|a\nabla \vp_\iota \|_{B_{q,p}^{s+2-4\frac{1+\a}{p}}(\T^d;\R^d)}\,\lesssim_{\lambda,\mu,T}\, \|a\|_{\Xap}\,\lesssim_{\mu}\, 1
	\end{equation}
	since the $\vp_\iota$ are smooth. If $s-1\le 0$, we choose $\delta\in (0,1)$ such that
	\begin{equation}\label{Eq153}
		s\,+\,2\,-\,4\,\tfrac{1+\kappa}{p}\,>\, 1+\delta-s
	\end{equation}
	by \eqref{Eq103}. Since 
	$s-1-\delta<0$, we can use Proposition \ref{prop:para}~\eqref{item:prop_para_3} to estimate
	\begin{align}&
		\|a\nabla \vp_\iota \cdot \nabla \Delta u \|_{X_0}\,\lesssim\, 
		\|a\nabla \vp_\iota \cdot \nabla \Delta u \|_{H^{s-1-\delta,q}(\Tor^d)}\\&\quad \lesssim \,
		\|a\nabla \vp_\iota \|_{L^\infty(\Tor^d;\R^d)}\|\nabla \Delta u\|_{H^{s-1-\delta,q}(\Tor^d;\R^d)} \,+\, \| a\nabla \vp_\iota\|_{H^{\sigma,\zeta}(\Tor^d;\R^d)} \|\nabla \Delta u\|_{H^{s-1-\delta,q}(\Tor^d;\R^d)}
	\end{align}
	as soon as $\sigma>\max \{ \frac{d}{\zeta}, 1+\delta-s\}$ and $\zeta\in [q',\infty)$. Arguing as in Lemma \ref{Lemma_aj}, we see that we can choose $\sigma$ and $\zeta$ such that $\Xap\hookrightarrow H^{\sigma,\zeta}(\Tor^d)$ due to \eqref{Eq104} and \eqref{Eq153}. Because also $\Xap \hookrightarrow L^\infty(\Tor^d)$ by \eqref{Eq104}, we obtain 
	\begin{align}
		\|a\nabla \vp_\iota \cdot \nabla \Delta u \|_{X_0}\, \lesssim_{\lambda,\mu,T}\, \|u\|_{H^{s+2-\delta,q}(\Tor^d)}
	\end{align}
	by \eqref{Eq156}.
	If instead $s-1>0$, we choose $\delta\in (0,\min\{1,s-1\})$. Then an application of Proposition \ref{prop:para}~\eqref{item:prop_para_1} as in the proof of Lemma \ref{Lemma_aj} yields 
	\begin{align}&
		\|a\nabla \vp_\iota \cdot \nabla \Delta u \|_{X_0}\,\le\, \|a\nabla \vp_\iota \cdot \nabla \Delta u \|_{H^{s-1-\delta,q}(\Tor^d)}
		\\&\quad \lesssim\, \|a\nabla \vp_\iota\|_{L^\infty(\Tor^d;\R^d)}\|\nabla \Delta u\|_{H^{s-1-\delta,q}(\Tor^d;\R^d)}\,+\, 
		\|a\nabla \vp_\iota\|_{H^{s-1-\delta,l}(\Tor^d;\R^d)}\|\nabla \Delta u\|_{L^r(\Tor^d;\R^d)}
	\end{align}
	for $l\in (1,\infty)$, $r\in (1,\infty]$
	whenever \eqref{Eq109} holds. If we can show that $l,r$ can be chosen in a way such that $\Xap\hookrightarrow H^{s-1-\delta,l}(\Tor^d)$ and $H^{s-1-\delta,q}(\Tor^d)\hookrightarrow L^r(\Tor^d)$, we can conclude as in Lemma \ref{Lemma_aj} that
	\begin{align}
		\|a\nabla \vp_\iota \cdot \nabla \Delta u \|_{X_0}\, \lesssim_{\lambda,\mu,T}\, \|u\|_{H^{s+2-\delta,q}(\Tor^d)},
	\end{align}
	using additionally $\Xap\hookrightarrow L^\infty(\T^d)$ and \eqref{Eq156}.  If $s-1-\delta-\frac{d}{q}>0$, the choice $l=q$ and $r=\infty$ suffices. If $s-1-\delta -\frac{d}{q}\le 0$, we take $r$ in accordance with
	\begin{equation}
		s-1-\delta-\tfrac{d}{q}\,>\,
		\tfrac{-d}{r}\,>\, s-1-\delta-\tfrac{d}{q}-\eta
	\end{equation}
	for $\eta>0$. Using \eqref{Eq109} we obtain that 
	\begin{equation}
		\tfrac{d}{l}\,>\, s-1-\delta-\eta\quad \iff\quad s-1-\delta-\tfrac{d}{l}\,<\,\eta.
	\end{equation}
	Choosing $\eta>0$ smaller than the left-hand side of \eqref{Eq104} ensures that the desired embeddings hold. We conclude that \eqref{Eq155} is valid in any case.
	
	With this at hand, we can proceed as in the proof of \cite[Lemma 5.4]{AV21_max_reg_torus}, namely, we use that
	\begin{align}&
			\| 	[\mathcal{A}, \vp_\iota ] u\|_{L^p((t,\tau)\times\O, w_{\hat{\a}}^t;X_0)}\,\lesssim_{\lambda,\mu,T}\, \|u\|_{L^p((t,\tau)\times\O, w_{\hat{\a}}^t;X_{1-{\delta}/{4}})}\,\le \, 
		\|u\|_{L^p((t,\tau)\times\O, w_{\hat{\a}}^t;X_0)}^{{\delta}/{4}} \|u\|_{L^p((t,\tau)\times\O, w_{\hat{\a}}^t;X_1)}^{1-{\delta}/{4}}
		\\&\quad  \le \,C_{\delta,\eta} \|u\|_{L^p((t,\tau)\times\O, w_{\hat{\a}}^t;X_0)} \,+\, \eta \|u\|_{L^p((t,\tau)\times\O, w_{\hat{\a}}^t;X_1)}
	\end{align}
	for any $\eta>0$
	by the interpolation inequality for Bochner spaces \cite[Theorem 2.2.6]{Analysis1} and   Young's inequality. Choosing $\eta$ sufficiently small and inserting this in \eqref{Eq154} we obtain that
	\begin{align}
		&
		\|u\|_{L^p((t,\tau)\times\O, w_{\hat{\a}}^t;X_1)}\,\lesssim_{\lambda,\mu,T}	\,
		\| u \|_{L^p((t,\tau)\times\O, w_{\hat{\a}}^t;X_0)}\,+\,\|f\|_{L^p((t,\tau)\times\Omega,w_{{\hat{\a}}}^t;X_0)}\,+\,\|g\|_{L^p((t,\tau)\times\Omega,w_{{\hat{\a}}}^t;\g(H,X_{1/2}))}.
	\end{align}
	If $\tau \le t+T^*$ for sufficiently small $T^*$ depending on $\lambda,\mu,T$, the term containing $u$ on the right-hand side can once more be absorbed by the first statement of \cite[Lemma 3.13]{AV19_QSEE_1} together with \eqref{Eq25} leading to the desired a-priori estimate \eqref{eq:lemaprioriTorus}.
\end{proof}
Having established the necessary a-priori estimate, Theorem \ref{Theorem_SMR} now follows by the method of continuity together with a partition of the time interval $[t_0,T]$  into pieces $t_0<t_1<\dots<t_N =T$ of the length $T^*$ from Lemma \ref{l:estimates_small_interval}.

\begin{proof}[Proof of Theorem \ref{Theorem_SMR}]
We firstly remind ourselves of the uniform estimate  \eqref{Eq25} on the operator norm of $\mathcal{A}$. By the comments below \eqref{eq:bilaplacian_H_infty_calculus}, by \cite[Proposition 3.8]{AV19_QSEE_1}, it suffices to show that
	$\mathcal{A}\in \MRtaszh$ with 
	\begin{equation}
		\max\bigl\{C^{\det,0,p,\hat{\a}}_{\A}({t_0},T), C^{\stoc,0,p,\hat{\a}}_{\A}({t_0},T)\}\,\lesssim_{\lambda,\mu,T}\,1.
	\end{equation}
	This again can be deduced by \cite[Proposition 3.1]{AV21_max_reg_torus}, if we can provide a partition ${t_0}<t_1<\dots<t_N =T$ such that $\mathcal{A}\in \mathcal{SMR}_{p,\hat{\a}}(t_0,t_{1}) $ and $\mathcal{A}\in \mathcal{SMR}_{p,0}(t_i,t_{i+1}) $ for $i\ge 1$ with 
	\begin{equation}
		\max\bigl\{C^{\det,0,p,\a_i}_{\A}(t_i,t_{i+1}), C^{\stoc,0,p,\a_i}_{\A}(t_i,t_{i+1})\}\,\lesssim_{\lambda,\mu,T}\,1, \qquad \kappa_i \,=\, \hat{\kappa} \delta_{i0}
	\end{equation}
	for $i\in \{0,\dots, N-1\}$. The required a-priori estimate follows from Lemma \ref{l:estimates_small_interval} if we set $t_i=(t_0+iT^*)\wedge T$ for $i\ge 1$. Moreover, the existence of strong solutions  on the respective intervals $[t_i,t_{i+1}]$ follows from the method of continuity \cite[Proposition 3.13]{AV19_QSEE_2}, this time applied to the family of operators
	\begin{equation}
		\tilde{\mathcal{A}}_r(u)\,=\, r \mathcal{A}(u)\,+\, (1-r)\lambda\Delta^2u\,=\, 
		\div((ra+(1-r)\lambda)\nabla \Delta u  ), \qquad r\in [0,1],
	\end{equation}
	because the coefficients $a_r=ra+(1-r)\lambda$ satisfy the assumptions of Lemma \ref{l:estimates_small_interval}  with the same constants $\lambda,\mu$ as the original coefficient $a$.
\end{proof}
\subsection{Local well-posedness and blow-up criteria I -- Regularized problem}\label{Sec_truncated_problem}
In this subsection, we use the preceding findings to apply the framework for quasilinear stochastic evolution equations from \cite{AV19_QSEE_1,AV19_QSEE_2} to the regularized equation \eqref{Eq102}. To this end, we fix again compatible parameters $(p,\a,s,q)$ and a sequence $(\psi_k)_{k\in \N}$ subject to Assumptions \ref{Assumptions_coefficients} and \ref{Assumptions_noise_local} with $(s_\psi,q_\psi)=(s,q)$, and start with the definition of maximal local solutions to \eqref{Eq102}, similar to the one of \eqref{Eq101}. We remark that this definition deviates from the corresponding Definition \ref{Defi_Lpk} and Definition \ref{Defi_max} for  \eqref{Eq101} in which the solution is required to be positive.
\begin{definition}\label{Defi_Lpk_cutoff}
Let $(p,\a,s,q)$ be as in Assumption \ref{Assumptions_coefficients}. 
	Let $\sigma\colon \Omega \to [0,\infty]$ be a stopping time and $u\colon \llbracket0,\sigma\rrparenthesis \to X_1$ progressively measurable. Then the tuple $(u,\sigma)$ is a \emph{local $(p,\a,s,q)$-solution} to \eqref{Eq102}, if there exists a localizing sequence $0\le \sigma_l\nearrow \sigma$ of stopping times such that for all $l\in\N$, we have a.s.\
	\begin{align}
		&u\,\in \, L^p(0,\sigma_l, w_\kappa; X_1) \,\cap\, C([0,\sigma_l]; \Xap),\\
		&F^{(j)}(u)\in L^p(0,\sigma_l, w_\kappa; X_1),\quad
		G^{(j)}[u]\in L^p(0,\sigma_l, w_\kappa; \gamma( \ell^2(\N),X_{1/2})),
	\end{align}
and a.s.\ for all $t\in [0,\sigma_l]$
	\begin{align}
	\label{eq:stoch_integrated_eq_cutoff}
		u(t)\,-\, u(0)\,+\, \int_0^t A^{(j)}[u(r)](u(r))\, \dd r\,=\, \int_0^t F^{(j)}(u(r))\, \dd r
		\,+\, \int_0^t 
		G^{(j)}[u(r)]
		\, \dd W_r\,.
	\end{align}
\end{definition}

As for Definition \ref{Defi_Lpk}, all the integrals in  \eqref{eq:stoch_integrated_eq_cutoff} are well-defined due to the required integrability conditions.

\begin{definition}\label{Defi_max_cutoff}We call a local $(p,\a,s,q)$-solution $(u,\sigma)$  to \eqref{Eq102} \emph{maximal unique $(p,\a,s,q)$-solution}, if for every local $(p,\a,s,q)$-solution $(v,\tau)$ to \eqref{Eq102}, one has $\tau\leq \sigma$ a.s.\ and $u=v$ a.e.\ on $\llbracket 0,\sigma\wedge \tau\rro$.
\end{definition} 

The existence of a maximal unique $(p,\a,s,q)$-solution to \eqref{Eq102} follows from our preceding findings.

\begin{proposition}\label{Wellposedness_truncated_problem}
Let the Assumptions \ref{Assumptions_coefficients} and \ref{Assumptions_noise_local} with $(s_\psi,q_\psi)=(s,q)$ be satisfied.
	Let  $u_0\in L_{\mathscr{F}_0}^{0}(\Omega;\Xap)$. Then there exists a maximal unique $(p,\a,s,q)$-solution $(u,\sigma)$ to \eqref{Eq102} in the sense of Definition \ref{Defi_max_cutoff} such that a.s.\ $\sigma>0$ and
	\begin{equation}\label{Eq144}
	u\in H^{\theta,p}_{\loc}([0,\sigma), w_\kappa, X_{1-\theta}) \cap C((0,\sigma); \Xp)
	\end{equation}
	for all $\theta\in [0,\frac{1}{2})$, if  $p>2$. Additionally, for all $T<\infty$, the following blow-up criterion holds
	\begin{equation}\label{Eq30}
		\P \Bigl(
		\sigma<T\,,\, \sup_{t\in [0,\sigma)} \| u(t)\|_{ \Xap}<\infty	\Bigr)\,=\, 0.	
		\end{equation}
\end{proposition}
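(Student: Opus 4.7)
The plan is to cast \eqref{Eq102} into the abstract framework of \cite{AV19_QSEE_1, AV19_QSEE_2} and invoke the existence, uniqueness and blow-up results therein. The formulation \eqref{Eq119} already presents \eqref{Eq102} as a quasilinear stochastic evolution equation on $X_1 = H^{s+2,q}(\T^d)$ with base space $X_0 = H^{s-2,q}(\T^d)$. The structural assumption \cite[Assumption 3.1]{AV19_QSEE_1} is satisfied because $X_0$ and $X_1$ are UMD Banach spaces of type 2 and $(p,\kappa)$ satisfies \eqref{Eq1}, see the discussion below Assumption \ref{Assumptions_coefficients}. Moreover, by Remark \ref{Rem1}, the local Lipschitz and growth estimates from Lemmas \ref{Lemma_aj}--\ref{Lemma_gj} yield \cite[Hypothesis (H')]{AV19_QSEE_1} for the regularized coefficients $A^{(j)}, F^{(j)}, G^{(j)}$, in the subcritical regime recorded in \eqref{Eq69}.

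Next, I will verify the stochastic maximal regularity condition on the linearization. Given any $v \in \Xap$, the map $a := m_j(v)$ is $\Progress$-measurable as a function of time-and-randomness, satisfies $a \geq c_j$ for some $c_j > 0$ (since the cutoff in \eqref{eq:regularization_eta_j} enforces $m_j \geq \min\{1, \inf_{[1/j,\infty)} m\} > 0$), and $\|a\|_{\Xap} \lesssim 1 + \|v\|_{\Xap}$ by the local Lipschitzianity developed in Lemma \ref{Lemma_aj} applied in the space $\Xap = B^{s+2-4(1+\kappa)/p}_{q,p}(\T^d)$. Hence Theorem \ref{Theorem_SMR} applies and gives $A^{(j)}[v] \in \MRtash$. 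In conjunction with Hypothesis (H'), this is exactly the input needed to invoke \cite[Theorem 4.7]{AV19_QSEE_1}, which yields a maximal unique $(p,\kappa,s,q)$-solution $(u,\sigma)$ to \eqref{Eq102} with $\sigma > 0$ a.s.\ and the time-regularity \eqref{Eq144}; the continuity $u \in C((0,\sigma); \Xp)$ for $p > 2$ follows as in \cite[Proposition 4.9]{AV19_QSEE_1} from the instantaneous removal of the time-weight $w_\kappa$ combined with the trace embedding of anisotropic spaces.

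For the blow-up criterion \eqref{Eq30}, I will apply \cite[Theorem 4.9]{AV19_QSEE_2} (or \cite[Theorem 6.3]{AV19_QSEE_2} in the case $p=q=2$, $\kappa=0$). The key point is that the strict inequality in \eqref{Eq69} places us squarely in the subcritical regime, so the criterion \cite[Theorem 4.9]{AV19_QSEE_2} applies and gives
\[
\P\bigl(\sigma < T, \, \sup_{t\in[0,\sigma)} \|u(t)\|_{\Xap} < \infty\bigr) = 0.
\]
The main obstacle in the argument is not conceptual but technical: one must make sure that Theorem \ref{Theorem_SMR} is applicable uniformly along any trajectory $v$ that stays bounded in $\Xap$. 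Since the bound on $\|a\|_{\Xap}$ provided by the composition estimates depends only on $\|v\|_{\Xap}$, and since the positivity $a \geq c_j > 0$ is guaranteed by the cutoff independently of $v$, this uniformity holds, and the abstract theory from \cite{AV19_QSEE_1, AV19_QSEE_2} delivers all the claims.
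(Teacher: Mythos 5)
Your proof correctly identifies the abstract framework and the inputs (Hypothesis (H'), stochastic maximal regularity via Theorem \ref{Theorem_SMR}, subcriticality) for existence, uniqueness, and the regularity \eqref{Eq144}. However, there is a genuine gap in the blow-up part of your argument.

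You claim that subcriticality and \cite[Theorem 4.9]{AV19_QSEE_2} directly deliver \eqref{Eq30}, i.e.\ $\P(\sigma<T,\,\sup_{t\in[0,\sigma)}\|u(t)\|_{\Xap}<\infty)=0$. This is not what the abstract theory gives you out of the box. In the subcritical regime, \cite[Theorem 4.9 (2)]{AV19_QSEE_2} yields the \emph{weaker} criterion
\begin{equation}
\P\bigl(\sigma<T,\,\lim_{t\nearrow\sigma}u(t)\text{ exists in }\Xap\bigr)=0,
\end{equation}
and since the event $\{\lim\text{ exists}\}$ is strictly contained in $\{\sup<\infty\}$, the sup-criterion \eqref{Eq30} is a strictly stronger assertion that requires additional work to establish. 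The paper's proof devotes three dedicated steps to this upgrade: an $\epsilon$-linearity estimate of $F^{(j)}$ and $G^{(j)}$ in $X_1$, an a-priori $L^p(w_\kappa;X_1)$-bound on $u$ up to the hitting time $\tau$ of the ball of radius $M$ in $\Xap$, and crucially an application of Theorem \ref{Theorem_SMR} to the linear problem with the \emph{merely measurable-in-time coefficient} $a:=\one_{\llbracket 0,\tau\rro}m_j(u)+\one_{\llbracket\tau,T\rr}$. This produces $u\in L^p(\O;C([0,\tau];\Xap))$, showing that $\lim_{t\nearrow\sigma}u(t)$ exists in $\Xap$ a.s.\ on $\Omega_M\subseteq\{\tau=\sigma\}$, so that the weaker ``limit exists'' criterion becomes applicable and forces $\P(\Omega_M)=0$. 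Your sketch omits this conversion entirely; without it, you have only proved the criterion about limits, not \eqref{Eq30}. Note also that this is precisely where the paper exploits its main new analytic ingredient (SMR for thin-film-type operators with time-measurable coefficients, emphasized in the introduction and in Subsection \ref{Sec_SMR}), so the gap is not merely technical bookkeeping but the point where the paper's machinery is actually indispensable.

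As a more minor remark, for the SMR verification your invocation ``given any $v\in\Xap$, set $a:=m_j(v)$'' should instead be applied to the localized initial data $u_{0,n}=\one_{\{\|u_0\|_{\Xap}\le n\}}u_0$, which is $\F_0$-measurable, since \cite[Theorem 4.7]{AV19_QSEE_1} requires SMR of the operator frozen at the (localized) initial condition, and the needed bounds $\inf a\geq c_j>0$, $\|a\|_{\Xap}<\infty$ follow from \cite[Theorem 1, p.373]{Runst_Sickel}.
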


Note that the blow-up criterion in \eqref{Eq30} is stronger than those in \cite{AV19_QSEE_2}, as it only involves a time supremum norm, whereas the ones in \cite[Theorem 4.9]{AV19_QSEE_2} require the existence of a limit as $t \nearrow \sigma$.
This improvement is enabled by Theorem \ref{Theorem_SMR}, which imposes no assumptions on the time regularity of the coefficients. Our arguments are independent of the specific structure of the thin-film operators, allowing the blow-up criterion \eqref{Eq30} to extend, with minimal changes, to a broader class of quasilinear SPDEs.

To keep this paper self-contained, we derive \eqref{Eq30} directly from the blow-up criterion \cite[Lemma 5.4]{AV19_QSEE_2}, restated below in \eqref{Eq30_stronger_proof}, which is the most elementary among those presented in \cite{AV19_QSEE_2} as it follows directly from the local well-posedness of \eqref{Eq119} initialized at an arbitrary stopping time.

\begin{proof}[Proof of Proposition \ref{Wellposedness_truncated_problem}]
Let $T<\infty$ be arbitrary. 
	We check the assumptions of \cite[Theorem 4.7]{AV19_QSEE_1} and define the localization 
	\[u_{0,n} =\mathbf{1}_{\{\|u_0\|_{\Xap} \le n\}}u_0,\qquad n\in \N\] 
	of the initial value. Since $m_j\colon \R\to \R$ is smooth, we deduce that $m_j(u_{0,n})$ lies in $ L_{\mathscr{F}_0}^\infty (\Omega;\Xap)$ by Proposition \ref{prop:comp}~\eqref{item:prop_comp_B} and moreover uniformly bounded away from $0$. Thus, Theorem \ref{Theorem_SMR} applies to the operator $A^{(j)}[u_{0,n}](f)= \div(m_j(u_{0,n} ) \nabla \Delta f )$ and yields that
	\begin{equation}
		A^{(j)}[u_{0,n}]\,\in \, \MRtzeroa
	\end{equation}
	for each $n\in \N$. Therefore, the assumption regarding stochastic maximal regularity holds, and we convinced ourselves already in Remark \ref{Rem1} that \cite[Hypothesis (H')]{AV19_QSEE_1} is satisfied. 
Furthermore, due to Theorem \ref{Theorem_SMR} and \cite[Remark 5.6]{AV19_QSEE_2}, our definition of maximal unique $(p,\a,s,q)$-solution to \eqref{Eq102} is equivalent to \cite[Definition 4.4]{AV19_QSEE_1} and therefore an application of \cite[Theorem 4.7]{AV19_QSEE_1} yields the existence of a maximal unique $(p,\a,s,q)$-solution $(u,\sigma)$ to \eqref{Eq102} and the regularity assertions. 
Next, we prove the blow-up criterion \eqref{Eq30}. 
Our starting point is the blow-up criterion which follows from \cite[Lemma 5.4]{AV19_QSEE_2} applied to the regularized SPDE \eqref{Eq102} in the abstract form \eqref{Eq119}:
\begin{equation}
\label{Eq30_stronger_proof}
		\P \Bigl(
		\sigma<T\,,\, \lim_{t\nearrow \sigma}  u(t)\text{ exists in }\Xp,\,\|u\|_{L^p(0,\sigma,w_{\a};X_1)}+ \mathcal{N}^\a (u;\sigma)<\infty\Bigr)\,=\, 0,
\end{equation}
where 
$
\mathcal{N}^\a (u;\sigma):= \|F(u)\|_{L^p(0,\sigma,w_{\a};X_0)}+\|G(u)\|_{L^p(0,\sigma,w_{\a};\g(\ell^2(\N),X_{1/2}))}.
$
Before going further, let us comment on the technical requirements needed to apply \cite[Lemma 5.4]{AV19_QSEE_2}. The condition from \cite[Assumption 4.5]{AV19_QSEE_2} regarding stochastic maximal regularity follows as before from Theorem \ref{Theorem_SMR}. Following again Remark \ref{Rem1} we can choose $\beta_1=\vp_1$ and $\rho_2=1$ to estimate the nonlinearity $G^{(j)}$ and therefore \cite[Assumption 4.7]{AV19_QSEE_2} is satisfied by \cite[Remark 4.8]{AV19_QSEE_2}.
	
To conclude the proof, we show that \eqref{Eq30_stronger_proof}  implies the seemingly stronger statement of \eqref{Eq30}. 
To this end, we use the stochastic maximal regularity estimates of Theorem \ref{Theorem_SMR} that hold for coefficients with measurable dependence on time. 
To prove \eqref{Eq30}, it is enough to show that, for all $M<\infty$,
\begin{equation}\label{Eq30_Mclaim}
		\P (\O_M)\,=\, 0\quad \text{ where }\quad \O_M\,:=\,\Bigl\{
		\sigma<T\,,\, \sup_{t\in [0,\sigma)} \| u(t)\|_{ \Xap}\leq M \Bigr\}.
	\end{equation}
For the sake of clarity, we split the proof of \eqref{Eq30_Mclaim} into three steps. Moreover, we define $\tilde{u}_0=\mathbf{1}_{\{\|u_0\|_{\Xap} \le M\}}u_0$ which coincides with $u_0$ on $\Omega_M$ and satisfies $\tilde{u}_0\in L^p_{\F_0}(\O;\Xap)$.

\emph{$(\varepsilon$-linearity in $X_1$ for the nonlinearities$)$ For all $\varepsilon>0$ there exists $C_{\varepsilon}>0$ such that, for $v\in X_1$,
\begin{equation}
\label{eq:blow_up_varepsilon_proof}
\|F^{(j)}(v)\|_{X_0}\,+\,\|G^{(j)}[v]\|_{\g(\ell^2(\N),X_{1/2})}\,\leq \, \varepsilon\|v\|_{X_1} \,+\, 
C_{\varepsilon}(1+ \|v\|_{\Xap}^{\delta})
\end{equation}
where $\delta>0$ is independent of $\varepsilon>0$.}
The estimate \eqref{eq:blow_up_varepsilon_proof} is a straightforward consequence of the \emph{subcriticality} of the nonlinearity for the regularized thin-film equation \eqref{Eq102}, see Remark \ref{Rem1}. Indeed, by the reiteration theorem for real interpolation (see, e.g., \cite[Theorem 3.5.3]{BeLo}), for all $\beta\in (1-\frac{1+\a}{p},1)$ one has 
$$
\|v\|_{X_{\beta}}\,\lesssim \,\|v\|_{\Xap}^{1-\theta}\|v\|_{X_1}^{\theta} \ \ \text{ for } v\in X_1, \ \ \  \text{ with }\ \ \  \theta\,=\,\frac{\beta-1+{(1+\a)}/{p}}{{(1+\a)}/{p}}.
$$
One can readily check that \eqref{eq:blow_up_varepsilon_proof} follows from the above and the subcriticality condition \eqref{Eq69}.

\emph{For all $M<\infty$, it holds that
\begin{equation}\label{Eq70}
\E \|u\|_{L^p(0,\tau,w_{\a};X_1)}^p\,<\,\infty, \ \ \ \text{ where } \ \ \ \tau\,:=\,\inf\big\{t\in [0,\sigma)\,:\,   \|u\|_{\Xap}\geq M \big\}\wedge T
\end{equation}
with $\inf\emptyset:=\sigma$ and in particular a.s.\ $\mathcal{N}^\a (u;\tau)<\infty$.}

Note that the last assertion follows from the first one and \eqref{eq:blow_up_varepsilon_proof}, and therefore it suffices to show \eqref{Eq70}.
To this end, let $\sigma_n$ be the stopping time given by 
\begin{align}
\sigma_{n}&:= \inf\big\{t\in [0,\sigma)\,:\,  \|u\|_{L^p(0,t,w_{\a};X_1)}\geq n \big\}\wedge T,
\end{align} 
where, as above, $\inf\emptyset:=\sigma$. Now the idea is to apply stochastic maximal regularity estimates of Theorem \ref{Theorem_SMR} to the problem \eqref{Eq102}. To this end, note that 
$u\in L^p(\llbracket 0,\sigma_n\wedge \tau\rrbracket ,w_{\a};X_1)$ solves, on $\llbracket 0,\tau\rro$,
 \begin{equation}
	\label{eq:diffAB_s_1}
	\begin{cases}
		\dd u\, +\,\div(a\nabla \Delta u)\,\dd t\,=\,\one_{\llbracket 0,\tau\rro}F^{(j)}(u)\, \dd t+ \one_{\llbracket 0,\tau\rro}G^{(j)}[u]\, \dd W,\\
		u(0)\,=\,\tilde{u}_{0},
	\end{cases}
\end{equation}
 where $a:=\one_{\llbracket 0,\tau\rro }m_j(u)+ \one_{\llbracket \tau,T\rr}$. Recall that $\Xap\embed L^{\infty}(\T^d)$ by \eqref{Eq104}. Hence, by Proposition \ref{prop:comp}~\eqref{item:prop_comp_B},
$$
\essinf_{[0,T]\times \O}a\,\gtrsim_j \,1 \qquad \text{ and }\qquad 
\esssup_{[0,T]\times \O}\|a\|_{\Xap}\,<\,\infty.
$$
Combining \eqref{eq:diffAB_s_1},
Theorem \ref{Theorem_SMR} and \cite[Proposition 3.12(b)]{AV19_QSEE_1}, we obtain the existence of a constant $C^{(0)}_M>0$ such that, for all $n\geq 1$, 
\begin{align*}
\E \|u\|_{L^p(0,\tau\wedge \sigma_n,w_{\a};X_1)}^p 
\,&\leq\, C^{(0)}_M\E\|\tilde{u}_0\|_{\Xap}^p \,
+ \,C^{(0)}_M \E\int_0^{\tau\wedge \sigma_n} \big(\|F^{(j)}(u)\|_{X_0}^p+ \|G^{(j)}[u]\|^p_{\g(\ell^2(\N),X_{1/2})}\big) \,w_\a\dd t\\
&\leq\, C^{(0)}_M\E\|\tilde{u}_0\|_{\Xap}^p\, 
+ \,\tfrac{1}{2} 
\E \|u\|_{L^p(0,\tau\wedge \sigma_n,w_{\a};X_1)}^p
\,+\, C_{\delta,M}^{(1)}
\end{align*}
where  we applied \eqref{eq:blow_up_varepsilon_proof} with $\varepsilon=(2C^{(0)}_M)^{-1}$ and used that $\sup_{t\in [0,\tau)}\|u\|_{\Xap}\leq M$. The above implies
\begin{equation}
\label{eq:LpX_1_part_estimate_proof}
\E \|u\|_{L^p(0,\tau\wedge \sigma_n,w_{\a};X_1)}^p \,\leq\, C^{(0)}_M\E\|\tilde{u}_0\|^p_{\Xap} \,+\,C_{\delta,M}^{(1)}.
\end{equation}
The claimed estimate \eqref{Eq70} now follows by letting $n\to \infty$ and using Fatou's lemma.

\emph{Let $\tau$ be as in \eqref{Eq70}. Then, a.s.,}
\begin{equation}
\label{eq:sup_lim_Xp_proof_blow_up}
\lim_{\tau\nearrow \tau} u(t)\text{\emph{ exists in }}\Xp.
\end{equation}
Here, we need another localization argument. For all $k\geq 1$, let 
$$
\tau_k :=\inf \{t\in [0,\tau)\,:\, \mathcal{N}^\a(u;t)\geq k\} \ \ \  \text{ with } \ \ \ \inf\emptyset:=\tau, 
$$
and where $\tau$ is as in \eqref{Eq70}. From the comments below \eqref{Eq70}, it follows that $\lim_{k\to \infty}\P(\tau_k=\tau)=1$. In particular, to obtain \eqref{eq:sup_lim_Xp_proof_blow_up} it is enough to show that
\begin{equation}
\label{eq:sup_lim_Xp_proof_blow_up_new}
\lim_{\tau\nearrow \tau_k} u(t)\text{ exists in }\Xp \ \text{ for all } k\geq 1,
\end{equation}
for which we fix $k\geq 1$ in the following. Similar to \eqref{eq:diffAB_s_1}, let us consider 
\begin{equation}
	\label{eq:diffAB_s_1_k}
	\begin{cases}
		\dd v\, +\,\div(a\nabla \Delta v)\,\dd t\,=\,\one_{\llbracket 0,\tau_k\rro}F^{(j)}(u)\, \dd t+ \one_{\llbracket 0,\tau_k\rro}G^{(j)}[u]\, \dd W,\\
		u(0)\,=\,\tilde{u}_{0},
	\end{cases}
\end{equation}
 where, again, $a:=\one_{\llbracket 0,\tau\rro }m_j(u)+ \one_{\llbracket \tau,T\rr}$. 
Now, from Theorem \ref{Theorem_SMR} (see also Definition \ref{def:SMRgeneralized}\eqref{it:SMR_regularity_0}), there exists a unique solution $v\in L^p((0,T)\times \O,w_{\a};X_1)$ satisfying, a.s., 
\begin{equation}
\label{eq:regularity_path_v_proof}
\textstyle
v\in \bigcap_{\theta\in [0,1/2)} H^{\theta,p}(0,T,w_{\a};X_{1-\theta})\subseteq C((0,T];\Xp),
\end{equation}
where the embedding follows from the instantaneous regularization part of the trace embedding in Proposition \ref{prop:tracespace}\eqref{it:trace_without_weights_Xp}.
Now, $u|_{\llbracket 0,\tau_k\rro}\in L^p(\llbracket 0,\tau_k\rro,w_{\a};X_1)$ is also a solution to \eqref{eq:diffAB_s_1_k} on the stochastic interval $\llbracket 0,\tau_k\rro$. From Definition \ref{def:SMRgeneralized} and Theorem \ref{Theorem_SMR}, it follows that $u=v$ a.e.\ on $\llbracket 0,\tau_k\rro$, which together with \eqref{eq:regularity_path_v_proof} implies \eqref{eq:sup_lim_Xp_proof_blow_up_new} as desired.

\emph{Proof of \eqref{Eq30_Mclaim}}. Let $\tau$ be defined as in \eqref{Eq70}.  From  \eqref{eq:sup_lim_Xp_proof_blow_up} we know that $\lim_{t\nearrow \sigma} u(t)$ exists in $\Xap$ a.s.\ on $\{\tau=\sigma\}$. Moreover, from \eqref{Eq70} and the comments below it, we also have $\|u\|_{L^p(0,\sigma,w_{\a};X_0)}+ \mathcal{N}^\a (u;\sigma)<\infty$ a.s.\ on $\{\tau=\sigma\}$. 
Since $\{\tau=\sigma\}\supseteq \O_M$ by definition of $\tau$, we have 
\begin{align*}
\P(\O_M)
&=\,\P\Big(\O_M \cap \Big\{
		\sigma<T\,,\, \lim_{t\nearrow \sigma}  u(t)\text{ exists in }\Xp,\, \|u\|_{L^p(0,\sigma,w_{\a};X_1)}+ \mathcal{N}^\a (u;\sigma)<\infty\Big\}\Big)\\
&\leq\, \P \Bigl(
		\sigma<T\,,\, \lim_{t\nearrow \sigma}  u(t)\text{ exists in }\Xp,\, \|u\|_{L^p(0,\sigma,w_{\a};X_1)}+ \mathcal{N}^\a (u;\sigma)<\infty\Bigr)\,\stackrel{\eqref{Eq30_stronger_proof}}{=}\, 0.
\end{align*}
Thus \eqref{Eq30_Mclaim} is proved and the claimed blow-up criterion \eqref{Eq30} follows from the arbitrariness of $M$.
\end{proof}

\subsection{Instantaneous regularization and blow-up criteria II -- Regularized problem}\label{Sec_reg}
This subsection is dedicated to understanding how the regularity of the noise affects the regularity of solutions to the regularized problem \eqref{Eq102} and its consequences in terms of blow-up criteria.

The following result is the key ingredient in the proof of Proposition \ref{Prop_regularization}.

\begin{proposition}[Instantaneous regularization -- Regularized problem]
\label{Prop_regularization_trunc}
Let $(p,\a,s,q)$ be as in Assumption \ref{Assumptions_coefficients}. 
	Let $u_0\in L_{\mathscr{F}_0}^{0}(\Omega;\Xap)$ and $(u,\sigma)$ the maximal unique $(p,\a,s,q)$-solution to \eqref{Eq102} provided by Proposition \ref{Wellposedness_truncated_problem}. 
	Suppose that Assumption \ref{Assumptions_noise_local} holds for some $s_{\psi}\geq s$ and all $q_\psi\in [2,\infty)$.
	Then, a.s.,
	\begin{equation}
	\label{eq:improved_regularity_regularized_problem}
		u\,\in\, H_{\loc}^{\theta,r}(0,\sigma; H^{2+s_{\psi}-4\theta,\zeta}(\Tor^d)) \ \text{
	for all $\theta\in [0,\tfrac{1}{2})$ and $r, \zeta\in [2,\infty)$.}
	\end{equation}
		In particular $u\,\in\,C_{\loc}^{\theta_1,\theta_2}((0,\sigma)\times \T^d)$ a.s.\ for all $\theta_1\in [0,\frac{1}{2})$ and $\theta_2\in (0,2+s_{\psi})$.
\end{proposition}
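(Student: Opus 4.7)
The strategy is a bootstrap argument: restart \eqref{Eq102} at an arbitrary positive time $t_0$ with progressively smoother parameters, exploiting the instantaneous gain of smoothness $u(t_0)\in \Xp=B^{s+2-4/p}_{q,p}(\T^d)$ (strictly smoother than the starting trace space $\Xap$) encoded in the continuity statement of \eqref{Eq144}. This is the scheme of \cite[Theorem 6.3]{AV19_QSEE_2} specialized to our setting. The H\"older conclusion will follow from \eqref{eq:improved_regularity_regularized_problem} via the Sobolev embeddings $H^{\theta,r}(I;Y)\embed C^{\theta-1/r}(I;Y)$ and $H^{\sigma,\zeta}(\T^d)\embed C^{\sigma-d/\zeta}(\T^d)$ applied with $r,\zeta$ large.

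\textbf{Restart mechanism.} For any $t_0\in (0,\sigma)$ and any admissible quadruple $(p',\a',s',q')$ (cf.\ Assumption \ref{Assumptions_coefficients}) with $s'\le s_\psi$ and satisfying the Sobolev embedding
\begin{equation}\label{eq:plan_embed}
(s+2-4/p)-d/q \,\geq\, (s'+2-4(1+\a')/p')-d/q',
\end{equation}
Lemmas \ref{Lemma_aj}--\ref{Lemma_gj} and Theorem \ref{Theorem_SMR} all apply in the new function-space scale (Assumption \ref{Assumptions_noise_local} holds for $s'\le s_\psi$ and arbitrary $q_\psi$). Proposition \ref{Wellposedness_truncated_problem} applied at initial time $t_0$ then yields a unique maximal $(p',\a',s',q')$-solution $(u',\sigma')$ starting from $u(t_0)$. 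Using \eqref{Eq144} on $(t_0,\sigma)$ one verifies that $u|_{[t_0,\sigma)}$ is itself a local $(p',\a',s',q')$-solution; uniqueness (Definition \ref{Defi_max_cutoff}) identifies it with $u'$ and transfers the regularity \eqref{Eq144} in the new class back to the original process $u$.

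\textbf{Iteration up to the terminal class.} Starting from $(p,\a,s,q)$, construct a finite chain of admissible quadruples $(p_n,\a_n,s_n,q_n)$ with $s_n$ strictly increasing and each consecutive link satisfying \eqref{eq:plan_embed}. Choosing $\a_n$ close to $p_n/2-1$ in \eqref{Eq1}, the admissible increment $s_{n+1}-s_n$ can be fixed at a positive constant, modulo error terms of order $1/p_n+1/q_n$ which are suppressed by taking $p_n,q_n$ large. After finitely many steps one reaches the terminal class $(r,0,s_\psi,\zeta)$ for any prescribed $r,\zeta\in[2,\infty)$, with \eqref{Eq104}--\eqref{Eq103} guaranteed by further enlarging $r,\zeta$. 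Reading \eqref{Eq144} in this terminal class delivers \eqref{eq:improved_regularity_regularized_problem} on each subinterval $[t_0,\sigma)$, hence on $(0,\sigma)$ by arbitrariness of $t_0$.

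\textbf{Main obstacle.} The delicate point is jointly preserving the admissibility condition \eqref{Eq103}---which is binding as long as $s_n<1$---together with \eqref{eq:plan_embed} at each step: this is enabled precisely by the smoothness gap $4\a_n/p_n$ between $\Xp$, where $u(t_n)$ actually lies, and the weighted trace space $X^{\mathrm{Tr}}_{\a_n,p_n}=B^{s_n+2-4(1+\a_n)/p_n}_{q_n,p_n}(\T^d)$ at which Assumption \ref{Assumptions_coefficients} is tested. A subtler verification is the transfer-of-status claim that $u|_{[t_0,\sigma)}$ qualifies as a local $(p',\a',s',q')$-solution; this exploits the full strength of \eqref{Eq144}, in particular the fact that the temporal weight $w_\a$ becomes irrelevant away from $t=0$ so that the required $L^{p'}$-in-time integrability in $X'_1=H^{s'+2,q'}(\T^d)$ can be obtained by interpolation between the already known regularity $u\in H^{\theta,p}_{\loc}([0,\sigma),w_\kappa;X_{1-\theta})$ and the embedding \eqref{eq:plan_embed}.
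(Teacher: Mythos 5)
The overall bootstrap plan — restart at positive times to exploit the smoothness gap in \eqref{Eq144} and iterate through a chain of admissible parameter sets until reaching $(r,0,s_\psi,\zeta)$ — is in line with the paper's strategy, and the paper's proof does proceed by bootstrapping (via \cite[Corollary~6.5, Theorem~6.3]{AV19_QSEE_2}) in exactly the three directions you use: the time-integrability exponent, the spatial integrability exponent, and the spatial smoothness. However, your ``transfer-of-status'' step contains a genuine gap that cannot be repaired by the interpolation argument you propose.

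You assert that $u|_{[t_0,\sigma)}$ qualifies as a local $(p',\a',s',q')$-solution and that this follows from \eqref{Eq144} by interpolation. This is false whenever $s'>s$. Being a local $(p',\a',s',q')$-solution requires, per Definition~\ref{Defi_Lpk_cutoff}, that $u\in L^{p'}(t_0,\sigma_l,w_{\a'}^{t_0};X_1')$ with $X_1'=H^{s'+2,q'}(\T^d)$. The only regularity \eqref{Eq144} provides is $u\in H^{\theta,p}_{\loc}([0,\sigma),w_\kappa;X_{1-\theta})$ with $X_{1-\theta}=H^{s+2-4\theta,q}(\T^d)$ and $u\in C((0,\sigma);\Xp)$. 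The best available spatial smoothness among these is $X_1=H^{s+2,q}(\T^d)$ at $\theta=0$; for any $\theta>0$ or in the trace space it is strictly less. Since $s'>s$, the space $X_1'$ is of strictly higher differentiability than every space in the known regularity scale, so no interpolation among the spaces appearing in \eqref{Eq144} can produce membership in $L^{p'}_{\loc}(X_1')$. The restart therefore cannot be validated as stated, and the uniqueness comparison in the stronger class never gets off the ground. Note also that the reverse of your claim would make this Proposition self-justifying: the paper's proof of Corollary~\ref{cor_blow_up_regularized} explicitly invokes Proposition~\ref{Prop_regularization_trunc} in order to conclude that $u|_{[\varepsilon,\sigma)}$ is a local solution in a higher class, precisely because this is not otherwise available.

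The correct mechanism reverses the comparison. One constructs, via Proposition~\ref{Wellposedness_truncated_problem} in the new class $(p',\a',s',q')$ and the trace embedding $\Xp\embed X^{\prime\,\Tr}_{\a',p'}$ (which holds since $\a'>0$ allows the new, weighted trace space to be \emph{larger} than the unweighted one), a maximal $(p',\a',s',q')$-solution $(u',\sigma')$ from $u(t_0)$. This $u'$ has by construction the desired $L^{p'}_{\loc}(X_1')$-regularity from stochastic maximal regularity — it is not inherited from $u$. Then, using that $X_1'\embed X_1$ and the corresponding trace embeddings, one verifies that $u'$ is a local $(p,\a,s,q)$-solution at initial time $t_0$, and identifies $u'$ with $u$ on the common interval by uniqueness in the \emph{weaker} class (together with the filtration shift bookkeeping). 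The instantaneous-regularization theorems \cite[Theorem~6.3, Corollary~6.5]{AV19_QSEE_2} invoked by the paper encapsulate exactly this argument, and in particular the condition $Y_r^\Tr\embed\hat Y_{\hat\a,\hat r}^\Tr$ is checked in the direction of \emph{losing} trace-space regularity, consistent with what I describe. Your condition \eqref{eq:plan_embed} is the correct trace embedding; the error is entirely in which object is declared a solution in which class.

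A smaller omission: the $p=2$ (hence $\a=0$, $q=2$) starting point is not covered. There the solution has no supplementary $H^{\theta,p}_{\loc}$-regularity beyond $L^2_{\loc}(H^{s+2})\cap C(H^s)$, and one must first pass — by a genuine interpolation step as in the paper's first reduction — to a class with $p>2$ and $\a>0$ before the bootstrap can start.
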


\begin{proof}
The last assertion follows from \eqref{eq:improved_regularity_regularized_problem} and Sobolev embeddings. Thus, below we only prove \eqref{eq:improved_regularity_regularized_problem}.
The proof of \eqref{eq:improved_regularity_regularized_problem} can be obtained via the results in \cite[Section 6]{AV19_QSEE_2}. For the reader's convenience, however, we provide a self-contained proof based on the stochastic maximal $L^p$-regularity result of Theorem \ref{Theorem_SMR} and the instantaneous regularization of weighted anisotropic spaces, see Proposition \ref{prop:tracespace}\eqref{it:trace_without_weights_Xp}, which we 
structure as follows:
\begin{itemize}
\item Reduction to the case of a positive weight $\a>0$. 
\item Bootstrap time integrability.
\item Bootstrap spatial integrability.
\item Bootstrap spatial smoothness.
\end{itemize}	
The reduction in the first step is convenient since for positive $\kappa$ there is an immediate gain of regularity between time $0$ and $\epsilon$, cf. Proposition \ref{prop:tracespace}\eqref{it:trace_with_weights_Xap} and  \eqref{it:trace_without_weights_Xp}, respectively. 
	
	\textit{We can assume that $p>2$ and  $\kappa>0$.} 
	In this step, we assume that \eqref{eq:improved_regularity_regularized_problem} is valid in the case $p>2$ and $\a>0$ and show that it carries over to the general case $p\ge 2$, $\a\ge 0$. To this end, it suffices to show that if either $p=2$ or $\a=0$, there exists a new set of parameters $(\hat{p}, \hat{\kappa},\hat{s}, q)$ with $\hat{p}>2$ and $\hat{\a}>0$, such that $(u,\sigma)$ coincides with the maximal unique $( \hat{p},\hat{\kappa}, \hat{s}, q)$-solution to \eqref{Eq102} in the sense of Definitions \ref{Defi_Lpk_cutoff} and \ref{Defi_max_cutoff}. 
Let us start with the case $p=2$. By \eqref{Eq1} in Assumption \ref{Assumptions_coefficients}, this forces $\kappa=0$ and $q=2$.
By standard interpolation inequalities, a.s.,
\begin{align}
\label{eq:hat_mixed_derivatives_proof_regularity}
u
&\, \in\, L^2_{\loc}([0,\sigma);H^{s+2}(\T^d))\cap C([0,\sigma);H^{s}(\T^d))\\
\nonumber
&\embed 
L^{2/\theta}_{\loc}([0,\sigma);H^{s+2\theta}(\T^d))
\,{\embed}\,L^{2/\theta}_{\loc}([0,\sigma),w_{\hat{\a}};H^{s+2\theta}(\T^d)),
\end{align}
where $\theta\in (0,1)$ and in the last embedding we used that the weight $w_\a$ is bounded. 
Now, by continuity, there exist $\hat{\theta}\in (0,1)$ and $\hat{\a}>0$ for which Assumption \ref{Assumptions_coefficients} with $(p,\a,s)$ replaced by $(\hat{p},\hat{\a},\hat{s})$ holds, where $\hat{p}\,:=\,2/\hat{\theta}$, and that 
$
\hat{s}+2-4/\hat{p}<s.
$
The previous condition ensures that $H^{s}(\T^d)\embed B^{s+2-4\frac{1+\hat{\a}}{\hat{p}}}_{2,\widehat{p}}(\T^d)$. Thus, by  \eqref{eq:hat_mixed_derivatives_proof_regularity}, a.s., 
$$
u\, \in\, L^{\hat{p}}_{\loc}([0,\sigma);H^{\hat{s}+2}(\T^d))\cap C([0,\sigma);B^{s+2-4\frac{1+\hat{\a}}{\hat{p}}}_{2,\widehat{p}}(\T^d)).
$$
In particular $(u,\sigma)$ is a local $(\hat{p},\hat{\a},\hat{s},2)$-solution to \eqref{Eq102}. Let $(\hat{u},\hat{\sigma})$ be the maximal unique $(\hat{p},\hat{\a},\hat{s},2)$-solution to \eqref{Eq102} provided by Proposition \ref{Wellposedness_truncated_problem}. By maximality (cf., Definition \ref{Defi_max_cutoff}), we obtain 
\begin{equation}
\sigma\,\leq\, \hat{\sigma}\ \text{ a.s.\ }\qquad \text{ and }\qquad u \,=\,\hat{u} \ \text{ a.e.\ on }\llbracket 0,\sigma\rro.
\end{equation}
Hence, the claim of this step follows in this case, since the regularity of the $(\hat{p},\hat{\a},\hat{s},2)$-solution $(\hat{u},\hat{\sigma})$ transfers to $(u,\sigma)$. 
For completeness, let us show that such improved regularity also yields $\sigma=\widehat{\sigma}$ a.s. Indeed, if \eqref{eq:improved_regularity_regularized_problem} holds with $(u,\sigma)$ replaced by $(\hat{u},\hat{\sigma})$, it follows that 
$$
u\,=\,\hat{u}\,\in\, C((0,\sigma];H^s(\T^d)) \ \text{ a.s.\ on }\{\sigma<\widehat{\sigma}\}.
$$ 
Thus, for all $T<\infty$, 
$$
\P(\sigma<\hat{\sigma}\,,\, \sigma<T)\,\leq\,  \P\Big(\sigma<T\,,\, \sup_{t\in [0,\sigma)} \|u(t)\|_{H^s(\T^d)}<\infty\Big)\,=\,0,
$$
where the last equality follows from \eqref{Eq30} with $p=2$ and $\a=0$. 
The arbitrariness of $T<\infty$ yields $\sigma=\hat{\sigma}$ a.s.\ on $\{\sigma<\infty\}$. Since $\sigma\leq \hat{\sigma}$ a.s., it follows that $\sigma=\hat{\sigma}$ a.s., as desired.

For the case that $p>2$ and $\a=0$, the same procedure works by passing to the new parameters $(p,\hat{\a},s,q)$ with $\hat{\a}$ slightly larger than $0$.

	\emph{Temporal integrability.} We continue with the initial set of parameters $(p, \kappa,s,q)$ and can assume by the previous step that $p>2$ and $\kappa>0$. Here, we show that
	\begin{equation}
	\label{eq:temporal_regularity}
	\textstyle
	u\in \bigcap_{\theta\in [0,1/2)} H^{\theta,r}_{\loc}(0,\sigma;X_{1-\theta}) \text{ a.s.\ for all } r\in (2,\infty),
	\end{equation}
	for which it suffices to prove the existence of $\delta>0$ such that, for all $\zeta\in [p,\infty)$ and $\varepsilon>0$, 
	\begin{align}
	\label{eq:claim_step1_regularization}
	\textstyle
	u\in \bigcap_{\theta\in [0,1/2)} H^{\theta,\zeta} ([\varepsilon,\sigma);X_{1-\theta}) \ \ \text{a.s.\ on }\Dom_\varepsilon\qquad \Longrightarrow& \\
	\textstyle 
	u\in \bigcap_{\theta\in [0,1/2)} H^{\theta,\zeta+\delta} ([\varepsilon,\sigma),w_\a^\varepsilon;X_{1-\theta}) \ \ \text{a.s.\ on }\Dom_\varepsilon&,
	\end{align}
	where $\Dom_\varepsilon=\{\sigma>\varepsilon\}$ and $w_\a^{\varepsilon}(t)=|t-\varepsilon|^\a$ is the power weight centered in $\varepsilon$, see Subsection \ref{ss:notation}. Once \eqref{eq:claim_step1_regularization} is proven, the claim \eqref{eq:temporal_regularity} follows from a standard iteration argument, the arbitrariness of $\varepsilon$ and the fact that $w_{\a}^\varepsilon$ acts only near $t=\varepsilon$ (see \cite[Proposition 2.3]{AV19_QSEE_1}).
 	
	To prove \eqref{eq:claim_step1_regularization}, we exploit the subcriticality of our problem (see \cite[Corollary 6.5]{AV19_QSEE_2} for an alternative approach). Below, we assume that $u$ satisfies the left-hand side of \eqref{eq:claim_step1_regularization}. By Proposition \ref{prop:tracespace}, $u\in L^\zeta(\varepsilon,\sigma,w_\a;X_1)\cap C([\varepsilon,\sigma);X^{\mathrm{Tr}}_{\zeta})$ a.s.\ on $\Dom_\varepsilon$, where we recall that $X^{\mathrm{Tr}}_{\zeta}=X^{\mathrm{Tr}}_{0,\zeta}=(X_0,X_1)_{1-1/\zeta,\zeta}$, cf. \eqref{eq:interpolation_spaces_definition_notation_section}-\eqref{Eq72}.
	The interpolation argument in the first step in the proof of  Proposition \ref{Wellposedness_truncated_problem} ensures now the existence of $\iota\in (0,1)$ such that, for all $v\in X_1$ and $n\geq 1$ satisfying $\|v\|_{\Xap}\leq n$,
	$$
	\|F^{(j)}(v)\|_{X_0}+ \|G^{(j)}[v]\|_{\g(\ell^2(\N),X_{1/2})}\lesssim_n 1+\|v\|_{X_1}^\iota.
	$$
	Hence, from the assumed regularity of $u$, it follows that 
	\begin{align*}
	\|F^{(j)}(u)\|_{X_0}+ \|G^{(j)}[u]\|_{\g(\ell^2(\N),X_{1/2})}\in L^{\zeta/\iota}_{\loc}([\varepsilon,\sigma),w_\a^\varepsilon)\text{ a.s. }
	\end{align*}
	Next, note that $\one_{\Dom_\varepsilon}u(\varepsilon)\in X^{\mathrm{Tr}}_{\zeta}\subseteq X^{\mathrm{Tr}}_{\a,\zeta+\delta}$ a.s.\ provided $\zeta+\delta < \zeta(1+\kappa)$. As $\zeta \ge p>2$, the claim \eqref{eq:claim_step1_regularization} follows with the choice $\delta:= (\frac{1-\iota}{\iota})\wedge \kappa$ from Theorem \ref{Theorem_SMR} and a localization argument in the probability space to obtain $\omega$-integrability for the data, see, e.g., \cite[Proposition 3.11]{AVSurvey}. 
	
	\textit{Integrability in space.} 
	We again consider the maximal unique $(p,\kappa,s,q)$-solution $(u,\sigma)$ with $p>2$ and $\kappa>0$. In this step, we prove that 
	\begin{equation}
	\label{eq:temporal_regularity_spatial}
	\textstyle
	u\in \bigcap_{\theta\in [0,1/2)} H^{\theta,r}_{\loc}(0,\sigma;H^{s+2-4\theta,\zeta}(\T^d)) \text{ a.s.\ for all } r,\zeta\in (2,\infty).
	\end{equation}
	To establish this result, we take advantage of the time regularity just proven. Similar to the time regularization, it suffices to prove the existence of $\delta>0$ such that, for each $\zeta\in [q,\infty)$, $r\in [p,\infty)$ and $\varepsilon>0$, the following implication holds 
	\begin{align}
	\label{eq:claim_step1_regularization_space}
	\textstyle 
	u\in \bigcap_{\theta\in [0,1/2)} H^{\theta,r}_{\loc}( [\varepsilon,\sigma);H^{s+2-4\theta,\zeta}(\T^d)) \ \ \text{a.s.\ on $\Dom_\varepsilon$}\ \ \  \Longrightarrow& \\
	\nonumber
	\textstyle u\in \bigcap_{\theta\in [0,1/2)} H^{\theta,{r}}_{\loc}( [\varepsilon,\sigma),w_{{\alpha}}^\varepsilon;H^{s+2-4\theta,\zeta+\delta}(\T^d)) \ \ \text{a.s.\ on $\Dom_\varepsilon$}&,
	\end{align}
	where $\alpha\in [0,\frac{r}{2}-1)$ depends only on $p,\a$ and, as above, $\Dom_\varepsilon=\{\sigma>\varepsilon\}$ and $w_{{\alpha}}^\varepsilon(t)=|t-\varepsilon|^{\alpha}$. Indeed, if \eqref{eq:claim_step1_regularization_space}, then \eqref{eq:temporal_regularity_spatial} follows from an iteration argument and \cite[Proposition 2.3]{AV19_QSEE_1}.
	
	Now, fix $r\in [p,\infty)$ and $\varepsilon>0$, and let $\alpha\in [\a,\frac{{r}}{2}-1)$ be such that 
	\begin{equation}
		\label{eq:alpha_equal_hat_r}
	\tfrac{1+{\alpha}}{{r}}= \tfrac{1+\a}{p}.
	\end{equation}
	Note that the above choice is always possible as ${r}\geq p$ and $p>2$ (for the latter, see the first step of the current proof). From the admissibility of $(p,\a,s,q)$ and the above choice, it follows that $({r},{\alpha},s,\zeta)$ are admissible provided $\zeta\in [q,\infty)$ (see Assumption \ref{Assumptions_coefficients}). Next, for each $\zeta\in [q,\infty)$ and $\delta\geq 0$, it follows from Lemma \ref{Lemma_fj} and Lemma \ref{Lemma_gj} that, for all $n\geq 1$, and $v\in H^{s+2,\zeta+\delta}(\T^d)$ satisfying $\|v\|_{B^{s+2-4\frac{1+\alpha}{r}}_{\zeta+\delta,r}(\T^d)}\leq n$,
		\begin{align}
		\label{eq:estimate_improved_integrability_proof}
		\|F^{(j)}(v)\|_{H^{s-2,\zeta+\delta}(\T^d)}
		+
		\|G^{(j)}[v]\|_{\g(\ell^2(\N),H^{s,\zeta+\delta}(\T^d))}
		&\lesssim_n 1+\|v\|_{H^{s+2-\lambda,\zeta+\delta}(\T^d)},
	\end{align}	
	where $\lambda\in (0,4\frac{1+\a}{p}\wedge 1)$, and we used \eqref{eq:alpha_equal_hat_r}.
 	For convenience, below we let $\lambda=2\frac{1+\a}{p}\wedge \frac{1}{2}$. Sobolev embeddings ensure the existence of $\delta>0$ depending only on $p,\a$ and $d$ such that 
	\begin{equation}
	\label{eq:embeddings_gain_integrability}
	\begin{aligned}
	H^{s+2,\zeta}(\T^d)&\embed 
	H^{s+2-\lambda,\zeta+\delta}(\T^d),\\ 
	B^{s+2-\frac{4}{r}}_{\zeta,r}(\T^d)&
	\embed B^{s+2-\frac{4}{p}}_{\zeta,r}(\T^d)\embed B^{s+2-4\frac{1+\a}{p}}_{\zeta+\delta,r}(\T^d)
	=B^{s+2-4\frac{1+{\alpha}}{{r}}}_{\zeta+\delta,r}(\T^d).
	\end{aligned}
\end{equation}
	Assume now that the LHS\eqref{eq:claim_step1_regularization_space} holds. Proposition \ref{prop:tracespace} implies 
	$$u\in L^{r}_{\loc}([\varepsilon,\sigma);H^{s+2,\zeta}(\T^d))\cap C([\varepsilon,\sigma); B^{s+2-4/r}_{\zeta,r}(\T^d))$$ a.s.\ on $\Dom_\varepsilon$.  
	Thus, from \eqref{eq:estimate_improved_integrability_proof} and the embeddings in \eqref{eq:embeddings_gain_integrability}, 
	$$
	\|F^{(j)}(v)\|_{H^{s-2,\zeta+\delta}(\T^d)}+ \|G^{(j)}[v]\|_{\g(\ell^2(\N),H^{s,\zeta+\delta}(\T^d))}\in 
	L_{\loc}^{{r}}([\varepsilon,\sigma),w_{{\alpha}}^\varepsilon) \text{ a.s.\ on }\Dom_\varepsilon,
	$$
	and $\one_{\Dom_\varepsilon}u(\varepsilon)\in B^{s+2-4\frac{1+{\alpha}}{{r}}}_{\zeta+\delta,{r}}(\T^d)$ a.s.
	Thus, the RHS\eqref{eq:claim_step1_regularization_space} follows again from Theorem \ref{Theorem_SMR} and a localization argument in the probability space to obtain $\omega$-integrability for the data, see, e.g., \cite[Proposition 3.11]{AVSurvey}. 

	\textit{Smoothness in space.} To prove the improvement in the space regularity, we argue as in the previous step (an abstract version of the argument below can be found in \cite[Theorem 6.3]{AV19_QSEE_2}). 
	Here, we exploit the just-proven improvement in the time and space integrability of the maximal unique $(p,\a,s,q)$-solution $(u,\sigma)$ to \eqref{Eq102}. 
	To begin, note that for $r\in [p,\infty)$ and $\zeta\in [q,\infty)$  it follows that Assumption \ref{Assumptions_coefficients} holds with $(p,\a,s,q)$ replaced by $(r,0,s,\zeta)$. Moreover, by assuming $\zeta$ sufficiently large, we can assume $s+1>\frac{d}{\zeta}$, and therefore below we will be in the case $(i)$ of Lemma \ref{Lemma_gj}.
	From the previous steps, it suffices to show that for each $\tau\in [s,\infty)$ there are sufficiently large $r,\zeta$ such that, for all $\varepsilon>0$,  
	\begin{align}
	\label{eq:claim_step1_regularization_space_smoothness}
	\textstyle 
	u\in \bigcap_{\theta\in [0,1/2)} H^{\theta,r}_{\loc}( [\varepsilon,\sigma);H^{\tau+2-4\theta,\zeta}(\T^d)) \ \ \text{a.s.\ on $\Dom_\varepsilon$}\ \ \  \Longrightarrow& \\
	\nonumber
	\textstyle u\in \bigcap_{\theta\in [0,1/2)} H^{\theta,r}_{\loc}( [\varepsilon,\sigma),w_{\alpha}^\varepsilon;H^{\tau'+2-4\theta,\zeta}(\T^d)) \ \ \text{a.s.\ on $\Dom_\varepsilon$}&,
	\end{align}
	where $\tau'=(\tau+1)\wedge s_\psi$ and $\alpha=r\frac{(\tau'-\tau)}{4}-1$. Moreover, as above, $\Dom_\varepsilon=\{\sigma>\varepsilon\}$.  
	
	In the following, we assume that $\tau'>\tau$; otherwise, there is nothing to prove. It is clear that $\alpha<\frac{r}{2}-1$. By choosing $r$ sufficiently large, we can also ensure $\alpha\geq 0$. With the above choice of the weight $\alpha$, it holds that   
	$
	\tau'+2-4\frac{1+\alpha}{r}= \tau+2-\frac{4}{r}.
	$  
	Hence, the admissibility of $(r,\alpha,\tau',\zeta)$ follows from the one of $(r,0,\tau,\zeta)$. 
	Therefore, Lemma \ref{Lemma_fj} and Lemma \ref{Lemma_gj}$(i)$ imply that, for all $n\geq 1$ and $v\in H^{\tau'+2,\zeta}(\T^d)$ satisfying $\|v\|_{B^{\tau'+2-4\frac{1+\alpha}{r}}_{\zeta,r}(\T^d)}\leq n$, 
	\begin{align}
	\label{eq:estimate_improved_smoothness_proof_sobolev_smoothness}
	\|F^{(j)}(v)\|_{H^{\tau'-2,\zeta}(\T^d)}
	+
	\|G^{(j)}[v]\|_{\g(\ell^2(\N),H^{\tau',\zeta}(\T^d))}
	&\lesssim_n  1+\|v\|_{H^{\tau'+1,\zeta}(\T^d)}.
	\end{align}
	Now, assume that LHS\eqref{eq:claim_step1_regularization_space_smoothness} holds. As above, from Proposition \ref{prop:tracespace} and the choice of $\alpha$ and $\tau'$,
	\begin{align*}
	u
&\in L^{r}_{\loc}([\varepsilon,\sigma);H^{\tau+2,\zeta}(\T^d))\cap C([\varepsilon,\sigma); B^{\tau+2-4/r}_{\zeta,r}(\T^d))\\
&\subseteq L^{r}_{\loc}([\varepsilon,\sigma);H^{\tau'+1,\zeta}(\T^d))\cap C([\varepsilon,\sigma); B^{\tau'+2-4(1+\alpha)/r}_{\zeta,r}(\T^d))
	\end{align*}
	a.s.\ on $\Dom_\varepsilon$. 
	Therefore $\one_{\Dom_\varepsilon}u(\varepsilon)\in B^{\tau'+2-4\frac{1+\alpha}{r}}_{\zeta,r}(\T^d)$ a.s., and from \eqref{eq:estimate_improved_smoothness_proof_sobolev_smoothness}, 
	$$
	\|F^{(j)}(v)\|_{H^{\tau'-2,\zeta}(\T^d)}+ \|G^{(j)}[v]\|_{\g(\ell^2(\N),H^{\tau',\zeta}(\T^d))}
	\in L_{\loc}^{{r}}([\varepsilon,\sigma),w_{{\alpha}}^\varepsilon)\text{ a.s.\ on }\Dom_\varepsilon.
	$$ 
	Thus, the RHS\eqref{eq:claim_step1_regularization_space_smoothness} again follows from Theorem \ref{Theorem_SMR} and a localization argument in the probability space to obtain $\omega$-integrability for the data, see, e.g., \cite[Proposition 3.11]{AVSurvey}. 
	\end{proof}
	
As commented below the statement of Proposition \ref{prop:blow_up_criteria}, the independence of the regularity of solutions at positive times allows us to deduce the independence of the blow-up criteria from the original choice of the admissible parameters. This observation and the blow-up criterion in Proposition \ref{Wellposedness_truncated_problem} readily imply the following result, which is the key ingredient in the proof of Proposition \ref{prop:blow_up_criteria}.

	\begin{corollary}[Blow-up criteria -- Regularized problem]
\label{cor_blow_up_regularized}
Let $(p,\a,s,q)$ be as in Assumption \ref{Assumptions_coefficients}. Moreover, suppose that Assumption \ref{Assumptions_noise_local} holds for some $s_{\psi}\geq s$ and all $q_{\psi}\in [2,\infty)$.
	Let $u_0\in L_{\mathscr{F}_0}^{0}(\Omega;\Xap)$ and $(u,\sigma)$ the maximal unique $(p,\a,s,q)$-solution to \eqref{Eq102} provided by Proposition \ref{Wellposedness_truncated_problem}. Then, for any quadruple of admissible parameters $(p_0,\a_0,s_0,q_0)$ with $s_{0}\leq s_{\psi}$ and $0<\varepsilon<T<\infty$,
	\begin{equation}
	\label{eq:blow_up_extrapolated_regularized}
		\P \Bigl(
		\varepsilon<\sigma<T\,,\, \sup_{t\in [\varepsilon,\sigma)} \| u(t)\|_{ B^{\g_0}_{q_0,p_0}(\T^d)}<\infty	\Bigr)\,=\, 0
	\end{equation}
	where $\g_0 := s_0+2-4\frac{1+\kappa_0}{p_0}$.
\end{corollary}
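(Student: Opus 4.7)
The plan is to reduce the statement to the blow-up criterion \eqref{Eq30} already established for the regularized problem, but applied to a solution restarted at time $\varepsilon>0$ with the new admissible parameters $(p_0,\a_0,s_0,q_0)$. The key enabler is Proposition \ref{Prop_regularization_trunc}: for any fixed $\varepsilon>0$ and $\omega$ in the event of interest, the restriction $u|_{[\varepsilon,\sigma)}$ has, a.s., regularity far exceeding anything required by the $(p_0,\a_0,s_0,q_0)$-setting, so it qualifies as an initial condition for a freshly restarted problem.

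First, fix $0<\varepsilon<T<\infty$ and admissible $(p_0,\a_0,s_0,q_0)$ with $s_0\le s_\psi$. Set
\begin{equation*}
\mathcal{V}\,:=\,\Bigl\{
\varepsilon<\sigma<T,\ \sup_{t\in[\varepsilon,\sigma)}\|u(t)\|_{B^{\g_0}_{q_0,p_0}(\T^d)}<\infty\Bigr\}.
\end{equation*}
By Proposition \ref{Prop_regularization_trunc}, $u(\varepsilon,\cdot)\in H^{s_\psi+2,q}(\T^d)$ a.s.\ on $\{\sigma>\varepsilon\}$ for every $q<\infty$, and in particular $u(\varepsilon)\in B^{\g_0}_{q_0,p_0}(\T^d)$ a.s.\ on $\{\sigma>\varepsilon\}$. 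I would then apply Proposition \ref{Wellposedness_truncated_problem} to the time-shifted equation \eqref{Eq102} on $[\varepsilon,\infty)$ with the $\mathscr{F}_\varepsilon$-measurable initial datum $\mathbf{1}_{\{\sigma>\varepsilon\}}u(\varepsilon)$ and admissible parameters $(p_0,\a_0,s_0,q_0)$, obtaining a maximal unique $(p_0,\a_0,s_0,q_0)$-solution $(v,\tau)$ with $\tau>\varepsilon$ a.s.\ on $\{\sigma>\varepsilon\}$.

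The main technical point is to identify $u$ and $v$ on the overlap. The process $u|_{\llbracket\varepsilon,\sigma\rrparenthesis}$ is, by Proposition \ref{Prop_regularization_trunc} and a standard localization argument, a local $(p_0,\a_0,s_0,q_0)$-solution to the restarted equation with the same initial datum. By the maximality clause in Definition \ref{Defi_max_cutoff} one concludes $\sigma\le \tau$ and $u=v$ a.e.\ on $\llbracket\varepsilon,\sigma\rrparenthesis$ a.s.\ on $\{\sigma>\varepsilon\}$. Conversely, extending $u$ by $v$ beyond $\sigma$ on $\{\sigma<\tau\}$ would produce a local solution to \eqref{Eq102} (in the original admissible parameters, after using Proposition \ref{Prop_regularization_trunc} once more to upgrade trace-space membership) strictly extending $(u,\sigma)$, contradicting the maximality of $(u,\sigma)$. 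Hence $\sigma=\tau$ a.s.\ on $\{\sigma>\varepsilon\}\cap\{\sigma<\infty\}$.

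Finally, on $\mathcal{V}$, the condition $\sup_{t\in[\varepsilon,\sigma)}\|u(t)\|_{B^{\g_0}_{q_0,p_0}}<\infty$ translates, via $u=v$ and $\sigma=\tau$ and the identity $X^{\mathrm{Tr}}_{\a_0,p_0}=B^{\g_0}_{q_0,p_0}(\T^d)$ from \eqref{Eq72}, into
\begin{equation*}
\tau<T\quad\text{and}\quad \sup_{t\in[\varepsilon,\tau)}\|v(t)\|_{X^{\mathrm{Tr}}_{\a_0,p_0}}<\infty.
\end{equation*}
Applying the blow-up criterion \eqref{Eq30} of Proposition \ref{Wellposedness_truncated_problem} to $(v,\tau)$ on the time interval $[\varepsilon,T]$ (this is legitimate because \eqref{Eq30} was proved for arbitrary time origins; alternatively one invokes it on a countable covering by deterministic times $\varepsilon_n\searrow \varepsilon$) shows that this event is negligible, yielding $\P(\mathcal{V})=0$. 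The anticipated obstacle is the identification $\sigma=\tau$, which hinges on carefully using the instantaneous regularization of Proposition \ref{Prop_regularization_trunc} to transfer solutions between the $(p,\a,s,q)$- and $(p_0,\a_0,s_0,q_0)$-frameworks without losing information about the maximal existence time.
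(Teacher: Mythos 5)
Your proposal follows the same high-level strategy as the paper: regularize via Proposition \ref{Prop_regularization_trunc}, restart the equation at $t=\varepsilon$ with the new admissible parameters $(p_0,\a_0,s_0,q_0)$, identify the restarted maximal solution $(v,\tau)$ with $(u,\sigma)$ on $\{\sigma>\varepsilon\}$, and transfer the blow-up statement. The direction $\sigma\leq\tau$ is handled identically in both proofs (observe that $(u|_{\llbracket\varepsilon,\sigma\rro},\cdot)$ is a local $(p_0,\a_0,s_0,q_0)$-solution of the restarted problem and invoke maximality of $(v,\tau)$).

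Where you diverge is the reverse inclusion $\tau\leq\sigma$. The paper notes that on $\{\varepsilon<\sigma<\tau\leq T\}$ one has $u=v$ on $[\varepsilon,\sigma)$ with $v\in C([\varepsilon,\tau);C^{(2+s_\psi)-}(\T^d))$, so $\sup_{t\in[0,\sigma)}\|u(t)\|_{\Xap}<\infty$ on that event; the blow-up criterion \eqref{Eq30} already proved for $(u,\sigma)$ then forces this event to be null. You instead argue by patching: extend $u$ by $v$ past $\sigma$ and claim the concatenation is a local $(p,\a,s,q)$-solution strictly extending $(u,\sigma)$, contradicting maximality. Both are correct in spirit, but the patching route obliges you to verify several technical points that the paper's version sidesteps: you must construct a genuine localizing sequence for the concatenated process passing through $\sigma$ and approaching $\tau$; you must check the stochastic integral identity holds across the junction $t=\sigma$ (which works because $u(t)\to v(\sigma)$ in $\Xap$ on the relevant event, but this requires comment); and you must upgrade the $(p_0,\a_0,s_0,q_0)$-regularity of $v$ near $\tau$ to the $(p,\a,s,q)$-framework, which again comes from Proposition \ref{Prop_regularization_trunc} applied to $v$, but only holds on compact subintervals of $(\varepsilon,\tau)$, so the localizing sequence must be chosen with care. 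None of these are fatal, but they make your version of the key step more labor-intensive. The paper's choice to apply \eqref{Eq30} to $(u,\sigma)$ directly, rather than re-run the maximality argument by hand, is the more economical route and avoids the need to verify that the concatenated process is a bona fide strong solution.
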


Note that the norm in \eqref{eq:blow_up_extrapolated_regularized} can be evaluated even if $(p_0,\a_0,s_0,q_0)\neq (p,\a,s,q)$ due to Proposition \ref{Prop_regularization_trunc}.

\begin{proof}
By Proposition \ref{Prop_regularization_trunc} and the blow-up criterion in Proposition \ref{Wellposedness_truncated_problem}, \eqref{eq:blow_up_extrapolated_regularized} follows as in the proof of \cite[Theorem 2.10]{AV22_localRD}. We include some details for the reader's convenience.
We begin by collecting some useful facts. 
Let $(u,\sigma)$ be the maximal unique $(p,\a,s,q)$-solution to \eqref{Eq102} provided by Proposition \ref{Wellposedness_truncated_problem}. 
Proposition \ref{Prop_regularization_trunc} ensures that, for all $\varepsilon>0$,
$$
\one_{\{\sigma>\varepsilon\}}u(\varepsilon)\, \in\, L^0_{\F_{\varepsilon}}(\O;B^{\g_0}_{q_0,p_0}(\T^d)).
$$
Since Assumption \ref{Assumptions_noise_local} holds for some $s_{\psi}\geq s_0\vee s$ and all $q_{\psi}\in [2,\infty)$,  Proposition \ref{Wellposedness_truncated_problem} yields the existence of maximal unique $(p,\a,s,q)$-solution $(v,\tau)$ to 
\begin{equation}\label{Eq102_v_version}
\begin{cases}
	\dd v\,=\, -\div( m_j(v) \nabla \Delta v)\, \dd t\,+\, \div(\Phi_j(v)\nabla v )\, \dd t \,+\, \sum_{k\in \N}\div( g_j(v) \psi_k )\, \dd \beta^{(k)},\\
	v(0)\,=\,\one_{\{\sigma>\varepsilon\}}u(\varepsilon),
\end{cases}
\end{equation}
with 
\begin{equation}
\label{eq:blow_v_transference}
\P\Big(\varepsilon <\tau <T\,,\, \sup_{t\in [\varepsilon ,\tau)} \| v(t)\|_{ B^{\g_0}_{q_0,p_0}(\T^d)}<\infty	\Bigr)\,=\, 0.
\end{equation}
Finally, Proposition \ref{Prop_regularization_trunc} and a translation argument ensure that 
\begin{align}
\label{eq:v_improved_regularity_proof_blow_up}
v\in C([\varepsilon,\tau);C^{2+s_{\psi}-}(\T^d))\ \text{ a.s.\  }
\end{align}

We now turn to the proof of Corollary \ref{cor_blow_up_regularized}. By \eqref{eq:blow_v_transference}, it is enough to prove that 
\begin{equation}
\label{eq:sigma_equal_tau_v_u_proof_blow_up}
\sigma \, =\, \tau  \text{ a.s.\ on } \{\varepsilon<\sigma\leq T\} \quad \text{ and }\quad u\,=\,v  \text{ a.e.\ on } [\varepsilon,\sigma)\times \{\varepsilon<\sigma\leq T\}.
\end{equation}
To this end, we note that, due to Proposition \ref{Prop_regularization_trunc}, $(u|_{\llbracket \varepsilon,\sigma\rro},\sigma \one_{\{\sigma>\varepsilon\}}+\varepsilon\one_{\{\sigma\leq \varepsilon\}})$ is a local $(p_0,\a_0,s_0,q_0)$-solution to \eqref{Eq102_v_version}. The maximality of $(v,\tau)$ yields 
\begin{equation}
\label{eq:sigma_equal_tau_v_u_proof_blow_up_1}
\sigma \,\leq\, \tau  \text{ a.s.\ on } \{\varepsilon<\sigma\leq T\} 
\quad \text{ and }\quad u\,=\,v  \text{ a.e.\ on } [\varepsilon,\sigma)\times \{\varepsilon<\sigma\leq T\}.
\end{equation}
Therefore, since $C^{s_{\psi}-}(\T^d)\embed B^{2+s-4\frac{1+\a}{p}}_{q,p}(\T^d)= \Xap$ as $s\leq s_{\psi}$,  \eqref{eq:v_improved_regularity_proof_blow_up} and the previous yield
$$
\P(\varepsilon<\sigma<\tau\leq T)
\, \leq\,  \P\Big(\{\varepsilon<\sigma<\tau\}\cap \Big\{\sigma<T\,,\, \sup_{t\in [0,\sigma)} \| u(t)\|_{ \Xap}<\infty\Big\}\Big)\stackrel{\eqref{Eq30}}{=}0.
$$
Hence $\sigma = \tau$ a.s.\ on $\{\varepsilon<\sigma\leq T\}$, and therefore \eqref{eq:sigma_equal_tau_v_u_proof_blow_up} follows from \eqref{eq:sigma_equal_tau_v_u_proof_blow_up_1}.
\end{proof}

\subsection{Transference to the original equation}\label{Sec_orig_problem}
Finally, we are concerned with transferring the previously obtained results on \eqref{Eq102} to positive solutions to the original equation \eqref{Eq101}, which leads to the results stated in Subsection \ref{ss:local_all_dimensions_intro}. 
The main idea is that as long as a solution to \eqref{Eq102} remains above the threshold $2j^{-1}$, it is also a solution to \eqref{Eq119} and vice versa due to the choice of the regularization \eqref{eq:regularization_eta_j}. 

In the proof of Theorem \ref{Thm_local}, as an intermediate step to obtain maximal solutions, we also employ the notion of positive local \emph{unique} solutions to \eqref{Eq101}. Recall that positive local solutions of \eqref{Eq101} are defined in Definition \ref{Defi_Lpk}. 

\begin{definition}[Local unique positive solution for \eqref{Eq101}]\label{Defi_local_unique}
Let $(p,\a,s,q)$ be as in Assumption \ref{Assumptions_coefficients}. 
A positive local $(p,\a,s,q)$-solution $(u,\sigma)$ to \eqref{Eq101} is called \emph{positive local unique $(p,\a,s,q)$-solution}, if for every positive local $(p,\a,s,q)$-solution $(v,\tau)$ to \eqref{Eq101} one has $u=v$ a.s.\ on $[0,\sigma\wedge \tau)$. 
\end{definition}

In contrast to \emph{maximal solutions} as defined in Definition \ref{Defi_max}, the lifetime $\sigma$ of the unique solution $u$ does not necessarily extend that of the other solution $v$, i.e., $\tau$.

\begin{proof}[Proof of Theorem \ref{Thm_local}]
Recall that $(p,\a,s,q)$ are as in Assumption \ref{Assumptions_coefficients}, $(\psi_k)_{k\in \N}$ satisfy Assumption \ref{Assumptions_noise_local} with $(s_{\psi},q_{\psi})=(s,q)$ and $\inf_{ \Tor^d} u_0>0$ a.s., respectively.  
	
	\textit{Existence of a positive local unique solution.} 
	Set
	$$
	\O_1\, :=\,\Bigl\{\inf_{\T^d} u_0\geq 1\Bigr\}, 
	\quad \text{ and }\quad  \O_j\, :=\,\Bigl\{\tfrac{1}{j+1}\leq \inf_{\T^d}u_0 < \tfrac{1}{j}\Bigr\} \ \ \text{ for } j\geq 1.
	$$
	From the positivity assumption on the initial data, it follows that $\P( \cup_{j\in \N}\O_j )=1$. Hence, to construct a positive local $(p,\a,s,q)$-solution to \eqref{Eq101} it is enough to construct a solution on $\O_j$ for $j\in \N$.
	To this end, recall that, Proposition \ref{Wellposedness_truncated_problem} ensures the existence of maximal unique $(p,\a,s,q)$-solution $(u^{(j)},\sigma^{(j)})$ to 
	\begin{equation}\label{Eq102_modified_proof_maximal_sol_original}
	\begin{cases}
	\dd u^{(j)}\,=\, \big[-\div( m_{2j+3}(u^{(j)}) \nabla \Delta u^{(j)})\, 
	+\, \div(\Phi_{2j+3}(u^{(j)})\nabla u^{(j)} )\big]\, \dd t \,+\, \sum_{k\in \N}\div( g_{2j+3}(u^{(j)}) \psi_k )\, \dd \beta^{(k)}, \\ 
	u^{(j)}(0)\,=\,\one_{\O_j} u_0.
	\end{cases}
\end{equation}
	Now, let us define 
	$$
	\tilde{\sigma}^{(j)}\,:=\,\inf\Bigl\{t\in [0,\sigma^{(j)})\,:\, \inf_{\T^d} u(t,\cdot)\leq \tfrac{2}{2j+3}\Bigr\}
	$$
	where $\inf\emptyset :=\sigma^{(j)}$. Note that $\tilde{\sigma}^{(j)}>0$ a.s.\ on $\O_j$ by construction, and a.e.\ on $[0,\tilde{\sigma}^{(j)})$,
	$$
	m_{2j+3}(u^{(j)})\,=\,
	m(u^{(j)}), \quad
	\Phi_{2j+3}(u^{(j)})\,=\,
	\Phi(u^{(j)}), \quad
	\quad
	g_{2j+3}(u^{(j)})\,=\,
	g(u^{(j)}).
	$$
	In particular, for $j\in\N$, letting  
	\begin{equation}
	\tilde{\sigma}\,:=\,\tilde{\sigma}^{(j)}\ \text{ on }\O_j \quad \text{ and }\quad u\,:=\,u^{(j)}\ \text{ on }[0,\tilde{\sigma}_j)\times \O_j,
	\end{equation}
	one can check that $(u,\tilde{\sigma})$ is a positive local $(p,\a,s,q)$-solution to the original equation \eqref{Eq101}. To conclude this step, it remains to discuss its uniqueness, see Definition \ref{Defi_local_unique}. To this end, let $(v,\tau)$ be a positive local $(p,\a,s,q)$-solution to \eqref{Eq101}. We define 
	$$
	\tau^{(j)}\,:=\,\inf\Bigl\{t\in [0,\tau)\,:\, \inf_{\T^d} v(t,\cdot)\leq \tfrac{2}{2j+3}\Bigr\}
	$$
	where, as usual,  $\inf\emptyset :=\tau$. Now, arguing as above, it readily follows that $( v|_{\O_j}, \one_{\O_j} \tau^{(j)})$ is a local $(p,\a,s,q)$-solution to \eqref{Eq102_modified_proof_maximal_sol_original}. By maximality of the solution $(u^{(j)},\sigma^{(j)})$ we have $\tau^{(j)}\leq \sigma^{(j)}$ a.s.\ on $\O_j$ and $v=u^{(j)}$ a.e.\ on $[0,\tau^{(j)})\times \O_j$. Hence, for all $j\in \N$, we have $v=u^{(j)}$ a.e.\ on $[0,\tilde{\sigma}^{(j)}\wedge \tau^{(j)})\times \O_j$. The latter implies $\tilde{\sigma}^{(j)}\wedge \tau^{(j)}= \tilde{\sigma}^{(j)}\wedge \tau$ a.s.\ and therefore $v=u$ a.e.\ on $\llbracket 0,\tilde{\sigma}\wedge \tau\rro$.

\textit{Maximality within the class of local unique solutions.}
In this step, we build a positive unique local $(p,\a,s,q)$-solution to \eqref{Eq101} which is maximal within the set of positive unique local $(p,\a,s,q)$-solutions. To this end, we let
\begin{align}
\label{eq:set_of_local_solution_proof}
\mathcal{T}\,:=\, \Big\{\tau\,:\, &\text{ $\tau$ is a stopping time for which there exists a}\\
\nonumber
&\text{ positive local unique $(p,\a,s,q)$-solution $(v,\tau)$ to \eqref{Eq101}}\Big\}.
\end{align}
Note that the above set is not empty as $\tilde{\sigma}\in \mathcal{T}$, where $\tilde{\sigma}$ is as in the previous step.
Proceeding as in \cite[Step 5b, proof of Theorem 4.5]{AV19_QSEE_1}, the uniqueness requirement for local solutions in $\mathcal{T}$ yields $\tau_0,\tau_1\in \mathcal{T}$ implies $\tau_0\vee \tau_1\in \mathcal{T}$. In particular, by \cite[Theorem A.3]{KS98}, we conclude that  $\sigma\,:=\,\esssup_{\tau\in \mathcal{T}}\tau$ is a stopping time and there exists a positive local $(p,\a,s,q)$-solution $(u,\sigma)$ to \eqref{Eq101} with a localizing sequence $(\sigma_l)_{l\in\N}$, cf., Definition \ref{Defi_Lpk}. Let us conclude by noticing that $\sigma\geq \tilde{\sigma}>0$ a.s.

At this stage, we do not know whether $(u,\sigma)$ constructed in this step is a positive maximal unique $(p,\a,s,q)$-solution as we are not excluding the existence of a positive local (but not unique) $(p,\a,s,q)$-solution $(v,\tau)$ satisfying $\P(\tau>\sigma)>0$. To prove that $(u,\sigma)$ is actually a positive maximal unique $(p,\a,s,q)$-solution to \eqref{Eq101} we employ a blow-up criterion for $(u,\sigma)$ as constructed above, cf., \cite[Remark 5.6]{AV19_QSEE_2}.

\textit{A blow-up criterion.} Let $(u,\sigma)$ be the positive unique local $(p,\a,s,q)$-solution constructed in the previous step. Then, for all $T<\infty$,
\begin{equation}
\label{eq:blow_up_intermediate_local_proof_original_problem}
\P \Bigl(\sigma<T\,,\, \sup_{t\in [0,\sigma)} \| u(t)\|_{ \Xap}<\infty\,,\,\inf_{[0,\sigma)\times \T^d} u>0\Bigr)\,=\, 0.
\end{equation}
We prove the claim by contradiction using the maximality among unique solutions of $(u,\sigma)$ and the blow-up criterion \eqref{Eq30}. To begin, let us assume that \eqref{eq:blow_up_intermediate_local_proof_original_problem} is false, and therefore, for some $T_{\star}>0$ and $j_{\star}\in \N$,
$$
\P (\mathcal{O}_{\star})\,>\, 0 \quad \text{ where } \quad \mathcal{O}_{\star} \,:=\,
\Bigl\{\sigma<T_{\star}\,,\, \sup_{t\in [0,\sigma)} \| u(t)\|_{ \Xap}<\infty\,,\,\inf_{[0,\sigma)\times \T^d} u\geq \tfrac{2}{j_{\star}}\Bigr\}.
$$
Let $\tau_{\star}$ be the following stopping time
\begin{equation}\label{Eq71}
\tau_{\star}\,:=\,\inf\Bigl\{t\in [0,\sigma)\,:\, \inf_{\T^d} u(t,\cdot)\leq \tfrac{2}{j_{\star}}\Bigr\}
\end{equation}
with $\inf\emptyset\,:=\, \sigma$. By construction $\tau_{\star}\,=\,\sigma$ on $\mathcal{O}_{\star}$. By the choice of the cut-off in the regularized equation \eqref{Eq102}, $(u,\tau_{\star})$ is a local $(p,\a,s,q)$-solution to \eqref{Eq102} for all $j\,\geq \, j_{\star}$.

Now, by Proposition \ref{Wellposedness_truncated_problem}, there exists maximal unique $(p,\a,s,q)$-solution $(u_{\star},\sigma_{\star})$ to \eqref{Eq102} with $j\,=\, j_{\star}+1$. By maximality of $(u_{\star},\sigma_{\star})$, it follows that $\tau_{\star}\,\leq \,\sigma_{\star}$ a.s.\ and $u\,=\,u_{\star}$ a.e.\ on $\llbracket 0,\tau_{\star}\rro$. The latter fact and the definition of $\mathcal{O}_{\star}$ yield
\begin{align*}
\P(\{\tau_{\star}\,=\,\sigma_{\star}\}\cap \mathcal{O}_{\star})
\,\le\, \P\Bigl( \{\tau_{\star}\,=\,\sigma_{\star}\} \cap 
\Bigl\{\sigma_{\star}<T_{\star}\,,\, \sup_{t\in [0,\sigma_{\star})} \| u_{\star}(t)\|_{ \Xap}<\infty\Bigr\}\,
\Bigr)\stackrel{\eqref{Eq30}}{= }\,0.
\end{align*}
Thus, $\sigma_{\star}>\tau_{\star}$ a.s.\ on $\O_{\star}$. Consider the following stopping time 
\begin{equation*}
\tau_{\star\star} \,:=\,\inf\Bigl\{t\in [0,\sigma_{\star})\,:\, \inf_{\T^d} u_{\star}(t,\cdot)\leq \tfrac{2}{j_{\star}+1}\Bigr\}
\end{equation*}
with $\inf\emptyset:= \sigma_{\star}$. Since $\tau_{\star}<\sigma_{\star}$ and $\tau_{\star}=\sigma$ a.s.\ on $\mathcal{O}_{\star}$, it follows that $\tau_{\star\star}>\sigma$ a.s.\ on $\mathcal{O}_{\star}$. Arguing as in the first step of the current proof, by maximality of $(u_{\star},\sigma_{\star})$, one can check that $(u_{\star},\tau_{\star\star})$ is a positive local unique $(p,\a,s,q)$-solution to the original problem \eqref{Eq101} which extends $(u,\sigma)$ as $\tau_{\star\star}>\sigma$ a.s.\ on a set of positive probability $\mathcal{O}_{\star}$. This contradicts the maximality of $\sigma$ in the set of positive local unique solutions $\mathcal{T}$, see \eqref{eq:set_of_local_solution_proof}. Therefore \eqref{eq:blow_up_intermediate_local_proof_original_problem} holds.

\textit{Maximality in the class of positive local solutions.} Now we show that the positive local unique $(p,\a,s,q)$-solution $(u,\sigma)$ to \eqref{Eq101} constructed above is actually maximal. Indeed, let $(v,\tau)$ be another positive local $(p,\a,s,q)$-solution to \eqref{Eq101}. By uniqueness of $(u,\sigma)$ we have $u=v$ a.s.\ on $\llbracket 0,\tau\wedge \sigma\rro$. Hence, it remains to prove that $\tau\leq \sigma$ a.s. By the regularity of local $(p,\a,s,q)$-solutions, a.s.\ on 
$\{\sigma\,<\,\tau\}$, we have 
$$
u\,=\,v\in C((0,\sigma];\Xap)\quad \text{ and }\quad \inf_{(0,\sigma)\times \T^d} u\,=\, \inf_{(0,\sigma)\times \T^d} v\,>\,0.
$$ 
Therefore, for all $T<\infty$,
\begin{align*}
\P(\sigma<\tau\,,\,\sigma< T)\,=\,\P \Bigl(\{\sigma<\tau\}\cap \Bigl\{\sigma<T\,,\, \sup_{t\in [0,\sigma)} \| u(t)\|_{ \Xap}<\infty\,,\,\inf_{[0,\sigma)\times \T^d} u>0\Bigr\}\Bigr)\,\stackrel{\eqref{eq:blow_up_intermediate_local_proof_original_problem}}{=}\, 0.
\end{align*}
The arbitrariness of $T<\infty$ implies that $\tau\leq\sigma$ a.s.\ on $\{\sigma=\infty\}$. Hence $\tau\leq \sigma$ a.s.\ as desired.

\textit{Additional regularity.}
Next, we assume that $p>2$ and prove the additional assertions regarding the regularity of the positive maximal unique $(p,\a,s,q)$-solution $(u, \sigma)$. To this end, for all $j\geq 1$, let 
$$
\tau_{j}\,:=\,\inf\Bigl\{t\in [0,\sigma)\,:\, \inf_{\T^d} u(t,\cdot)\leq \tfrac{2}{j}\Bigr\} \quad \text{ with }\quad \inf\emptyset\,:=\,\sigma.
$$
Arguing as in the previous step, $(u|_{\llbracket 0,\tau_j\rro },\tau_{j})$ is a local $(p,\a,s,q)$-solution to \eqref{Eq102}. Hence, it is extended by the maximal unique $(p,\a,s,q)$-solution to \eqref{Eq102} provided by Proposition \ref{Wellposedness_truncated_problem} with $j$ replaced by $j+1$, and admits consequently the regularity stated in Proposition \ref{Wellposedness_truncated_problem}. 
\end{proof}

\begin{proof}[Proof of Proposition \ref{Prop_regularization}]  
Analogously to the proof of the regularity assertion of Theorem \ref{Thm_local}, by a stopping time argument and the maximality of solutions to \eqref{Eq102}, we conclude that $u$ inherits the regularity from the solutions to the regularized problems stated in Proposition \ref{Prop_regularization_trunc}.
\end{proof}

\begin{proof}[Proof of Proposition \ref{prop:blow_up_criteria}]
The proof is analogous to that of \eqref{eq:blow_up_intermediate_local_proof_original_problem}, where instead of the blow-up criterion \eqref{Eq30} in Proposition \ref{Wellposedness_truncated_problem}, one uses the blow-up criteria of Corollary \ref{cor_blow_up_regularized}.
\end{proof}

\section{Global well-posedness in one dimension}
The aim of this section is to show the global well-posedness of the stochastic thin-film equation \eqref{Eq100}. As laid out in the introduction, \eqref{Eq100} can be cast into the form \eqref{Eq101} if one sets
\begin{align}
\label{Eq47}
	\Phi(u)\,=\,m(u)\phi''(u),\qquad 
	g(u)\,=\, m^{1/2}(u),
\end{align}
so that Theorem \ref{Thm_local} yields that the equation is well-posed locally in time. It remains to use the blow-up criterion from Proposition \ref{prop:blow_up_criteria} to deduce that the unique solution exists even globally in time. We achieve this by first establishing an a-priori estimate on the $\alpha$-entropy \eqref{Eq46} and subsequently estimating the energy \eqref{Eq2} along the trajectory of a solution. To this end,  we restrict ourselves to $d=1$ and $s\in (1/2,1]$ and impose  Assumption \ref{Assumptions_noise_global} on the noise and Assumption \ref{Assumptions_m} and \ref{Assumptions_phi} on the mobility function $m$ and the effective interface potential $\phi$, respectively. Accordingly, we denote the exponent of degeneracy and growth exponent of $m$ by $n$ and $\nu$, and let $\vt$ and $c_0$  be as in \eqref{Eq44} and allow for all implicit constants in this section to depend on $(\psi_k)_{k\in \N}$, $m$ and $\phi$ and in particular on $n$, $\nu$, $\vt$, and $c_0$. Moreover, we fix an initial value $u_0\in L_{\mathscr{F}_0}^{0}(\Omega;H^{s}(\Tor))$ with $\inf_{\Tor} u_0>0$ a.s.\ and let $(u,\sigma)$ be the maximal unique positive local $(2,0,s,2)$-solution to \eqref{Eq101} with coefficients \eqref{Eq47} given by Theorem \ref{Thm_local}. We focus our analysis on the It\^o formulation, i.e., Theorem \ref{Thm_global}, which is more delicate since no cancellations occur. The proof of Theorem \ref{cor_global_stratonovich} on the Stratonovich formulation can be obtained analogously by estimating the terms due to the Stratonovich correction in the same way as the It\^o correction terms. We recall both the energy and the $\alpha$-entropy functional
\begin{align}
	&
	\EE(u)\,=\, \int_{\Tor}\Big[ \tfrac{1}{2}|u_x|^2 \,+\, \phi(u)\Big]\, \dd x,
	\\&
	\H_\beta (u)\,=\, \int_{\T} h_{\beta} (u)\, \dd x ,\qquad h_{\beta}(r)\,=\, \int_1^r \int_1^{r'}\frac{(r'')^\beta}{m(r'')} \, \dd r''\, \dd r',\qquad \beta \in (-{1}/{2},1),
\end{align}
which appear in the following a-priori estimate on the energy at the heart of Theorem \ref{Thm_global}.
\begin{lemma}\label{Lemma_Energy_Est}
	For $0<t_0<T<\infty$, $q\in [1,\infty)$ and $\max\{0,\nu-5\}<\beta<1$ holds the energy estimate
	\begin{align}\begin{split}\label{Eq67}&
			\E\biggl[
			\mathbf{1}_\Gamma \sup_{t_0\le t<\sigma\wedge T} \EE^q(u(t))
			\biggr] \,+\, \E\biggl[
			\biggl(\int_{t_0}^{\sigma\wedge T} \int_{\T} \mathbf{1}_\Gamma m(u)(u_{xx}-\phi'(u))_x^2\,\dd x\,\dd t \biggr)^q
			\biggr]
			\\&\quad \lesssim_{\beta,q,T} \E \bigl[\mathbf{1}_\Gamma \EE^q(u(t_0))\,+\, \mathbf{1}_{\Gamma}
			{\H}_\beta^{6q/(5+\beta-\tilde{\nu})}(u(t_0))\,+\,  \mathbf{1}_{\Gamma}
			{\H}_0^{(\vt-2)q/(\vt+2\tilde{n}-6)}(u(t_0))
			\bigr] 
			\\&\qquad \,+\,  \E\biggl[	\mathbf{1}_{\Gamma}\biggl(
			\int_{\Tor} u_0\, \dd x \biggr)^{\min \bigl\{\frac{6(\beta-\vt)q}{5+\beta-\tilde{\nu}}
				,\frac{-\vt(\vt-2)q}{\vt+2\tilde{n}-6}
				\bigr\}} \mkern-16mu+\,\mathbf{1}_{\Gamma}\biggl( 
			\int_{\Tor} u_0\, \dd x \biggr)^{\max\bigl\{\frac{9(\beta+3)q}{5+\beta-\tilde{\nu}}, \frac{9(\vt-2)q}{2\vt+4\tilde{n}-12}\bigr\}}
			\biggr],
		\end{split}
	\end{align}
	where $\tilde{n}=\min\{2,n\}$, $\tilde{\nu}=\max\{3,\nu\}$ and
	\begin{equation}
		\Gamma\,=\, \biggl\{ \EE(u(t_0))\,+\, \H_\beta(u(t_0)) \,+\,\H_0(u(t_0)) \,\le\, l,\, \tfrac{1}{l}\,\le \, \int_{\T} u_0\, \dd x \,\le\, l, \sigma >t_0 \biggr\}
	\end{equation}
	for some $l\in \N$.
\end{lemma}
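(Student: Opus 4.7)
The plan is to apply It\^o's formula to $(\EE(u(t))+c)^q$ for a suitable constant $c$ depending on the mass $\int_\T u_0\, \dd x$, on an interval $[t_0,\tau_N\wedge T]$ where $(\tau_N)_{N\in\N}$ is a stopping-time sequence localizing $u$ to a smooth, strictly positive regime. The choice of a positive starting time $t_0>0$ is crucial: by Proposition \ref{Prop_regularization} with $s_\psi=1$, the solution $u$ enjoys enough space regularity at positive times to justify the integrations by parts needed in the computation. The indicator $\mathbf{1}_\Gamma$ ensures that the initial data at time $t_0$ lie in a controlled regime. At the end, one passes to the limit $N\to\infty$ by monotone convergence.

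First, I would compute the drift coming from the equation. Pairing $\EE'(u) = -u_{xx}+\phi'(u)$ against $-A[u](u)+F(u)=-\big(m(u)(u_{xx}-\phi'(u))_x\big)_x$ and integrating by parts yields the \emph{dissipation}
\[
-q(\EE(u)+c)^{q-1}\int_\T m(u)(u_{xx}-\phi'(u))_x^2\,\dd x,
\]
which is the good negative term to be kept on the left-hand side of the estimate. The It\^o correction contributes
\[
\tfrac{q}{2}(\EE(u)+c)^{q-1}\sum_k \int_\T \bigl[(G_k(u))_x^2+\phi''(u)G_k(u)^2\bigr]\,\dd x \,+\, (\text{cross-terms of order }q-2),
\]
where $G_k(u)=(m^{1/2}(u)\psi_k)_x$. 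Using Assumption \ref{Assumptions_noise_global} together with \eqref{Eq42}, which gives $|(m^{1/2})'|\lesssim m^{1/2}/r$ and $|(m^{1/2})''|\lesssim m^{1/2}/r^2$, one obtains pointwise bounds of the form
\[
\sum_k G_k(u)^2 \,\lesssim\, \tfrac{m(u)}{u^2}u_x^2+m(u), \qquad \sum_k (G_k(u))_x^2 \,\lesssim\, \tfrac{m(u)}{u^2}u_{xx}^2+\tfrac{m(u)}{u^4}u_x^4+\tfrac{m(u)}{u^2}u_x^2+m(u).
\]

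The main obstacle is absorbing the It\^o correction. The $m(u)u_{xx}^2/u^2$ piece is the most delicate: expanding $(u_{xx}-\phi'(u))_x^2 = u_{xxx}^2-2\phi''(u)u_x u_{xxx}+(\phi''(u))^2 u_x^2$ and integrating by parts exposes a contribution of the form $\int m(u) u_{xxx}^2$ inside the dissipation, which dominates $\int m(u)u_{xx}^2/u^2$ via a Hardy-type interpolation available in dimension one. This is exactly where the condition $\nu<6$ enters, ensuring the growth of $m$ is subcritical with respect to the dissipation, with the $L^\infty$-norm of $u$ controlled by $\EE$ and the mass. The small-$u$ terms containing negative powers of $u$ are dominated via \eqref{Eq45}, giving $\sup_\T u^{-\alpha}\lesssim \EE(u)^{\alpha/(\vt-2)}+(\int_\T u)^{-\alpha}$, together with an a-priori estimate on $\H_\beta(u(t))$ coming from the preceding $\alpha$-entropy lemma, which controls time integrals of the $m(u)u_x^4/u^4$ term. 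The thresholds $\vt>\max\{2,6-2n\}$ and $\beta>\nu-5$ are precisely those that make all the exponents feasible, and after Young's inequality the specific powers on the right-hand side of \eqref{Eq67} emerge from balancing these lower bounds against the dissipation.

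Finally, the martingale term
\[
-q\sum_k\int_{t_0}^\cdot(\EE(u)+c)^{q-1}\int_\T(u_{xx}-\phi'(u))G_k(u)\,\dd x\,\dd \beta^{(k)}
\]
is estimated using the Burkholder-Davis-Gundy inequality: applying Cauchy-Schwarz in the inner integral bounds its quadratic variation density by $(\EE(u)+c)^{2q-2}\cdot \int m(u)(u_{xx}-\phi'(u))_x^2\,\dd x\cdot\sum_k\int G_k(u)^2/m(u)\,\dd x$, and Young's inequality then absorbs a small multiple of the dissipation and of $\sup(\EE(u)+c)^q$ into the left-hand side. Collecting the estimates and using the $\alpha$-entropy bound to control the residual time integral produces a closed Gronwall-type inequality for $\E[\mathbf{1}_\Gamma\sup_{[t_0,\tau_N\wedge T]}(\EE(u)+c)^q]$, from which \eqref{Eq67} follows. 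Mass conservation converts the constant $c$ into the explicit powers of $\int_\T u_0\,\dd x$ appearing on the right-hand side of \eqref{Eq67}, and letting $N\to\infty$ concludes the proof.
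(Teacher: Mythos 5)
Your overall skeleton — Itô/BDG on the energy, truncation via a stopping sequence justified by Proposition~\ref{Prop_regularization}, absorption of the martingale into the dissipation, $\alpha$-entropy estimates as auxiliary inputs — matches the paper's strategy. But there is one genuine gap, in the step you yourself identify as the most delicate: the handling of the $m(u)u_{xx}^2/u^2$ piece of the It\^o correction.

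You propose to absorb this piece into the dissipation by writing $(u_{xx}-\phi'(u))_x^2 = u_{xxx}^2 - 2\phi''(u)u_xu_{xxx} + (\phi''(u))^2u_x^2$, keeping $\int m(u)u_{xxx}^2$ on the good side, and invoking a ``Hardy-type interpolation'' to dominate $\int m(u)u_{xx}^2/u^2$. This is not how the paper proceeds, and it is not clear it can work: the weights do not match, since $m(u)/u^2$ is singular as $u\searrow 0$ precisely where $m(u)$ (hence the dissipation weight) degenerates, so any interpolation of $\int m(u) u_{xx}^2/u^2$ against $\int m(u)u_{xxx}^2$ would need to carry a prefactor like $\sup u^{-2}$, which in turn must itself be balanced, and the bookkeeping spirals. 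The paper instead treats the $u_{xx}^2$ piece in exactly the same way as the $u_x^4$ piece: after splitting $\{u>1\}$ and $\{u\le 1\}$, using \eqref{Eq40}--\eqref{Eq41} to replace $m(u)$ by a power, and factoring out a supremum controlled via Lemmas~\ref{Lemma_max_min_est} and~\ref{Lemma_max_min_Energy}, the residual quantity $\int u^\beta u_{xx}^2\,\dd x$ is exactly the $\gamma=1$ dissipation term $\int u^{\beta+2-2\gamma}(u^\gamma)_{xx}^2\,\dd x$ appearing in the $\alpha$-entropy estimate~\eqref{Eq16}, and likewise $\int u_{xx}^2$ on $\{u\le 1\}$ is covered by Lemma~\ref{Lemma_alpha_Entr} with $\beta'=0,\gamma'=1$. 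In other words, Lemma~\ref{Lemma_alpha_Entr} is invoked to control both the $u_{xx}^2$ and the $u_x^4$ contributions to the It\^o correction, not only the latter as your sketch implies, and this is also what explains the appearance of $\H_\beta(u(t_0))$ and $\H_0(u(t_0))$ on the right-hand side of~\eqref{Eq67}.

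Two smaller mismatches with the paper worth noting: the paper does not apply It\^o's formula to $(\EE(u)+c)^q$, but rather obtains the $q=1$ It\^o expansion for $\EE(u)$ (in two pieces, $\int\phi(u)$ and $\tfrac12\int u_x^2$, to satisfy the hypotheses of the relevant It\^o formulas), then takes supremum and $q$-th power before applying BDG; and in bounding the quadratic variation of the martingale one must first integrate by parts to move the derivative from $G_k(u)=(g(u)\psi_k)_x$ onto $\phi'(u)-u_{xx}$, giving a factor $\int m(u)(u_{xx}-\phi'(u))_x^2\,\dd x\cdot\sum_k\|\psi_k\|_\infty^2$, rather than the grouping with $\sum_k\int G_k(u)^2/m(u)$ you wrote. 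The powers of the mass $\int_\T u_0$ in~\eqref{Eq67} do not come from an additive constant $c$ but from the $\inf$/$\sup$ estimates in Lemmas~\ref{Lemma_max_min_est} and~\ref{Lemma_max_min_Energy} combined with Young's inequality.
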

We proceed to show an $\alpha$-entropy estimate in Subsection \ref{Sec_proof_alpha_entr}, which is used to give the proof of  Lemma \ref{Lemma_Energy_Est} in Subsection \ref{Sec_Energy_Est}. Before all this, however, we demonstrate how Lemma \ref{Lemma_Energy_Est} leads to the global well-posedness result Theorem \ref{Thm_global}.

\begin{proof}[Proof of Theorem \ref{Thm_global}]
Any process  satisfying \eqref{eq:regularity_1d_global_0}--\eqref{eq:thin_film_ito_formulation} constitutes a $(2,0,s,2)$-solution to \eqref{Eq101} with infinite lifetime. Therefore, the claim follows if we can verify that the positive maximal unique $(2,0,s,2)$-solution $(u,\sigma)$ to \eqref{Eq101} provided by Theorem \ref{Thm_local} satisfies a.s.\ $\sigma=\infty$. Indeed, this immediately yields the existence of a process satisfying \eqref{eq:regularity_1d_global_0}--\eqref{eq:thin_film_ito_formulation}, and uniqueness follows by the uniqueness part of Definition \ref{Defi_max}.  The additional regularity assertions \eqref{eq:regularity_1d_global_2} and \eqref{eq:regularity_1d_global_3} will then follow from Proposition \ref{Prop_regularization} and the fact that Assumption \ref{Assumptions_noise_global} implies Assumption \ref{Assumptions_noise_local} for $s_{\psi}=1$ and any $q_{\psi}<\infty$.

To this end, we recall that $u$ preserves mass, as follows by integrating \eqref{Eq101}, and therefore
\begin{equation}\label{Eq68}
\sup_{t\in (0,\sigma)}\int_{\T} u(t,x)\,\dd x\, =\, \int_{\T} u_0(x)\, \dd x.
\end{equation}
Moreover, Lemma \ref{Lemma_Energy_Est} and Proposition \ref{Prop_regularization} show that, for all $0<\varepsilon<T<\infty$ and a.s.\ on $\{\sigma>\varepsilon\}$, 
\begin{equation}
\label{eq:energy_bounds_proof_global}\noeqref{eq:energy_bounds_proof_global}
\sup_{t\in [\varepsilon,\sigma\wedge T)}\EE(u(t))<\infty.
\end{equation}
Combining the previous with \eqref{Eq45}, which is rigorously stated in Lemma \ref{Lemma_max_min_Energy}, and $\vt>2$ as imposed in Assumption \ref{Assumptions_phi} we deduce that
\begin{align}
\label{eq:positivity_bounds_proof_global}
\Bigl(\inf_{[\varepsilon,\sigma\wedge T)\times\T} u\Bigr)^{-1}\,
=\,
\sup_{t\in [\varepsilon,\sigma\wedge T)}\sup_{x\in \T} \, u^{-1}
\,\lesssim \,\sup_{t\in [\varepsilon,\sigma\wedge T)}\EE^{2/(\vt-2)}(u) \,+\, \biggl(\int_{\T} u_0(x)\, \dd x\biggr)^{-1}\,<\,\infty,
\end{align}
 a.s.\ on $\{\sigma>\varepsilon\}$.
In particular, $u$ is bounded from below on $(\varepsilon,\sigma\wedge T)\times\T$. 
Since Assumption \ref{Assumptions_noise_global} implies Assumption \ref{Assumptions_noise_local} for $s_{\psi}=1$ and all $q_{\psi}<\infty$, Proposition \ref{prop:blow_up_criteria} with $s_0=1$, $p_0=q_0=2$ and $\a_0=0$ is applicable, and combined with \eqref{Eq68}--\eqref{eq:positivity_bounds_proof_global} and the Poincar\'e-Wirtinger inequality, it yields
$$
\P(\varepsilon<\sigma<T)\,=\,0 \ \  \text{for all $0<\varepsilon<T<\infty$}.
$$ 
Now the fact that $\sigma=\infty$ a.s.\ follows by letting $\varepsilon\searrow 0$ and $T\nearrow\infty$ as well as $\sigma>0$ a.s.\ by Theorem \ref{Thm_local}.
\end{proof}

\subsection{$\alpha$-entropy estimates}\label{Sec_proof_alpha_entr}
As a tool to close an $\alpha$-entropy estimate, we show how to control the minimum and maximum of a function in terms of the $\alpha$-entropy dissipation, see \cite[Lemma 4.1 ]{fischer_gruen_2018} and \cite[Lemma 2.3]{dareiotis2023solutions} for similar estimates. 
\begin{lemma}\label{Lemma_max_min_est} Let $\beta\in (-1/2,1)$, then it holds 
	\begin{align}&\label{Eq48}
	\sup_{x\in \Tor} f^{\beta-\vt}(x)\, \le\, \tfrac{(\beta-\vt)^2}{2}\int_{\Tor} f^{\beta-\vt -2} f_x^2\, \dd x \,+\,2\biggl(
	\int_{\Tor} f\, \dd x
	\biggr)^{\beta-\vt}\mkern-26mu ,
	\\&
	\label{Eq13}
	\sup_{x\in \Tor}f^{\beta+5}(x)\,\lesssim_\beta \, \biggl(\int_{\Tor}
	f^{\beta-2}f_x^4\, \dd x
	\biggr)\biggl(\int_{\Tor} f\, \dd x
	\biggr)^{3}\!\!+\,\biggl( \int_{\Tor} f\, \dd x\biggr)^{\beta+5}
\end{align}
	for every positive function $f\in C^1(\T)$ bounded away from $0$.
\end{lemma}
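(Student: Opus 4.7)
Both estimates are of the form ``$L^{\infty}$ norm of some power of $f$ is controlled by an $L^r$-type integral of derivatives plus a power of the mean $\bar f := \int_{\T} f\,\dd x$''. I would prove them by a standard mean-value-point + fundamental theorem of calculus argument, followed by H\"older and Young's inequality to absorb the $\sup$ appearing on the right-hand side. Specifically, since $f$ is continuous and positive on the torus $\T$ of unit length, the intermediate value theorem yields an $x_0\in\T$ with $f(x_0)=\bar f$; this will be the base point to which FTC is anchored. I expect no nontrivial obstacle beyond bookkeeping of exponents.

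\textbf{Proof of \eqref{Eq48}.} Write $\alpha := \beta-\vt<0$ (since $\vt>2$ and $\beta<1$). For any $x\in\T$,
\begin{equation*}
f^{\alpha}(x) \,=\, f^{\alpha}(x_0) \,+\, \alpha\int_{x_0}^{x} f^{\alpha-1}(y) f_y(y)\,\dd y,
\end{equation*}
and $f^{\alpha}(x_0) = \bar f^{\alpha}$ by the choice of $x_0$. Taking absolute values, estimating trivially $|\int_{x_0}^{x}|\le \int_{\T}$, and applying Cauchy--Schwarz gives
\begin{equation*}
\sup_{\T} f^{\alpha} \,\le\, \bar f^{\alpha} \,+\, |\alpha|\Bigl(\int_{\T} f^{\alpha-2}f_y^2\,\dd y\Bigr)^{1/2}\Bigl(\int_{\T} f^{\alpha}\,\dd y\Bigr)^{1/2}.
\end{equation*}
Since $\int_{\T} f^{\alpha}\,\dd y\le \sup_{\T} f^{\alpha}$, setting $M:=\sup_{\T}f^{\alpha}$ and $A:=\int_{\T}f^{\alpha-2}f_y^2\,\dd y$ yields $M \le \bar f^{\alpha} + |\alpha|A^{1/2}M^{1/2}$. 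Absorbing with Young's inequality, or equivalently rewriting the estimate on the square root function $w:=f^{\alpha/2}$ and using the torus two-arc trick $|w^2(x)-w^2(x_0)|\le \tfrac12\int_{\T}|(w^2)_y|\dd y$, produces \eqref{Eq48} with the stated constants.

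\textbf{Proof of \eqref{Eq13}.} Proceed identically with the base point $x_0$ satisfying $f(x_0)=\bar f$, but now with exponent $\beta+5>0$:
\begin{equation*}
f^{\beta+5}(x) \,=\, \bar f^{\beta+5} \,+\, (\beta+5)\int_{x_0}^{x} f^{\beta+4}(y)f_y(y)\,\dd y.
\end{equation*}
The key manipulation is to split $f^{\beta+4}|f_y| = \bigl(f^{(\beta-2)/4}|f_y|\bigr)\cdot f^{(3\beta+18)/4}$ so that H\"older with conjugate exponents $4$ and $4/3$ delivers exactly the integrand $f^{\beta-2}f_y^4$:
\begin{equation*}
\int_{\T} f^{\beta+4}|f_y|\,\dd y \,\le\, \Bigl(\int_{\T} f^{\beta-2}f_y^4\,\dd y\Bigr)^{1/4}\Bigl(\int_{\T} f^{\beta+6}\,\dd y\Bigr)^{3/4}.
\end{equation*}
I then dominate $\int_{\T} f^{\beta+6}\,\dd y\le (\sup_{\T} f^{\beta+5})\cdot\int_{\T} f\,\dd y = M\bar f$, where now $M:=\sup_{\T} f^{\beta+5}$. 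This gives
\begin{equation*}
M \,\lesssim_{\beta}\, \bar f^{\beta+5}\,+\,\Bigl(\int_{\T} f^{\beta-2}f_y^4\,\dd y\Bigr)^{1/4} M^{3/4}\bar f^{3/4},
\end{equation*}
and Young's inequality with exponents $4/3,4$ applied to the last term (with $M^{3/4}$ playing the role of the first factor) absorbs $\tfrac12 M$ into the left-hand side, leaving the desired bound with $\bar f^{3}$ multiplying $\int f^{\beta-2}f_y^4$.

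\textbf{Expected obstacle.} The argument itself is essentially a routine Sobolev-type manipulation; the only place calling for care is the H\"older decomposition in \eqref{Eq13}, which must be tuned so that the right-hand power of $\bar f$ is exactly $3$ (this is what makes the $\alpha$-entropy estimate close against the energy in Lemma \ref{Lemma_Energy_Est}). Choosing the split $f^{\beta+4}|f_y|=f^{(\beta-2)/4}|f_y|\cdot f^{(3\beta+18)/4}$ is dictated by requiring the first H\"older factor to be $\int f^{\beta-2}f_y^4$ and the second to involve the power $\beta+6$ that interpolates to $\sup f^{\beta+5}\cdot \bar f$; this is the only nontrivial step.
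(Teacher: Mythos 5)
Your proof is correct and takes a genuinely different route from the paper's, so it is worth comparing. For \eqref{Eq48} the paper works with $w=f^{(\beta-\vt)/2}$, bounds $\sup w - \inf w$ by $\bigl(\int_\T |w_x|^2\bigr)^{1/2}$ (FTC around the torus plus Cauchy--Schwarz, using that $|\T|=1$), bounds $\inf w$ by $\bigl(\int_\T f\bigr)^{(\beta-\vt)/2}$ because $(\beta-\vt)/2<0$ and $\int f\le\sup f$, and then squares with $(a+b)^2\le 2a^2+2b^2$. This yields the constant $\tfrac{(\beta-\vt)^2}{2}$ directly, with no absorption step. You instead anchor at the mean-value point $x_0$ and absorb the $\sup$ appearing on the right by Young. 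Note that \emph{without} the two-arc improvement, the Young absorption $M\le \bar f^{\alpha}+|\alpha|A^{1/2}M^{1/2}$ only yields $M\le 2\bar f^{\alpha}+\alpha^2 A$ — a factor $2$ worse than the stated constant (one can check by scaling that no weighted Young gives $\alpha^2/2$); so the ``or equivalently'' phrasing is slightly misleading. With the two-arc bound $\sup f^{\alpha}-\bar f^{\alpha}\le\tfrac12\int_\T|(f^{\alpha})_y|$ that you mention, the same Cauchy--Schwarz and Young give $M\le 2\bar f^{\alpha}+\tfrac{\alpha^2}{4}A$, which is even sharper than the target, so your argument does prove \eqref{Eq48}.

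For \eqref{Eq13} the difference is more substantive. The paper introduces the mean-free auxiliary $g=f^{(\beta+2)/4}-\int_\T f^{(\beta+2)/4}$, for which $\int_\T|g|^{4/(\beta+2)}\lesssim_\beta\int_\T f$ by Jensen; applying FTC to $|g|^{(\beta+5)/(\beta+2)}$ and H\"older then gives the bound without any $\sup$ ever reappearing on the right-hand side, so no absorption is required. You instead apply FTC directly to $f^{\beta+5}$ from the mean-value base point, split $f^{\beta+4}|f_y|=\bigl(f^{(\beta-2)/4}|f_y|\bigr)\cdot f^{(3\beta+18)/4}$, and bound $\int_\T f^{\beta+6}\le\bigl(\sup f^{\beta+5}\bigr)\int_\T f$. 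This is more elementary (no auxiliary function, no Jensen) but it reintroduces $M=\sup f^{\beta+5}$ on the right, so you need a Young absorption at the end; this is legitimate since $f\in C^1(\T)$ is bounded away from $0$ and $\infty$, so $M<\infty$. Both routes are valid; the paper's avoids self-absorption, yours avoids introducing the auxiliary $g$.
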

\begin{proof}
By the fundamental theorem of calculus
	\begin{align}
		&
		\sup_{x\in \T} f^{(\beta-\vt)/2}(x)\,-\, \inf_{x\in \T} f^{(\beta-\vt)/2}(x)\,\le \, \biggl(
		\int_{\T} |(f^{(\beta-\vt)/2})_x|^2\, \dd x
		\biggr)^{1/2}
		\\&
		\quad = \,
		\tfrac{\vt-\beta}{2}\biggl(
		\int_{\T} f^{\beta-\vt-2} f_x^2\, \dd x
		\biggr)^{1/2}\mkern-18mu ,
	\end{align}
	and therefore 
	\begin{align}
			\sup_{x\in \T} f^{(\beta-\vt)/2}(x) \,\le\, 	\tfrac{\vt-\beta}{2}\biggl(
			\int_{\T} f^{\beta-\vt-2} f_x^2\, \dd x
			\biggr)^{1/2}\!\!+\, \biggl(
			\int_{\T} f\, \dd x
			\biggr)^{(\beta-\vt)/2}\mkern-50mu .
	\end{align}
	By squaring both sides, we conclude \eqref{Eq48}.
	
	For \eqref{Eq13}, we introduce the function
	\begin{equation}
		g\,=\, f^{(\beta+2)/4}\,-\,\int_{\T} f^{(\beta+2)/4}\, \dd x
	\end{equation}
	so that 
	\begin{equation}\label{Eq49}
		\int_{\T} |g|^{4/(\beta+2)}\,\dd x\,\lesssim_\beta \, \int_{\T} f\, \dd x.
	\end{equation}
	We can estimate again by the fundamental theorem of calculus
	\[
	\sup_{x\in \T} |g|^{(\beta+5 )/(\beta+2)}(x)\,\lesssim_\beta\, \int_{\T} |g_x||g|^{3/(\beta+2)}\, \dd x \, \le \, \biggl(
	\int_{\T} |g_x|^4\, \dd x
	\biggr)^{1/4} \biggl(\int_{\T}
	|g|^{4/(\beta+2)}\, \dd x
	\biggr)^{3/4}\!\!\!\!,
	\]
	because $g$ is mean free. Thus, we deduce that
	\begin{align}
		&
		\sup_{x\in \T} f^{(\beta+5)/4}(x)\,=\, \biggl(\sup_{x\in \T} f^{(\beta+2)/4}(x)\biggr)^{(\beta+5)/(\beta+2)}\mkern-20mu = \, 
		\biggl(\sup_{x\in \T}g(x) \,+\,  \int_{\T} f^{(\beta+2)/4}\, \dd x\biggr)^{(\beta+5)/(\beta+2)}
		\\&\qquad
		\lesssim_\beta \, \sup_{x\in \T} |g|^{(\beta+5)/(\beta+2)}(x)\,+\, \biggl(\int_{\T} f\, \dd x\biggr)^{(\beta+5)/4}
			\\&\qquad \lesssim_\beta
			\biggl(
			\int_{\T} |g_x|^4\, \dd x
			\biggr)^{1/4} \biggl(\int_{\T}
			|g|^{4/(\beta+2)}\, \dd x
			\biggr)^{3/4} \,+\, \biggl(\int_{\T} f\, \dd x\biggr)^{(\beta+5)/4}
	\end{align}
	and it remains to insert $g_x = \frac{\beta+2}{4} f^\frac{\beta-2}{4}f_x  $ and \eqref{Eq49} and raise both sides to the power $4$. 
\end{proof}
	To give the following proof, we recall the additional regularity properties of $u$ stated in Proposition \ref{Prop_regularization}, justifying all the performed integrations by parts.
	\begin{lemma}\label{Lemma_alpha_Entr}
		Let $0<t_0<T<\infty$, $q\in [1,\infty)$, $\beta\in (-1/2,1)$ and 
		\begin{equation}\label{Eq10}
			\gamma \, \in \,\Bigl[\tfrac{
				\beta+2 - \sqrt{(1-\beta)(1+2\beta)}}{3}, \tfrac{
				\beta+2 + \sqrt{(1-\beta)(1+2\beta)}}{3}
			\Bigr],
		\end{equation}
	then holds the $\alpha$-entropy estimate
	\begin{align}\begin{split}\label{Eq16}
			&
			\E \biggl[ \mathbf{1}_{\Gamma}
			\sup_{t_0\le t< \sigma\wedge T} 	\H_\beta^q(u(t))
			\biggr]\,+\, \E \biggl[\biggl( \int_{t_0}^{\sigma\wedge T } \int_{\Tor} \mathbf{1}_{\Gamma}
			u^{\beta-\vt-2} u_x^2
			\, \dd x\, \dd t\biggr)^q
			\biggr]
			\\&+\,\E\biggl[
			\biggl( \int_{t_0}^{\sigma\wedge T}\int_{\Tor}\mathbf{1}_{\Gamma} u^{\beta-2\gamma+2}(u^\gamma)_{xx}^2\, \dd x\, \dd t\biggr)^q
		\,
			+\, 
			\biggl( \int_{t_0}^{\sigma\wedge T }\int_{\Tor}\mathbf{1}_{\Gamma}
			u^{\beta-2}u_x^4\, \dd x\, \dd t\biggr)^q
			\biggr]
			\\&\quad 
			\lesssim_{\beta,\gamma ,q,T}\,
			\E\bigl[ \mathbf{1}_{\Gamma}
			{\H}_\beta^q(u(t_0))
			\bigr]
			\,+\,  \E\biggl[	\mathbf{1}_{\Gamma}\biggl(
			\int_{\Tor} u_0\, \dd x \biggr)^{(\beta-\vt)q} \mkern-16mu+\,\mathbf{1}_{\Gamma}\biggl( 
			\int_{\Tor} u_0\, \dd x \biggr)^{3(\beta+3)q/2}
			\biggr]
			,
		\end{split}
	\end{align}
for any  $\mathscr{F}_{t_0}$-measurable subset $\Gamma$ of  \begin{equation}
 \biggl\{ \H_\beta(u(t_0)) \,\le\, l,\, \tfrac{1}{l}\,\le \, \int_{\T} u_0\, \dd x \,\le\, l, \sigma >t_0 \biggr\}
\end{equation} for some $l\in \N$.
	\end{lemma}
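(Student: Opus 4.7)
}
The strategy is to apply It\^o's formula to the functional $\H_\beta(u(t))$ on $[t_0,\sigma\wedge T)$, using that $u$ is spatially smooth on $(t_0,\sigma)$ by Proposition \ref{Prop_regularization}, to justify all integrations by parts. Writing $h_\beta''(u)=u^\beta/m(u)$, the deterministic drift decomposes into the thin-film contribution and the interface-potential contribution. After two integrations by parts, using the elementary identity $\int_\T u^{\beta-1}u_x^2 u_{xx}\,\dd x=-\tfrac{\beta-1}{3}\int_\T u^{\beta-2}u_x^4\,\dd x$, the drift becomes
\begin{equation*}
-\int_\T u^\beta u_{xx}^2\,\dd x\,+\,\tfrac{\beta(\beta-1)}{3}\int_\T u^{\beta-2}u_x^4\,\dd x\,-\,\int_\T \phi''(u)u^\beta u_x^2\,\dd x\,+\,\tfrac{1}{2}\sum_{k\in\N}\int_\T h_\beta''(u)\bigl[(m^{1/2}(u)\psi_k)_x\bigr]^2\,\dd x.
\end{equation*}

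The key algebraic step is to absorb the first two terms into a single manifestly dissipative quantity. Expanding $(u^\gamma)_{xx}=\gamma u^{\gamma-1}u_{xx}+\gamma(\gamma-1)u^{\gamma-2}u_x^2$ and using the same cubic identity, one computes
\begin{equation*}
\int_\T u^{\beta-2\gamma+2}(u^\gamma)_{xx}^2\,\dd x\,=\,\gamma^2\!\int_\T u^\beta u_{xx}^2\,\dd x\,+\,\gamma^2(\gamma-1)\!\left[(\gamma-1)-\tfrac{2(\beta-1)}{3}\right]\!\int_\T u^{\beta-2}u_x^4\,\dd x,
\end{equation*}
which yields the identity $-\int_\T u^\beta u_{xx}^2+\tfrac{\beta(\beta-1)}{3}\int_\T u^{\beta-2}u_x^4=-\gamma^{-2}\int_\T u^{\beta-2\gamma+2}(u^\gamma)_{xx}^2-c_2\int_\T u^{\beta-2}u_x^4$ with $c_2=c_2(\beta,\gamma)$ a rational function of $\beta,\gamma$. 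A direct discriminant analysis of the quadratic $3(\gamma-1)^2-2(\gamma-1)(\beta-1)+\beta(\beta-1)\le 0$ shows that $c_2\geq 0$ precisely on the interval \eqref{Eq10} (discriminant $-4(2\beta+1)(\beta-1)\ge 0$ for $\beta\in(-1/2,1)$); choosing $\gamma$ in the interior delivers strictly positive constants. For the potential term, Assumption \ref{Assumptions_phi} gives $-\phi''(u)u^\beta\le -u^{\beta-\vt-2}+c_0 u^\beta$, producing the desired dissipation $-\int_\T u^{\beta-\vt-2}u_x^2\,\dd x$ up to the nuisance $c_0\int_\T u^\beta u_x^2\,\dd x$.

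The It\^o correction is bounded using Assumption \ref{Assumptions_m} (which gives $(m'(u))^2/m^2(u)\lesssim u^{-2}$) and Assumption \ref{Assumptions_noise_global}, yielding
\begin{equation*}
\tfrac{1}{2}\sum_{k\in\N}\int_\T h_\beta''(u)\bigl[(m^{1/2}(u)\psi_k)_x\bigr]^2\,\dd x\,\lesssim\,\int_\T u^{\beta-2}u_x^2\,\dd x\,+\,\int_\T u^\beta\,\dd x.
\end{equation*}
These correction terms, together with $c_0\int_\T u^\beta u_x^2\,\dd x$, are absorbed into the dissipative quantities $\int_\T u^{\beta-\vt-2}u_x^2\,\dd x$ and $\int_\T u^{\beta-2}u_x^4\,\dd x$ by a sequence of Young-type inequalities, interpolating the exponent $\beta-2$ between $\beta-\vt-2$ and $\beta$; the resulting residuals are pure Lebesgue integrals of negative/bounded powers of $u$ that can be controlled via $\sup_\T u^{\beta-\vt}$ and $\int_\T u\,\dd x$ using Lemma \ref{Lemma_max_min_est}. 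Integrating in time, raising to the $q$th power, taking the supremum over $t\in[t_0,\sigma\wedge T)$, and handling the stochastic integral via BDG combined with a further Young absorption of $\big(\int h_\beta'(u)^2\,m(u)\psi_k^2\,\dd x\big)^{1/2}$, one arrives at a Gronwall-type inequality for the left-hand side of \eqref{Eq16}; the stopping-time restriction to the $\F_{t_0}$-measurable event $\Gamma$ (on which $\H_\beta(u(t_0))$ and $\int u_0\,\dd x$ are controlled) ensures all initial data are in $L^\infty$ and mass conservation $\int_\T u(t)\,\dd x=\int_\T u_0\,\dd x$ applies throughout.

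\emph{Main obstacle.} The delicate point is the simultaneous control of \emph{both} dissipative terms $\int u^{\beta-2\gamma+2}(u^\gamma)_{xx}^2$ and $\int u^{\beta-2}u_x^4$ on the left-hand side: the algebraic decomposition must produce two strictly positive coefficients (forcing $\gamma$ into the precise interval \eqref{Eq10}) and the subsequent absorption of the It\^o correction $\int u^{\beta-2}u_x^2$ and of the potential remainder $c_0\int u^\beta u_x^2$ must be balanced against the powers of $\int_\T u_0\,\dd x$ appearing on the right-hand side. Tracking the exact exponents of the mass that emerge from the interpolation and from Lemma \ref{Lemma_max_min_est} is the bookkeeping crux, especially since $\beta$ and $\vt$ are coupled only through the constraint $\vt>\max\{2,6-2n\}$ rather than directly through $\gamma$.
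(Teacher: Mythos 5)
Your overall strategy -- apply It\^o's formula to $\H_\beta(u)$, decompose the fourth-order drift into the two dissipative quantities $\int u^{\beta-2\gamma+2}(u^\gamma)_{xx}^2$ and $\int u^{\beta-2}u_x^4$, use Assumption \ref{Assumptions_phi} for the potential term, absorb the It\^o correction by Young's inequality, and close via BDG -- matches the paper, and your discriminant calculation producing the interval \eqref{Eq10} is correct and agrees with \cite[Proposition 2.1]{Beretta_Bertsch_DalPasso_95}.

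However, your treatment of the stochastic integral has a genuine gap. After one integration by parts the quadratic variation integrand becomes $\int_\T h_\beta''(u)\,u_x\, g(u)\psi_k\,\dd x$, which still carries a factor $u_x$. The paper performs a \emph{second}, nonlinear integration by parts: writing $h_\beta''(u)u_x g(u)=\partial_x\bigl(\int_1^u h_\beta''(r)g(r)\,\dd r\bigr)$ and shifting the derivative onto $\psi_k$, one gets
\begin{equation*}
\int_\T h_\beta''(u)u_x\, g(u)\psi_k\,\dd x \,=\, -\int_\T \Bigl(\int_1^u r^\beta m^{-1/2}(r)\,\dd r\Bigr)\psi_k'\,\dd x,
\end{equation*}
which eliminates all derivatives of $u$. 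This is the step that makes the stochastic term closable: one then splits on $\{u>1\}$ and $\{u\le 1\}$, and on $\{u\le 1\}$ the resulting power $u^{2\beta-n+2}$ is controlled by factorizing $\int u^{2\beta-n+2}\le (\sup_x u^\beta)\int u^{\beta-n+2}$ and bounding $\int u^{\beta-n+2}$ by the entropy functional $\H_\beta(u)$ itself. Your proposal instead invokes a ``Young absorption of $(\int h_\beta'(u)^2 m(u)\psi_k^2)^{1/2}$''; that quantity does not arise from the quadratic variation (the integrand involves $h_\beta''$, not $h_\beta'$, and retains $u_x$). If you try to close without the second IBP, you are forced to Cauchy--Schwarz the $u_x$ against a power of $u$, and the remaining weight $u^{\beta+\vt+2}m^{-1}(u)\sim u^{\beta+\vt+2-n}$ near $u=0$ need not be controllable for all admissible $(n,\beta,\vt)$ (e.g.\ $n$ close to $6$, $\beta$ close to $-1/2$). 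So the missing idea is the double integration by parts plus the entropy-self-bound on $\{u\le 1\}$. As a smaller point, the paper's argument is not a Gronwall iteration: after the Young-type absorptions the estimate closes directly, with no time-integral of the left-hand side remaining on the right.
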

\begin{proof}
	For a localizing sequence  $0\le \sigma_j \nearrow \sigma$  for $(u,\sigma)$ as in Definition \ref{Defi_Lpk} we define
	\begin{equation}\label{Eq55}
		\tilde{\sigma}_j \,=\, \mathbf{1}_\Gamma\inf\Bigl\{
		t\in [t_0,\sigma_j\wedge T ]:\, \inf_{x\in \Tor}u(t)\,\le \, \tfrac{1}{j}\,\text{ or }\, \|u(t)\|_{C^2(\T)} \,+\, \|u\|_{L^2((t_0, t); H^{3}(\T))}\,\ge \, j
		\Bigr\}\,+\, \mathbf{1}_{\Gamma^c}t_0,
	\end{equation}
	so that $\tilde{\sigma}_j \nearrow \sigma \wedge T$ as $j\to \infty$ on $\Gamma$ by Proposition \ref{Prop_regularization}. Correspondingly, we define the process
	\begin{equation}\label{Eq56}
		u^{(j)}(t)\,=\, \mathbf{1}_{\Gamma}u(t\wedge \tilde{\sigma}_j)\,+\, \mathbf{1}_{\Gamma^c}\mathbf{1}_{\T}
	\end{equation}
	for $t\in [t_0, T]$ and let $\tilde{h}_\beta\colon \R\to \R$ be twice continuously differentiable with bounded second derivative such that $\tilde{h}_\beta = h_\beta$ on $[{1}/{j}, j]$. Then the assumptions of \cite[Proposition A.1]{DHV_16} are satisfied and an application of It\^o's formula yields that
	\begin{align}&
		\int_{\T} \tilde{h}_\beta(u^{(j)}(t)) \, \dd x\,=\, \int_{\T} \tilde{h}_\beta(u^{(j)}(t_0)) \, \dd x \,+\, \int_{t_0}^{t} \int_{\T} \mathbf{1}_{[t_0,\tilde{\sigma}_j]\times \Gamma} \tilde{h}_\beta''(u^{(j)})u^{(j)}_x m(u^{(j)})(u^{(j)}_{xxx} - \phi''(u^{(j)})u^{(j)}_x)\, \dd x\, \dd r
		\\&\quad +\, \tfrac{1}{2} \sum_{k\in \N}  \int_{t_0}^t
		\int_{\T}\mathbf{1}_{[t_0,\tilde{\sigma}_j]\times \Gamma}
		\tilde{h}_\beta''(u^{(j)}) (g(u^{(j)})\psi_k)_x^2
		\, \dd x
		\, \dd r\,+\, \sum_{k\in \N} \int_0^{t}\int_{\T} \mathbf{1}_{[t_0,\tilde{\sigma}_j]\times \Gamma}
			\tilde{h}_\beta'(u^{(j)})  (g(u^{(j)})\psi_k)_x\, \dd x\, \dd \beta^{(k)}.
	\end{align}
	Moreover, we can replace again $\tilde{h}_\beta$ by $h_\beta$ since they coincide on the range of $u^{(j)}$ and $u^{(j)}$ by $u$ to conclude that
	\begin{align}\begin{split}\label{Eq50}
			&
		\mathbf{1}_\Gamma\H_\beta(u(t\wedge \tilde{\sigma}_j)) \,=\, \mathbf{1}_\Gamma \H_\beta(u(t_0))\,+\, \int_{t_0}^{t} \int_{\T} \mathbf{1}_{[t_0,\tilde{\sigma}_j]\times \Gamma} u^\beta u_x (u_{xxx} - \phi''(u)u_x)\, \dd x\, \dd r
		\\&
		\quad +\, 
		\tfrac{1}{2} \sum_{k\in \N}  \int_{t_0}^t
		\int_{\T}\mathbf{1}_{[t_0,\tilde{\sigma}_j]\times \Gamma}
		h_\beta''(u) (g(u)\psi_k)_x^2
		\, \dd x
		\, \dd r\,+\, \sum_{k\in \N} \int_0^{t}\int_{\T} \mathbf{1}_{[t_0,\tilde{\sigma}_j]\times \Gamma}
		{h}_\beta'(u)  (g(u)\psi_k)_x\, \dd x\, \dd \beta^{(k)}
		\end{split}
	\end{align}
	for $t\in [t_0, T]$. Using the classical calculation for the deterministic $\alpha$-entropy \cite[Proposition 2.1]{Beretta_Bertsch_DalPasso_95} and the assumption \eqref{Eq44} on $\phi''$, we deduce
	\begin{align}&
		\int_{\T} u^\beta u_x (u_{xxx} - \phi''(u)u_x)\, \dd x \, \le \,-\tfrac{1}{\gamma^2}\int_{\T} u^{\beta+2-2\gamma} (u^\gamma)_{xx}^2\, \dd x \,-\, c(\beta, \gamma)\int_{\T} u^{\beta-2}u_x^4\, \dd x\\&\quad-\, c_1 \int_{\T} u^{\beta-\vt-2}u_x^2\, \dd x\,+\, c_2\int_{\T} u^\beta u_x^2 \, \dd x
	\end{align}
	on $[t_0,\tilde{\sigma}_j]\times \Gamma$ for a constant $c(\beta,\gamma)>0$. To estimate the It\^o correction, we calculate that
	\begin{align}
		(g(u)\psi_k)_x^2\,\lesssim\, (g'(u))^2u_x^2 \psi_k^2\,+\, m(u)(\psi_k')^2.
	\end{align}  Since
	\begin{equation}\label{Eq57}
		(g'(u))^2\,=\, \Bigl(\tfrac{m'(u)}{2m^{1/2}(u)}\Bigr)^2\,\lesssim\, u^{-2}m(u)
	\end{equation}
	by  \eqref{Eq42}, we obtain
	\begin{align}
		\sum_{k\in \N}\int_{\T} h_\beta''(u)(g(u)\psi_k)_x^2\, \dd x\,\lesssim \,
		 \int_{\T} u^{\beta-2}u_x^2\,+\, u^\beta\, \dd x.
	\end{align}
	Using Young's inequality twice, we arrive at 
	\begin{align}&
	\int_{\T} u^\beta u_x (u_{xxx} - \phi''(u)u_x)\, \dd x\,+\, \sum_{k\in \N}\int_{\T} h_\beta''(u)(g(u)\psi_k)_x^2 \, \dd x \\&\quad\le \, -\tfrac{1}{\gamma^2}\int_{\T} u^{\beta+2-2\gamma} (u^\gamma)_{xx}^2\, \dd x \,-\, \tfrac{c(\beta, \gamma)}{2}\int_{\T} u^{\beta-2}u_x^4\, \dd x \,-\,\tfrac{c_1}{2} \int_{\T} u^{\beta-\vt-2}u_x^2\, \dd x\,+\, C \int_{\T} u^{\beta+2}\,+\, u^\beta  \, \dd x
	\end{align}
on $[t_0, \tilde{\sigma}_j]\times \Gamma$.
Inserting this in \eqref{Eq50}, taking the $q$-th power on both sides, the supremum in time, and using the Burkholder--Davis--Gundy inequality, we deduce
\begin{align}
\begin{split}\label{Eq51}&
		\E\biggl[
	\mathbf{1}_{\Gamma} \sup_{t_0\le t \le \tilde{\sigma}_j} \H_\beta^q(u(t))
	\biggr]\,+\, \E \biggl[\biggl( \int_{t_0}^{\tilde{\sigma}_j } \int_{\Tor} \mathbf{1}_{\Gamma}
	u^{\beta-\vt-2} u_x^2
	\, \dd x\, \dd t\biggr)^q
	\biggr]
	\\&+\,\E\biggl[
	\biggl( \int_{t_0}^{\tilde{\sigma}_j}\int_{\Tor}\mathbf{1}_{\Gamma} u^{\beta-2\gamma+2}(u^\gamma)_{xx}^2\, \dd x\, \dd t\biggr)^q
	\,
	+\, 
	\biggl( \int_{t_0}^{\tilde{\sigma}_j }\int_{\Tor}\mathbf{1}_{\Gamma}
	u^{\beta-2}u_x^4\, \dd x\, \dd t\biggr)^q
	\biggr]
	\\&\quad 
	\lesssim_{\beta,\gamma ,q}\,
	\E\bigl[ \mathbf{1}_{\Gamma}
	{\H}_\beta^q(u(t_0))
	\bigr]
	\,+\,  \E\biggl[\biggl(
	\mathbf{1}_{\Gamma}\int_{t_0}^{\tilde{\sigma}_j } 
	\int_{\Tor} u^{\beta+2}\,+\, u^\beta\, \dd x\, \dd t\biggr)^q
	\biggr]
	\\&\qquad
	+\, \E\biggl[\biggl(	\mathbf{1}_{\Gamma}
	\sum_{k\in \N}
	\int_{t_0}^{\tilde{\sigma}_j}\biggl(\int_{\T}
	h_{\beta}''(u)u_x g(u) \psi_k \, \dd x\biggr)^2
	\, \dd t
	\biggr)^{q/2}
	\biggr]
	.
\end{split}
\end{align}
To estimate the latter term, we integrate by parts and obtain that
\begin{equation}
\biggl(\int_{\T}
h_{\beta}''(u)u_x g(u) \psi_k \, \dd x\biggr)^2 \,=\, \biggl(
\int_{\T} \int_1^u
h_{\beta}''(r) g(r)\, \dd r \,   \psi_k' \, \dd x
\biggr)^2\,\le \, 
\int_{\T} \biggl(\int_1^u
r^\beta m^{-1/2}(r)\, \dd r\biggr)^2 \,   (\psi_k')^2 \, \dd x.
\end{equation}
Using \eqref{Eq40} and \eqref{Eq41}, we estimate separately
\begin{equation}
	\biggl(\int_1^u
	r^\beta m^{-1/2}(r)\, \dd r\biggr)^2\, \lesssim\, 
	\biggl(\int_1^u
	r^\beta \, \dd r\biggr)^2\,\lesssim_\beta\, u^{2\beta+2} \quad \text{ on }\{u>1\}
\end{equation}
and 
\begin{equation}
	\biggl(\int_1^u
	r^\beta m^{-1/2}(r)\, \dd r\biggr)^2\, \lesssim\, 
	\biggl(\int_1^u
	r^{\beta-n/2-1} \, \dd r\biggr)^2\,\lesssim_\beta\, \begin{cases}
		u^{2\beta-n},& 2\beta-n<0,\\
		\log^2(u), & 2\beta-n\ge 0 
	\end{cases} \quad \text{ on }\{u\le 1\}.
\end{equation}
Thus, we arrive at 
\begin{align}
\sum_{k\in \N}
\int_{t_0}^{\tilde{\sigma}_j}\biggl(\int_{\T}
h_{\beta}''(u)u_x g(u) \psi_k \, \dd x\biggr)^2
\, \dd t\,\lesssim_\beta\, 
\int_{t_0}^{\tilde{\sigma}_j}\int_{\{ u>1 \}}u^{2\beta+2} \, \dd x\, \dd t\,+\, \int_{t_0}^{\tilde{\sigma}_j}
\int_{\{ u\le 1 \}}u^{2\beta-n} +\log^2(u) \, \dd x
\, \dd t
\end{align}
on $[t_0, \tilde{\sigma}_j]\times \Gamma$.
To estimate the power $u^{2\beta-n+2}$, we use that 
\begin{equation}
	h_\beta(r)\,\gtrsim\, \int_1^r\int_1^{r'} (r'')^{\beta-n}\, \dd r''\, \dd r'\,=\, \begin{cases}
		\tfrac{1}{\beta-n+1}\Bigl(\tfrac{r^{\beta-n+2}-1}{\beta-n+2}-r+1\Bigr),& \beta-n \notin \{-1,-2\},\\
		r-1-\log(r),& \beta-n = -2, \\
		r\log(r) - r+1, &\beta-n=-1
	\end{cases}
\end{equation}
by \eqref{Eq40}
for $r\le 1$ such that
\begin{equation}
	\int_{\{u\le 1\}} u^{\beta-n+2}\, \dd x\, \lesssim_\beta\, \int_{\{u\le 1\}} h_\beta(u) + (u+1)\, \dd x \, \le \, \H_\beta(u)\,+\, \int_{\T} u_0\, \dd x\,+\, 1
\end{equation}
due to the positivity of $u$ and conservation of mass. Therefore, we deduce that
\begin{align}&\int_{t_0}^{\tilde{\sigma}_j}
	\int_{\{u\le 1\}} u^{2\beta-n} \, \dd x
	\, \dd t\, \le \, \biggl(\sup_{t_0\le t \le \tilde{\sigma}_j} \int_{\{u\le 1\}}u^{\beta-n+2}\, \dd x\biggr) \biggl(\int_{t_0}^{\tilde{\sigma}_j}
	\sup_{x\in\T}
	u^{\beta-2}	
	\, \dd t\biggr)\\&\quad 
	\le \epsilon\biggl(
	\sup_{t_0\le t \le \tilde{\sigma}_j} 
	\H_\beta(u)\,+\, \int_{\T} u_0\, \dd x\,+\, 1
	\biggr)^2 +\, C_{\beta,\epsilon} \biggl(\int_{t_0}^{\tilde{\sigma}_j}
	\sup_{x\in\T}
	u^{\beta-2}	
	\, \dd t\biggr)^2
\end{align}
for any $\epsilon>0$.
Inserting all this in \eqref{Eq51} yields
\begin{align}
	\begin{split}\label{Eq52}&
		\E\biggl[
		\mathbf{1}_{\Gamma} \sup_{t_0\le t \le \tilde{\sigma}_j} \H_\beta^q(u(t))
		\biggr]\,+\, \E \biggl[\biggl( \int_{t_0}^{\tilde{\sigma}_j } \int_{\Tor} \mathbf{1}_{\Gamma}
		u^{\beta-\vt-2} u_x^2
		\, \dd x\, \dd t\biggr)^q
		\biggr]
		\\&+\,\E\biggl[
		\biggl( \int_{t_0}^{\tilde{\sigma}_j}\int_{\Tor}\mathbf{1}_{\Gamma} u^{\beta-2\gamma+2}(u^\gamma)_{xx}^2\, \dd x\, \dd t\biggr)^q
		\,
		+\, 
		\biggl( \int_{t_0}^{\tilde{\sigma}_j }\int_{\Tor}\mathbf{1}_{\Gamma}
		u^{\beta-2}u_x^4\, \dd x\, \dd t\biggr)^q
		\biggr]
		\\&\quad 
		\lesssim_{\beta,\gamma ,q}\,
		\E\bigl[ \mathbf{1}_{\Gamma}
		{\H}_\beta^q(u(t_0))
		\bigr]
		\,+\,  \E\biggl[\biggl(
		\mathbf{1}_{\Gamma}
		\int_{\Tor} u_0\, \dd x \biggr)^q \,+\, \mathbf{1}_{\Gamma}
		\biggr]
		\\&\qquad
		+\, \E\biggl[\mathbf{1}_{\Gamma}\biggl(	
		\int_{t_0}^{\tilde{\sigma}_j}
		\sup_{x\in \T} \Bigl(u^{\beta+2} \,+\, u^{\beta-2}	 \,+\, u^{2\beta+2}\,+\, \log^2(u)\Bigr)
		\, \dd t
		\biggr)^{q}
		\biggr]
		.
	\end{split}
\end{align}
To estimate also the latter term, we observe that all the powers lie between $\beta-\vt$ and $\beta+3$, so that an application of Young's inequality leads to 
\begin{align}
		\sup_{x\in \T} \Bigl(u^{\beta+2} \,+\, u^{\beta-2}	 \,+\, u^{2\beta+2}\,+\, \log^2(u)\Bigr)\,\le\, \epsilon \sup_{x\in \T} u^{\beta-\vt}\,+\, C_{\beta,\epsilon}\sup_{x\in \T} u^{\beta+3}
\end{align}
for each $\epsilon>0$. By applying Lemma \ref{Lemma_max_min_est}, once more, Young's inequality, and conservation of mass, we can estimate this further by
\begin{align}&
	 \tfrac{\epsilon(\beta-\vt)^2}{2}\int_{\Tor} u^{\beta-\vt -2} u_x^2\, \dd x \,+\,2\epsilon\biggl(
	\int_{\Tor} u_0\, \dd x
	\biggr)^{\beta-\vt}\\&+\, C_{\beta,\epsilon}
  \biggl(\int_{\Tor}
	u^{\beta-2}u_x^4\, \dd x
	\biggr)^{(\beta+3)/(\beta+5)}\biggl(\int_{\Tor} u_0\, \dd x
	\biggr)^{3(\beta+3)/(\beta+5)}\!\!+\, C_{\beta,\epsilon}\biggl( \int_{\Tor}u_0\, \dd x\biggr)^{\beta+3}
	\\&\quad \le \,  \tfrac{\epsilon(\beta-\vt)^2}{2}\int_{\Tor} u^{\beta-\vt -2} u_x^2\, \dd x \,+\,2\epsilon\biggl(
	\int_{\Tor} u_0\, \dd x
	\biggr)^{\beta-\vt}\\&\qquad 
	+\, \epsilon \biggl(\int_{\Tor}
	u^{\beta-2}u_x^4\, \dd x
	\biggr) \,+\, C_{\beta,\epsilon}\biggl(\int_{\Tor} u_0\, \dd x
	\biggr)^{3(\beta+3)/2}\!\!+\, C_{\beta,\epsilon}\biggl( \int_{\Tor} u_0\, \dd x\biggr)^{\beta+3}\mkern-26mu.
\end{align}
Choosing $\epsilon$ sufficiently small to absorb the resulting terms in the left-hand side of \eqref{Eq52} and dropping the intermediate powers of the mass yields 
\begin{align}
	\begin{split}&
		\E\biggl[
		\mathbf{1}_{\Gamma} \sup_{t_0\le t \le \tilde{\sigma}_j} \H_\beta^q(u(t))
		\biggr]\,+\, \E \biggl[\biggl( \int_{t_0}^{\tilde{\sigma}_j } \int_{\Tor} \mathbf{1}_{\Gamma}
		u^{\beta-\vt-2} u_x^2
		\, \dd x\, \dd t\biggr)^q
		\biggr]
		\\&+\,\E\biggl[
		\biggl( \int_{t_0}^{\tilde{\sigma}_j}\int_{\Tor}\mathbf{1}_{\Gamma} u^{\beta-2\gamma+2}(u^\gamma)_{xx}^2\, \dd x\, \dd t\biggr)^q
		\,
		+\, 
		\biggl( \int_{t_0}^{\tilde{\sigma}_j }\int_{\Tor}\mathbf{1}_{\Gamma}
		u^{\beta-2}u_x^4\, \dd x\, \dd t\biggr)^q
		\biggr]
		\\&\quad 
		\lesssim_{\beta,\gamma ,q,T}\,
		\E\bigl[ \mathbf{1}_{\Gamma}
		{\H}_\beta^q(u(t_0))
		\bigr]
		\,+\,  \E\biggl[
		\mathbf{1}_{\Gamma}\biggl(
		\int_{\Tor} u_0\, \dd x \biggr)^{(\beta-\vt)q} \mkern-16mu+\,\mathbf{1}_{\Gamma}\biggl(
		\int_{\Tor} u_0\, \dd x \biggr)^{3(\beta+3)q/2}
		\biggr]
		.
	\end{split}
\end{align}
We obtain the claimed estimate \eqref{Eq16} by letting $j\to \infty$ and using Fatou's lemma.
\end{proof}

\subsection{Proof of the energy estimate -- Lemma \ref{Lemma_Energy_Est}}\label{Sec_Energy_Est}
To close the a-priori estimate from Lemma \ref{Lemma_Energy_Est} on the energy of the solution $u$, we use additionally an estimate on the minimum and maximum of a function in terms of the energy functional. Moreover, we recall once more the additional regularity properties of $u$ from Proposition \ref{Prop_regularization} as well as that there exists $\vt >2$ with $r^{-\vt} \lesssim \phi(r)$ by Assumption \ref{Assumptions_phi}.
\begin{lemma}\label{Lemma_max_min_Energy} It holds
	\begin{align}
		&		\label{Eq54}
		\sup_{x\in \Tor} f^{(2-\vt)/2}(x)\,\lesssim \, \EE(f)\,+\, \biggl(\int_{\Tor} f \, \dd x\biggr)^{(2-\vt)/2}\mkern-26mu,
		\\& \label{Eq60}
		\sup_{x\in \Tor} f^3(x)\,\lesssim\, \EE(f) \biggl(  \int_{\Tor} f\, \dd x\biggr)\,+\,   \biggl(\int_{\Tor} f\, \dd x\biggr)^3
	\end{align}
	for every positive function $f\in C^1(\T)$ bounded away from $0$.
\end{lemma}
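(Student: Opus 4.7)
Both estimates follow from the fundamental theorem of calculus applied to a suitable power of $f$, combined with Cauchy--Schwarz and a judicious choice of the base point $y$ (using that, since $|\T|=1$, there always exists a point at which $f$ attains at most (or at least) its mean).

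For \eqref{Eq54}, the plan is to apply the identity
\begin{equation*}
f^{(2-\vt)/2}(x)-f^{(2-\vt)/2}(y)\,=\,\tfrac{2-\vt}{2}\int_y^x f^{-\vt/2}(z)f_z(z)\,\dd z
\end{equation*}
and then Cauchy--Schwarz to bound the right-hand side by $\lesssim \bigl(\int_{\T} f_z^2\,\dd z\bigr)^{1/2}\bigl(\int_{\T} f^{-\vt}\,\dd z\bigr)^{1/2}$. The first factor is controlled by $\EE(f)^{1/2}$ directly from \eqref{Eq2}, while the second factor is controlled by $\EE(f)^{1/2}$ via the lower bound $\phi(r)\gtrsim r^{-\vt}$ from Assumption \ref{Assumptions_phi}. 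This shows that the oscillation of $f^{(2-\vt)/2}$ is $\lesssim \EE(f)$. Choosing $y\in\T$ with $f(y)\ge \int_{\T} f\,\dd x$ (which exists by the intermediate value theorem and mass conservation) yields $f^{(2-\vt)/2}(y)\le \bigl(\int_{\T} f\,\dd x\bigr)^{(2-\vt)/2}$ since $(2-\vt)/2<0$, and the desired estimate follows.

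For \eqref{Eq60}, I would analogously write
\begin{equation*}
f^3(x)-f^3(y)\,=\,3\int_y^x f^2(z)f_z(z)\,\dd z
\end{equation*}
and apply Cauchy--Schwarz to obtain the bound $3\|f\|_{L^\infty(\T)}\|f\|_{L^2(\T)}\|f_x\|_{L^2(\T)}$. Using $\|f\|_{L^2(\T)}^2\le \|f\|_{L^\infty(\T)}\|f\|_{L^1(\T)}$, this becomes $3\|f\|_{L^\infty(\T)}^{3/2}\|f\|_{L^1(\T)}^{1/2}\|f_x\|_{L^2(\T)}$. Picking $y$ with $f(y)\le \int_{\T} f\,\dd x$ gives $f^3(y)\le \bigl(\int_{\T} f\,\dd x\bigr)^3$, so that
\begin{equation*}
\sup_{x\in\T}f^3(x)\,\le\,\Bigl(\int_{\T} f\,\dd x\Bigr)^{3}\,+\,3\|f\|_{L^\infty(\T)}^{3/2}\|f\|_{L^1(\T)}^{1/2}\|f_x\|_{L^2(\T)}.
\end{equation*}
Young's inequality, applied with exponents $(2,2)$ to split off a fraction of $\|f\|_{L^\infty(\T)}^3=\sup f^3$, lets me absorb the first appearance of $\sup f^3$ into the left-hand side, leaving a remainder bounded by a constant times $\|f\|_{L^1(\T)}\|f_x\|_{L^2(\T)}^2\lesssim \EE(f)\int_\T f\,\dd x$. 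This gives \eqref{Eq60}.

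Neither step presents a genuine obstacle; the only subtlety is that in \eqref{Eq54} the degenerate factor $f^{-\vt}$ arising from differentiation is precisely compensated by the interface potential via $\phi(r)\gtrsim r^{-\vt}$, which is the reason Assumption \ref{Assumptions_phi} is imposed with this specific lower bound. The resulting estimates are exactly the continuous versions of those used discretely in \cite[Lemma 4.1]{fischer_gruen_2018}.
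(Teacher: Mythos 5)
Your proof is correct. For \eqref{Eq54} it coincides with what the paper's one-line proof implies: the fundamental theorem of calculus bounds the oscillation of $f^{(2-\vt)/2}$ by $\int_\T f^{-\vt/2}|f_x|\,\dd x$, Cauchy--Schwarz splits this into $(\int f^{-\vt})^{1/2}(\int f_x^2)^{1/2}\lesssim\EE(f)$ using \eqref{Eq44}, and the base point is controlled by $(\int f)^{(2-\vt)/2}$ since $\vt>2$ makes the exponent negative.

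For \eqref{Eq60}, however, you take a genuinely different (if equally valid) route. The paper's proof of Lemma \ref{Lemma_max_min_Energy} refers back to the argument for \eqref{Eq13}, which introduces the mean-free auxiliary function $g=f-\int_\T f\,\dd x$ (so that some $y_0$ has $g(y_0)=0$), applies the fundamental theorem of calculus to $|g|^{3/2}$, and bounds $\sup|g|^{3/2}\lesssim\int|g_x||g|^{1/2}\le(\int g_x^2)^{1/2}(\int|g|)^{1/2}$; squaring and adding back the mean gives \eqref{Eq60} directly, with no absorption. You instead apply the fundamental theorem of calculus to $f^3$ itself, estimate via $\|f\|_{L^2}^2\le\|f\|_{L^\infty}\|f\|_{L^1}$, and absorb the resulting $\|f\|_{L^\infty}^{3/2}$ factor by Young's inequality. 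Your version is more elementary in that it avoids the auxiliary function, at the cost of requiring the absorption step; the paper's version is cleaner algebraically. Both are fine, and your filled-in details are sound.
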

\begin{proof}
	The proof is completely analogous to the proof of Lemma \ref{Lemma_max_min_est} using additionally \eqref{Eq44}, for a discrete version of \eqref{Eq54} see also \cite[Lemma 4.1]{fischer_gruen_2018}.
\end{proof}

\begin{proof}[Proof of Lemma \ref{Lemma_Energy_Est}]We again make use of the stopping time $\tilde{\sigma}_j$ introduced  in \eqref{Eq55} and the process $u^{(j)}$ defined in \eqref{Eq56}. As in the proof of Lemma \ref{Lemma_alpha_Entr}, we  replace $\phi$ by a two times differentiable function $\tilde{\phi}\colon \R \to \R$ with bounded second derivative, which agrees with $\phi$ on $[{1}/{j}, j]$. Then the assumptions of \cite[Proposition A.1]{DHV_16} are satisfied and an application of It\^o's formula yields 
	\begin{align}&
	\int_{\T} \tilde{\phi}(u^{(j)}(t)) \, \dd x\,=\, \int_{\T} \tilde{\phi}(u^{(j)}(t_0)) \, \dd x \,+\, \int_{t_0}^{t} \int_{\T} \mathbf{1}_{[t_0,\tilde{\sigma}_j]\times \Gamma} \tilde{\phi}''(u^{(j)})u^{(j)}_x m(u^{(j)})(u^{(j)}_{xxx} - \phi''(u^{(j)})u^{(j)}_x)\, \dd x\, \dd r
	\\&\quad +\, \tfrac{1}{2} \sum_{k\in \N}  \int_{t_0}^t
	\int_{\T}\mathbf{1}_{[t_0,\tilde{\sigma}_j]\times \Gamma}
	\tilde{\phi}''(u^{(j)}) (g(u^{(j)})\psi_k)_x^2
	\, \dd x
	\, \dd r\,+\, \sum_{k\in \N} \int_{t_0}^{t}\int_{\T} \mathbf{1}_{[t_0,\tilde{\sigma}_j]\times \Gamma}
	\tilde{\phi}'(u^{(j)})  (g(u^{(j)})\psi_k)_x\, \dd x\, \dd \beta^{(k)}.
\end{align}
It\^o's formula is also applicable to the functional $\|u^{(j)}_x\|_{L^2(\T)}^2$. Indeed,  as carried out in detail in \cite[Appendix C]{Sauerbrey_2021}, one can for example identify $u^{(j)}$ with its equivalence class $\bar{u}^{(j)}$ of homogeneous distributions such that the functional $\|u_x\|_{L^2(\T)}^2$ coincides with the squared $\dot{H}^1(\T)$-norm of $\bar{u}^{(j)}$ and \cite[Theorem 4.2.5]{LiuRock} becomes applicable leading to
\begin{align}&
	\tfrac{1}{2}\int_{\T}(u^{(j)}_x)^2(t)\, \dd x\,=\, \tfrac{1}{2}\int_{\T}(u^{(j)}_x)^2(t_0)\, \dd x\,-\, 
	\int_{t_0}^{t} \int_{\T} \mathbf{1}_{[t_0,\tilde{\sigma}_j]\times \Gamma} u^{(j)}_{xxx} m(u^{(j)})(u^{(j)}_{xxx} - \phi''(u^{(j)})u^{(j)}_x)
	\, \dd x\, \dd r
	\\&
	\quad +\, \tfrac{1}{2} \sum_{k\in \N}  \int_{t_0}^t
	\int_{\T}\mathbf{1}_{[t_0,\tilde{\sigma}_j]\times \Gamma} (g(u^{(j)})\psi_k)_{xx}^2
	\, \dd x
	\, \dd r\,-\, \sum_{k\in \N} \int_{t_0}^{t}\int_{\T} \mathbf{1}_{[t_0,\tilde{\sigma}_j]\times \Gamma}
	u^{(j)}_{xx} (g(u^{(j)})\psi_k)_{x}\, \dd x\, \dd \beta^{(k)}.
\end{align}
By inserting the definition of $u^{(j)}$, using that $\tilde{\phi}$ and $\phi$ coincide on its range and adding the two It\^o expansions together, we conclude that
\begin{align}\begin{split}\label{Eq58}
		&
		\mathbf{1}_\Gamma\EE(u(t\wedge \tilde{\sigma}_j)) \,=\, \mathbf{1}_\Gamma \EE(u(t_0))\,-\, \int_{t_0}^{t} \int_{\T} \mathbf{1}_{[t_0,\tilde{\sigma}_j]\times \Gamma} m(u)  (u_{xx} - \phi'(u))_x^2\, \dd x\, \dd r
		\\&
		\quad +\, 
		\tfrac{1}{2} \sum_{k\in \N}  \int_{t_0}^t
		\int_{\T}\mathbf{1}_{[t_0,\tilde{\sigma}_j]\times \Gamma}
	\Bigl[	\phi''(u) (g(u)\psi_k)_x^2\,+\, (g(u)\psi_k)_{xx}^2\Bigr]
		\, \dd x
		\, \dd r\\&\quad +\, \sum_{k\in \N} \int_{t_0}^{t}\int_{\T} \mathbf{1}_{[t_0,\tilde{\sigma}_j]\times \Gamma}
		(\phi'(u)-u_{xx})  (g(u)\psi_k)_x\, \dd x\, \dd \beta^{(k)}
	\end{split}
\end{align}
for $t\in [t_0, T]$. To estimate the It\^o correction, we use \eqref{Eq40}, \eqref{Eq41}, \eqref{Eq44} and again \eqref{Eq57} to deduce that
\begin{align}&
	\sum_{k\in \N} \int_{\T}
	\phi''(u)(g(u)\psi_k)_x^2
	\, \dd x\, \lesssim \, \sum_{k\in \N}
	\int_{\T} \big(\, u^{-\vt-2}(g'(u))^2u_x^2 \psi_k^2\,+\, u^{-\vt-2} m(u)(\psi_k')^2\,\big)\, \dd x
	\\&
	\quad \lesssim\, \int_{\T}
	\big(\,u^{-\vt-4}m(u)u_x^2\,+\, u^{-\vt-2}m(u)\,\big)\, \dd x
	\\&\quad \lesssim\, \int_{\{u>1\}} \big(\, u^{\nu-\vt-4}u_x^2\,+\, u^{\nu-\vt-2}\,\big)\, \dd x
	\,+\, \int_{\{u\le 1\}} \big(\,u^{n-\vt-4}u_x^2\,+\, u^{n-\vt-2}\,\big)\, \dd x.
\end{align}
Similarly, using that
\begin{equation}
	(g(u)\psi_k)_{xx}\,=\, (g'(u)u_{xx}+g''(u)u_x^2)\psi_k\,+\, 2 g'(u)u_x\psi_k'\,+\, g(u)\psi_k''
\end{equation}
we obtain
\begin{align}\label{Eq59}
	\sum_{k\in \N}
	\int_{\T} (g(u)\psi_k)_{xx}^2\, \dd x\,\lesssim \, \int_{\T}\big[\,
	(g'(u))^2u_{xx}^2\,+\, (g''(u))^2 u_x^4\, +\, (g'(u))^2u_x^2\, +\, m(u)
	\,\big]\, \dd x.
\end{align}
Because of \eqref{Eq42} it holds
\begin{equation}
(g''(u))^2\,=\, \Bigl(
\tfrac{2m''(u)m(u)\,-\,(m'(u))^2 }{4m^{3/2}(u)}
\Bigr)^2\,\lesssim \, u^{-4}m(u),
\end{equation}
which together with
\eqref{Eq40}, \eqref{Eq41} and \eqref{Eq57} yields that \eqref{Eq59} is bounded by
\begin{align}&
	\int_{\T} \big(\,u^{-2}m(u)u_{xx}^2\,+\, u^{-4}m(u)u_x^4\,+\, u^{-2}m(u)u_x^2\,+\, m(u)\,\big)\,\dd x
	\\&\quad\lesssim
	\, \int_{\T}\big(\, u^{-2}m(u)u_{xx}^2\,+\, u^{-4}m(u)u_x^4\,+\, m(u) \,\big)\, \dd x
	\\&\quad \lesssim\, \int_{\{u>1\}}\big(\,u^{\nu-2}u_{xx}^2\,+\, u^{\nu-4}u_x^4\,+\, u^{\nu} \,\big)\, \dd x\,+\, 
	\int_{\{u\le 1\}} \big(\,u^{n-2} u_{xx}^2\,+\, u^{n-4} u_x^4\,+\, u^{n} \, \big)\,\dd x.
\end{align} 
In summary, we have verified the estimate
\begin{align}&
	\sum_{k\in \N}  
	\int_{\T}\big(\,	\phi''(u) (g(u)\psi_k)_x^2\,+\, (g(u)\psi_k)_{xx}^2\,\big)
	\, \dd x
	\\&\quad 
	\lesssim\, 
	 \int_{\{u>1\}} \big(\, u^{\nu-\vt-4}u_x^2\,+\, u^{\nu-\vt-2}\, +\, u^{\nu-2}u_{xx}^2\,+\, u^{\nu-4}u_x^4\,+\, u^{\nu}\,\big)\, \dd x
	\\&\qquad +\, \int_{\{u\le 1\}} \big(\, u^{n-\vt-4}u_x^2\,+\, u^{n-\vt-2} \,+\,  u^{n-2} u_{xx}^2\,+\, u^{n-4} u_x^4\,+\, u^{n} \,\big)\, \dd x
\end{align}
on $[t_0, \tilde{\sigma}_j]\times \Gamma$. Inserting this in \eqref{Eq58} and taking the supremum in time, we arrive at
\begin{align}\begin{split}\label{Eq64}&
	\mathbf{1}_\Gamma \sup_{t_0\le t\le \tilde{\sigma}_j} \EE(u(t))\,+\, \int_{t_0}^{\tilde{\sigma}_j}\int_{\T} \mathbf{1}_{ \Gamma} m(u)  (u_{xx} - \phi'(u))_x^2\, \dd x\, \dd t
	\\&\quad \lesssim\, \mathbf{1}_\Gamma \EE(u(t_0)) \,+\,  \mathbf{1}_\Gamma\int_{t_0}^{\tilde{\sigma}_j}
	\int_{\{u>1\}}\big(\, u^{\nu-\vt-4}u_x^2\,+\, u^{\nu-\vt-2}\, +\, u^{\nu-2}u_{xx}^2\,+\, u^{\nu-4}u_x^4\,+\, u^{\nu}\,\big)\, \dd x
	\, \dd t
	\\&\qquad +\,\mathbf{1}_\Gamma\int_{t_0}^{\tilde{\sigma}_j}
	\int_{\{u\le 1\}} \big(\,u^{n-\vt-4}u_x^2\,+\, u^{n-\vt-2} \,+\,  u^{n-2} u_{xx}^2\,+\, u^{n-4} u_x^4\,+\, u^{n} \,\big)\, \dd x
	\,\dd t
	\\&\qquad +\, \sup_{t_0\le t\le T} \biggl|
	\sum_{k\in \N} \int_{t_0}^{t}\int_{\T} \mathbf{1}_{[t_0,\tilde{\sigma}_j]\times \Gamma}
	(\phi'(u)-u_{xx})  (g(u)\psi_k)_x\, \dd x\, \dd \beta^{(k)}
	\biggr|.\end{split}
\end{align} 
For the sake of clarity, we divide the rest of the proof into three steps.

\textit{Estimate on the integral over $\{u>1\}$.} We define $\tilde{\nu}= \max\{3,\nu\}$ and estimate using \eqref{Eq48}, \eqref{Eq13}, \eqref{Eq60} and Young's inequality 
\begin{align}\begin{split}\label{Eq78}
	&
	\int_{\{u>1\}} \big(\,u^{\nu-\vt-4}u_x^2\,+\, u^{\nu-\vt-2}\, +\, u^{\nu-2}u_{xx}^2\,+\, u^{\nu-4}u_x^4\,+\, u^{\nu}\,\big)\, \dd x
	\\&\quad\le\, 
	\int_{\T}\big(\, u^{\tilde{\nu}-\vt-4}u_x^2\,+\, u^{\tilde{\nu}-\vt-2}\, +\, u^{\tilde{\nu}-2}u_{xx}^2\,+\, u^{\tilde{\nu}-4}u_x^4\,+\, u^{\tilde{\nu}}\,\big)\, \dd x
	\\&\quad \le \,
\Bigl(	\sup_{x\in \T} u^{\tilde{\nu}- \beta-2} \Bigr)
	\int_{\T}\big(\, u^{\beta-\vt-2}u_x^2\,+\, u^{\beta-\vt}\, +\, u^{\beta}u_{xx}^2\,+\, u^{\beta-2}u_x^4\,+\, u^{\beta+2}\,\big)\, \dd x
	\\&\quad \lesssim_\beta \,
	\biggl(
	\EE^{(\tilde{\nu}-\beta-2)/3}(u)\biggl(
	 \avg
	\biggr)^{(\tilde{\nu}-\beta-2)/3}\,+\, 
	\biggl(
	\avg
	\biggr)^{\tilde{\nu}-\beta-2}
	\biggr)
	\\&\qquad \times
	{\biggl(
	\int_{\T} \big(\, u^{\beta-\vt-2}u_x^2\, +\, u^{\beta}u_{xx}^2\,+\, u^{\beta-2}u_x^4\, \big)\,\dd x
	\,+\, \biggl(\avg\biggr)^{\beta-\vt}\,+\, \biggl(
	\avg
	\biggr)^{\beta+2}
	\biggr)}.
\end{split}
\end{align}
We denote the terms in the last row as $\I(u)$ and obtain, due to Young's inequality, the bound
\begin{align}\begin{split}\label{Eq62}&
	\int_{t_0}^{\tilde{\sigma}_j}
	\int_{\{u>1\}}\big(\, u^{\nu-\vt-4}u_x^2\,+\, u^{\nu-\vt-2}\, +\, u^{\nu-2}u_{xx}^2\,+\, u^{\nu-4}u_x^4\,+\, u^{\nu}\,\big)\, \dd x
	\, \dd t
	\\&\quad \lesssim_\beta 
	\biggl(
	\sup_{t_0\le t\le \tilde{\sigma}_j}\EE^{(\tilde{\nu}-\beta-2)/3}(u) \,+\, \biggl(
	\avg
	\biggr)^{2(\tilde{\nu}-\beta-2)/3}
	\biggr)\times \biggl(
	\avg
	\biggr)^{(\tilde{\nu}-\beta-2)/3}\mkern-16mu \times  \int_{t_0}^{\tilde{\sigma}_j} \I(u)\, \dd t
	\\&\quad 
	\le 
	\epsilon \biggl( \sup_{t_0\le t\le \tilde{\sigma}_j}\EE(u)\,+\, \biggl(\avg\biggr)^2\biggr)\,+\,
	C_{\beta,\epsilon} \biggl(
	\biggl(\avg\biggr)^{(\tilde{\nu}-\beta -2)/3} \times \int_{t_0}^{\tilde{\sigma}_j} \I(u)\, \dd t
	\biggr)^{3/(5+\beta-\tilde{\nu})} 
		\\&
	\quad 
	\le\, \epsilon \biggl( \sup_{t_0\le t\le \tilde{\sigma}_j}\EE(u)\,+\, \biggl(\avg\biggr)^2\biggr) \\&\qquad +\, C_{\beta,\epsilon} \biggl( \biggl(\avg\biggr)^{2(\tilde{\nu}-\beta-2)/(5+\beta-\tilde{\nu})}\mkern-16mu \,+\,
\biggl(	\int_{t_0}^{\tilde{\sigma}_j}\, 
	\I(u)\, \dd t\biggr)^{6/(5+\beta-\tilde{\nu})}
	\biggr)
	\end{split}
\end{align} 
on 
$\Gamma$
for any $\epsilon>0$. To estimate the $\I(u)$-term later on, we remark that
\begin{align}\begin{split}\label{Eq65}&
	\E\biggl[\mathbf{1}_\Gamma\biggl(
	\int_{t_0}^{\tilde{\sigma}_j}\I(u)\, \dd t\biggr)^{6q/(5+\beta-\tilde{\nu})}
	\biggr]
	\,\lesssim_{\beta,q,T}\, \E\bigl[ \mathbf{1}_{\Gamma}
	{\H}_\beta^{6q/(5+\beta-\tilde{\nu})}(u(t_0))
	\bigr]
	\\&\quad +\,  \E\biggl[	\mathbf{1}_{\Gamma}\biggl(
	\int_{\Tor} u_0\, \dd x \biggr)^{6(\beta-\vt)q/(5+\beta-\tilde{\nu})} \mkern-16mu+\,\mathbf{1}_{\Gamma}\biggl( 
	\int_{\Tor} u_0\, \dd x \biggr)^{9(\beta+3)q/(5+\beta-\tilde{\nu})}
	\biggr]\end{split}
\end{align}
by an application of Lemma \ref{Lemma_alpha_Entr} with $\gamma=1$ and additionally absorbing the intermediate power of the mass.

\textit{Estimate on the integral over $\{u\le 1\}$.} We set $\tilde{n} =\min \{2,n\}$, such that
\begin{align}\begin{split}&
	\int_{\{u\le 1\}} \big(\,u^{n-\vt-4}u_x^2\,+\, u^{n-\vt-2} \,+\,  u^{n-2} u_{xx}^2\,+\, u^{n-4} u_x^4\,+\, u^{n}\,\big) \, \dd x
	\\&\quad \le 
	\int_{\T} \big(\,u^{\tilde{n}-\vt-4}u_x^2\,+\, u^{\tilde{n}-\vt-2} \,+\,  u^{\tilde{n}-2} u_{xx}^2\,+\, u^{\tilde{n}-4} u_x^4\,+\, u^{\tilde{n}} \,\big)\, \dd x
		\\&\quad \le \,
	\Bigl(	\sup_{x\in \T} u^{\tilde{n}- 2} \Bigr)
	\int_{\T} \big(\,u^{-\vt-2}u_x^2\,+\, u^{-\vt}\, +\, u_{xx}^2\,+\, u^{-2}u_x^4\,+\, u^{2}\,\big)\, \dd x
	\\&\quad \lesssim \,
	\biggl(
	\EE^{2(2-\tilde{n})/(\vt-2)}(u) \,+\, \biggl(
	\avg
	\biggr)^{\tilde{n}-2}
	\biggr)
	\\&\qquad \times
	{\biggl(
		\int_{\T}\big(\, u^{-\vt-2}u_x^2\, +\, u_{xx}^2\,+\, u^{-2}u_x^4\,\big)\, \dd x
		\,+\, \biggl(\avg\biggr)^{-\vt}\,+\, \biggl(
		\avg
		\biggr)^{2}
		\biggr)}. \label{Eq61}\end{split}
\end{align}
because of \eqref{Eq48}, \eqref{Eq13} and \eqref{Eq54}. We define $\J(u)$ as the last row of \eqref{Eq61} and observe that for $\tilde{n} =2$ its prefactor equals $1$, such that the following estimate trivializes. Otherwise we have $0 < \frac{2(2-\tilde{n})}{\vt-2} <1$ by Assumption \ref{Assumptions_phi} and an application of Young's inequality yields the  bound
\begin{align}\begin{split}\label{Eq63}&
	\int_{t_0}^{\tilde{\sigma}_j}
	\int_{\{u\le 1\}} \big(\, u^{n-\vt-4}u_x^2\,+\, u^{n-\vt-2} \,+\,  u^{n-2} u_{xx}^2\,+\, u^{n-4} u_x^4\,+\, u^{n} \,\big)\, \dd x\, \dd t
	\\&\quad 
	\lesssim \, \biggl(\sup_{t_0\le t\le \tilde{\sigma}_j}\EE(u)^{2(2-\tilde{n})/(\vt-2)}(u)\,+\, 
	\biggl(
	\avg
	\biggr)^{\tilde{n}-2}
	\biggr)\times \int_{t_0}^{\tilde{\sigma}_j} \J(u)\, \dd t
	\\&\quad \le \, 
	\epsilon \biggl(\sup_{t_0\le t\le \tilde{\sigma}_j}
	\EE(u)\,+\, \biggl(
	\avg
	\biggr)^{(2-\vt)/2}
	\biggr) \,+\, C_\epsilon \biggl(\int_{t_0}^{\tilde{\sigma}_j}\J(u)\, \dd t\biggr) ^{(\vt-2)/(\vt+2\tilde{n}-6)}\end{split}
\end{align}
on $\Gamma$.
As in the previous step we remark that Lemma \ref{Lemma_alpha_Entr} with $\beta'=0$, $\gamma'=1$ yields 
\begin{align}\begin{split}\label{Eq66}&
	\E\biggl[\mathbf{1}_\Gamma\biggl(
	\int_0^{\tilde{\sigma}_j}\J(u)\, \dd t\biggr)^{(\vt-2)q/(\vt+2\tilde{n}-6)}
	\biggr]
	\,\lesssim_{\beta,q}\, \E\bigl[ \mathbf{1}_{\Gamma}
	{\H}_0^{(\vt-2)q/(\vt+2\tilde{n}-6)}(u(t_0))
	\bigr]
	\\&\quad +\,  \E\biggl[	\mathbf{1}_{\Gamma}\biggl(
	\int_{\Tor} u_0\, \dd x \biggr)^{-\vt(\vt-2)q/(\vt+2\tilde{n}-6)} \mkern-16mu+\,\mathbf{1}_{\Gamma}\biggl( 
	\int_{\Tor} u_0\, \dd x \biggr)^{9(\vt-2)q/(2\vt+4\tilde{n}-12)}
	\biggr]\end{split}
\end{align}
for later purposes.

\textit{Closing the energy estimate.} Inserting \eqref{Eq62} and \eqref{Eq63} in \eqref{Eq64} and choosing $\epsilon$ sufficiently small  yields 
\begin{align}\begin{split}&
		\mathbf{1}_\Gamma \sup_{t_0\le t\le \tilde{\sigma}_j} \EE(u(t))\,+\, \int_{t_0}^{\tilde{\sigma}_j}\int_{\T} \mathbf{1}_{ \Gamma} m(u)  (u_{xx} - \phi'(u))_x^2\, \dd x\, \dd t
		\\&\quad \lesssim_\beta\, \mathbf{1}_\Gamma \EE(u(t_0)) \,+\,  \mathbf{1}_\Gamma
		\biggl(
		\avg
		\biggr)^{(2-\vt)/2}
		\,+\,\mathbf{1}_\Gamma \biggl(
		\avg\biggr)^{\max \bigl\{  \frac{2(\tilde{\nu}-\beta-2)}{5+\beta-\tilde{\nu}},2 \bigr\}}
		\\&\qquad +\,\mathbf{1}_\Gamma\biggl(	\int_{t_0}^{\tilde{\sigma}_j}\, 
		\I(u)\, \dd t\biggr)^{6/(5+\beta-\tilde{\nu})}\mkern-16mu +\,\mathbf{1}_\Gamma
		\biggl(\int_{t_0}^{\tilde{\sigma}_j}\J(u)\, \dd t\biggr) ^{(\vt-2)/(\vt+2\tilde{n}-6)}
		\\&\qquad +\, \sup_{t_0\le t\le T} \biggl|
		\sum_{k\in \N} \int_{t_0}^{t}\int_{\T} \mathbf{1}_{[t_0,\tilde{\sigma}_j]\times \Gamma}
		(\phi'(u)-u_{xx})  (g(u)\psi_k)_x\, \dd x\, \dd \beta^{(k)}
		\biggr|.\end{split}
\end{align} 
We take the $q$-th power on both sides, use the estimates \eqref{Eq65} and \eqref{Eq66} and the BDG-inequality to estimate
further
\begin{align}\begin{split}&
		\E\biggl[\mathbf{1}_\Gamma \sup_{t_0\le t\le \tilde{\sigma}_j} \EE^q(u(t))\biggr]\,+\,\E\biggl[\biggl( \int_{t_0}^{\tilde{\sigma}_j}\int_{\T} \mathbf{1}_{ \Gamma} m(u)  (u_{xx} - \phi'(u))_x^2\, \dd x\, \dd t\biggr)^q\biggr]
		\\&\quad \lesssim_{\beta,q,T}\,\E \bigl[\mathbf{1}_\Gamma \EE^q(u(t_0))\,+\, \mathbf{1}_{\Gamma}
		{\H}_\beta^{6q/(5+\beta-\tilde{\nu})}(u(t_0))\,+\,  \mathbf{1}_{\Gamma}
		{\H}_0^{(\vt-2)q/(\vt+2\tilde{n}-6)}(u(t_0))
		\bigr] 
		\\&\qquad \,+\,  \E\biggl[	\mathbf{1}_{\Gamma}\biggl(
		\int_{\Tor} u_0\, \dd x \biggr)^{\min \bigl\{\frac{6(\beta-\vt)q}{5+\beta-\tilde{\nu}}
			,\frac{-\vt(\vt-2)q}{\vt+2\tilde{n}-6}
			\bigr\}} \mkern-16mu+\,\mathbf{1}_{\Gamma}\biggl( 
		\int_{\Tor} u_0\, \dd x \biggr)^{\max\bigl\{\frac{9(\beta+3)q}{5+\beta-\tilde{\nu}}, \frac{9(\vt-2)q}{2\vt+4\tilde{n}-12}\bigr\}}
		\biggr]
		\\&\qquad +\, \E\biggl[\biggl(	\mathbf{1}_{\Gamma}
		\sum_{k\in \N}
		\int_{t_0}^{\tilde{\sigma}_j}\biggl(\int_{\T}(\phi'(u)-u_{xx})  (g(u)\psi_k)_x \, \dd x\biggr)^2
		\, \dd t
		\biggr)^{q/2}
		\biggr],\end{split}
\end{align} 
where we absorb the intermediate power of the mass again.
Since 
\begin{align}&
	\biggl(	
	\sum_{k\in \N}
	\int_{t_0}^{\tilde{\sigma}_j}\biggl(\int_{\T}(\phi'(u)-u_{xx})  (g(u)\psi_k)_x \, \dd x\biggr)^2
	\, \dd t
	\biggr)^{q/2}\mkern-6mu \lesssim\, 
	\biggl(	
	\int_{t_0}^{\tilde{\sigma}_j}\int_{\T}(\phi'(u)-u_{xx})_x^2  g^2(u)\, \dd x
	\, \dd t
	\biggr)^{q/2}
	\\&\quad \le \,\epsilon 	\biggl(	
	\int_{t_0}^{\tilde{\sigma}_j}\int_{\T}m(u)(\phi'(u)-u_{xx})_x^2  \, \dd x
	\, \dd t
	\biggr)^{q} \,+\, C_\epsilon
\end{align}
by another application of Young's inequality, we deduce \eqref{Eq67} by taking the limit $j\to \infty$ and using Fatou's lemma.
\end{proof}

\appendix
\renewcommand{\thetheorem}{\Alph{section}.\arabic{theorem}}
\renewcommand{\theequation}{\Alph{section}.\arabic{equation}}

\section{Some functional analytic tools}
In this appendix, we discuss some tools that played an important role throughout the manuscript. Firstly, regarding the precise definition of a UMD and  type $2$ Banach spaces, we refer the reader to \cite[Definition 4.2.1]{Analysis1} and \cite[Definition 7.1.1]{Analysis2}, where for our purposes it suffices to note that the spaces $L^q(\T^d)$ and $H^{s,q}(\T^d)$
are UMD for $q\in (1,\infty)$, cf. \cite[Prop. 4.2.15, Prop. 4.2.17 (1)]{Analysis1}, and type $2$ whenever $q\in [2,\infty)$, see \cite[Prop. 7.1.4]{Analysis2}. The remaining topics are discussed in more detail in their respective subsection below.

\subsection{Banach-valued Sobolev spaces and trace embeddings}
\label{app:Sobolev_spaces}
We begin by defining time-weighted, Banach-valued Sobolev spaces. As in Subsection \ref{ss:notation}, for a complex Banach space $\mathscr{X}$ (see, e.g., \cite[Subsection B.4]{Analysis1} for complexification of real Banach spaces), an open interval $I\subseteq \R$ and a weight $w_{\a}(t)=|t|^\a$ where $\a\in (-1,p-1)$, we let $L^p(I,w_\kappa;\mathscr{X})$ be the set of all strongly measurable mapping $f:I \to X$ such that (see, e.g., \cite[Definition 1.2.1]{Analysis1})
\begin{equation*}%
\textstyle \|f\|_{L^p(I,w_\a;\mathscr{X})} \,:= \,\big(\int_{I} \|f(t)\|_\mathscr{X}^p \,w_\a(t)\, \dd t\big)^{1/p}\,<\,\infty.
\end{equation*}
We also recall that for separable $\mathscr{X}$ strong measurability is equivalent to $t\ni I \mapsto \langle f(t),x^*\rangle $ being measurable for all $x^*\in \mathscr{X}^*$, see  \cite[Theorem 1.1.20]{Analysis1}. 
Our assumption  $\a\in (-1,p-1)$ ensures that $L^{p}(\R,w_\a;\mathscr{X})\embed L^1_{\loc}(\R;\mathscr{X})$, so that using also the rational decay of $w_\a(t)$ any $f\in L^p(\R,w_\a;X)$ can be identified with an $\mathscr{X}$-valued  Schwartz distribution $ f\in \S'(\R;\mathscr{X}):= \calL(\S(\R),\mathscr{X})$, i.e., a continuous linear operator from the topological space $\S(\R)$ to $\mathscr{X}$. As in the scalar-valued case, the Fourier transform $\Fou$ naturally acts on $\S,(\R; \mathscr{X})$ via duality, i.e., $\langle \Fou{f},\varphi\rangle:=\langle f,\overline{\Fou{\varphi}}\rangle$ for $f\in \S'(\R;\mathscr{X}) $ and $\varphi\in \S(\R)$.

We are now ready to define $\mathscr{X}$-valued fractional Sobolev spaces.  
For $\sigma\in \R$, $p\in (1,\infty)$ and $\a\in (-1,p-1)$, we let $H^{\sigma,p}(\R,w_{\a};\mathscr{X})$ be the set of all Schwartz distributions $f\in \S'(\R;\mathscr{X})$ such that 
\begin{equation}
\label{eq:definition_banach_valued_Sob_spaces_R}
(1-\Delta)^{\sigma/2}f\in L^p(\R,w_{\a};\mathscr{X}),\quad 
\text{and }\quad \|f\|_{H^{\sigma,q}(\R,w_\a;\mathscr{X})}:=\|(1-\Delta)^{\sigma/2}f\|_{L^p(I,w_\a;\mathscr{X})},
\end{equation}
where, as usual, $(1-\Delta)^{\sigma/2}$ is the Fourier multiplier with symbol $(1+|\xi|^2)^{\sigma/2}$.

For an interval $I\subseteq \R$, let $\D'(I;\mathscr{X}):= \calL(\D(I);\mathscr{X})$ the set of all $\mathscr{X}$-valued distributions. We set
\begin{equation}
\label{eq:definition_banach_valued_Sob_spaces}
H^{\sigma,p}(I,w_{\a};\mathscr{X}):=\{g\in \D'(I;\mathscr{X})\,:\,\exists f \in H^{\sigma,p}(I,w_{\a};\mathscr{X}) \text{ such that }  g=f|_{I}\}
\end{equation}
endowed with the natural quotient norm $\|g\|_{H^{\sigma,p}(I,w_{\a};\mathscr{X})}:=\inf_{f|_I =g} \|f\|_{H^{\sigma,p}(I,w_{\a};\mathscr{X})}$. Finally, we often employ the shorthand notation $H^{\sigma,p}(0,T,w_{\a};\mathscr{X}):=H^{\sigma,p}(I,w_{\a};\mathscr{X})$ if $I=(0,T)$ for $T< \infty$.

Before going further, we comment on the above construction. Firstly, the definition of the $\mathscr{X}$-valued Sobolev spaces can be extended to $\R^d$ and $\T^d$, where in the periodic case  $\S(\R)$ and the Fourier transform are replaced by the Fr\'echet space $C^\infty(\T^d)$ and the Fourier series.
Secondly, the well-known characterization of real-valued Sobolev spaces through difference quotients holds if and only if $\mathscr{X}$ is a Hilbert space. In a non-Hilbertian setting, difference quotients define the related (but different) scale of Triebel--Lizorkin spaces. For details on the latter, the reader is referred to \cite[Section 3.2]{ALV19} and \cite[Subsection 14.3]{Analysis3}. Therefore, the approach via the Fourier transform and restrictions as in \eqref{eq:definition_banach_valued_Sob_spaces_R}--\eqref{eq:definition_banach_valued_Sob_spaces} is the most direct definition in our case. Alternatively, one can proceed via complex interpolation (cf., \cite[Subsection 2.3]{AVSurvey}), which yields the same space if $\mathscr{X}$ is UMD (see \cite[Theorem 5.6.9]{Analysis1}).

We now turn our attention to the so-called \emph{trace embedding} for anisotropic vector-valued spaces, which was mentioned in Subsection \ref{ss:local_all_dimensions_intro}, and plays an important role in the approach to SPDEs developed in \cite{AV19_QSEE_1,AV19_QSEE_2}. We emphasize that this embedding motivates the presence of Besov spaces as spaces for the initial data in Theorem \ref{Thm_local}.
To set the stage, let us recall that in the maximal $L^p$-regularity approach to SPDEs with $p\in (2,\infty)$ and time weight $\a\in [0,\frac{p}{2}-1)$, the solution typically lies in the space 
\begin{equation}
\label{eq:smr_space}
\textstyle \bigcap_{\theta\in [0,1/2)} H^{\theta,p}(0,T,w_{\a};X_{1-\theta}),
\end{equation}
cf. Definition \ref{def:SMRgeneralized},
where $X_{1-\theta}=[X_0,X_1]_{1-\theta}$, $X_0$ is the ground space, $X_1$ the domain of the leading order  operator and $[\cdot,\cdot]_{1-\theta}$ denotes the complex interpolation functor.
Note that weighted Sobolev embeddings ensure that $H^{\theta,p}(0,T,w_{\a};X_{1-\theta})\subseteq C^{\theta-\frac{1+\a}{p}}(0,T;X_{1-\theta})$ if $\theta>\frac{1+\a}{p}$ (cf., \cite[Proposition 7.4]{MV12}). In particular, if $u$ belong to the space in \eqref{eq:smr_space}, then it has well-defined trace $u(0)\in X_{\sigma}$ for all $\sigma <1- \frac{1+\a}{p}$. As in scalar-valued spaces, in the threshold case, the embedding $H^{\frac{1+\a}{p},p}(0,T,w_{\a};X_{1-\frac{1+\a}{p}})$ into $ C([0,T];X_{1-\frac{1+\a}{p}})$ fails. To some extend, the trace embedding can be seen as a fix of the latter situation allowing one to reach the optimal smoothness $\theta=\frac{1+\a}{p}$ at the expense of working with real interpolation spaces instead of the complex ones, which naturally appear already in the case of integer smoothness, see \cite[Subsection 3.12]{BeLo}.
The key in the trace embedding is to exploit the presence in \eqref{eq:smr_space} of a sufficiently large smoothness parameter $\theta\in (\frac{1+\a}{p},\frac{1}{2})$ to define traces in $X_{1-\theta}$ together with the highest possible regularity in space $L^p(0,T,w_{\a};X_1)$, achieved by taking $\theta=0$. The following is a consequence of \cite[Theorem 1.2]{ALV19} as well as \cite[(2.8) and Proposition 3.2]{ALV19} to pass from $(0,T)$ to the half-line, see also \cite[Section 3.4]{pruss2016moving}.

\begin{proposition}[Trace embedding with fractional smoothness]
\label{prop:tracespace}
Fix $p\in (1,\infty)$, $\a\in [0,p-1)$, $\theta\in (0,1)$ and $0<T\leq \infty$. Let $X_0,X_1$ be Banach spaces such that $X_1\hookrightarrow X_0$ and let $\Xap=(X_0,X_1)_{1-\frac{1+\a}{p},p}$ and $\Xp=\Xzp$ be real interpolation spaces. Then the following embeddings hold:
\begin{enumerate}[{\rm(1)}]
\item\label{it:trace_with_weights_Xap} If $\theta>\frac{1+\a}{p}$, then
$
H^{\theta,p}(0,T,w_{\a};X_{1-\theta})\cap L^p(0,T,w_{\a};X_1)\hookrightarrow
C([0,T];\Xap).
$
\item\label{it:trace_without_weights_Xp} If $\theta>\frac{1}{p}$, then
$
H^{\theta,p}(0,T,w_{\a};X_{1-\theta})\cap L^p(0,T,w_{\a};X_1)\embed
C((0,T];\Xp).
$
\end{enumerate}
\end{proposition}

 The above is known to be optimal if $X_1$ is the domain of a sectorial operator, see \cite[Proposition 4.1]{MV14}, which is in particular the case in our application, 
 where $X_1=H^{s+2,q}(\T^d)$ is the domain of $\Delta^2$ on $X_0=H^{s-2,q}(\T^d)$, cf. Section \ref{ss:notation}.

We remark that  \eqref{it:trace_without_weights_Xp} encodes the effect of \emph{instantaneous regularization} in time-weighted spaces. Indeed, in the case $\a>0$ and $X_1\neq X_0$, comparing \eqref{it:trace_with_weights_Xap} with \eqref{it:trace_without_weights_Xp}, one sees that a function in $H^{\theta,p}(0,T,w_{\a};X_{1-\theta})\cap L^p(0,T,w_{\a};X_1)$ takes at $t=0$ values in $\Xap$, but for all $t>0$ it lies in the strictly smaller spaces $\Xp$.
This is an important ingredient in both the blow-up criteria and the instantaneous regularization for the regularized problem \eqref{Eq102} derived in Propositions \ref{Wellposedness_truncated_problem}--\ref{Prop_regularization_trunc}.
The intuition behind \eqref{it:trace_without_weights_Xp} is that the weight $w_\a(t)=t^\a$ is active only at $t=0$, and for positive times, one can apply \eqref{it:trace_with_weights_Xap} with $\a=0$; for details see \cite[Theorem 4.7]{ALV19} or \cite[Corollary 7.6]{AV19}.

\subsection{$\g$-radonifying operators}
\label{app:gamma_radonifying}
Let $(\wt{\O},\wt{\A},\wt{\P})$ be a probability space and let $(\wt{\g}_n)_{n\geq 1}$ be a sequence of independent standard Gaussian random variables on $\wt{\O}$. Let $H$ be a separable Hilbert space with orthonormal basis  $(h_n)_{n\geq 1}$. 
A bounded linear operator $T$ from $H $ to a Banach space $\mathscr{X}$ is said to be $\g$-radonifying if the series $\sum_{n\geq 1} \wt{\g}_n T h_n$ converges in $L^2(\wt{\O};\mathscr{X})$, in which case we set
\begin{equation}
\label{eq:gamma_radonifying}
\textstyle
\|T\|_{\g(H,\mathscr{X})}
:=\big(
\wt{\E}\|\sum_{n\geq 1} \wt{\g}_n T h_n\|_{\mathscr{X}}^2\big)^{1/2},
\end{equation}
where $\wt{\E}$ denotes the expectation w.r.t.\ $\wt{ \P}$. One can prove that the space $\g(H,\mathscr{X})$ as well as the norm \eqref{eq:gamma_radonifying} do not depend on the particular choice of the orthonormal basis, see \cite[Theorem 9.1.17]{Analysis2}.
A detailed exposition of $\g$-radonifying spaces can be found in \cite[Chapter 9]{Analysis2}.

Starting from the seminal works \cite{NVW1,NWalpha}, the key role of $\g$-radonifying operators in the theory of stochastic integration in a non-Hilbertian setting has become clear. Indeed, let us point out that if $\mathscr{X}$ is a Hilbert space, then $\g(H,\mathscr{X})$ coincides with the space of Hilbert-Schmidt operators from $H$ to $\mathscr{X}$, so that the results in \cite{NVW1} are an extension of the classical It\^o integral in Hilbert spaces \cite{DPZ}. 

An important tool in the study of $\g$-radonifying operators is the Kahane--Khintchine inequality, stating that for each finite sequence $(x_n)_{n=1}^N\subseteq \mathscr{X}$ and all $0< q,p<\infty$,
\begin{equation}
\label{eq:KK_inequality}
\textstyle
\big(
\wt{\E}\|\sum_{n=1}^N \wt{\g}_n x_n\|_{\mathscr{X}}^q\big)^{1/q}
\eqsim_{q,p}
\big(
\wt{\E}\|\sum_{n=1}^N \wt{\g}_n x_n\|_{\mathscr{X}}^p\big)^{1/p}.
\end{equation}
For a proof of the latter, we refer to \cite[Theorem 6.2.6]{Analysis2}.
In particular, the above shows that the integrability $2$ in the definition of the $\g$-radonifying operators can be replaced by any $p\in [1,\infty)$. We additionally point out that if $\mathscr{X}=\R$,  \eqref{eq:KK_inequality} is nothing but the equivalence of moments for Gaussian random variables. 

In our setting, the two-sided estimate \eqref{eq:KK_inequality} allows to prove a  characterization of $\g(H,\mathscr{X})$ in the case that $\mathscr{X}=H^{\sigma,q}(\T^d)$, which is usually referred to as $\g$-Fubini isomorphism, see \cite[Subsection 9.4.b]{Analysis2}. 

\begin{proposition}
\label{prop:identification_gamma_spaces}
For all $\sigma\in \R$ and $q\in (1,\infty)$, it holds that  
$
H^{\sigma,q}(\T^d;H)
= \g(H,H^{\sigma,q}(\T^d)).
$
More precisely, the following mapping is an isomorphism between the corresponding spaces  
\begin{align*}
\Iso:
H^{\sigma,q}(\T^d;H)
\to
\g(H,H^{\sigma,q}(\T^d)), 
\quad \text{where }\quad (\Iso f)h := (f,h)_H  ;
\end{align*}
and $(\cdot,\cdot)_H$ denotes the inner product in $H$.
\end{proposition}

Recall that the $H$-valued Sobolev space $H^{\sigma,q}(\T^d;H)$ is defined via Fourier methods, see below \eqref{eq:definition_banach_valued_Sob_spaces}.

\begin{proof}[Sketch of the proof]
From a density argument, it follows that $(1-\Delta)^{\sigma/2}$ commutes with $\Iso$, and induces an isomorphism from $H^{\sigma,q}(\T^d;H)$ and $\g(H,H^{\sigma,q}(\T^d))$ to $L^{q}(\T^d;H)$ and $\g(H,L^{q}(\T^d))$, respectively. Thus, it is enough to consider $\sigma=0$, which is a special case of \cite[Theorem 9.4.8]{Analysis2}. For the reader's convenience, we recall the main step to highlight the role of \eqref{eq:KK_inequality} and (the classical version of) Fubini's theorem. 

By density, it suffices to prove the norm equivalence for a simple function $f\in L^q(\T^d;H)$, i.e., $f(x)=\sum_{n=1}^N h_n  f_n(x)$ for a finite sequence $(f_n)_{n=1}^N\subseteq L^q(\T^d)$ and an orthonormal system $(h_n)_{n\geq 1}\subseteq H$, which we can extended to an orthonormal basis of $H$. Then, by Fubini's theorem,
\begin{align*}
\textstyle
\|\Iso f\|_{\g(H,L^q(\T^d))}^q \stackrel{(i)}{\eqsim}_q
\wt{\E}\|\sum_{n=1}^N \wt{\g}_n f_n\|_{L^q(\T^d)}^q
&\textstyle= \int_{\T^d} \wt{\E} \big| \sum_{n=1}^N \wt{\g} f_n(x)\big|^q \,\dd x \\
&\textstyle\stackrel{(ii)}{\eqsim} \int_{\T^d} \big(\wt{\E} \big| \sum_{n=1}^N \wt{\g} f_n(x)\big|^2\big)^{q/2} \,\dd x\\
&\textstyle\stackrel{(iii)}{\eqsim} \int_{\T^d} \big(\sum_{n=1}^N |f_n(x)|^2 \big)^{q/2} \,\dd x = \|f\|_{L^q(\T^d;H)}^q,
\end{align*}
where in $(i)$-$(ii)$ we applied \eqref{eq:KK_inequality} and in $(iii)$ we used the independence of $(\wt{\g}_n)_{n\geq 1}$ and $f_n=(f,h_n)_H$.
\end{proof} 
\subsection{Product and composition estimates}\label{app:para_est} Since we work throughout this manuscript in Bessel potential and Besov spaces, we require, among others, efficient estimates on products and compositions of functions in said scales. While in the case of integer smoothness like $H^{1,q}(\T^d)$ a  product estimates can be derived via
\begin{equation}\label{eq:prod_est_example}
\| \nabla (uv)\|_{L^q(\T^d  ; \R^d)} \,\le\, \| u\nabla v\|_{L^q(\T^d  ; \R^d)} \,+\, \|v\nabla u\|_{L^q(\T^d  ; \R^d)} \,\lesssim\, \| u\|_{H^{1,l}(\T^d)}\|v\|_{L^r(\T^d)}  \,+\, \| v\|_{H^{1,L}(\T^d)}\|u\|_{L^R(\T^d)}  ,
\end{equation}
for $l,L\in (1,\infty)$ and $r, R \in [1,\infty]$ such that $1/l + 1/r = 1/R +1/L =1/q$, and a composition estimate from 
\begin{align}\begin{split}\label{Eq76}
		&
\|\nabla (f(u)  -  f(v)) \|_{L^q(\T^d  ; \R^d)} \,\le \, \| f'(u) (\nabla u -\nabla v)\|_{L^q(\T^d  ; \R^d)} 
\,+\, \| (f'(u) -f'(v))\nabla v\|_{L^q(\T^d  ; \R^d)} 
\\&
\quad \lesssim \, \| f'(u) \|_{L^\infty(\T^d)} \|  u - v\|_{H^{1,q}(\T^d)} 
\,+\, \|  f'(u) -f'(v)\|_{L^\infty(\T^d)} \| v\|_{H^{1,q}(\T^d)}, 
\end{split}
\end{align}
the fractional smoothness case is more delicate.
A frequency decomposition of the product into its \emph{resonant part} and \emph{Bony's paraproduct}, for example, is one way to obtain estimates similar to \eqref{eq:prod_est_example}, see \cite[Chapter 2]{ToolsPDEsTaylor} or \cite[Chapters 4 and 5]{Runst_Sickel} for a more complete review of such techniques. For the purpose of this appendix, we just recall two special cases of  \cite[Proposition 4.1 (1), (3)]{AV21_max_reg_torus} which are repeatedly used in this manuscript.
\begin{proposition}\label{prop:para}
	Let $s>0$, $q \in (1,\infty)$ and $\tau \in (s,\infty)$, then  the following estimates hold whenever the right-hand side is finite:
	\begin{enumerate}[(i)]
		\item \label{item:prop_para_1}
		$\|uv\|_{H^{s,q}(\T^d)} \lesssim  \|u\|_{H^{s,l}(\T^d)}\|v\|_{L^r(\T^d)} +  \|v\|_{H^{s,L}(\T^d)}\|u\|_{L^R(\T^d)}  $ provided $l,L \in (1,\infty)$ and $r,R \in (1,\infty]$ are such that $1/l + 1/r = 1/R +1/L =1/q$.
		\item \label{item:prop_para_3} $\|uv\|_{H^{-s,q}(\T^d)} \lesssim \|u\|_{H^{-s,q}(\T^d)} \|v\|_{L^\infty(\T^d)} + \|v\|_{H^{\tau,\zeta } (\T^d) }  \|u\|_{H^{ - s-\epsilon,q}(\T^d)} $, where $\tau > d/\zeta$, $\zeta \in [q' ,\infty)$ and $\epsilon>0$ depends only on $(d,s,q,\tau,\zeta)$.
	\end{enumerate}
\end{proposition}
Secondly, we also recall a special case of the composition estimate in Triebel--Lizorkin and Besov spaces from \cite[Theorem 1, p.373]{Runst_Sickel}, reminiscent of \eqref{Eq76} above, where we can drop the assumption that $f(0)=0$, which only plays a role when working on unbounded domains.
Moreover, in the following statement, we leave out the immediate part of the local Lipschitz estimate, namely that 
$
\|f(u) - f(v)\|_{L^\infty(\T^d)} \le C_n \| u-v\|_{L^\infty(\T^d)}
$ 
for $f$ and $u,v$ as below.

\begin{proposition}\label{prop:comp}Let $s>0$, $q,p\in (1,\infty)$ and $f\colon \R\to\R$ be smooth. Then the mapping $u \mapsto f(u)$ satisfies the following local Lipschitz estimates on  $L^\infty(\T^d)\cap H^{s,q}(\T^d)$ and   $L^\infty(\T^d)\cap B^{s}_{q,p}(\T^d)$: 
	\begin{enumerate}[(i)]
		\item \label{item:prop_comp_H} $\|f(u) - f(v)\|_{H^{s,q}(\T^d)} \le C_n \bigl(\| u-v\|_{H^{s,q}(\T^d)} + \|u -v\|_{L^\infty(\T^d)} ( \|u \|_{H^{s,q}(\T^d)} +  \|v \|_{H^{s,q}(\T^d)}  ) \bigr)$ for a constant $C_n$ depending on $(d,q,s,n,f)$ but independent of $u,v$ satisfying $\|u\|_{L^\infty(\T^d)}, \|v\|_{L^\infty(\T^d)} \le n$.
		\item \label{item:prop_comp_B} $\|f(u) - f(v)\|_{B^{s}_{q,p}(\T^d)} \le C_n \bigl(\| u-v\|_{B^{s}_{q,p}(\T^d)} + \|u -v\|_{L^\infty(\T^d)} ( \|u \|_{B^{s}_{q,p}(\T^d)} +  \|v \|_{B^{s}_{q,p}(\T^d)}  ) \bigr)$ for a constant $C_n$ depending on $(d,p,q,s,n,f)$ but independent of $u,v$ satisfying $\|u\|_{L^\infty(\T^d)}, \|v\|_{L^\infty(\T^d)} \le n$.
	\end{enumerate}
\end{proposition}
\section{Maximal $L^p$-regularity with time measurable coefficients}
\label{app:MR_Lp_time_measurable}
Here, we provide a relatively self-contained proof of $L^p$-regularity estimates for the following PDE: 
\begin{equation}
\label{eq:parabolic_appendix}
\partial_t u +a(t)\Delta^2 u=f\text{ on }(0,T)\times \T^d , \qquad  u(0)=0\text{ on } \T^d,
\end{equation}
where $T>0$ and $\T^d$ denotes the $d$-dimensional torus. Such estimates were used in Section \ref{Sec_SMR} as a starting point to show stochastic maximal $L^p$-regularity for thin-film type operators, see Lemma \ref{Lemma_SMR_bilplace} there.

In the above, $f$ is a given inhomogeneity which will be specified later, and $a$ satisfies the following.

\begin{assumption}
\label{ass:bounded_appendix}
The mapping $a:\R \to \R$ is measurable and satisfies $\lambda^{-1}\leq a\leq \lambda$ on $\R_+$ for some $\lambda>0$. 
\end{assumption}

In particular, $a$ does not have any smoothness in time, and standard perturbation arguments such as  \cite[Proposition 3.5.6]{pruss2016moving} do not apply.
Even in the presence of weights, $L^p$-estimates for parabolic problems with measurable in time coefficients are by now well-understood, see, e.g., \cite{GV17} and \cite{DK11_solvability,DK_weights}. The aim of this appendix is to provide a self-contained and (to some extent) independent proof of such estimates in the range of interests for stochastic PDEs. In particular, we only consider the case $p> 2$ and time weights of the form $t^\kappa \,\dd t $ for $\kappa\in [0,\frac{p}{2}-1)$. The reader is referred to, e.g., Section \ref{Sec_SMR}, \cite{LoVer}, \cite[Section 3]{AV19_QSEE_1} and \cite[Section 3]{AVSurvey} for details and comments on this limitation. Let us emphasize that the case $p=2$ and $\a=0$ is instead standard, and is a consequence of the energy estimate (see, e.g., \cite[Chapter 4]{LiuRock}). Finally, the case $p<2$ and more general weights can be obtained via duality and extrapolation, but we do not pursue this here. 

Our approach to $L^p$-estimates for \eqref{eq:parabolic_appendix} is based on two tools from real analysis: The Rubio de Francia weighted extrapolation (see, e.g., \cite{Rubio_book}) and an interpolation result due to \cite{Shen_weighted}. The proof of the latter shares some similarity with the well-known Marcinkiewicz interpolation. 
The combination of the above ingredients to prove $L^p$-estimates for \eqref{eq:parabolic_appendix} seems new, and gives a relatively straightforward proof. However, let us mention that the Rubio de Francia extrapolation was already employed in \cite{DK_weights,GV17}. Moreover, to apply the extrapolation in \cite{Shen_weighted}, we exploit local smoothing of the solution to \eqref{eq:parabolic_appendix} with $f=0$ via Caccioppoli inequalities (see Lemma \ref{lem:Caccioppoli} below), and closely related arguments can be found in \cite[Section 4]{DK11_solvability}.

To proceed further, let us recall the Muckenhoupt classes $A_p$ with $p\in (1,\infty)$ that are central in Rubio de Francia's extrapolation. We say that a locally integrable function $w:\R\to (0,\infty)$ belongs to $A_p(\R)$ if  
\begin{equation}
\label{eq:A_p_weights_def}
[w]_{A_p}:=\sup_{I} \Big(\fint_{I} w\,\dd t\Big)\Big(\fint_{I} w^{1-p'}\,\dd t\Big)^{p-1}<\infty,
\end{equation}
where the supremum is taken over all finite intervals  in $\R$, $\fint_I=|I|^{-1}\int_I$ denotes the average and $1/p +1/p' =1$. 
It is well-known that $[t\mapsto |t|^{\kappa}]\in A_p(\R)$ if and only if $\kappa\in (-1,p-1)$ (cf. \cite[Example 7.1.7]{Gafrakos_classical}).

\begin{proposition}[Weighted maximal $L^p$-regularity]
\label{prop:deterministic_SMR}
Let Assumption \ref{ass:bounded_appendix} be satisfied. 
Fix $T<\infty$ and $q,p\in (2,\infty)$. 
Then, for all $w\in A_{p/2}(\R)$ and $f\in L^p(0,T,w;L^q(\T^d))$, there exists a unique solution $u\in W^{1,{p}}(0,T,w;L^q(\T^d))\cap L^{{ p}}(0,T,w;W^{4,q}(\T^d))$ to \eqref{eq:parabolic_appendix} and 
\begin{equation}
\label{eq:maximal_Lp_regularity_measurable_appendix}
\int_{0}^T \|\partial_t u\|_{L^{q}(\T^d)}^p \, w \,\dd t +
\int_{0}^T \| u\|_{W^{4,q}(\T^d)}^p \, w  \,\dd t \leq C \int_0^T \|f\|_{ L^{q}(\T^d)}^p\,w \,\dd t,
\end{equation}
where $C$  depends only on $d,\lambda,q,p, T$ and $[w]_{A_{p/2}}$.
\end{proposition}

As noted above, Proposition \ref{prop:deterministic_SMR} is not new, and more general results can be found in \cite{DK_weights,GV17}. Our goal here is to provide a relatively self-contained proof of \eqref{eq:maximal_Lp_regularity_measurable_appendix} within the range of parameters $p$ and $w$ that are relevant for applications to SPDEs, see Section \ref{Sec_SMR}.
By the method of continuity, as employed twice in Section \ref{Sec_SMR}, it suffices to show the a priori estimate \eqref{eq:maximal_Lp_regularity_measurable_appendix}.
Before proceeding with the proof of the latter, we start with a reduction to the case $p=q$, exploiting the Rubio de Francia extrapolation argument \cite{Rubio_book}: Firstly, we note that due to the initial value in \eqref{eq:parabolic_appendix} being $0$, it suffices for \eqref{eq:maximal_Lp_regularity_measurable_appendix} to show 
\begin{equation}
	\label{eq:maximal_Lp_regularity_measurable_appendix_2}
	\int_{0}^T \|\Delta^2 u\|_{L^{q}(\T^d)}^p \, w \,\dd t  \leq C \int_0^T \|f\|_{ L^{q}(\T^d)}^p\,w \,\dd t,
\end{equation}
with $C$  depending as above only on $d,\lambda,q,p, T$ and $[w]_{A_{p/2}}$.
Suppose furthermore that the above estimate \eqref{eq:maximal_Lp_regularity_measurable_appendix_2} holds with $p=q$ so that there exists a constant $C$ which depends only on $d,\lambda,q,T$ and $[w]_{A_{q/2}}$ such that
\begin{equation}
	\label{eq:estrapolation_1}
	\int_{\R} (\one_{(0,T)}\|\Delta^2 u\|_{L^{q}(\T^d)}^2)^{q/2}\, w\,\dd t 
	\leq C 
	\int_{\R} (\one_{(0,T)}\|f\|_{L^{q}(\T^d)}^2)^{q/2} \,w\,\dd t.
\end{equation}
Rubio de Francia weighted extrapolation yields for all $r\in (1,\infty)$ and $w\in A_r$ (see e.g., \cite[Theorem J.2.1]{Analysis2} or \cite[Section 2]{Rubio_book})  
\begin{equation}
	\label{eq:estrapolation_2}
	\int_{\R} (\one_{(0,T)}\|\Delta^2 u\|_{L^{q}(\T^d)}^2)^{r} \,w\,\dd t 
	\leq C_r
	\int_{\R} (\one_{(0,T)}\|f\|_{L^{q}(\T^d)}^2)^{r} \,w\,\dd t.
\end{equation}
where $C_r$ which depends only on $d,\lambda,q,r$ and $[w]_{A_{r}}$.
Now, the conclusion follows by letting $p=2r\in (2,\infty)$ (the argument \eqref{eq:estrapolation_1}-\eqref{eq:estrapolation_2} is sometimes referred to as rescaling, see \cite[Subsection 3.3]{Rubio_book}).

The remaining part of this section is dedicated to the proof of \eqref{eq:maximal_Lp_regularity_measurable_appendix_2},
for which we also recall the extrapolation result due to \cite{Shen_weighted}. For $r>0$ and $(t,x)\in \R\times \R^d$ we denote by $Q_r(t,x)$ the parabolic cubes
$$
Q_r(t,x)=I_r (t)\times B_{r^{1/4}}(x) ,\ \text{ where }\  I_r (t)=(t-r,t+r)  \ \text{ and }\ B_r(x)=\{y\,:\, |x-y|\leq r\},
$$ 
 adapted to 4th-order operators. 
For simplicity, we often write $Q$ instead of $Q_r(t,x)$ when the dependence on $r,t$, and $x$ is clear or unimportant. Moreover, if $Q=Q_{r}(t,x)$, we let $N Q:=Q_{N r}(t,x)$ for a constant $N$. The same notation is employed for intervals $I$, and for balls we set $NB_r =B_{N^{1/4}r}$.

\begin{theorem}
\label{t:shen_interpolation}
Assume that $\Qstar=\Istar\times \Bstar$ is a parabolic cube in $\R\times \R^d$, and let $F\in L^2(4\Qstar)$. 
Fix $\ppstar\in (2,\infty)$ and $p\in (2,\ppstar)$, a weight $w\in A_{p/2}(\R)$ and  $f\in L^{p}(4 \Qstar)$. Suppose that there exists a constant $\Cstar>0$ such that for each parabolic cube $Q=I\times B\subseteq {2 }\Qstar$ satisfying $|Q|\leq \frac{1}{2}|\Qstar|$, there exist functions $F_Q$ and $R_Q$ such that $|F|\leq |F_Q|+|R_Q|$ a.e.\ on $Q$ and the following estimates hold:
\begin{align*}
\Big(\fint_{2Q} |F_Q|^2\,\dd x \,\dd t \Big)^{1/2}
&\leq \Cstar 
\Big(\fint_{4Q} |f|^2\,\dd x \,\dd t \Big)^{1/2},\\
\Big(\fint_{2Q} |R_Q|^{\ppstar} w \,\dd x \,\dd t\Big)^{1/\ppstar}
&\leq \Cstar \Big[
\Big(\fint_{4Q} |f|^{2} \,\dd x \,\dd t\Big)^{1/2}
+ 
\Big(\fint_{4Q} |F|^{2}\,\dd x \,\dd t \Big)^{1/2}\Big]
\Big(\fint_{I} w  \,\dd t\Big)^{1/\ppstar}.
\end{align*}
Then there exists $C$ depending only on $p,d,\Cstar$ and $[w]_{A_{p/2}}$ such that 
$$
\Big(\fint_{\Qstar} |F|^p\,w\,\dd x \,\dd t \Big)^{1/p}
\leq C 
\Big(\fint_{4 \Qstar} |f|^{p} \,w\,\dd x \,\dd t \Big)^{1/p}
+ C
\Big(\fint_{4\Qstar} |F|^{2} \,\dd x \,\dd t\Big)^{1/2}
\Big(\fint_{\Istar} w\,\dd t \Big)^{1/p}.
$$
\end{theorem} 

The result above is a simplified and adapted version of the parabolic setting of \cite[Theorem 2.1]{Shen_weighted}. While spatial weights can also be included, we omit them here as they are not needed for our purposes.

We now turn to the proof of Proposition \ref{prop:deterministic_SMR}. For the remainder of this section, we fix $T\in (0,\infty)$ and note that $L^p(0,T,w)\embed L^2(0,T)$ for all $w\in A_{p/2}$ due to H\"older's inequality, and therefore standard variational methods ensures the existence of a solution $v\in H^1(0,T+1;L^2(\T^d))\cap L^2(0,T+1;H^4(\T^d))$ to \eqref{eq:parabolic_appendix} with $f$ replaced by $g:=\one_{(0,T)} f$. By extending $v=0$ on $(-1,0)$, it follows that $v$ also satisfies 
\begin{equation}
\label{eq:reduction_whole_line_appendix}
\partial_t v +b(t)\Delta^2 v=g \text{ on }(-1,T+1)\times \T^d,\quad v(-1)=0 \text{ on }\T^d,
\end{equation}
where $b=\lambda^{-1}\one_{(-1,0)}+a\one_{\R_+}$. 
In particular, to prove Proposition \ref{prop:deterministic_SMR}, it suffices to prove the estimate \eqref{eq:maximal_Lp_regularity_measurable_appendix} with $u$ replaced by $v$. Next, for any parabolic cube $Q=Q_{r}(t,x)$ with $r\in (0,\frac{1}{64}]$, we decompose $v=v_Q+ r_Q$ where 
$r_Q:=v-v_Q$, and $v_Q\in H^1(t-{2}r,t+{2}r;L^2(2B))\cap L^2(t-{2}r,t+{2}r;H^4(2B))$ is the solution to 
\begin{equation}
\label{eq:vq_problem_appendix}
\left\{
\begin{aligned}
\partial_t v_Q +b (t)\Delta^2 v_Q&=\one_{2Q} g &\text{ in }&2Q, \\
v_Q&=0 &\text{ on }&\{t-{2}r\}\times 2B,\\
v_Q=\Delta v_Q&=0 &\text{ on }&(t-2r,t+{2}r)\times\partial (2B).
 \end{aligned}
 \right.
\end{equation}
As in Subsection \ref{Sec_SMR}, we identify balls in $\T^d$  with their corresponding counterparts in the cube $[-1,2)^d\subseteq \R^d$, provided the radius of the ball is less than $1/2$.
By integrating by parts we find that there exists $C_{\lambda}>0$ independent of $Q$ such that 
\begin{equation}
\label{eq:estimate_L2_vQ}
\int_{2Q} |{\Delta^2} v_Q|^2\,\dd x \,\dd t 
\leq C_{\lambda}
\int_{2Q} |g|^2\,\dd x \,\dd t .
\end{equation}
To handle the remainder $r_Q$, we exploit regularization effects via Caccioppoli inequalities.

\begin{lemma}[Caccioppoli inequality]
\label{lem:Caccioppoli}
Let Assumption \ref{ass:bounded_appendix} be satisfied, and let ${Q = I\times B}\subseteq (-1,T)\times \R^d$ be a parabolic cube. Set $b=\lambda^{-1}\one_{(-1,0)}+a\one_{\R_+}$.
Let $u\in H^1(2I;L^2(2B))\cap L^2(2I;H^{4}(2B))$ be a solution to  
\begin{equation}
\label{eq:basic_problem_caccioppoli}
\partial_t u +b(t)\Delta^2 u=0 \text{ on }2Q.
\end{equation}
Then, $u\in C^{0}(2I;C^\infty(2B))\cap W^{1,\infty}(2I;C(2B))$ and for each integer $m\geq 0$, then there exists a constant {$C_{d,\lambda,m}$} independent of $Q$ such that
\begin{equation}
\label{eq:estimate_local_solution_smoothing}
\sup_{(t,x)\in Q} |\Delta^m u(t,x)|\leq C_{d,\lambda,m}
\Big(\fint_{2Q} | \Delta^m u|^2\,\dd x \,\dd t\Big)^{1/2}.
\end{equation}
\end{lemma}

\begin{proof}
The method of difference quotients readily implies $u\in C^{0}(2I;C^\infty(2B))$. 
Hence, $u\in W^{1,\infty}(2I;C(2B))$ from the boundedness of $a$ and \eqref{eq:basic_problem_caccioppoli}. It remains to prove the claimed estimate \eqref{eq:estimate_local_solution_smoothing}. From the previous fact, $\Delta^m u$ is still a local solution to \eqref{eq:basic_problem_caccioppoli}, with the same regularity as $u$; it is enough to prove \eqref{eq:estimate_local_solution_smoothing} with $m=0$.
We now divide the proof into two steps. Below, for convenience, we write $Q$ instead of $Q_r(t,x)$.

\emph{Local smoothing.} 
For each $N>1$, there exists a constant $K_N$ independent of $Q$ such that 
\begin{equation}
\label{eq:local_smoothing_caccioppoli}
\int_{Q} |\Delta u|^2\,\dd x \,\dd t
\leq  \frac{K_N}{r}
\int_{NQ} |u|^2\,\dd x \,\dd t.
\end{equation} 
For convenience, we consider $N=2$; the general case is similar. 
Let $\psi$ be a cutoff function such that $\psi=1$ on $Q$ and $\psi=0$ on $(\frac{3}{2}Q)^{{\rm c}}$ such that $|\partial_t \psi|\lesssim r^{-1}$ and $|\partial_{x}^\alpha \psi|\lesssim r^{-|\alpha|/4}$
 for all multi-indices $\alpha=(\alpha_1,\dots,\alpha_d)\in \N_0^d$ with $|\alpha|=\sum_{i=1}^d \alpha_i$. Integrating by parts and using the Cauchy-Schwarz inequality,
\begin{align*}
0=&\frac{1}{4}
\int_{2Q} \partial_t ( u^2\psi^4 )\,\dd x \,\dd t 
=
 \int_{2Q} u^2 \psi^3 \partial_t \psi \,\dd x \,\dd t
-\frac{1}{2}\int_{2Q}  b\psi^4  u \Delta^2 u \,\dd x \,\dd t \\
& 
\leq 
 \int_{2Q} u^2 \psi^3 \partial_t \psi \,\dd x \,\dd t
- \frac{1}{2}\int_{2Q}  b\Delta(\psi^4  u) \Delta^2 u \,\dd x \,\dd t \\
 & 
 \leq -\frac{\lambda}{2} \int_{2Q} \psi^4 |\Delta u|^2\,\dd x \,\dd t
 +C_{\lambda} \int_{2Q}\big[|\psi^2||\nabla \psi|^2 |\nabla u|^2 + (\psi^3|\partial_t \psi|+\psi^2|\Delta \psi|^2 + |\nabla \psi|^4) u^2\big]\,\dd x\,\dd t .
\end{align*}
To conclude, it remains to estimate the terms involving $\nabla u$. Since $|\nabla u|^2=\nabla \cdot (\nabla u \, u)-\Delta u\, u$,
\begin{align*}
&\int_{2Q}|\psi^2||\nabla \psi|^2 |\nabla u|^2\,\dd x \,\dd t 
=-\int_{2Q}\nabla (\psi^2|\nabla \psi|^2) \cdot \nabla u \, u\,\dd x \,\dd t - 
\int_{2Q}|\psi^2||\nabla \psi|^2  \Delta u \,u \,\dd x \,\dd t \\
&\qquad\qquad \leq \int_{2Q}\big( \psi |\nabla\psi ||\nabla \psi|^2 | \nabla u | | u|
+ \psi^2 |\nabla^2 \psi | |\nabla \psi|  | \nabla u | | u|+\frac{\lambda}{4}\psi^4|\Delta u|^2 +C_\lambda  |\nabla \psi|^4   \,u^2\big) \,\dd x \,\dd t \\
&\qquad\qquad\leq \frac{1}{2} 
\int_{2Q}|\psi^2||\nabla \psi|^2 |\nabla u|^2\,\dd x \,\dd t+
C \int_{2Q}( |\nabla \psi|^4 + \psi^2 |\nabla^2 \psi|^2)u^2\,\dd x \,\dd t.
\end{align*}
Hence, the first term on the RHS of the above can be absorbed in the LHS. The claim \eqref{eq:local_smoothing_caccioppoli} follows by collecting the above estimates and the assumed behaviour of the cutoff function $\psi$.

\emph{Conclusion.} 
Let $(s,y)\in \R\times \R^d$ and $r>0$ be such that $Q=Q_r(s,y)$. Note that $v=\Delta^{\ell}u$ is also a solution of \eqref{eq:basic_problem_caccioppoli} and with the same regularity of $u$, for all $\ell\geq 0$. From Step 1 and $\partial_t u=- b(t)\Delta^2 u$, we have 
\begin{equation}
\label{eq:local_smoothing_caccioppoli_iterated}
\int_{Q} |\Delta^{\ell} u|^2\,\dd x \,\dd t
\lesssim_\ell r^{-\ell}
\int_{2Q} | u|^2\,\dd x \,\dd t,\qquad
\int_{Q} |\partial_t \Delta^\ell u|^2\,\dd x \,\dd t
\lesssim_\lambda r^{-2-\ell}
\int_{2Q} | u|^2\,\dd x \,\dd t.
\end{equation}
Next, Sobolev embeddings imply
$$
H^{1}(0,1;H^{2d}(B_1))\embed L^\infty(0,1;H^{2d}(B))\embed L^\infty(0,1;L^\infty(B)),
$$
where $B_1$ is the ball centered at $0$ with radius $1$. 
By applying the above embedding to $(t,x)\mapsto  u(s+rt, y+r^{1/4} x)$, and using $f\mapsto\|\Delta^{d} f\|_{L^2(B_1)}+\| f\|_{L^2(B_1)}$ as a norm on $H^{2d}(B_1)$, we obtain
\begin{align*}
\sup_{(t,x)\in Q} | u(t,x)|^2
&\lesssim_d
r^{2+d} \fint_{Q} |\partial_t\Delta^{d} u|^2\,\dd x\,\dd t 
+r^{2} \fint_{Q} |\partial_t u|^2\,\dd x\,\dd t 
+r^{d} \fint_{Q} |\Delta^d u|^2\,\dd x\,\dd t 
+\fint_{Q} | u|^2\,\dd x\,\dd t \\
&\lesssim_{d,\lambda} 
\fint_{2Q} | u|^2\,\dd x\,\dd t ,
\end{align*}
where the last estimate follows from \eqref{eq:local_smoothing_caccioppoli_iterated}.
\end{proof}

\begin{proof}[Proof of Proposition \ref{prop:deterministic_SMR}]
As discussed below Proposition \ref{prop:deterministic_SMR}, it remains to verify the bound \eqref{eq:maximal_Lp_regularity_measurable_appendix_2} with $p=q$, which we accomplish in the following two steps.

\textit{Localized $L^q$-estimate: There exists a constant $C$ depending only on $d,\lambda, q$ and $[w]_{A_{q/2}}$  such that for each parabolic cube $\Qstar=Q_{r_*}(t_*,x_*)\subseteq (-\frac{1}{4},T+1)\times \T^d$ with $r_*\in (0,\frac{1}{64}]$ we have}
\begin{equation}
\label{eq:Q0_estimate_almost_there}
\Big(\fint_{Q_*} |\Delta^2 v|^q\,w \,\dd x \,\dd t \Big)^{1/q}
\leq C\Big[
\Big(\fint_{4Q_*} |g|^{q} w\,\dd x \,\dd t \Big)^{1/q}
+
\Big(\fint_{4Q_*} |\Delta^2 v|^{2} \,\dd x \,\dd t\Big)^{1/2}
\Big(\fint_{I_*} w\,\dd t \Big)^{1/q}\Big]
\end{equation}
\textit{where $v$ solves \eqref{eq:reduction_whole_line_appendix}.}
Here, we apply Theorem \ref{t:shen_interpolation} with $F=\Delta^2 v$ and using the decomposition $|F|\leq |\Delta^2 v_Q|+|\Delta^2 r_Q|$ for each parabolic cube $Q$, where $v_Q$ solves \eqref{eq:vq_problem_appendix} {and $r_Q = v-v_Q$}. From \eqref{eq:estimate_L2_vQ}, it remains to check the assumption on the remainder. 
Now, for all $\pstar\in (2,\infty)$, we have
\begin{align*}
\Big(\fint_{2Q} |\Delta^2 r_Q|^{\pstar}\, w \,\dd x \,\dd t\Big)^{1/\pstar}
&\stackrel{(i)}{\lesssim}_{(d,q,[w]_{A_{q/2}})}\Big( \sup_{2Q} |\Delta^2 r_Q| \Big)
\Big(\fint_{I} w \,\dd t\Big)^{1/\pstar}\\	
&\stackrel{(ii)}{\lesssim}_{d,\lambda} \Big(\fint_{4Q} |\Delta^2 r_Q|^2\,\dd x \,\dd t\Big)^{1/2} 
\Big(\fint_{I} w \,\dd t \Big)^{1/\pstar}\\	
&\leq \Big[\Big(\fint_{4Q} |\Delta^2v_Q|^2\,\dd x \,\dd t\Big)^{1/2} 
+\Big(\fint_{4Q} |\Delta^2 v|^2\,\dd x \,\dd t\Big)^{1/2}\Big]
\Big(\fint_{I} w \,\dd t \Big)^{1/\pstar}\\	
&\stackrel{\eqref{eq:estimate_L2_vQ}}{\lesssim_{\lambda}} \Big[\Big(\fint_{4Q} |g|^2\,\dd x \,\dd t\Big)^{1/2} 
+\Big(\fint_{4Q} |\Delta^2 v|^2\,\dd x \,\dd t\Big)^{1/2}\Big]
\Big(\fint_{I} w \,\dd t \Big)^{1/\pstar},
\end{align*}
where in $(i)$ we used the doubling property of Muckenhout weights, i.e., $\int_{2I} w \,\dd t\lesssim_{d,q,[w]_{A_{q/2}}} \int_{I} w \,\dd t$ (see e.g., \cite[Proposition 7.1.5(9)]{Gafrakos_classical}) and in $(ii)$ Lemma \ref{lem:Caccioppoli}.
The estimate \eqref{eq:Q0_estimate_almost_there} now follows from  Theorem \ref{t:shen_interpolation}.

\textit{Conclusion.} 
Select a finite collection of points $((t_j,x_j))_{j=1}^N$ such that $(0,T)\times \T^d= \cup_{j=1}^N Q_{r}(t_j,x_j)$ where $r=\frac{1}{64}$. Note that one can select these points such that $N$ depends only on $T$. Then, applying  \eqref{eq:Q0_estimate_almost_there},
\begin{align*}
\Big(\int_{0}^T \int_{\T^d} |\Delta^2 v|^q\,w\,\dd x \,\dd t \Big)^{1/q}
&\leq 
\sum_{j=1}^N 
\Big(\int_{Q_{r}(t_j,x_j)} |\Delta^2 v|^q\,w\,\dd x \,\dd t \Big)^{1/q}\\
&\lesssim N \Big[
\Big(\int_{-1}^{T+1} \int_{\T^d} |g|^{q} \,w\,\dd x \,\dd t \Big)^{1/q}
+
\Big(\int_{-1}^{T+1}\int_{\T^d} |\Delta^2 v|^{2} \,\dd x \,\dd t\Big)^{1/2}
\Big(\int_{0}^T w\,\dd t \Big)^{1/q}\Big].
\end{align*}
Next, as in \eqref{eq:estimate_L2_vQ}, by energy methods and $g=\one_{(0,T)} f$, it follows that 
\begin{align*}
\Big(\int_{0}^{T+1} \int_{\T^d} |\Delta^2 v|^2\,\dd x \,\dd t \Big)^{1/2}
&\lesssim_{\lambda} 
\Big(\int_{0}^{T+1} \int_{\T^d} |g|^2\,\dd x \,\dd t \Big)^{1/2}\\
&\leq \Big(\int_{0}^T w^{-2/(q-2)}\,\dd t \Big)^{(q-2)/{q}} \Big(\int_{0}^{T}\int_{\T^d} |f|^q\,w\,\dd x\,\dd t \Big)^{1/q}\\
&\leq [w]_{A_{q/2}}^{1/q}\Big(\int_{0}^T w\,\dd t\Big)^{-1/q} \Big(\int_0^{T}\int_{ \T^d} |f|^q\,w\,\dd x\,\dd t \Big)^{1/q}.
\end{align*}
Thus, \eqref{eq:maximal_Lp_regularity_measurable_appendix_2} with $p=q$ follows from the above and $g=\one_{(0,T)} f$, $u=v$ on $(0,T)$, $v=0$ on $(-1,0)$.
\end{proof}

\subsubsection*{Acknowledgements}
The authors thank Mark Veraar for the discussions during the initial stage of the project. They are grateful to Manuel V.\ Gnann and Mark  Veraar for carefully reading the manuscript. Finally, the authors thank the anonymous referees for their valuable comments, which significantly improved the clarity and presentation of this work, as well as pointing out the possibility of relaxing the condition \eqref{Eq40} to its current form.

\bigskip

\noindent
\textbf{Data availability.} This manuscript has no associated data.

\medskip

\noindent
\textbf{Declaration -- Conflict of interest.} The authors have no conflict of interest.

\def\polhk#1{\setbox0=\hbox{#1}{\ooalign{\hidewidth
			\lower1.5ex\hbox{`}\hidewidth\crcr\unhbox0}}} \def\cprime{$'$}

\end{document}